\LetLtxMacro{\oldsqrt}{\sqrt}
\renewcommand{\sqrt}[2][]{\,\oldsqrt[#1]{#2}\,}
\def\@tocline#1#2#3#4#5#6#7{\relax
  \ifnum #1>\c@tocdepth 
  \else
    \par \addpenalty\@secpenalty\addvspace{#2}%
    \begingroup \hyphenpenalty\@M
    \@ifempty{#4}{%
      \@tempdima\csname r@tocindent\number#1\endcsname\relax
    }{%
      \@tempdima#4\relax
    }%
    \parindent\z@ \leftskip#3\relax \advance\leftskip\@tempdima\relax
    \rightskip\@pnumwidth plus4em \parfillskip-\@pnumwidth
    #5\leavevmode\hskip-\@tempdima
      \ifcase #1
       \or\or \hskip 1em \or \hskip 2em \else \hskip 3em \fi%
      #6\nobreak\relax
    \dotfill\hbox to\@pnumwidth{\@tocpagenum{#7}}\par
    \nobreak
    \endgroup
  \fi}
\def\bmu{\boldsymbol \mu}
\newcommand{\dbr}[1]{\left\llbracket #1 \right\rrbracket}
\newcommand{\abs}[1]{\lvert #1 \rvert}
\newcommand{\ol}{\overline}
\newcommand{\zmod}[1]{\mathbb{Z}/ #1 \mathbb{Z}}
\newcommand{\dangle}[1]{\left\langle #1 \right\rangle}
\newcommand{\qalg}[3]{\left(\frac{#1, #2}{#3}\right)}
\newcommand{\Lsymb}[2]{\genfrac{(}{)}{}{}{#1}{#2}}    
\newcommand{\<}{\langle}
\renewcommand{\>}{\rangle}
\newcommand{\bs}{{\pmb{\star}}} 
\DeclareMathSymbol{\twoheadrightarrow} {\mathrel}{AMSa}{"10}
\DeclareMathOperator{\Cl}{Cl}
\DeclareMathOperator{\Tp}{Tp}
\DeclareMathOperator{\Gor}{Gor}
\DeclareMathOperator{\ord}{ord}
\DeclareMathOperator{\fchar}{char}
\DeclareMathOperator{\Pic}{Pic}
\DeclareMathOperator{\Nr}{Nr}
\DeclareMathOperator{\Aut}{Aut}
\DeclareMathOperator{\Inn}{Inn}
\DeclareMathOperator{\End}{End}
\DeclareMathOperator{\Hom}{Hom}
\DeclareMathOperator{\Emb}{Emb}
\DeclareMathOperator{\Mass}{Mass}
\DeclareMathOperator{\Gal}{Gal}
\DeclareMathOperator{\Tr}{Tr}
\DeclareMathOperator{\Nm}{N}  
\DeclareMathOperator{\SO}{SO}
\DeclareMathOperator{\PGL}{PGL}
\newcommand{\Z}{\mathbb Z}
\newcommand{\Q}{\mathbb Q}
\newcommand{\R}{\mathbb R}
\renewcommand{\H}{\mathbb H}  
\renewcommand{\O}{\mathbb O} 
\newcommand{\F}{\mathbb F}
\def\greekbolds#1{%
 \@for\next:=#1\do{%
    \def\X##1;{%
     \expandafter\def\csname V##1\endcsname{\boldsymbol{\csname##1\endcsname}}
     }
   \expandafter\X\next;
  }
}
\def\make@bb#1{\expandafter\def
  \csname bb#1\endcsname{{\mathbb{#1}}}\ignorespaces}
\def\make@bbm#1{\expandafter\def
  \csname bb#1\endcsname{{\mathbbm{#1}}}\ignorespaces}
\def\make@bf#1{\expandafter\def\csname bf#1\endcsname{{\bf
      #1}}\ignorespaces} 
\def\make@gr#1{\expandafter\def
  \csname gr#1\endcsname{{\mathfrak{#1}}}\ignorespaces}
\def\make@scr#1{\expandafter\def
  \csname scr#1\endcsname{{\mathscr{#1}}}\ignorespaces}
\def\make@cal#1{\expandafter\def\csname cal#1\endcsname{{\mathcal
      #1}}\ignorespaces} 
\def\do@Letters#1{#1A #1B #1C #1D #1E #1F #1G #1H #1I #1J #1K #1L #1M
                 #1N #1O #1P #1Q #1R #1S #1T #1U #1V #1W #1X #1Y #1Z}
\def\do@letters#1{#1a #1b #1c #1d #1e #1f #1g #1h #1i #1j #1k #1l #1m
                 #1n #1o #1p #1q #1r #1s #1t #1u #1v #1w #1x #1y #1z}
\newcounter{thmcounter}
\numberwithin{thmcounter}{section}  
\newtheorem{thm}[thmcounter]{Theorem}
\newtheorem{lem}[thmcounter]{Lemma}
\newtheorem{def-lem}[thmcounter]{Definition-Lemma}
\newtheorem{cor}[thmcounter]{Corollary}
\newtheorem{prop}[thmcounter]{Proposition}
\theoremstyle{definition}
\newtheorem{defn}[thmcounter]{Definition}
\newtheorem{rem}[thmcounter]{Remark}
\newtheorem{sect}[thmcounter]{}
\numberwithin{equation}{section}
\numberwithin{figure}{section}
\numberwithin{table}{section}
\newtheoremstyle{notitle}  
  {}
  {}
  {\itshape}
  {}
  {}
  {\ }
  {.5em}
  {}
\theoremstyle{notitle}
\newtheorem*{pnt}{}
\title[Unit Groups]{Unit Groups of Maximal Orders in Totally Definite
  Quaternion Algebras over real quadratic fields}
\author{Qun Li}
\address{(Li) School of Mathematics and Statistics, Wuhan University, Luojiashan,
Wuhan, Hubei, 430072, P.R. China.}
\email{qun\_l@whu.edu.cn}
\author{Jiangwei Xue}
\address{(Xue) Collaborative Innovation Centre of Mathematics, 
School of Mathematics and Statistics, Wuhan University, Luojiashan,
Wuhan, Hubei, 430072, P.R. China.}
\address{(Xue) Hubei Key Laboratory of Computational Science (Wuhan
  University), Wuhan, Hubei,  430072, P.R. China.}
\email{xue\_j@whu.edu.cn}
\author{Chia-Fu Yu}
\address{(Yu) Institute of Mathematics,
  Academia Sinica, Astronomy-Mathematics
  Building, No. 1, Sec. 4, Roosevelt Road, Taipei 10617, TAIWAN.}
\address{(Yu) National Center for Theoretical Sciences, 
  Astronomy-Mathematics
  Building, No. 1, Sec. 4, Roosevelt Road, Taipei 10617, TAIWAN.}
\email{chiafu@math.sinica.edu.tw}
\begin{document}
\date{\today}
 \subjclass[2010]{11R52, 11R29, 11G10}
 \keywords{quaternion orders, unit groups, superspecial  abelian
 surfaces, class number formula}

\begin{abstract}
  We study a form of refined class number formula (resp.~type number
  formula) for maximal orders in totally definite quaternion algebras
  over real quadratic fields, by taking into consideration the
  automorphism groups of right ideal classes (resp.~unit groups of
  maximal orders).  For each finite noncyclic group $G$, we give an
  explicit formula for the number of conjugacy classes of maximal
  orders whose unit groups modulo center are isomorphic to $G$, and write
  down a representative for each conjugacy class. This
  leads to a complete recipe (even explicit formulas in special cases) for
  the refined class number formula for all finite groups.  As an
  application,  we prove the existence of superspecial abelian
 surfaces whose endomorphism algebras coincide with $\Q(\sqrt{p})$ in
all positive characteristic $p\not\equiv 1\pmod{24}$. 






\end{abstract}

\maketitle
\tableofcontents   


\section{Introduction}

Let $F$ be a totally real number field with ring of integers $O_F$,
and $H$ be a \textit{totally definite} quaternion $F$-algebra, that is,
$H\otimes_{F, \sigma}\bbR$ is isomorphic to the Hamilton quaternion
$\bbR$-algebra $\bbH$ for every embedding $\sigma:F\hookrightarrow \bbR$.
Fix a maximal $O_F$-order $\O$ in $H$.  The \emph{class number} $h(\O)$, by
definition, is the cardinality of the finite set $\Cl(\O)$ of right
ideal classes of $\bbO$. It depends only on $H$ and is also denoted by
$h(H)$ and called the class number of $H$. The \emph{type number}
$t(H)$ is the cardinality of the finite set $\Tp(H)$ of all
$H^\times$-conjugacy classes of maximal $O_F$-orders of
$H$. 



For any $O_F$-order $\calO$ in $H$, the quotient group
$\calO^\bs:=\calO^\times/O_F^\times$ of the unit group $\calO^\times$
by $O_F^\times$ is finite by \cite[Theorem~V.1.2]{vigneras} and called the $\emph{reduced unit group}$
of $\calO$. For a (fractional) right ideal $I\subset H$ of $\calO$, the \emph{reduced
  automorphism group} of $I$ is defined to be $\calO_l(I)^\bs$, where
$\calO_l(I):=\{x\in H\mid xI\subseteq I\}$ is the left order of
$I$. The reduced unit group $\calO^\bs$ can easily be regarded as a
finite subgroup of $\SO_3(\R)$ (see
Section~\ref{sect:general-classification-grp}). By the well-known
classification of finite
subgroups of $\SO_3(\R)$, $\calO^\bs$ is isomorphic to either a cyclic
group $C_n$ of order $n$, a dihedral group $D_n$ of order $2n$, or one
of the groups $A_4, S_4$ and $A_5$ (see \cite[Theorem~I.3.6]{vigneras}). For each group $G$ in this list, we
define
\begin{align}
\Cl(\O,G)&:=\{[I]\in \Cl(\O)\mid
         \calO_l(I)^\bs \simeq G\},&  h(H, G):&=\abs{\Cl(\O, G)},\label{eq:105}\\
\Tp(H,G)&:=\{\dbr{\O'}\in \Tp(H)\mid \O'^\bs\simeq G \},&  t(H, G):&=\abs{\Tp(H,G)},\label{eq:106}
\end{align}
where $[I]$ denotes the right ideal class of $I$, and $\dbr{\O'}$ denotes
the $H^\times$-conjugacy class of the maximal order $\O'$.  The
quantity $h(H, G)$ (resp.~$t(H, G)$) can be regarded as a refined
class number (resp.~refined type number) of $H$. The classical proof
of the independence of $h(H)$ on the choice of the maximal order $\O$
can be adapted to show that the same holds for $h(H, G)$ as well (see
\cite[Lemma~I.4.9(2)]{vigneras}). Indeed, any two maximal orders
$\O, \O'$ are \emph{linked}, that is to say, there exists a full
$O_F$-lattice $J\subset H$
such that $\calO_l(J)=\O$ and $\calO_r(J)=\O'$, where $\calO_r(J):=\{x\in H\mid Jx\subseteq J\}$.  The map
$[I]\mapsto [IJ]$ defines a bijection $\Cl(\O)\to \Cl(\O')$. Since
$\calO_l(I)=\calO_l(IJ)$ for each $I$, it follows from the definition
that $h(H, G)$ does not depend on the choice of $\O$.  If $H$ is
clear from the context, then we drop it from the notation and write
$h(G)$ and $t(G)$ instead.

The main tool for studying class numbers and type numbers is Eichler's
trace formula (\cite{eichler:crelle55, Pizer-1976},
cf.~\cite{vigneras}).  This has been used to study various arithmetic
problems concerning totally definite quaternion algebras, including the
analogous Gauss problem and the cancellation property by several people
\cite{vigneras-crelle-1976, Smertnig-crelle-2015,
  Hallouin-Maire-crelle-2006, Brzezinski:class_no_one,
  Kirschmer-Voight, Kirschmer-Lorch}.
Brzezinski
\cite{Brzezinski:class_no_one} obtains a complete list of all
orders (including non-Gorenstein orders) in definite quaternion 
$\Q$-algebras with class number one.   
Kirschmer and Voight \cite{Kirschmer-Voight} determine all Eichler
$O_F$-orders with class number $\le 2$. Kirschmer and Lorch \cite{Kirschmer-Lorch}
determine all
Eichler $O_F$-orders with type number $\le 2$. 

Vign\'eras \cite[Theorem 3.1]{vigneras:ens} gives an explicit formula
for $h(\bbO)$ (including Eichler orders $\bbO$) when $F$ is a real
quadratic field. Explicit formulas for type numbers, however,
are comparably unknown.
In \cite{Pizer1973} Pizer proves a general formula for type numbers, 
and uses this to deduce an explicit type number formula for 
Eichler orders in an arbitrary definite quaternion $\Q$-algebra \cite{Pizer1973}.
So far there is no known \emph{explicit} formula for
$t(H)$ in the literature when $F$ is an \emph{arbitrary} quadratic field. As far as the authors are aware, 
the only known
cases are due to Kitaoka \cite[1.12 and 3.11]{kitaoka:nmj1973} and 
Ponomarev \cite[Theorem part (c), p.~102]{ponomarev:aa1981} 
when $F=\Q(\sqrt{p})$ with a prime number $p$, and 
$H=H_{\infty_1,\infty_2}$ is the totally definite quaternion $F$-algebra unramified at every finite place of $F$. 
Under the same hypothesis on $H$, 
we prove the following result for more general totally real fields in
\cite[Section~3]{xue-yu:type_no}.  The idea of the proof is sketched in
Remark~\ref{rem:free-act-odd-hF} for the convenience of the readers. 


\begin{prop}\label{1.1}
  Let $H$ be a totally definite quaternion algebra over a totally real
  number field $F$ of even degree over $\Q$. Assume  that $H$ is unramified at all finite places
  of $F$ and that $h(F)$ is odd. Then for any finite group $G$
  one has $h(H, G)=h(F) t(H, G)$. In particular, the equality $h(H)=h(F)
  t(H)$ holds. 
\end{prop}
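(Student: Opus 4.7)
The plan is to exhibit a free action of $\Cl(F)$ on $\Cl(\O)$ whose orbits coincide with the fibers of the natural surjection $\pi\colon \Cl(\O)\twoheadrightarrow \Tp(H)$, $[I]\mapsto \dbr{\calO_l(I)}$. Since such an action preserves the left order of every representative, it restricts to each subset $\Cl(\O,G)$ with orbits equal to the fibers of the induced surjection $\Cl(\O,G)\twoheadrightarrow \Tp(H,G)$; counting orbits then yields $h(H,G)=h(F)\,t(H,G)$ at once.

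I would define the action by $[\gra]\cdot[I]:=[\gra I]$, where the fractional $O_F$-ideal $\gra$ is regarded as a sublattice of $H$ via the central embedding $F\hookrightarrow H$; centrality makes both well-definedness and $\calO_l(\gra I)=\calO_l(I)$ immediate. The decisive structural input, used both for freeness and for the orbit--fiber identification, is that every two-sided fractional ideal of any maximal order $\calO'\subset H$ has the form $\grb\calO'$ for a unique fractional $O_F$-ideal $\grb$. This is purely local: at each finite prime $\grp$ of $F$ the algebra $H_\grp$ is split by hypothesis, and the two-sided ideals of $M_2(O_{F_\grp})$ are exhausted by $\grp^{k}M_2(O_{F_\grp})$.

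For freeness, suppose $[\gra]\cdot[I]=[I]$, so $\gra I=hI$ for some $h\in H^\times$. Right-multiplying by $I^{-1}$ produces an equality of two-sided $\calO_l(I)$-ideals $\gra\calO_l(I)=h\calO_l(I)$, and comparing reduced norms yields $\gra^{2}=\Nm(h)\,O_F$, so $[\gra]^{2}=1$ in $\Cl(F)$; the parity of $h(F)$ then forces $[\gra]=1$. For the orbit--fiber identification, suppose $\pi([I])=\pi([J])$ and choose $y\in H^\times$ with $\calO_l(J)=y\calO_l(I)y^{-1}$; the representative $y^{-1}J$ lies in the same class as $J$ and satisfies $\calO_l(y^{-1}J)=\calO_l(I)$, so after relabelling one may assume $\calO_l(I)=\calO_l(J)=:\calO'$. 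The product $K:=I^{-1}J$ is then a two-sided fractional $\O$-ideal, hence $K=\gra\O$ for some $\gra$ by the structural input, whence $J=I\gra\O=\gra I$ and $[J]=[\gra]\cdot[I]$.

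The main obstacle is the freeness step, where the oddness of $h(F)$ is crucial: without it, the $2$-torsion in $\Cl(F)$ could stabilize some right ideal classes and orbit sizes would vary across the fibers of $\pi$. With freeness and the orbit--fiber correspondence in place, the equality $h(H,G)=h(F)\,t(H,G)$ for each $G$, and the total identity $h(H)=h(F)\,t(H)$ after summing over finite subgroups $G$ of $\SO_3(\R)$, follow immediately from orbit counting.
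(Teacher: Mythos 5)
Your proposal is correct and is essentially the paper's own argument: as sketched in Remark~\ref{rem:free-act-odd-hF}, the paper likewise considers the action $([\gra],[I])\mapsto[\gra I]$ of $\Cl(O_F)$ on $\Cl(\O)$, identifies its orbits with the fibers of $\Upsilon$ using that $H$ splits at all finite places, and deduces freeness of the action from the oddness of $h(F)$. Your write-up merely supplies the details (the local classification of two-sided ideals of a maximal order and the reduced-norm squaring argument) that the paper defers to the cited reference \cite{xue-yu:type_no}.
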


Thanks to Vign\'eras's explicit formula \cite[Theorem 3.1]{vigneras:ens}, 
we obtain an explicit formula for $t(H_{\infty_1,\infty_2})$ when $F$ has odd class number. 
For a complete list of quadratic fields with
odd class numbers, see \cite[Corollary~18.4]{Conner-Hurrelbrink}. In
particular, $h(\Q(\sqrt{p}))$ is odd for every prime $p$. 

The task of this paper is to determine explicitly the refined class
number $h(H, G)$ for an arbitrary totally definite $H$ over any real quadratic field
$F$. A key step turns out to be determining explicitly $t(H, G)$ for
all finite \emph{noncyclic} $G$. Let $d\in \bbN$ be the square-free positive
integer such that $F=\Q(\sqrt{d})$, and $\varepsilon\in O_F^\times$ be the
fundamental unit of $F$. 
The cases $d\in \{2,3,5\}$ will be treated separately. Suppose that  $d\ge 6$. Up to isomorphism,  the reduced unit group $\calO^\bs$ of any $O_F$-order $\calO\subset H$
falls into the following list as shown in
Section~\ref{sec:minimal-g-orders}:
\begin{equation} \label{eq:calG} 
\calG=\{C_1,C_2,C_3,C_4, C_6,D_2^\dagger, D_2^\ddagger,  D_3^\dagger, D_3^\ddagger, D_4, D_6,A_4, S_4\}. 
\end{equation} 
Recall that $C_n$ denotes the cyclic group of order $n$, and $D_n$
denotes the dihedral group of order $2n$, so $D_2$ is just the Klein
4-group.  The reduced unit groups isomorphic to $D_n$ for $n=2,3$ are
further distinguished into two different kinds (labeled by
$D_n^\dagger$ and $D_n^\ddagger$ respectively) according to the
reduced norms of certain units (see
Definition~\ref{def-of-kinds}). The cases $D_n^\ddagger$ with $n\in \{2,3\}$ occur only when $\Nm_{F/\Q}(\varepsilon)=1$ since the reduced norm of every nonzero element in a totally definite quaternion algebra is totally positive. 
For a noncyclic group $G$ in $\calG$,
we introduce in Definitions~\ref{defn:minimal-g-orders} and~\ref{def:min-g-ord-up-isom} the notion of
\emph{minimal $G$-orders}.  Essentially, a minimal $G$-order is an
$O_F$-order $\calO$ in $H$ such that $\calO^\bs\simeq G$ and minimal
with respect to inclusion. By Proposition~\ref{prop:uniqueness-finit-subgp}, a minimal
$G$-order, if exists, is unique up to $O_F$-isomorphism. Since any
maximal $O_F$-order $\O$ with $\O^\bs\supseteq G$ contains a minimal
$G$-order, the calculation of $t(H, G)$ reduces to counting and classifying the maximal orders containing a fixed minimal
$G$-order $\calO$. Let $\beth(\calO)$ be the number of conjugacy
classes of  maximal orders containing $\calO$. 



We summarize our main results as the following two theorems. See Section~\ref{sec:maximal-orders} and Section~\ref{sec:maximal-orders-part2}.

\begin{thm}\label{1.2} 
   Suppose that $d\ge 6$. We have
\begin{align}
\label{eq:D_6} 
  t(D_6)&=
  \begin{cases}
    1 &\quad \text{if } H\simeq \qalg{-1}{-3}{F} \text{ and } 
    3\varepsilon\in F^{\times 2};\\
    0 &\quad \text{otherwise;}
  \end{cases}\\
  \label{eq:S_4}
  t(S_4)=t(D_4)&=
  \begin{cases}
    1 &\quad\text{if } H\simeq\qalg{-1}{-1}{F} \text{ and } 
       2 \varepsilon\in F^{\times 2};\\
    0 &\quad\text{otherwise;}
  \end{cases}\\
  \label{eq:A_4}
  t(A_4)&=
  \begin{cases}
    1 &\quad\text{if } H\simeq \qalg{-1}{-1}{F} \text{ and }  
    2\varepsilon\not\in F^{\times 2} ;\\
    0 &\quad\text{otherwise;}
  \end{cases} \\
  \label{eq:D_2I} 
    t(D_2^\dagger)&=
  \begin{cases}
    1 &\quad\text{if } 
    H\simeq\qalg{-1}{-1}{F}, \left(\frac{F}{2}\right)=0 \text{ and }
    2\varepsilon\not\in F^{\times 2}  
    ;\\ 0 &\quad\text{otherwise;}
\end{cases}\\
   \label{eq:D_3I}
    t(D_3^\dagger)&=
  \begin{cases}
    1 &\quad\text{if } 
    H\simeq\qalg{-1}{-3}{F} \text{ and }
    3\varepsilon\not\in F^{\times 2}  
    ;\\ 0 &\quad\text{otherwise.}
\end{cases} 
\end{align}
Here the Artin symbol $\left(\frac{F}{2}\right)=0$ if and only if $2$
is ramified in $F$. 
\end{thm}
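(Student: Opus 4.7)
The proof proceeds case-by-case for each of the six noncyclic groups $G$ in the statement, using the minimal-$G$-order framework from Section~\ref{sec:minimal-g-orders}: fix a minimal $G$-order $\calO_G$ in $H$ (unique up to $O_F$-isomorphism, hence up to $H^\times$-conjugacy by Skolem--Noether), compute $\beth(\calO_G)$, and then separate those maximal overorders with reduced unit group exactly $G$ from those whose unit group is strictly larger. The task is to show $t(H, G) \in \{0, 1\}$, with the exact value determined by (a) whether $H$ contains a minimal $G$-order, and (b) whether the overlying maximal order has reduced unit group precisely $G$.

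For step (a), existence splits along two axes. First, the quaternion algebra $H$ must contain the $F$-subalgebra generated by the lift of $G$ under $\calO^\times \twoheadrightarrow \calO^\bs \hookrightarrow \SO_3(\R)$. For $G \in \{A_4, S_4, D_4, D_2^\dagger\}$ the preimage of the Klein four-subgroup is the quaternion group $Q_8$, which generates $\qalg{-1}{-1}{F}$, so $H \simeq \qalg{-1}{-1}{F}$; for $G \in \{D_6, D_3^\dagger\}$ the preimage of an order-$3$ rotation together with an inverting involution gives the subalgebra $\qalg{-1}{-3}{F}$. Second, enlarging the unit group from $A_4$ to $S_4$ (and analogously $D_2^\dagger \to D_4$, $D_3^\dagger \to D_6$) requires an extra unit of the shape $\beta(1+i)/2$ with $\beta \in O_F$ satisfying $\beta^2 = 2u$ for a totally positive unit $u \in O_F^\times$. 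Since the totally positive units of the real quadratic field $F$ modulo squares form $\{1, \varepsilon\}$ (when $\Nm_{F/\Q}(\varepsilon) = 1$) or $\{1\}$ (when $\Nm_{F/\Q}(\varepsilon) = -1$), such a $\beta$ exists if and only if $2\varepsilon \in F^{\times 2}$; the same argument with $2$ replaced by $3$ handles the order-$3$ enlargement.

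For step (b), once the existence of $\calO_G$ is established, I would localize at each finite place $\mathfrak{p}$ of $F$: at primes away from $2$ and $3$ the completion $\calO_G \otimes_{O_F} O_{F_{\mathfrak{p}}}$ is automatically maximal in the split algebra $H_{\mathfrak{p}}$, while at primes above $2$ (resp.~$3$) the Hurwitz-type local description made possible by the square-root hypothesis is again the unique local maximal order. Hence $\calO_G$ is globally maximal, so $\beth(\calO_G) = 1$, and combined with the $H^\times$-conjugacy-uniqueness of $\calO_G$ this yields exactly one conjugacy class of maximal order containing a minimal $G$-order. Separating $A_4$ from $S_4$, $D_2^\dagger$ from $D_4$, and $D_3^\dagger$ from $D_6$ then follows directly from step (a): the relevant square condition is equivalent to realizability of the larger group, and its negation confines the reduced unit group to the smaller one.

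\textbf{Main obstacle.} The most delicate point is translating the internal structural requirement on $\calO_G$ into the Hilbert-symbol and square conditions on $H$ and $\varepsilon$, especially in case \eqref{eq:D_2I} where the ramification condition $\left(\frac{F}{2}\right) = 0$ is needed to guarantee that $\qalg{-1}{-1}{F}$ is actually split at $\mathfrak{p} \mid 2$, allowing the $D_2^\dagger$ minimal order to embed locally. Tracking how the ramification of $H$ at primes above $2$ and $3$ interacts with the square conditions on $\varepsilon$ is the principal bookkeeping challenge and the place where the cases of the theorem genuinely diverge.
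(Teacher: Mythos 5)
Your overall framework---fix the minimal $G$-order $\calO_G$ (unique up to conjugacy), count conjugacy classes of maximal overorders, then subtract those whose reduced unit group is strictly larger---is indeed the paper's strategy, and your step (a) correctly reproduces how the existence of $\calO_G$ pins down $H$ and the square conditions on $\varepsilon$. However, step (b) contains a fatal error: the claim that $\calO_G$ is globally maximal, hence $\beth(\calO_G)=1$, is false for four of the six groups. By Table~\ref{tab:minimal-G-orders}, $\grd(\calO_{12})=\grd(\scrO_4)=2O_F$, $\grd(\scrO_2^\dagger)=4O_F$, and $\grd(\scrO_3^\dagger)=3O_F$, whereas in the relevant cases $H$ splits at the primes dividing these discriminants (for instance $\qalg{-1}{-1}{F}$ splits at the dyadic prime whenever $2$ is inert or ramified in $F$), so $\grd(H)$ is a proper divisor of $\grd(\calO_G)$ and $\calO_G$ cannot be maximal. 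Only $\scrO_6$ and $\O_{24}$ are maximal (Corollary~\ref{cor:mini-D6-S4-maximal}), which is exactly why the paper's computations of $t(S_4)$ and $t(D_6)$ are short while the others are not. The real content of the paper's proof is the analysis of $\grS(\calO_G)$ when it has several elements: for $A_4$ with $2$ inert, $\calO_{12}$ is an Eichler order of level $2O_F$ lying in two maximal orders, shown conjugate by $1+i$ because that element is odd at the dyadic prime (Proposition~\ref{prop:type-num-A4}); for $D_4$, one needs $\grS(\scrO_4)=\{\O_{24},\O_8\}$ via Bass-order theory (Lemma~\ref{lem:bass-order-eichler-inv=0}); for $D_2^\dagger$ with $2$ ramified, $\aleph(\scrO_2^\dagger)=4$, the overorder tree is a star, and $\beth(\scrO_2^\dagger)=2$ follows only after showing $\xi=(1+i+j+k)/2$ permutes the three exterior vertices transitively. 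None of these counts equal $1$, and none of them follow from your localization argument.

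A second, related error is your reading of the condition $\left(\frac{F}{2}\right)=0$ in \eqref{eq:D_2I}. It is not needed to make $\qalg{-1}{-1}{F}$ split at the dyadic prime---that holds whenever $2$ is inert \emph{or} ramified in $F$. The actual dichotomy (paper's proof of Proposition~\ref{prop:type-num-D2I}) is structural: when $2$ is inert, the unique minimal overorder of $\scrO_2^\dagger$ coincides with $\calO_{12}$, so \emph{every} maximal order containing $\scrO_2^\dagger$ contains $\calO_{12}$ and has reduced unit group at least $A_4$, forcing $t(D_2^\dagger)=0$; when $2$ splits, $H$ is ramified at both dyadic primes and $\calO_{12}$ is the unique maximal overorder, again giving $0$. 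Only when $2$ is ramified does a second conjugacy class of maximal overorders (represented by $\O_4^\dagger$) exist, and it has reduced unit group exactly $D_2^\dagger$ precisely when $2\varepsilon\notin F^{\times 2}$. In the same vein, your final sentence of step (b) ("its negation confines the reduced unit group to the smaller one") handles only one direction: to get $t(A_4)=0$ when $2\varepsilon\in F^{\times 2}$, or $t(D_2^\dagger)=0$ when $2$ is inert, one must show that every maximal overorder of the smaller minimal order is forced to pick up the extra units, which is precisely the overorder-structure argument your proposal omits.
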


\begin{thm}\label{1.3}
  Suppose that $d\ge 6$. For $n\in \{2,3\}$, let 
$\scrO_n^\ddagger$ be a minimal $D_n^\ddagger$-order, which is 
unique up to $O_F$-isomorphism if exists.

If $\Nm_{F/\Q}(\varepsilon)=1$ and $H\simeq \qalg{-1}{-\varepsilon}{F}$, then 
\begin{equation}\label{eq:D2II}
t(D_2^\ddagger)+t(D_4)+t(S_4)+t(D_6)=\beth(\scrO_2^\ddagger).
\end{equation}

If $\Nm_{F/\Q}(\varepsilon)=1$ and $H\simeq \qalg{-\varepsilon}{-3}{F}$, then 
\begin{equation}\label{eq:D3II}
t(D_3^\ddagger)+t(S_4)+t(D_6)=\beth(\scrO_3^\ddagger).
\end{equation}
The numbers $\beth(\scrO_n^\ddagger)$ for $n=2,3$ are determined
explicitly in Propositions~\ref{prop:scroII-unified-proof} and
\ref{prop:scrO3-typeII}.  In all the remaining cases, minimal $D_n^\ddagger$-orders do not exist and $t(D_n^\ddagger)=0$.
\end{thm}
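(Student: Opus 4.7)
The plan is to split the argument into two logically independent tasks: first, to determine when a minimal $D_n^\ddagger$-order exists in $H$, and second, assuming existence, to derive the counting identity by classifying maximal orders that contain $\scrO_n^\ddagger$ according to the isomorphism type of their reduced unit group.

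\emph{Existence and nonexistence.} Unpacking Definition~\ref{def-of-kinds}, the $\ddagger$-type of $D_n$ is distinguished by the requirement that a certain involution in the group has reduced norm lying in the \emph{nontrivial} class of totally positive units in $O_F^{\times}/O_F^{\times 2}$. Since every reduced norm of an element in $H^\times$ is totally positive, and since $O_F^{\times}$ is generated by $\pm 1$ and $\varepsilon$, such a nonsquare totally positive unit exists exactly when $\Nm_{F/\Q}(\varepsilon)=1$, in which case $\varepsilon$ itself is automatically totally positive and represents the unique nontrivial class. When $\Nm_{F/\Q}(\varepsilon)=-1$, the $\ddagger$-case cannot be realized, so no minimal $D_n^\ddagger$-order exists and $t(D_n^\ddagger)=0$. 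Assuming $\Nm_{F/\Q}(\varepsilon)=1$, I would construct $\scrO_n^\ddagger$ explicitly: for $n=2$ via generators $i,j$ with $i^2=-1$, $j^2=-\varepsilon$, $ij=-ji$, and for $n=3$ by adjoining a primitive sixth root of unity to the quadratic subfield $F(\sqrt{-\varepsilon})$ inside a suitable quaternion algebra. Matching invariants then forces $H\simeq \qalg{-1}{-\varepsilon}{F}$ or $\qalg{-\varepsilon}{-3}{F}$ respectively, which accounts for the ``remaining cases'' statement. Uniqueness of $\scrO_n^\ddagger$ up to $O_F$-isomorphism is exactly Proposition~\ref{prop:uniqueness-finit-subgp}.

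\emph{Subgroup analysis.} Under the existence hypothesis, every maximal order $\O\supseteq \scrO_n^\ddagger$ satisfies $\O^\bs\supseteq D_n^\ddagger$ where the containment is of $\ddagger$-type, meaning the distinguished involutions still have reduced norm outside $F^{\times 2}$. I would now scan the list $\calG$ of possible reduced unit groups and, using the explicit quaternion-algebra realization of each one together with the reduced-norm data, determine which finite subgroups of $\SO_3(\R)$ can accommodate a $D_n^\ddagger$-subgroup. For $n=2$, the admissible supergroups are precisely $D_2^\ddagger,D_4,S_4,D_6$: the exclusion of $D_2^\dagger$ and $A_4$ follows from Theorem~\ref{1.2}, where both arise only inside $\qalg{-1}{-1}{F}$, in which the standard involutions have reduced norm $1\in F^{\times 2}$. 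For $n=3$, the admissible supergroups are precisely $D_3^\ddagger, S_4, D_6$, since $D_3^\dagger$ lives in $\qalg{-1}{-3}{F}$, an algebra excluded by the hypothesis $H\simeq\qalg{-\varepsilon}{-3}{F}$.

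\emph{Bijection of conjugacy classes.} Combining the two directions gives the identity. Conjugacy classes of maximal orders containing the fixed $\scrO_n^\ddagger$ partition according to the isomorphism type of $\O^\bs$, which by the previous step lies in the listed set. Conversely, any maximal order $\O$ with $\O^\bs$ in the listed set contains some minimal $D_n^\ddagger$-order $\scrO'$; by uniqueness of minimal $G$-orders up to $O_F$-isomorphism together with Skolem--Noether, $\scrO'$ is $H^\times$-conjugate to $\scrO_n^\ddagger$, so $\O$ is conjugate to a maximal order containing $\scrO_n^\ddagger$ itself. Counting on both sides yields \eqref{eq:D2II} and \eqref{eq:D3II}. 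The explicit values of $\beth(\scrO_n^\ddagger)$ are deferred to Propositions~\ref{prop:scroII-unified-proof} and \ref{prop:scrO3-typeII}.

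The main obstacle will be the subgroup analysis of the second paragraph: verifying case by case that $D_4, S_4, D_6$ genuinely contain $D_n^\ddagger$-subgroups of the correct reduced-norm type, while $D_2^\dagger, A_4$ (for $n=2$) and $D_3^\dagger$ (for $n=3$) do not. This reduces to explicit computations of reduced norms on order-$2$ and order-$3$ elements in the concrete models $\qalg{-1}{-\varepsilon}{F}$ and $\qalg{-\varepsilon}{-3}{F}$, and must be coordinated with the parallel calculations that went into Theorem~\ref{1.2}.
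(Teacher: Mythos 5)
Your proposal follows essentially the same route as the paper: nonexistence via total positivity of reduced norms, existence pinning down $H$ via the explicit minimal order, and the counting identity obtained from the list of admissible reduced unit groups together with uniqueness of minimal $G$-orders plus Skolem--Noether. However, there is a gap at the final counting step. What you prove (modulo the subgroup analysis) is that the map from $\calN(\scrO_n^\ddagger)$-orbits on $\grS(\scrO_n^\ddagger)$ to $H^\times$-conjugacy classes of maximal orders with admissible unit group is \emph{surjective}; but $\beth(\scrO_n^\ddagger)$, as it is actually computed in Propositions~\ref{prop:scroII-unified-proof} and \ref{prop:scrO3-typeII} that you defer to, is the number of normalizer orbits, and a priori two orders in $\grS(\scrO_n^\ddagger)$ could be $H^\times$-conjugate without being $\calN(\scrO_n^\ddagger)$-conjugate. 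Without excluding this, "counting on both sides" only yields $t(D_2^\ddagger)+t(D_4)+t(S_4)+t(D_6)\le \beth(\scrO_2^\ddagger)$, and similarly for $n=3$. The paper closes exactly this hole with Lemma~\ref{lem:conj-max-orders}: if $\O=y\O'y^{-1}$ with $\O,\O'\in\grS(\calO)$, then $\calO^\bs$ and $y\calO^\bs y^{-1}$ are strictly isomorphic noncyclic subgroups of $\O^\bs$, hence conjugate by some $u\in\O^\times$ by Lemma~\ref{lem:strict-isom-subgp-conj} (itself a nontrivial case-by-case group-theoretic check), so that $x=y^{-1}u\in\calN(\calO)$ conjugates $\O$ to $\O'$. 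This idea is absent from your outline and is not automatic.

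A second, more local flaw: your exclusion of $A_4$ and $D_2^\dagger$ (for $n=2$) on the grounds that "both arise only inside $\qalg{-1}{-1}{F}$" proves nothing, because the hypothesis $H\simeq\qalg{-1}{-\varepsilon}{F}$ does \emph{not} rule out $H\simeq\qalg{-1}{-1}{F}$: the two algebras coincide precisely when $2\varepsilon\in F^{\times 2}$, which is exactly the situation in which $D_4$ and $S_4$ (groups that likewise occur only in $\qalg{-1}{-1}{F}$) genuinely appear and must remain in your list. Applied consistently, your algebra-based reasoning would wrongly delete $t(D_4)$ and $t(S_4)$ from \eqref{eq:D2II}. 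The correct exclusion is purely norm-theoretic, as in Remark~\ref{rem:elements-of-order-2}: the reduced norm $\Nr\colon G\to O_{F,+}^\times/O_F^{\times 2}$ is a homomorphism, and $A_4$ has no subgroup of index $2$, so $\Nr$ is trivial on any copy of $A_4$ and every involution there has reduced norm in $F^{\times 2}$; the same holds for $D_2^\dagger$ by definition. Hence neither group can contain a subgroup strictly isomorphic to $D_2^\ddagger$, whereas $(\scrO_2^\ddagger)^\bs$ is such a subgroup of $\O^\bs$ for every $\O\in\grS(\scrO_2^\ddagger)$. Your closing paragraph does gesture at exactly this kind of reduced-norm computation, so the repair is local, but as written the step fails.
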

Moreover, for each finite noncyclic group $G\in \calG$, if a maximal $G$-order (maximal $O_F$-order
with reduced unit group $G$) exists, then we write down explicitly a
representative $\O'$ in each $H^\times$-conjugacy class and calculate its normalizer $\calN(\O')$.     This leads to explicit formulas of $h(G)$ for noncyclic groups 
$G\in\calG$ in Section~\ref{sec:maximal-orders}
and Section~\ref{sec:maximal-orders-part2}.


The strategy for computing $h(C_n)$ with $n>1$ is described in
Section~\ref{sec:general-strategy-cyclic}. The main idea is to apply
the trace formula (\ref{eq:41}) (see
\cite[Theorem~III.5.2]{vigneras}).  According to (\ref{eq:42}), the
computations depend on the classification of maximal orders with
\emph{noncyclic} reduced unit groups. Lastly, $h(C_1)$ is computed
using the \emph{mass formula} (\ref{eq:114}) and the knowledge of
$h(G)$ for every nontrivial group $G$.  In conclusion, we obtain the
following result:

\begin{pnt}\label{1.4}
 Let $F=\Q(\sqrt{d})$ be a real quadratic field and $H$ be an
 arbitrary totally definite quaternion $F$-algebra. 
We have  a complete recipe for writing down $h(G)$ for each finite
group $G$. 
\end{pnt}

In fact, the only obstacle between us and a complete formula for
$h(G)$ is the overwhelming number of cases that the problem naturally
divides into, rendering any unified formula too cumbersome
and unwieldy.  However, for any class of quadratic real fields that
one has a good grasp on the fundamental units, the deduction of explicit
formulas for $h(G)$ based on our recipe becomes entirely routine. One such example is 
when $H=H_{\infty_1,\infty_2}$ and
$d=p$ is a prime; see Theorems~\ref{thm:p=1mod4typenum} and \ref{thm:p=3mod4typenum}. 
This extends a result of Hashimoto
\cite{Hashimoto-twisted-tr-formula} by a different method. 


\begin{rem}
  We emphasize that same as Vig\'eras's explicit formula \cite[Theorem
  3.1]{vigneras:ens}, our refined formula depends not only on the
  square-free integer $d$ defining $F$ but also on a good
  understanding of the fundamental unit $\varepsilon\in
  O_F^\times$.
  Our recipe for $h(G)$ refines the explicit formula for
  $h(H)$ given by Vign\'eras, which in turn is an application of the more general
  Eichler's class number formula \eqref{eq:168} \cite[Corollaire~V.2.5]{vigneras}. However, the equality
  $h(H)=\sum_{G\in \calG}h(G)$ does \emph{not} provide an alternative
  approach to her formula. Indeed,  to compute $h(C_n)$ with
  $n\in \{1, 2, 3, 4, 6\}$, we have to work out all the information
  required by Eichler's formula, so summing all $h(G)$
  together merely reproduces Vign\'eras's result along the same
  route. See Section~\ref{sect:mass-formula} for more details. 
\end{rem}  



For the following two theorems,  let $p\in \bbN$ be a prime, $F=\Q(\sqrt{p})$, and
$H=H_{\infty_1, \infty_2}$ be the totally definite quaternion $F$-algebra
unramified at all the finite places of $F$. We write $\zeta_F(s)$ for
the Dedekind zeta function of $F$, whose special value at $s=-1$ can be
calculated using Siegel's formula \cite[Table~2,
p.~70]{Zagier-1976-zeta}: 
\begin{equation}
  \label{eq:132}
\zeta_F(-1)=\frac{1}{60}\sum_{\substack{b^2+4ac=\grd_F\\ a,c>0}} a,
\end{equation}
where $\grd_F$ denotes the discriminant of
$F$, and $a,b,c\in \Z$. 
For simplicity, denote the class number $h(\Q(\sqrt{m}))$ by $h(m)$ for any square-free
$m\in \Z$. We first recall a result
of   Hashimoto  \cite{Hashimoto-twisted-tr-formula}. 
\begin{thm}[Hashimoto]
\label{thm:p=1mod4typenum} 
  If $p\equiv
  1\pmod 4$ and $p>5$, then 
 \begin{align*}
   t(C_1) &= \frac{\zeta_F(-1)}{2}-\frac{h(-p)}{8}-\frac{h(-3p)}{12}-\frac{1}{4}\Lsymb{p}{3}-\frac{1}{4}\Lsymb{2}{p}+\frac{1}{2}, \\
   t(C_2) &= \frac{h(-p)}{4}+\frac{1}{2}\Lsymb{p}{3}+\frac{1}{4}\left(\frac{2}{p}\right)-\frac{3}{4}, \\
   t(C_3) &= \frac{h(-3p)}{4}+\frac{1}{4}\Lsymb{p}{3}+\frac{1}{2}\left(\frac{2}{p}\right)-\frac{3}{4}, \\
   t(D_3) &= \frac{1}{2}\left( 1-\Lsymb{p}{3} \right), \\
   t(A_4) &= \frac{1}{2}\left( 1-\Lsymb{2}{p} \right).
 \end{align*}
\end{thm}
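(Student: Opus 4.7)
The plan is to pass from the type number $t(H,G)$ to the refined class number $h(H,G)$ using Proposition~\ref{1.1}. Both of its hypotheses hold for $F=\Q(\sqrt{p})$: the degree $[F:\Q]=2$ is even, and $h(F)$ is odd by genus theory (since $p$ is prime). Hence $t(H,G)=h(H,G)/h(F)$ for every finite group $G$, and the problem reduces to computing $h(H,G)$ one $G$ at a time. Because $p\equiv 1\pmod 4$ is prime, the fundamental unit $\varepsilon$ of $F$ satisfies $\Nm_{F/\Q}(\varepsilon)=-1$, so no minimal $D_n^\ddagger$-order exists. By~\eqref{eq:calG} the only noncyclic reduced unit groups that can occur in $H$ are contained in $\{D_2^\dagger,D_3^\dagger,D_4,S_4,A_4,D_6\}$.

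For these noncyclic groups I would first identify $H_{\infty_1,\infty_2}$ with $\qalg{-1}{-1}{F}$ or $\qalg{-1}{-3}{F}$ by computing Hilbert symbols at the primes of $F$ above $2$ and $3$; the conditions translate into the Legendre symbols $\Lsymb{2}{p}$ and $\Lsymb{p}{3}$. Theorem~\ref{1.2} then determines each $t(G)$ in closed form. Since $\Nm_{F/\Q}(\varepsilon)=-1$, neither $2\varepsilon$ nor $3\varepsilon$ is totally positive, so the conditions $2\varepsilon\in F^{\times 2}$ and $3\varepsilon\in F^{\times 2}$ both fail; this forces $t(D_4)=t(S_4)=t(D_6)=0$ and $t(D_2^\dagger)=0$, and leaves $t(A_4)=\tfrac{1}{2}(1-\Lsymb{2}{p})$ and $t(D_3)=t(D_3^\dagger)=\tfrac{1}{2}(1-\Lsymb{p}{3})$ as stated.

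For $t(C_2)$ and $t(C_3)$ I would apply Eichler's trace formula: $h(H,C_n)$ is the weighted count of optimal embeddings of $O_F[\zeta_{2n}]$ into a maximal order of $H$, minus the contributions from maximal orders whose reduced unit group is strictly larger than $C_n$, i.e.\ the noncyclic ones already computed. The global embedding count is controlled by the class number of the relative CM extension $F(\zeta_{2n})/F$, which specialises to factors $h(-p)/4$ (for $n=2$) and $h(-3p)/4$ (for $n=3$) after accounting for units and the norm map from $F(\zeta_{2n})$ to $F$. The local optimal embedding numbers at the primes above $2$ and $3$ supply the remaining Legendre-symbol corrections.

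Finally for $t(C_1)$, I would invoke Eichler's mass formula $\Mass(H)=2h(F)\zeta_F(-1)$ and use the knowledge of $t(G)$ for $G\neq C_1$ to solve for $t(C_1)$ via $\sum_G t(G)/|G|=\Mass(H)/h(F)$; Siegel's formula~\eqref{eq:132} supplies the explicit value of $\zeta_F(-1)$, and the subtraction of the other $t(G)/|G|$ produces the terms $-h(-p)/8$ and $-h(-3p)/12$. The main obstacle is the bookkeeping: verifying that the local embedding counts at the special primes $2$ and $3$, combined with the noncyclic type numbers from Theorem~\ref{1.2}, assemble precisely into the constants and Legendre-symbol corrections appearing in the statement. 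This executes the general recipe summarised in~\ref{1.4} and reproduces Hashimoto's theorem without invoking his twisted trace formula.
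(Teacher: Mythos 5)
First, a structural point: the paper itself offers no proof of this statement. Theorem~\ref{thm:p=1mod4typenum} is quoted from Hashimoto, who proved it with his twisted trace formula; the paper only executes its own recipe for the complementary case $p\equiv 3\pmod 4$ (Section~\ref{sec:d=p}). So your plan --- running the recipe of Sections~\ref{sec:general-stra}--\ref{sec:quadratic-orders} for $p\equiv 1\pmod 4$ --- is a genuinely different route from the paper's treatment of this theorem, but it is precisely the route the paper takes for Theorem~\ref{thm:p=3mod4typenum}, and in outline it works: Proposition~\ref{1.1} applies; $\Nm_{F/\Q}(\varepsilon)=-1$ rules out the $D_n^\ddagger$ and forces $2\varepsilon,3\varepsilon\notin F^{\times 2}$, hence $t(D_4)=t(S_4)=t(D_6)=0$ (and also $t(C_4)=t(C_6)=0$, which you never address: by Table~\ref{tab:rep-elements} no CM order has reduced unit group $C_4$ or $C_6$ here); $t(A_4)$ and $t(D_3)=t(D_3^\dagger)$ follow from Theorem~\ref{1.2} plus the Hilbert-symbol computation at $2$ and $3$; and the trace formula \eqref{eq:42} with $\scrB_2=\{O_{F(\sqrt{-1})}\}$, $\scrB_3=\{O_{F(\sqrt{-3})}\}$ (the $d\equiv 1\pmod 4$ rows of Tables~\ref{tab:orders-K1} and \ref{tab:orders-K3}) yields $t(C_2),t(C_3)$, after which the mass formula gives $t(C_1)$.

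However, three of your steps are wrong as stated. (i) The mass constant: by \eqref{eq:113}, with $[F:\Q]=2$ and $\grd(H)=O_F$, one has $\Mass(\O)=\tfrac{1}{2}h(F)\zeta_F(-1)$, not $2h(F)\zeta_F(-1)$; plugging your value into your (correct) identity $\sum_G t(G)/\abs{G}=\Mass(\O)/h(F)$ would make the leading term of $t(C_1)$ equal to $2\zeta_F(-1)$ rather than $\zeta_F(-1)/2$, i.e.\ an answer off by a factor of $4$. (ii) Your justification of $t(D_2^\dagger)=0$ is backwards: by \eqref{eq:D_2I}, $2\varepsilon\notin F^{\times 2}$ is one of the conditions \emph{required} for $t(D_2^\dagger)=1$, so the failure of $2\varepsilon\in F^{\times 2}$ cannot force the vanishing; what actually kills $D_2^\dagger$ is that $2$ is unramified in $F$ (the discriminant $p$ is odd), i.e.\ the condition $\Lsymb{F}{2}=0$ fails. (iii) The Legendre-symbol corrections in $t(C_2)$ and $t(C_3)$ do not come from local optimal embedding numbers: since $\grd(H)=O_F$, \eqref{eq:111} gives $m_\grp(B)=1$ for every $\grp$, so the right-hand side of \eqref{eq:41} is exactly $h(B)$. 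The symbols $\Lsymb{2}{p}$ and $\Lsymb{p}{3}$ enter only through the subtracted terms $m(B,\O',\O'^{\times})$ for $\dbr{\O'}\in\Tp^\natural(H)$: these counts ($1$ or $2$, read off from Proposition~\ref{prop:explicit-num-opt-embed} once one checks $B_C\simeq B$) are present or absent according to whether the $A_4$-order, resp.\ $D_3^\dagger$-order, exists in $H$, which happens with indicator $\tfrac{1}{2}\left(1-\Lsymb{2}{p}\right)$, resp.\ $\tfrac{1}{2}\left(1-\Lsymb{p}{3}\right)$; one also needs $\calN(\O')=F^\times\O'^\times$ for these orders, supplied by \eqref{eq:54} (case $d\equiv 5\pmod 8$) and Proposition~\ref{prop:type-num-D3I} (case $d\equiv 2\pmod 3$). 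With these repairs, your outline does assemble into the stated formulas.
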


We get the results for the remaining primes $p$ as a direct application of
our recipe.
\begin{thm}\label{thm:p=3mod4typenum}
 If $p\equiv 3\pmod{4}$ and $p>5$, then
 \begin{align*}
   t(C_1) &= \frac{\zeta_F(-1)}{2}+\left(-7+3\left(\frac{2}{p}\right)\right)\frac{h(-p)}{8}-\frac{h(-2p)}{4}-\frac{h(-3p)}{12}+\frac{3}{2},  \\
   t(C_2) &= \left(2-\left(\frac{2}{p}\right)\right)\frac{h(-p)}{2}+\frac{h(-2p)}{2}-\frac{5}{2}, \\
   t(C_3) &= \frac{h(-3p)}{4}-1, \\
   t(C_4) &= \left(3-\left(\frac{2}{p}\right)\right)\frac{h(-p)}{2}-1, \\
   t(D_3) &= 1, \\
   t(D_4) &= 1, \\
   t(S_4) &= 1.
 \end{align*}
For $p=2,3$ and $5$, we have
\begin{center}
\begin{tabular}{|c|c|c|c|}
\hline
  $p$ &    $2$    &      $3$       &    $5$   \\ \hline
  $t(H)$ & $1$ & $2$ & $1$\\
  \hline
  $t(G)$ & $t(S_4)=1$   & $t(S_4)=t(D_{12})=1$ &   $t(A_5)=1$ \\
\hline
\end{tabular}
\end{center}
\end{thm}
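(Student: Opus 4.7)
The plan is to mechanically apply the recipe assembled in the preceding sections, with the arithmetic of $F=\Q(\sqrt p)$ for a prime $p\equiv 3\pmod 4$ with $p>5$ providing all the required local data. Three standard facts anchor the computation: $h(F)$ is odd by genus theory, $\Nm_{F/\Q}(\varepsilon)=+1$, and the prime $2$ ramifies as $(2)=\grp_2^2$. Since $h(F)$ is odd and $[\grp_2]$ has order dividing $2$ in $\Cl(F)$, the ideal $\grp_2=(v)$ is principal; because $2$ is not a square in $F$ and the totally positive units modulo squares are $\{1,\varepsilon\}$, we must have $v^2=2\varepsilon$, so $2\varepsilon\in F^{\times 2}$. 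A place-by-place computation of local invariants of $\qalg{-1}{-1}{\Q}\otimes_\Q F$ (at $\grp_2$ the local degree is $2$, doubling the invariant to $0\in\Q/\Z$; at the two archimedean places it remains $\tfrac12$; all others are unramified) yields $H\simeq\qalg{-1}{-1}{F}$.

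With these inputs Theorem~\ref{1.2} immediately gives $t(D_4)=t(S_4)=1$ and $t(A_4)=t(D_2^\dagger)=0$. For the $D_3$-family the argument bifurcates: if $p\equiv 2\pmod 3$ then $3$ is inert in $F$, so $H\simeq\qalg{-1}{-3}{F}$ and $3\varepsilon\notin F^{\times 2}$ (no principal ideal of absolute norm $3$ exists), yielding $t(D_3^\dagger)=1$ and $t(D_6)=0$ via~\eqref{eq:D_3I}; if $p\equiv 1\pmod 3$ then $3$ splits, $H\not\simeq\qalg{-1}{-3}{F}$, but a parallel invariant computation gives $H\simeq\qalg{-\varepsilon}{-3}{F}$, and substituting the value of $\beth(\scrO_3^\ddagger)$ from Proposition~\ref{prop:scrO3-typeII} into~\eqref{eq:D3II} produces $t(D_3^\ddagger)=1$ and $t(D_6)=0$. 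An analogous check using~\eqref{eq:D2II} and Proposition~\ref{prop:scroII-unified-proof} gives $t(D_2^\ddagger)=0$. Either way the noncyclic contributions sum to $t(D_3)+t(D_4)+t(S_4)=3$ as claimed.

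For the cyclic numbers, Proposition~\ref{1.1} (applicable since $[F:\Q]=2$ is even, $h(F)$ is odd, and $H$ is unramified at every finite place) reduces $t(C_n)$ to $h(C_n)/h(F)$, which is in turn extracted from the Eichler trace formula~\eqref{eq:41}. The optimal embeddings to count are from $O_F$-orders inside the totally imaginary quadratic extensions $F(\sqrt{-1})$, $F(\sqrt{-3})$ and $F(\sqrt{-2})$; standard biquadratic-CM relations convert their class numbers into $h(-p)$, $h(-3p)$, $h(-2p)$, while the local behaviour at $2$ and $3$ produces the Legendre-symbol factors $\Lsymb{2}{p}$ appearing in the formulas. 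The correction term~\eqref{eq:42} subtracts embeddings factoring through orders with strictly larger reduced unit group; the constants appearing (e.g.\ the $-1$ in $t(C_3)$ and $t(C_4)$) come from the noncyclic totals already computed. Finally, $t(C_1)$ is read off from the mass formula~\eqref{eq:114}, $\Mass(H)=h(F)\zeta_F(-1)/2$, after subtracting the weighted contributions $\sum_{G\neq C_1}t(G)/|G|$.

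The three small primes $p\in\{2,3,5\}$ fall outside the hypothesis $d\ge 6$ of Theorems~\ref{1.2} and~\ref{1.3} and are dispatched by the direct analysis of Section~\ref{sec:minimal-g-orders}: one exhibits in each case the complete list of maximal orders, whose reduced unit groups turn out to be $S_4$ for $p=2$, $S_4$ and $D_{12}$ for $p=3$, and $A_5$ for $p=5$. The principal difficulty of the whole argument lies not in any individual step but in the case bookkeeping---first in identifying $H$ inside $\qalg{-\varepsilon}{-3}{F}$ versus $\qalg{-1}{-3}{F}$ according to $p\bmod 3$, and second in showing that the formulas of Propositions~\ref{prop:scroII-unified-proof} and~\ref{prop:scrO3-typeII} for $\beth(\scrO_n^\ddagger)$ collapse to the single-line values stated; once this is unified, the remaining evaluations are routine applications of Vign\'eras's class number formula and the trace formula.
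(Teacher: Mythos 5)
Your proposal is correct and follows essentially the same path as the paper's own proof in Section~\ref{sec:d=p}: identify $H\simeq\qalg{-1}{-1}{F}$ using $2\varepsilon\in F^{\times 2}$, read off the noncyclic values from Theorem~\ref{1.2} together with Propositions~\ref{prop:scroII-unified-proof} and~\ref{prop:scrO3-typeII} via \eqref{eq:D2II} and \eqref{eq:D3II}, compute $t(C_n)$ for $n=2,3,4$ from the trace formula \eqref{eq:42} with the CM-orders of Section~\ref{sec:quadratic-orders}, obtain $t(C_1)$ from the mass formula \eqref{eq:114}, and dispatch $p\in\{2,3,5\}$ by direct inspection of the maximal orders. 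The one (harmless) deviation is your self-contained deduction of $2\varepsilon\in F^{\times 2}$ from the oddness of $h(F)$ and the principality of the dyadic prime, where the paper cites the literature; note that this same input is what the paper uses to pin down the parity of $a$ in $\varepsilon=a+b\sqrt{p}$ and the congruence $\varepsilon\equiv -1\pmod{3O_F}$, which are exactly what is needed to select the correct rows of the two $\beth$-tables---a step your sketch defers to ``case bookkeeping'' but which must be carried out for the argument to close.
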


Lastly, we apply the above results to the study of superspecial
abelian surfaces \cite[Definition~1.7, Ch.1]{li-oort}. Indeed, one of
our motivations is to count the number of certain superspecial abelian
surfaces with a fixed reduced automorphism group $G$. 
This extends results of our earlier works \cite{xue-yang-yu:sp_as2,xue-yang-yu:sp_as,xue-yang-yu:num_inv,xue-yang-yu:ECNF}
where we compute explicitly the number of these abelian surfaces over finite fields.  
We also construct superspecial abelian surfaces $X$ over some field $K$ of
characteristic $p$ with endomorphism algebra
$\End^0(X)=\Q(\sqrt{p})$, provided that
$p \not\equiv 1 \pmod{24}$. The construction makes use of results of
Florian Pop \cite{Pop-1996} on embedding problems for large fields.

This paper is organized as follows. In Section~\ref{sec:prelimiary}, we
recall some preliminary results on orders in quaternion algebras. The general strategy for
computing $h(G)$ for totally definite quaternion algebras over
arbitrary totally real fields is explained in
Section~\ref{sec:general-stra}. We restrict ourselves to the case of
quadratic real fields $F=\Q(\sqrt{d})$ starting from
Section~\ref{sec:minimal-g-orders}, where we introduce the concept of
minimal $G$-orders for finite noncyclic groups $G$. 
Section~\ref{sec:maximal-orders} and
Section~\ref{sec:maximal-orders-part2} contain the case-by-case study
of the maximal orders containing the minimal $G$-orders for each
$G$, and the results are summarized in Theorems~\ref{1.2} and
\ref{1.3}, respectively. In Section~\ref{sec:quadratic-orders}, we
classify the quadratic $O_F$-orders with nontrivial reduced unit groups in CM-extensions of $F$. 
The formulas in Theorem~\ref{thm:p=3mod4typenum} are calculated in
Section~\ref{sec:d=p}. We conclude with two applications to
superspecial abelian surfaces in Section~\ref{sec:Sp}.

\section{Preliminaries on orders in quaternion algebras}
\label{sec:prelimiary}
Throughout this section, $F$ is either a global field or a
non-archimedean local field,  and $H$ is a quaternion $F$-algebra. The algebra $H$
admits a canonical involution $x\mapsto \bar{x}$ such that
$\Tr(x)=x+\bar{x}$ and $\Nr(x)=x\bar{x}$ are respectively the reduced trace and
reduced norm of $x\in H$.  We
always assume that $\fchar(F)\neq 2$. If $H=\qalg{a}{b}{F}$ for
$a,b\in F^\times$, then $\{1, i, j, k\}$ denotes the standard
$F$-basis of $H$ subjected to the following multiplication rules
\begin{equation}
  \label{eq:12}
k=ij,\quad   i^2=a,\quad j^2=b, \quad \text{and}\quad ij=-ji. 
\end{equation}
When $F$ is local, $\qalg{a}{b}{F}$ splits over $F$ if and only if the Hilbert
symbol $(a,b)=1$. Often, $H$ is also presented as $K+Kx$, where
$K$ is a separable $F$-algebra of dimension $2$, and $x\in H$ is
an element such that
  \begin{equation}
    \label{eq:33}
x^2=c\in F^\times, \quad \text{and}\quad \forall\, y\in
K,\ xy=\bar{y}x. 
  \end{equation}
  Here $y\mapsto \bar{y}$ is the unique nontrivial $F$-automorphism of
  $K$. Following \cite[Section~I.1]{vigneras}, we write $H=\{K, c\}$
  for the above presentation.



If $F$ is local, then we fix a uniformizer $\pi\in F^\times$, and
write $\nu: F^\times\twoheadrightarrow \Z$ for the
discrete valuation of $F$.  Denote by $O_F, \grp, \grk$ respectively
the valuation ring, the maximal ideal and the residue field of $\nu$.
If $F$ is global, we fix a finite set $S$ of places of $F$ containing
all the archimedean ones, and write $O_F$ for the ring of $S$-integers of
$F$. In fact,  when $F$ is a number field,  $S$ will always be the
set of archimedean valuations (unless specified otherwise), so $O_F$ is just the usual ring of
integers of $F$.
For a nonzero prime ideal $\grp\subset O_F$, its corresponding
discrete valuation is denoted by $\nu_\grp$, and the $\grp$-adic
completion of $F$ is denoted by $F_\grp$.

Let $\Lambda$ be an $O_F$-\textit{lattice} in $H$, i.e. a finitely
generated $O_F$-submodule that spans $H$ over $F$. Its dual lattice is
defined to be
$\Lambda^\vee:=\{x\in H\mid \Tr(x\Lambda)\subseteq O_F\}$.  An
\textit{order} in $H$ always refers to an $O_F$-lattice that is at the
same time a subring of $H$ containing $O_F$. For an order
$\calO\subset H$, any maximal order $\O$ containing $\calO$ is a
lattice intermediate to $\calO\subseteq \calO^\vee$. There are only
finitely many such lattices. 

The
\textit{discriminant} of an order $\calO\subset H$ is denoted by
$\grd(\calO)$. If $\calO$ is a free $O_F$-module with basis
$\{x_1, \ldots, x_4\}$ (e.g.~when $F$ is local), then $\grd(\calO)$ is
the square root of the $O_F$-ideal
$\det(\Tr(x_s\bar x_t)_{1\leq s, t\leq 4})O_F$.  If
$\calO'\subseteq \calO$ is a suborder of $\calO$, then
$\grd(\calO')=\chi(\calO, \calO')\grd(\calO)$, where
$\chi(\calO, \calO')$ is the ideal index of $\calO'\subseteq\calO$ as
in \cite[Section~III.1]{Serre_local}. For any finite extension $K/F$,
we have $\grd(\calO\otimes_{O_F}O_K)=\grd(\calO)O_K$.  An order
$\calO$ is maximal if and only if $\grd(\calO)$ coincides with the
discriminant $\grd(H)$ of $H$, defined as the product of all finite primes of $F$
that are ramified in $H$ \cite[Definition~2.7.4]{hyperbolic-3-mfld}.  When $F$ is global, $\calO_\grp$ denotes the
$\grp$-adic completion of $\calO$ at a finite prime $\grp$.

\begin{sect}\label{sect:parity-of-element}
  Let $\calN(\calO)=\{x\in H^\times\mid x\calO x^{-1}=\calO\}$ be the
  normalizer of $\calO$.  First suppose that $F$ is local. Following
  \cite[Section~2]{Brzezinski-crelle-1990}, we say that an element
  $x\in H^\times$ is \textit{even} (resp.~\textit{odd}) if
  $\nu(\Nr(x))$ is even (resp.~odd). The notion of parity applies to
  elements of $H^\times/F^\times$ as well. Clearly, any unit
  $u\in \calO^\times$ is even.  Let $\O$ be a maximal order in a split
  quaternion algebra $H\simeq M_2(F)$. Then $\O$ is
  $H^\times$-conjugate to $M_2(O_F)$, and
  $\calN(\O)=F^\times\O^\times$ by \cite[Section~II.2, p.~40]{vigneras}
  (see also \cite[Proposition~2.1]{Brzezinski-crelle-1990}). In
  particular, if $x$ is odd, then $x\not\in \calN(\O)$. 
  
  If $F$ is
  global and $\grp$ is a finite prime of $F$, an element
  $x\in H^\times$ is said to be even (resp.~odd) at $\grp$ if
  $\nu_\grp(\Nr(x))$ is even (resp.~odd).
\end{sect}

\begin{lem}\label{lem:max-order-normalizer}
  Let $\O$ be a maximal order in $H$, and $u\in \calN(\O)$ be an element
  of the normalizer of $\O$. If $\Nr(u)\in O_F$, then $u\in \O$; if further $\Nr(u)\in O_F^\times$, then $u\in \O^\times$. 
\end{lem}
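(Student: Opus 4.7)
The plan is to reduce to the local setting and then split according to whether $H$ is locally split or ramified. Since $u \in \O$ (resp.\ $u \in \O^\times$) can be checked prime by prime when $F$ is global, and since $u \in \calN(\O)$ implies $u \in \calN(\O_\grp)$ for every finite prime $\grp$, while the conditions $\Nr(u) \in O_F$ and $\Nr(u) \in O_F^\times$ also localize, we may assume $F$ is non-archimedean local with uniformizer $\pi$ and valuation $\nu$.

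Assume first that $H$ is a division algebra over the local field $F$. Then $\O$ is the unique maximal order of $H$, and there is a unique extension $w$ of $\nu$ to $H^\times$ given by $w(x) = \tfrac{1}{2}\nu(\Nr(x))$, for which $\O = \{x \in H : w(x) \ge 0\} = \{x \in H : \Nr(x) \in O_F\}$ and $\O^\times = \{x : \Nr(x) \in O_F^\times\}$. Both assertions of the lemma follow immediately from this description of $\O$ and $\O^\times$.

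Next assume $H \simeq M_2(F)$ is split. By the discussion in~\ref{sect:parity-of-element}, $\calN(\O) = F^\times \O^\times$, so we may write $u = c v$ with $c \in F^\times$ and $v \in \O^\times$. Taking reduced norms gives $\Nr(u) = c^2 \Nr(v)$, and $\Nr(v) \in O_F^\times$ since $v \in \O^\times$, so $\nu(\Nr(u)) = 2\nu(c)$. If $\Nr(u) \in O_F$, then $\nu(c) \ge 0$, so $c \in O_F$ and $u = cv \in \O$. If further $\Nr(u) \in O_F^\times$, then $\nu(c) = 0$, so $c \in O_F^\times$ and $u \in \O^\times$.

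I do not expect any real obstacle: the argument rests entirely on two standard facts already available in the excerpt, namely the characterization of $\O$ in the local division case via the reduced norm, and the description $\calN(\O) = F^\times \O^\times$ in the local split case cited in~\ref{sect:parity-of-element}. The only mildly delicate point is remembering that the reduced norm on a quaternion algebra is a quadratic (rather than linear) form, which forces the factor of $2$ in the valuation comparison but does not affect the conclusion.
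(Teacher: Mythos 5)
Your proof is correct and follows essentially the same route as the paper's: localize at each finite prime, use the reduced-norm characterization $\O_\grp=\{z\in H_\grp\mid \Nr(z)\in O_{F_\grp}\}$ of the unique maximal order at ramified primes, and use $\calN(\O_\grp)=F_\grp^\times\O_\grp^\times$ at split primes. The only difference is that you spell out the valuation computation $\nu(\Nr(u))=2\nu(c)$ in the split case, which the paper leaves implicit.
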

\begin{proof}
  Suppose that $\Nr(u)\in O_F$.  We show that
  $u\in \O_\grp$ for every finite prime $\grp$ of $F$. If $H$ is
  ramified at $\grp$, then $\O_\grp$ coincides with the unique maximal
  order $\{z\in H_\grp\mid \Nr(z)\in O_{F_\grp}\}$ in $H_\grp$; if
  $H$ splits at $\grp$, then $u\in \O_\grp$ because 
  $\calN(\O_\grp)=F_\grp^\times\O_\grp^\times$ as explained above. The second part of the lemma follows immediately. 
\end{proof}



\begin{sect}
  Given an order $\calO$ in $H$, we write $\grS(\calO)$ for the set of
  maximal orders containing $\calO$, and $\aleph(\calO)$ for the
  cardinality of $\grS(\calO)$. The quotient group
  $\calN(\calO)/\calO^\times$ acts naturally on $\grS(\calO)$ by
  conjugation. If $F$ is global and $\grp$ is a nonzero prime 
  ideal of $O_F$, we set 
  $\aleph_\grp(\calO):=\aleph(\calO_\grp)$. 
  Note that $\calO_\grp$ is maximal (forcing $\aleph_\grp(\calO)=1$) for almost all
$\grp$, and  
  \begin{equation}
    \label{eq:34}
\aleph(\calO)=\prod_\grp  \aleph_\grp(\calO), 
  \end{equation}
where the product runs over all finite primes of $F$.  If further $F$ is a number
  field and $p\in \bbN$ is an integral prime, then we set
  $\aleph_p(\calO)=\prod_{\grp\mid (pO_F)} \aleph_\grp(\calO)$. 

  Suppose that $F$ is local. If $H$ is division, then it has a unique
  maximal order, so $\aleph(\calO)=1$ for all $\calO$. Now suppose
  that $H=M_2(F)$. The \textit{Bruhat-Tits tree} $\grT$ (of
  $\PGL_2(F)$) is a homogeneous tree of degree $\abs{\grk}+1$ whose
  vertices are the maximal orders of $M_2(F)$ and such that two
  vertices are connected by an edge if and only if the two maximal
  orders have distance one \cite[Section~II.2]{vigneras}.  Let $\grT(\calO)$ be the subtree whose
  set of vertices is $\grS(\calO)$. For example, if $\calO=\O\cap \O'$
  is an Eichler order of level $\pi^n O_F$, i.e. the intersection of two
  maximal orders $\O$ and $\O'$ of distance $n$, then by
  \cite[Corollary~2.5]{Brzezinski-1983}, $\grT(\calO)$ is the unique
  path connecting $\O$ and $\O'$ on $\grT$, and
  $\aleph(\calO)=n+1$. In general, Arenas-Carmona
  \cite{Arenas-Carmona-2013} has shown that $\grT(\calO)$ is a
  \textit{thick line}, i.e.~the maximal subtree whose vertices lie no
  further than a fixed distance (called the \textit{depth}) from a
  line segment, which is called the \textit{stem} of the thick line. The stem may have
  length 0, in which case it degenerates into a single vertex.  A
  thick line with stem length $0$ and depth $1$ is called a
  \emph{star}, and its stem is called the \emph{center} of
  the star.
  Arenas-Carmona and Saavedra \cite{A-Carmona-Saavedra} provide
  concrete formulas for the depth and stem length (and hence
  $\aleph(\calO)$) of $\grT(\calO)$ when $\calO$ is generated by a
  pair of orthogonal pure quaternions. However, the present paper does
  not depend on their formulas.  It is also worthwhile to mention
  Fang-Ting Tu's result \cite{MR2825964} that the intersection of any
  finite number of maximal orders coincides with the intersection of
  three maximal orders.
\end{sect}

\begin{sect}\label{sect:def-eichler-invariant}
  Let $\grJ(\calO)$ be the \textit{Jacobson radical} of
  $\calO$, and $\grk'/\grk$ be the unique quadratic extension of the finite field
  $\grk$. First, suppose that $F$ is local.  Using lifting of idempotents, one shows that
  $\calO\simeq M_2(O_F)$ if and only if
  $\calO/\grJ(\calO)\simeq M_2(\grk)$
  (cf. \cite[Proposition~2.1]{Brzezinski-1983}). When
  $\calO\not \simeq M_2(O_F)$, we have
  \begin{equation}
    \label{eq:45}
\calO/\grJ(\calO)\simeq \grk,\qquad  \grk\times\grk,\quad \text{or}\quad \grk', 
  \end{equation}
   and the \textit{Eichler
    invariant} $e(\calO)$ is defined to be $0$, $1$ or $-1$
  accordingly \cite[Definition~1.8]{Brzezinski-1983}.  The Eichler invariant of $M_2(O_F)$ is defined to be $2$.
  By
  \cite[Chapter 6, Exercise~14]{Drozd-Kirichenko-finite-dim-alg},
  $e(\calO)$ behaves under a finite field extension $K/F$ in the
  following way: if $e(\calO)=-1$ and $\grk'$ is contained in the
  residue field of $K$, 
  then $e(\calO\otimes_{O_F}O_K)=1$, otherwise
  $e(\calO\otimes_{O_F}O_K)=e(\calO)$. We refer to
  \cite[Section~1]{Brzezinski-1983} for the concepts of
  \textit{Gorenstein}, \textit{Bass},  and \textit{hereditary} orders. 
  If $e(\calO)=1$, then $\calO$ is an Eichler order and hence a Bass order. If
  $e(\calO)=-1$, then $\calO$ is Bass as well. The Bass orders are
  explicitly described in \cite[Section~1]{Brzezinski-crelle-1990}.
  The orders with Eichler invariant 0 are more complicated, and the
  Gorenstein ones are discussed in \cite[Section~4]{Brzezinski-1983}. 

If $F$ is global and $\grp$ is a nonzero prime ideal of $O_F$, we denote by
$e_\grp(\calO)$ the Eichler invariant of $\calO_\grp$. An order $\calO$ is Gorenstein (resp.~Bass,
resp.~hereditary) if and only if $\calO_\grp$ is Gorenstein (resp.~Bass,
resp.~hereditary) for every $\grp$. 

The \emph{Brandt invariant} $\grb(\calO)$ is defined to be
$\grd(\calO)\Nr(\calO^\vee)$. By
\cite[Proposition~1.3]{Brzezinski-1983}, $\grb(\calO)$ is an integral
ideal of $O_F$, and $\calO$ is Gorenstein if and
only if $\grb(\calO)=O_F$.
\end{sect}

\begin{lem}\label{lem:bass-order-eichler-inv=0}
Let $H\simeq M_2(F)$ be a split quaternion algebra over a local field $F$, and
$\calO\subseteq H$ be a Bass order with Eichler invariant
$e(\calO)=0$. Then $\aleph(\calO)=2$. 
\end{lem}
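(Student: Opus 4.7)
The plan is to combine Brzezinski's classification of Bass orders with the Bruhat--Tits tree description of $\grS(\calO)$. First, I would invoke \cite[Section~1]{Brzezinski-crelle-1990} to put $\calO$ into a normal form: a Bass order with Eichler invariant $0$ in $M_2(F)$ is, up to $H^\times$-conjugacy, of the shape $\calO = O_K + j\,\grp_K^n O_K$, where $K/F$ is a \emph{ramified} quadratic field embedded in $H$, $\grp_K$ is the maximal ideal of $O_K$, $n \geq 1$, and $j \in H$ satisfies $jy = \bar{y}j$ for $y \in K$ with $j^2 \in F^\times$. The ramification of $K$ and the positivity of $n$ are forced by $\calO/\grJ(\calO) \simeq \grk$ and distinguish the case $e(\calO) = 0$ from the Eichler case $e(\calO) = 1$.

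Next, I would exhibit two maximal orders containing $\calO$ and rule out all others. Since $K/F$ is ramified, the regular representation $K \hookrightarrow M_2(F)$ produces a natural maximal order $\O_1 = \End_{O_F}(O_K)$ containing $O_K$, and a second one is obtained as $\O_2 = j\O_1 j^{-1}$. Both contain $\calO$ by direct verification, and they are distinct since $j \notin \calN(\O_1)$ when $n \geq 1$. For uniqueness, I would use the thick-line structure of $\grT(\calO)$ recalled in Section~2: the Bass hypothesis --- every over-order of $\calO$ is Gorenstein --- forces the depth of the thick line to be $0$, so $\grT(\calO)$ reduces to its stem. The stem length is then at most $1$: a longer stem would embed $\calO$ into an Eichler over-order of strictly higher level with the same Jacobson radical, contradicting $e(\calO) = 0$.

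\emph{Main obstacle.} The technical heart is the depth-zero assertion. While the thick-line structure itself is known, translating the Bass property into the combinatorial statement that no vertex branches off the stem requires careful bookkeeping: any branching vertex $\O'$ would yield an over-order of $\calO$ that fails to be Gorenstein, contradicting the Bass hypothesis. Verifying this cleanly may require either a direct computation in the normal form above, or a more conceptual argument via the explicit description of over-orders in \cite[Section~1]{Brzezinski-crelle-1990}.
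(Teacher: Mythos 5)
There are two genuine gaps here: one in your construction of the second maximal order, and one in your stem-length bound. The paper's own proof, for comparison, is three lines and avoids both issues: by Brzezinski's results (\cite[Corollary~4.3]{Brzezinski-1983} together with \cite[Section~1]{Brzezinski-crelle-1990}), every maximal order containing a Bass order contains its hereditary closure $\calH(\calO)$, which has discriminant $\pi O_F$; since $H$ splits, $\calH(\calO)$ is an Eichler order of level $\pi O_F$, and exactly two maximal orders contain such an order.

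\textbf{Gap 1.} The claim that $\O_1=\End_{O_F}(O_K)$ and $\O_2=j\O_1j^{-1}$ are distinct ``since $j\notin\calN(\O_1)$ when $n\ge1$'' cannot be right. By Section~\ref{sect:parity-of-element}, $\calN(\O_1)=F^\times\O_1^\times$, so whether $j$ normalizes $\O_1$ is governed solely by the parity of $\nu(\Nr(j))$ and has nothing to do with $n$. Your normal form does not pin down this parity: replacing $j$ by $\lambda j$ with $\lambda\in K^\times$ leaves the family of orders $O_K+j\grp_K^nO_K$ unchanged up to shifting $n$, so one may always normalize $j^2\in O_F^\times$. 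For such a $j$ (concretely $j\colon v\mapsto \bar v$ on $V=K$) one has $jO_K=\overline{O_K}=O_K$, hence $j\in\O_1^\times\subseteq\calN(\O_1)$ and $\O_2=\O_1$: the construction produces only \emph{one} maximal order. The correct pair is $\End_{O_F}(O_K)$ and $\End_{O_F}(\grp_K)$, which are distinct and adjacent and both contain $\calO$, since $\calO\subseteq O_K+jO_K\subseteq\End_{O_F}(O_K)\cap\End_{O_F}(\grp_K)$.

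\textbf{Gap 2.} The deduction ``a longer stem would embed $\calO$ into an Eichler over-order of strictly higher level with the same Jacobson radical, contradicting $e(\calO)=0$'' is not a valid argument. You never establish that the radicals agree; and even if they did, no contradiction follows: equality of radicals together with $\calO\subseteq\calE_m$ would only give an embedding $\calO/\grJ(\calO)\simeq\grk\hookrightarrow\calE_m/\grJ(\calE_m)\simeq\grk\times\grk$, which the diagonal realizes with no conflict with $e(\calO)=0$. (Indeed $\calO$ already sits inside its level-$\pi$ hereditary closure, so containment in an Eichler order is certainly compatible with $e(\calO)=0$.) What genuinely rules out a stem of length $m\ge2$ is the Bass hypothesis in the form your normal form provides: $\calO\supseteq O_K$ with $K/F$ \emph{ramified}, whereas an Eichler order of level $\pi^mO_F$ with $m\ge2$ contains no element whose minimal polynomial $x^2-tx+n$ is Eisenstein ($\nu(n)=1$, $\nu(t)\ge1$): in the standard presentation the determinant condition forces one diagonal entry to be a unit and the other to have valuation one, contradicting $\nu(t)\ge1$. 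Better still, once $O_K\subseteq\calO$ is in hand the tree (and Gap 1) can be bypassed entirely: every maximal order containing $\calO$ contains $O_K$, hence equals $\End_{O_F}(M)$ for a fractional $O_K$-ideal $M$; as these ideals are the $\grp_K^m$, the only such orders are $\End_{O_F}(O_K)$ and $\End_{O_F}(\grp_K)$, both contain $\calO$, and $\aleph(\calO)=2$. Your depth-zero step, by contrast, is sound: a branch vertex $\O_v$ with three neighbors $\O_{u_1},\O_{u_2},\O_{u_3}$ in $\grT(\calO)$ would force $\calO\subseteq\O_v\cap\O_{u_1}\cap\O_{u_2}\cap\O_{u_3}=O_F+\pi\O_v$, which is non-Gorenstein by Lemma~\ref{lem:non-Gorenstein-order}, contradicting the Bass property.
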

\begin{proof}
  By \cite[Corollary~4.3]{Brzezinski-1983} and \cite[Section~1]{Brzezinski-crelle-1990}, every maximal order
  containing $\calO$ necessarily contains its hereditary closure
  $\calH(\calO)$, which has discriminant $\pi O_F$.  Since $H$ splits
  over $F$, $\calH(\calO)$ is an Eichler order of level $\pi O_F$.  There are
  precisely two maximal orders containing it.  
\end{proof}

\begin{lem}\label{lem:non-Gorenstein-order}
  Let $F$ be a local field, and  $H= M_2(F)$. Up to $H^\times$-conjugation, $\calO=O_F+\pi M_2(O_F)$ is
  the unique \emph{non-Gorenstein} order with $\grd(\calO)=\pi^3O_F$.  The subtree $\grT(\calO)$ is a star centered at the
  Gorenstein saturation\footnote{It is also called the \textit{Gorenstein
    closure} in some literature.} $M_2(O_F)$ of $\calO$. In particular,
  $\aleph(\calO)=\abs{\grk}+2$. 
\end{lem}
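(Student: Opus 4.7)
The plan is to handle uniqueness via the Gorenstein saturation and the multiplicativity of discriminants, and then analyze $\grS(\calO)$ directly using the Bruhat--Tits tree.

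For the first assertion, let $\calO$ be any non-Gorenstein order of discriminant $\pi^3 O_F$ in $H\simeq M_2(F)$. The Gorenstein saturation $\calO^G$ satisfies $\calO = O_F + \grb(\calO)\calO^G$, and since $\calO$ is non-Gorenstein we have $\grb(\calO)\neq O_F$, say $\grb(\calO)=\pi^m O_F$ with $m\geq 1$. A direct computation of the index shows $\chi(\calO^G,\calO)=\pi^{3m}O_F$, so that $\grd(\calO)=\pi^{3m}\grd(\calO^G)$. With $\grd(\calO)=\pi^3O_F$ this forces $m=1$ and $\grd(\calO^G)=O_F$, so $\calO^G$ is a maximal order. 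Since $H$ splits, $\calO^G$ is $H^\times$-conjugate to $M_2(O_F)$, and hence $\calO$ is conjugate to $O_F+\pi M_2(O_F)$.

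For the remaining assertions, from now on take $\calO=O_F+\pi M_2(O_F)$. The key observation is the following criterion: a maximal order $\O'\subset H$ contains $\calO$ if and only if $\pi M_2(O_F)\subseteq \O'$. Identifying maximal orders with homothety classes of $O_F$-lattices in $F^2$ via $\O'=\End_{O_F}(L')$, and letting $L:=O_F^2$ so that $M_2(O_F)=\End(L)$, the containment $\pi M_2(O_F)\subseteq \End(L')$ translates into $\pi L\subseteq L'$ (after rescaling $L'$ so that $L'\subseteq L$). The plan is to enumerate such $L'$: either $L'=L$, giving the center $M_2(O_F)$, or $L'\subsetneq L$ with $L/L'\cong \grk$, and these are parametrized by the lines in $L/\pi L\cong \grk^2$, i.e.~by $\P^1(\grk)$.

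This produces exactly $1+(|\grk|+1)=|\grk|+2$ maximal orders, all of distance $\le 1$ from $M_2(O_F)$ on the Bruhat--Tits tree, so $\grT(\calO)$ is the star of radius one centered at $M_2(O_F)$ and $\aleph(\calO)=|\grk|+2$. The main step is verifying the equivalence ``$\O'\supseteq \calO\Longleftrightarrow \pi L\subseteq L'$'': the forward direction is immediate from $\pi M_2(O_F)\subseteq \calO$, and the reverse follows because any $T\in M_2(O_F)$ maps $L'$ into $L$, so $\pi T$ maps $L'$ into $\pi L\subseteq L'$. The only subtle point is ruling out maximal orders at distance $\ge 2$: for such $\O'$, choosing a basis $v_1,v_2$ of $L$ in which $L'=O_Fv_1+\pi^n O_Fv_2$ with $n\ge 2$ makes $\pi v_2\notin L'$, so $\pi L\not\subseteq L'$, as desired.
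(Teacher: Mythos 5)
Your uniqueness argument is exactly the paper's: both pass to the Gorenstein saturation, write $\calO=O_F+\grb(\calO)\Gor(\calO)$, compute $\chi(\Gor(\calO),\calO)=\grb(\calO)^3$, and use integrality of $\grd(\Gor(\calO))=\pi^{3-3m}O_F$ to force $m=1$, so that $\Gor(\calO)$ is maximal and $\calO$ is conjugate to $O_F+\pi M_2(O_F)$. For the tree, your route is genuinely different and more self-contained: the paper simply identifies $\calO$ with $M_2(O_F)^{[1]}$ in the notation of Arenas-Carmona--Saavedra and quotes the star structure, whereas you derive it by enumerating lattices $L'$ with $\pi L\subseteq L'\subseteq L$, which is a nice elementary replacement for that citation. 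However, two things are missing. First, you never verify that $O_F+\pi M_2(O_F)$ is itself non-Gorenstein of discriminant $\pi^3O_F$; the paper does this by computing the dual lattice and the Brandt invariant $\grb(\calO)=\grd(\calO)\Nr(\calO^\vee)=\pi O_F\neq O_F$. Without this, your uniqueness statement could in principle be vacuous, and it is part of the lemma as stated.

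Second, and more seriously, the forward implication of your key equivalence, $\O'\supseteq \calO\Rightarrow \pi L\subseteq L'$, is asserted to be ``immediate'' but is never proved, and your ``subtle point'' does not fill the hole: showing that a lattice at distance $\geq 2$ satisfies $\pi L\not\subseteq L'$ only rules out such maximal orders \emph{after} one knows that $\calO\subseteq\End(L')$ forces $\pi L\subseteq L'$ --- as written, the argument is circular. Moreover, with only your stated normalization $L'\subseteq L$ the implication is false: $L'=\pi^2L$ satisfies $\End(L')=M_2(O_F)\supseteq\calO$ but $\pi L\not\subseteq L'$; you need the full normalization $L'\subseteq L$, $L'\not\subseteq \pi L$. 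The fix is short, and your elementary-divisor computation already contains the key witness: either argue directly that $L'$ contains a vector $v$ primitive in $L$, so that $M_2(O_F)v=L$ and hence $\pi L=\pi M_2(O_F)v\subseteq L'$ whenever $\pi M_2(O_F)\subseteq \End(L')$; or, in the distance-$n$ case $L'=O_Fv_1+\pi^nO_Fv_2$ with $n\geq 2$, take $T\in M_2(O_F)$ with $Tv_1=v_2$ and observe that $\pi T\in\calO$ yet $\pi Tv_1=\pi v_2\notin L'$, so $\calO\not\subseteq\End(L')$. With either patch (and the discriminant/Brandt computation for the model order) your plan becomes a complete proof.
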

\begin{proof}
First, suppose that $\calO=O_F+\pi M_2(O_F)$. Its discriminant is
\[\grd(\calO)=\chi(M_2(O_F),
\calO)\grd(M_2(O_F))=\pi^3O_F.\] The dual lattice of
$\calO$ is 
\[\calO^\vee=\frac{1}{\pi}\left\{
  \begin{bmatrix}
    a & b\\ c & d
  \end{bmatrix}\in M_2(O_F)\,\middle\vert\, a\equiv d\pmod{\pi O_F}
\right\}, \]
which has reduced norm $\Nr(\calO^\vee)=\frac{1}{\pi^2}O_F$. Hence the
Brandt invariant $\grb(\calO)$ is $\grd(\calO)\Nr(\calO^\vee)=\pi O_F$. By the criterion in
\cite[Proposition~1.3]{Brzezinski-1983}, $\calO$ is non-Gorenstein.
In the notation of \cite[Section~1.1]{A-Carmona-Saavedra},
$\calO=M_2(O_F)^{[1]}$, so $\grT(\calO)$ is a star centered at
$M_2(O_F)$. More concretely, a maximal order contains
$\calO=O_F+\pi M_2(O_F)$ if and only if it has at most distance 1 from
$M_2(O_F)$.  The star has a unique center and $1+\abs{\grk}$
external vertices, so $\aleph(\calO)=\abs{\grk}+2$.

Conversely, let $\calO$ be an arbitrary non-Gorenstein order in
$H=M_2(F)$ with $\grd(\calO)=\pi^3O_F$, and $\Gor(\calO)$ be its
Gorenstein saturation. By \cite[Propositon~1.4]{Brzezinski-1983}, 
\[\calO=O_F+\grb(\calO)\Gor(\calO), \quad \text{where}
\quad \grb(\calO)=\pi^rO_F\quad \text{with} \quad r>1.\]
Hence  $\grd(\Gor(\calO))=\chi(\Gor(\calO),
\calO)^{-1}\grd(\calO)=\grb(\calO)^{-3}\grd(\calO)=\pi^{3-3r}O_F$. Since 
$\grd(\Gor(\calO))$ is integral, $r=1$ and $\Gor(\calO)$ is a maximal
order in $M_2(F)$.  Thus  $\calO$ is conjugate to $O_F+\pi
M_2(O_F)$. 
\end{proof}

\begin{lem}\label{lem:gen-lem-scro2II}
  Let $H=\qalg{a}{b}{F}$ with $a, b\in O_F\smallsetminus \{0\}$.  The
  order $\calO=O_F[i,j]$ is a Gorenstein order with 
  discriminant $\grd(\calO)=4abO_F$. Suppose further that $F$ is a
  number field, and both $a,b\in O_F^\times$. Then
  $\calO$ is maximal at every nondyadic prime of $F$, and
  $e_\grp(\calO)=0$ for every dyadic prime $\grp$.
  Moreover, if $a=-1$, then $(1+i)\in \calN(\calO)$.
\end{lem}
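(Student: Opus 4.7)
The lemma makes four assertions; I would prove them in the order stated. First, for the discriminant, observe that $\{1,i,j,k\}$ is an orthogonal basis with respect to the trace pairing $(x,y)\mapsto\Tr(x\bar y)$, giving a diagonal Gram matrix with entries $2,-2a,-2b,2ab$, determinant $(4ab)^2$, and hence $\grd(\calO)=4abO_F$ by definition. For Gorenstein-ness, the plan is to compute the Brandt invariant and apply Brzezinski's criterion that $\calO$ is Gorenstein iff $\grb(\calO)=\grd(\calO)\Nr(\calO^\vee)=O_F$. Orthogonality makes the dual basis explicit, namely $\{1/2,\,-i/(2a),\,-j/(2b),\,k/(2ab)\}$, whose reduced norms $1/4,\,-1/(4a),\,-1/(4b),\,1/(4ab)$ generate the fractional ideal $(4ab)^{-1}(1,a,b,ab)O_F=(4ab)^{-1}O_F$; this gives $\Nr(\calO^\vee)=(4ab)^{-1}O_F$ and hence $\grb(\calO)=O_F$ directly.

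For the statements at primes, assume $a,b\in O_F^\times$, so $\grd(\calO)=4O_F$ is invertible away from dyadic primes; maximality of $\calO_\grp$ at nondyadic $\grp$ is then immediate. At a dyadic prime $\grp$, I would analyze the reduction $\calO_\grp/\grp\calO_\grp\simeq\grk[i,j]/(i^2-a,j^2-b)$. Since $\mathrm{char}\,\grk=2$, the relation $ij=-ji$ collapses to $ij=ji$, so the reduction is commutative. Perfectness of the finite residue field $\grk$ yields $\alpha,\beta\in\grk$ with $\alpha^2=a,\ \beta^2=b$, and the substitution $x=i-\alpha,\ y=j-\beta$ identifies the reduction with $\grk[x,y]/(x^2,y^2)$, a local $\grk$-algebra with residue field $\grk$. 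Lifting, $\calO_\grp$ is local with $\calO_\grp/\grJ(\calO_\grp)\simeq\grk$, i.e.~$e_\grp(\calO)=0$.

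For the last assertion, a direct calculation suffices when $a=-1$. The element $1+i$ has $\Nr(1+i)=2\neq 0$, so it lies in $H^\times$ with inverse $(1-i)/2$, and it manifestly commutes with $i$. Expanding $(1+i)j(1-i)/2$ using $ij=k$, $ji=-k$, and $ki=j$ yields $(1+i)j(1+i)^{-1}=k$. Since $\calO=O_F[i,j]=O_F[i,k]$ (because $j=-ik$), conjugation by $1+i$ preserves $\calO$, proving $(1+i)\in\calN(\calO)$.

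The main obstacle is the dyadic case: one must simultaneously exploit characteristic $2$ to collapse the noncommutativity of the reduction and the perfectness of $\grk$ to guarantee that the residual quotient is $\grk$ itself rather than the quadratic extension $\grk'$, since only then is the Eichler invariant $0$ rather than $-1$. The remaining parts are essentially bookkeeping with the diagonal Gram matrix and the explicit multiplication table.
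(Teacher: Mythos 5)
Your proof is correct and takes essentially the same route as the paper's: the dual basis and Brzezinski's Brandt-invariant criterion give Gorenstein-ness, the unit discriminant $4O_F$ gives nondyadic maximality, characteristic $2$ together with perfectness of $\grk$ forces the quotient by the radical at a dyadic prime to be $\grk$, and the normalizer claim is the identical computation $(1+i)j(1+i)^{-1}=k$. The only variations are cosmetic: at the dyadic step the paper cites the trichotomy $\calO_\grp/\grJ(\calO_\grp)\simeq\grk,\ \grk\times\grk,\ \text{or } \grk'$ and uniqueness of square roots there, whereas you compute $\calO_\grp/\grp\calO_\grp\simeq\grk[x,y]/(x^2,y^2)$ explicitly and pass to the radical quotient; and in the Gorenstein step, where you assert $\Nr(\calO^\vee)=(4ab)^{-1}O_F$ from the basis norms alone (strictly one should also note that every norm $\frac{1}{4ab}\bigl(abc_0^2-bc_1^2-ac_2^2+c_3^2\bigr)$ lies in $(4ab)^{-1}O_F$, or simply use, as the paper does, the containment $O_F\subseteq\grb(\calO)$ together with the integrality of the Brandt invariant), the conclusion is the same.
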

\begin{proof}
One calculates that $\grd(\calO)=4ab O_F$, and the dual
basis of $\{1, i, j, k\}$ is
\[\left\{\frac{1}{2}, \quad \frac{i}{2a}, \quad \frac{j}{2b}, \quad
  \frac{-k}{2ab}\right\}.\]
In particular, $O_F=\grd(\calO)\Nr(-k/2ab)\subseteq
\grd(\calO)\Nr(\calO^\vee)=\grb(\calO).$
It follows from  \cite[Proposition~1.3]{Brzezinski-1983} that $\calO$
is Gorenstein.  Suppose that $F$
is a number field and $a,b\in O_F^\times$. We have $\grd(\calO)=4O_F$,
and the maximality of $\calO$ at the nondyadic primes of $F$ follows
directly.  Let $\grp$ be a dyadic prime of $F$ with finite
residue field $\grk$.  Then $\calO_\grp$ is not maximal since
$\grd(\calO_\grp)$ is not square-free. By (\ref{eq:45}), an equation of
the form $x^2=c\in \grk$ with $x\in\calO_\grp/\grJ(\calO_\grp)$ has a
unique solution that lies in $\grk$.  It follows that the reductions
of both $i$ and $j$ modulo $\grJ(\calO_\grp)$ are in $\grk$, and hence
$\calO_\grp/\grJ(\calO_\grp)=\grk$, i.e. $e_\grp(\calO)=0$.  When
$a=-1$, we have
\begin{equation}\label{eq:28}
(1+i)i(1+i)^{-1}=i\quad  \text{and}\quad  (1+i)j(1+i)^{-1}=k.  
\end{equation}
Thus $(1+i)\in \calN(\calO)$. 
 \end{proof}

 \begin{lem}\label{lem:scro2II-Bass-criterion}
   Let $H=\qalg{a}{b}{F}$ with
   $a, b\in O_F\smallsetminus \{0\}$, and $\calO=O_F[i,j]$.  Suppose that
   $F$ is a local field, $\grd(\calO)=4abO_F$ is divisible by
   $\pi^3O_F$, and $e(\calO)=0$. Then $\calO$ is a Bass
   order if and only if at least one of the orders $O_F[i]$ or $O_F[j]$ is maximal in
   its total quotient ring. 
 \end{lem}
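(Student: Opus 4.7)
The strategy is to appeal to the structural characterization of local Bass quaternion orders: an $O_F$-order in $H$ is Bass if and only if it contains the maximal $O_F$-order of some separable quadratic $F$-subalgebra of $H$; see \cite[Section~1]{Brzezinski-crelle-1990} and \cite[Proposition~1.11]{Brzezinski-1983}. Granted this criterion, the ``if'' direction is immediate: both $O_F[i]$ and $O_F[j]$ are contained in $\calO=O_F[i,j]$ by construction, so maximality of either in its total quotient ring $F[i]$ or $F[j]$ places the maximal order of a quadratic $F$-subalgebra of $H$ inside $\calO$, forcing $\calO$ to be Bass.

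For the converse, suppose $\calO$ is Bass, so that $O_K\subseteq\calO$ for some separable quadratic subalgebra $K\subseteq H$. Since $O_F$ is a DVR and $K/F$ is \'etale of degree two, $O_K$ is monogenic, say $O_K=O_F[\alpha]$. Subtracting $\Tr(\alpha)/2$ (when permissible) and rescaling by a suitable power of $\pi$, one may replace $\alpha$ by a primitive pure quaternion $y=x_1 i+x_2 j+x_3 k\in\calO$, where primitive means that the triple $(x_1,x_2,x_3)$ has no common divisor in $\pi O_F$. A direct computation gives $F[y]\cap\calO=O_F+O_F y=O_F[y]$, so $O_K=O_F[y]$, and the maximality of $O_K$ becomes the equality
\[
\nu\bigl(4(x_1^2 a+x_2^2 b-x_3^2 ab)\bigr)=\nu(\grd_{K/F}).
\]

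The main obstacle is to extract from the two hypotheses $\nu(4ab)\geq 3$ and $e(\calO)=0$ enough rigidity to conclude that this equality can only hold when the primitive triple $(x_1,x_2,x_3)$ reduces (up to units) to $(1,0,0)$ or $(0,1,0)$ modulo $\pi$. The condition $e(\calO)=0$ pins down $\calO/\grJ(\calO)=\grk$, which forces the images of $i,j,k$ in $\grk$ to be square roots of $\bar a,\bar b,-\bar a\bar b$ respectively; together with $\nu(ab)\geq 3-\nu(4)$, this yields strong congruence constraints on $a$ and $b$ (in the nondyadic case simply $\bar a=0$ or $\bar b=0$, and analogous conditions in the dyadic case). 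A case analysis on the ramification type of $K/F$ (unramified, totally ramified, or split), combined with the observation that the cross term $x_3^2 ab$ has valuation at least $\nu(ab)+2\nu(x_3)$, then rules out any primitive $y$ in which two of $x_1,x_2,x_3$ contribute at the minimal valuation to $\Nr(y)$. The surviving primitive choices of $y$ are (up to $O_F^\times$) either $i$ or $j$, and reading off the equality in that case is precisely the statement that $O_F[i]$ or $O_F[j]$ is maximal in its total quotient ring.
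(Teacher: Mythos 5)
Your ``if'' direction is correct and is exactly the paper's argument: both simply invoke \cite[Proposition~1.11]{Brzezinski-crelle-1990}. The problem is the converse, which is the substantive half of the lemma. Your reduction of it is fine (a generator of $O_K\subseteq\calO$ may indeed be taken to be a pure quaternion, since its coefficient of $1$ in the basis $\{1,i,j,k\}$ lies in $O_F$), but what remains after the reduction --- that the existence of a primitive pure $y=x_1i+x_2j+x_3k\in\calO$ with $O_F[y]$ maximal forces $O_F[i]$ or $O_F[j]$ to be maximal --- is precisely the assertion to be proved, and your justification of it is an announced but never executed ``case analysis''. Moreover, the classification you extract from it (the surviving $y$ are $i$ or $j$ up to units) is not correct: if $O_F[i]$ is maximal then so is $O_F[i+\pi^Nj]$ for $N\gg 0$, and when $a=-1$ one has $k^2=b$, so $O_F[k]$ (triple $(0,0,1)$) is maximal whenever $O_F[j]$ is. More seriously, the mechanism you sketch --- comparing valuations of the three terms $x_1^2a$, $x_2^2b$, $x_3^2ab$ --- cannot decide maximality in the dyadic case, which is the only case in which the paper applies this lemma (with $a,b\in O_F^\times$, the hypothesis $\pi^3\mid 4abO_F$ forces $F$ to be a ramified dyadic field). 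For instance, take $F=\Q_2(\sqrt{2})$ and $a=b=-1$: one must exclude $y=i+j+k$, where all three coefficients are units, $y^2=-3$, and $\nu(4y^2)=\nu(4)=4$ is a perfectly admissible discriminant valuation for a \emph{ramified} quadratic extension of $F$; maximality of $O_F[y]$ fails only because $F(\sqrt{-3})/F$ happens to be unramified, i.e.\ for reasons visible in the square class of $y^2$ modulo $4O_F$, not in valuations. Your sketch never engages with these congruence phenomena, and it also never pins down where the hypotheses $e(\calO)=0$ and $\pi^3\mid\grd(\calO)$ actually enter.

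For comparison, the paper's converse is contrapositive and requires no classification at all. Assuming neither $O_F[i]$ nor $O_F[j]$ is maximal, it takes the overorders $B\supset O_F[i]$ and $B'\supset O_F[j]$ with $\chi(B,O_F[i])=\chi(B',O_F[j])=\pi O_F$ and forms $O=B+Bj$ and $O'=B'+iB'$. These are distinct overorders of $\calO$ (since $O\cap F(i)=B$ while $O'\cap F(i)=O_F[i]$) with the same index $\chi(O,\calO)=\chi(O',\calO)=\pi^2O_F$, and they are non-maximal exactly because $\pi^3\mid\grd(\calO)$. By \cite[Corollary~4.3]{Brzezinski-1983}, if $\calO$ were Bass the overorder with this index would be unique; hence $\calO$ is not Bass. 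So the hypotheses you leave idle are precisely what drive the paper's argument. If you want to keep your direct approach, you must genuinely carry out the dyadic analysis of square classes of $y^2$; as written, the converse direction is a restatement of the lemma, not a proof of it.
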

 \begin{proof}
The sufficiency follows directly from
\cite[Proposition~1.11]{Brzezinski-crelle-1990}.  We  prove the
converse. So assume that neither $O_F[i]$ nor $O_F[j]$ are maximal
orders. Then there exists a unique overorder $B\subset F(i)$ of $O_F[i]$
such that $\chi(B, O_F[i])=\pi O_F$. Similarly, we find an overorder
$B'\subset F(j)$ of $O_F[j]$ with $\chi(B', O_F[j])=\pi O_F$. Now let $O=B+Bj$, and
$O'=B'+iB'$. These are distinct overorders of $\calO$ since
$O\cap F(i)=B$ while $O'\cap F(i)=O_F[i]\neq B$.
The choices of $B$ and $B'$ imply that 
\begin{equation}
\label{eq:58}
\chi(O,\calO)=\chi(O', \calO)=\pi^2O_F.
\end{equation}
Hence $\grd(O)=\grd(O')=\pi^{-2}\grd(\calO)\neq O_F$ by the assumption on
$\grd(\calO)$. It follows from
\cite[Corollary~4.3]{Brzezinski-1983} that
$\calO$ cannot be Bass, otherwise the 
overorder of $\calO$ satisfying (\ref{eq:58}) is unique.  
 \end{proof}

\begin{lem}\label{lem:gen-lem-scro3II}
  Let $H=\qalg{a}{b}{F}$ with $a,b\in O_F\smallsetminus \{0\}$ and
  $b\equiv 1\pmod{4O_F}$.  Then $\calO:=O_F[i, (1+j)/2]$ is a
  Gorenstein order with $\grd(\calO)=abO_F$ and $j\in \calN(\calO)$.
  Suppose further that $F$ is a number field,  $a\in O_F^\times$ and
  $b=-3$. Then $\calO$ is maximal at any nonzero prime of $O_F$
  coprime to $3$. Let $\grp$ be a finite prime of $F$ above $3$ with
  residue field $\grk$. If
  $H_\grp$ is division and $\grp$ is unramified above $3$, then
  $\calO_\grp$ is maximal; otherwise we have
\[e_\grp(\calO)=
\begin{cases}
  1 \qquad & \text{if } (a\bmod{\grp}) \in (\grk^\times)^2,\\
 -1 \qquad & \text{if } (a\bmod{\grp}) \not\in (\grk^\times)^2. 
\end{cases}
\] 
\end{lem}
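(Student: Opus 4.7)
The plan is to first establish the general structural claims about $\calO$ (that it is an order with the stated discriminant, is Gorenstein, and has $j$ in its normalizer), which do not use the number-field hypothesis, and then analyze $\calO_\grp$ locally under the additional assumptions $a\in O_F^\times$ and $b=-3$.

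For the general part, first verify that $\omega := (1+j)/2$ satisfies $\omega^2 - \omega + (1-b)/4 = 0$, whose coefficients lie in $O_F$ by the hypothesis $b\equiv 1\pmod{4O_F}$; thus $\calO = O_F + O_F i + O_F\omega + O_F i\omega$ is an order, free of rank $4$. A direct computation of the Gram matrix $(\Tr(x_s\bar{x}_t))$ on the basis $\{1, i, \omega, (i+k)/2\}$ (using $\bar{\omega} = 1-\omega$) gives, after a suitable reordering, a block-diagonal matrix whose $2\times 2$ blocks have determinants $-b$ and $-a^2b$, so $\grd(\calO) = abO_F$. To show $\calO$ is Gorenstein, exhibit $k/(ab) \in \calO^\vee$ (verified by pairing against each basis element) with reduced norm $1/(ab)$; this forces $\grb(\calO) = \grd(\calO)\Nr(\calO^\vee) = O_F$, and Gorenstein follows by \cite[Proposition~1.3]{Brzezinski-1983}. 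For $j \in \calN(\calO)$, a direct calculation yields $jij^{-1} = -i$ and $j\omega j^{-1} = 1-\omega$, both lying in $\calO$; since $\{i, \omega\}$ generates $\calO$ over $O_F$, conjugation by $j$ preserves $\calO$.

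Now specialize to $F$ a number field, $a \in O_F^\times$, and $b = -3$ (which satisfies $b\equiv 1\pmod 4$). Then $\grd(\calO) = 3O_F$, which is a unit ideal at every $\grp \nmid 3$, forcing $\calO_\grp$ to be maximal there. At $\grp \mid 3$ with $H_\grp$ division and $\grp$ unramified over $3$, one has $v_\grp(3) = 1$, hence $\grd(\calO_\grp) = \grp O_{F_\grp}$ equals the algebra discriminant $\grd(H_\grp)$, so $\calO_\grp$ must be the unique maximal order. A Hilbert-symbol computation (using that $(a, u)_\grp = 1$ for units in residue characteristic $\neq 2$) shows that $H_\grp$ is automatically split when $\grp$ is ramified over $3$, so the ``otherwise'' case reduces to: $H_\grp$ split (which includes $\grp$ unramified with $\bar a$ a square) or $\grp$ ramified over $3$.

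The heart of the argument is the analysis of $R := \calO_\grp/\grp \calO_\grp$ in this remaining case. Let $\tau := \omega + 1 \in R$; then $\tau^2 = 3\omega \equiv 0$ in $R$. Using $k = i(2\omega - 1) \equiv -i\tau \pmod{\grp}$ in characteristic $3$, the relation $i\omega - \omega i = k$ yields $\tau i = -i\tau$ in $R$. Hence the two-sided ideal $(\tau) = R\tau = \tau R$ satisfies $(\tau)^2 = 0$, so $(\tau) \subseteq \grJ(R)$. Setting $\omega = -1$ gives $R/(\tau) \simeq \grk[i]/(i^2 - \bar a)$, which is $\grk \times \grk$ if $\bar a \in (\grk^\times)^2$ (yielding $e_\grp(\calO) = 1$) and the quadratic extension $\grk'$ otherwise (yielding $e_\grp(\calO) = -1$). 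Since $R/(\tau)$ is semisimple in both cases, $\grJ(R) = (\tau)$ and $\calO_\grp/\grJ(\calO_\grp) \simeq R/(\tau)$, as required. The main obstacle is the bookkeeping at $\grp \mid 3$: the choice $b = -3$ is essential, making $\omega$ a primitive sixth root of unity so that $(\omega+1)^2 = 3\omega$ produces the nilpotent $\tau$ upon reduction, and the characteristic-$3$ identity $2\omega - 1 \equiv -\tau$ produces the anti-commutation $\tau i = -i\tau$ that gives $(\tau)$ the structure of a two-sided ideal.
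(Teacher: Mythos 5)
Your proof is correct; it reaches every assertion of the lemma, and in its key local step it takes a genuinely different route from the paper. For the global statements the two arguments are near-equivalent: the paper computes the full dual basis of $\{1,i,(1+j)/2,(i+k)/2\}$, which delivers $\grd(\calO)=abO_F$ and $\grb(\calO)=O_F$ simultaneously, whereas you read the discriminant off the Gram matrix and get the Brandt invariant from the single dual vector $k/(ab)$ of reduced norm $1/(ab)$; both then quote Brzezinski's criterion. The real divergence is the Eichler invariant at $\grp\mid 3$. The paper appeals to the structure (\ref{eq:45}) of $\calO_\grp/\grJ(\calO_\grp)$: the root of unity $(1+j)/2$ must reduce into $\grk$, so the residue algebra is $\grk[\,\breve{\imath}\,]$ and the dichotomy comes from whether $x^2-\bar a$ splits over $\grk$. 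You instead compute the radical of $R=\calO_\grp/\grp\calO_\grp$ from scratch: $\tau=\omega+1$ spans a square-zero two-sided ideal (via the characteristic-$3$ anticommutation $\tau i=-i\tau$) whose quotient $\grk[x]/(x^2-\bar a)$ is semisimple, so $\grJ(R)=(\tau)$. Your route buys something concrete: it pins down that the residue algebra is two-dimensional, which is exactly what excludes $e_\grp(\calO)=0$ when $\bar a$ is a square; the paper's proof leaves that exclusion implicit (and, incidentally, mislabels $(1+j)/2$: it is a primitive \emph{sixth} root of unity whose image in the reduced residue algebra is $-1$ — indeed $x^2-x+1\equiv (x+1)^2 \pmod 3$ — not a third root reducing to $1$; only the containment of its image in $\grk$ is actually needed).

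Two slips in your write-up, neither of which damages the proof. First, $j\omega j^{-1}=\omega$, not $1-\omega$, since $\omega=(1+j)/2$ commutes with $j$; it is conjugation by $i$ that sends $\omega$ to $1-\omega$. Either way the conjugates of the generators lie in $\calO$, so $j\in\calN(\calO)$ stands. Second, the side remark that $H_\grp$ is automatically split whenever $\grp$ ramifies over $3$ is false in general: writing $-3=\pi^e u$ with $u\in O_{F_\grp}^\times$, the tame symbol gives $(a,-3)_\grp=(a,\pi)_\grp^{\,e}$, so when the ramification index $e$ is odd (possible with $e\ge 3$ for a general number field $F$) and $\bar a$ is a nonsquare, $H_\grp$ is division. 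Fortunately you never use this remark: your computation of $\grJ(R)$ is uniform over all $\grp\mid 3$, and its output ($e_\grp(\calO)=-1$ when $\bar a$ is a nonsquare) is consistent with $H_\grp$ being division in that case.
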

\begin{proof}
  Note that $(1+j)/2$ is integral over $O_F$ since
  $b\equiv 1\pmod{4O_F}$, and one easily verifies that
  $j\in \calN(\calO)$. The order $\calO$ is a free $O_F$-module with
  basis $\{1, i, (1+j)/2, (i+k)/2\}$.  Direct calculation shows that
  the dual basis is
  \begin{equation}
\label{eq:61}
\left\{\frac{b-j}{2b}, \quad \frac{bi+k}{2ab},\quad \frac{j}{b},      \quad \frac{k}{ab}\right\}, 
  \end{equation}
and $\grd(\calO)=abO_F$. By \cite[Proposition~1.3(b)]{Brzezinski-1983},
$\calO$ is Gorenstein since $\grb(\calO)=O_F$.  

Now suppose that $F$ is a number field, $a\in O_F^\times$ and
$b=-3$. Then $\grd(\calO)=3O_F$, and hence $\calO$ is maximal at any
nonzero prime of $O_F$ coprime to $3$.  If $\grp$ is a prime
unramified above $3$ and $H_\grp$ is division, then
$\grd(\calO_\grp)=\grp O_{F_\grp}=\grd(H_\grp)$, so $\calO_\grp$ is
the unique maximal order in $H_\grp$. In the remaining cases,
$\calO_\grp$ is non-maximal. Since $(1+j)/2$ is a primitive third root
of unity and $\fchar(\grk)=3$, we have
$(1+j)/2\equiv 1 \pmod{\grJ(\calO_\grp)}$ by (\ref{eq:45}).  
Let $\breve{\imath}$ denote the image of $i$ in
$\calO_\grp/\grJ(\calO_\grp)$. We then have 
\[\grk[\,\breve{\imath}\,]=
\begin{cases}
  \grk\oplus \grk &\text{if} \quad (a\bmod{\grp}) \in (\grk^\times)^2,\\
  \grk'           &\text{if} \quad (a\bmod{\grp}) \not\in (\grk^\times)^2.
\end{cases}
\]
The formula for $e_\grp(\calO)$ follows.
\end{proof}

For the convenience of the reader, we state the following well-known lemma. 

\begin{lem}\label{lem:unram-ext}
  Let $Q$ be a non-archimedean local field with ring of integers $R$ and uniformizer
  $\pi_0$, and $H_0$ be a quaternion division algebra over $Q$ with the
  unique maximal $R$-order $\O_0$. Suppose that $F/Q$ is an unramified
  quadratic extension. Then $\calO:=\O_0\otimes_RO_F$ is an Eichler
  order of level $\pi_0 O_F$ in $H:=H_0\otimes_QF\simeq M_2(F)$.
\end{lem}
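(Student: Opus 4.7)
The proof plan is a short three-step argument that combines the base-change behaviour of the Eichler invariant with a direct discriminant calculation.

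First I would verify that $H = H_0\otimes_Q F$ splits. Since $F/Q$ is the unramified quadratic extension and the unique quaternion division algebra $H_0$ over $Q$ is split by every quadratic extension (in particular by the unramified one), we get $H\simeq M_2(F)$. Next, I compute the Eichler invariant of $\O_0$ itself. Because $H_0$ is division, $\O_0/\grJ(\O_0)$ is a finite-dimensional division $\grk$-algebra, hence a field, and in fact it is the unramified quadratic extension $\grk'$ of the residue field $\grk$ of $R$. Comparing with the trichotomy \eqref{eq:45}, this says $e(\O_0) = -1$.

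Next I would invoke the base-change rule for the Eichler invariant recalled in Section~\ref{sect:def-eichler-invariant}: if $e(\calO)=-1$ and the residue field of the extension contains $\grk'$, then the Eichler invariant jumps to $1$ after base change. Since $F/Q$ is unramified of degree $2$, the residue field of $O_F$ is exactly $\grk'$, so $e(\calO)=e(\O_0\otimes_R O_F)=1$. This forces $\calO$ to be an Eichler order in $H\simeq M_2(F)$.

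Finally I would identify the level. By the behaviour of the discriminant under base change recalled in Section~\ref{sec:prelimiary}, $\grd(\calO)=\grd(\O_0)O_F$, and because $\O_0$ is the unique maximal order in the division algebra $H_0$ we have $\grd(\O_0)=\grd(H_0)=\pi_0 R$. Hence $\grd(\calO)=\pi_0 O_F$. For an Eichler order in a split quaternion algebra, the discriminant equals the level (since $\grd(H)=O_F$), so $\calO$ is an Eichler order of level $\pi_0 O_F$, as claimed. There is no real obstacle here; the only point requiring a moment's care is matching the hypothesis of the base-change formula — and unramifiedness of $F/Q$ delivers exactly the needed containment $\grk'\subseteq$ residue field of $O_F$.
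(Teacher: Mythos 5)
Your proposal is correct and follows essentially the same route as the paper: the paper's (very terse) proof likewise invokes the base-change behaviour of the Eichler invariant and the discriminant from Section~\ref{sect:def-eichler-invariant} to conclude $e(\calO)=1$ and $\grd(\calO)=\pi_0O_F$, then cites \cite[Proposition~2.1]{Brzezinski-1983}. Your write-up simply fills in the details the paper leaves implicit (that $e(\O_0)=-1$, that unramifiedness supplies the residue-field containment, and that level equals discriminant in the split case).
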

\begin{proof}
  As remarked in Section~\ref{sect:def-eichler-invariant},
  $\calO$  is an order with Eichler invariant $1$ and discriminant
  $\pi_0O_F$. The lemma follows from \cite[Proposition~2.1]{Brzezinski-1983}. 
\end{proof}

We treat the case where $F/Q$ is ramified next. 
\begin{lem}\label{lem:ram-ext}
  Let $Q, R, H_0, \O_0$ and $\pi_0$ be as in
  Lemma~\ref{lem:unram-ext}. Suppose that $F/Q$ is a ramified
  quadratic extension. Then there is a unique $O_F$-order $\O$ in
  $H:=H_0\otimes_Q F\simeq M_2(F)$ properly containing
  $\calO:=\O_0\otimes_R O_F$. The order $\O$ is necessarily maximal, and 
$\calO\simeq O_L+\pi \O$, where $L/F$ is the unique unramified
quadratic extension of $F$, identified with a subfield of $H$ such
that $O_L\subset \O$.  
\end{lem}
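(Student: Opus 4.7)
\emph{The plan} is to fix an explicit presentation of $\calO$ inside $H$, then build the candidate maximal overorder $\O$ as an ``unramified twist'' of $\calO$, and finally prove uniqueness by pinning down all overorders via an $O_L$-bimodule analysis. Since $F/Q$ is ramified quadratic, the Hasse invariant of $H_0$ doubles from $1/2$ to $0$ under base change, giving $H\simeq M_2(F)$. Fix a presentation $H_0=L_0+L_0\Pi$, where $L_0/Q$ is the unramified quadratic subfield of $H_0$, $\Pi^2=\pi_0$, and $\Pi\alpha=\bar\alpha\Pi$ for $\alpha\in L_0$, so that $\O_0=O_{L_0}+O_{L_0}\Pi$. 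Setting $L:=L_0\cdot F$, the compositum is the unique unramified quadratic extension of $F$ (as $L_0/Q$ is unramified and $F/Q$ totally ramified), with $O_L=O_{L_0}\otimes_R O_F$, so that $\calO=O_L+O_L\Pi$ inside $H=L+L\Pi$.

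\emph{Construction and maximality of $\O$.} The ramification of $F/Q$ yields $\pi_0=w\pi^2$ for some $w\in O_F^\times$, so $\xi:=\pi^{-1}\Pi\in H$ satisfies $\xi^2=w\in O_F^\times$ and $\xi\alpha=\bar\alpha\xi$ for $\alpha\in L$. I would set $\O:=O_L+O_L\xi$, which strictly contains $\calO$ since $\Pi=\pi\xi$. Because $L/F$ is unramified and $\xi^2\in O_F^\times$, $\O$ is precisely a standard maximal order $\{L,w\}$ in the split algebra $H$ (cf.\ \cite[\S II.2]{vigneras}). A direct computation gives $\pi\O=\pi O_L+O_L\Pi$, whence $O_L+\pi\O=O_L+O_L\Pi=\calO$, establishing the structural claim.

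\emph{Uniqueness.} The crux is to show that any $O_F$-order $\O^*$ with $\calO\subseteq\O^*$ equals either $\calO$ or $\O$. First, integrality over $O_F$ gives $\O^*\cap L=O_L$ and $\O^*\cap F=O_F$. Pick $\gamma\in O_L$ with $\gamma-\bar\gamma\in O_L^\times$ (possible because $L/F$ is unramified); for any $x=\alpha+\beta\Pi\in\O^*$, the commutator
\[
\gamma x-x\gamma \;=\; \beta(\gamma-\bar\gamma)\Pi
\]
lies in $\O^*$, and multiplying by $(\gamma-\bar\gamma)^{-1}\in O_L\subseteq\O^*$ yields $\beta\Pi\in\O^*$, hence $\alpha=x-\beta\Pi\in O_L$. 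Therefore $\O^*=O_L+\calM\cdot\Pi$ for an $O_L$-submodule $O_L\subseteq\calM\subseteq L$ that is finitely generated over $O_F$; so $\calM=\pi^{-n}O_L$ for some $n\ge 0$. Closure under squaring forces $(\pi^{-n}\Pi)^2=w\pi^{2-2n}\in\O^*\cap F=O_F$, hence $n\le 1$, and so $\O^*\in\{\calO,\O\}$.

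\emph{The main obstacle} is the uniqueness step; the construction and maximality of $\O$ are essentially routine once the unit $\xi=\pi^{-1}\Pi$ is isolated. The delicate point is to exploit the noncommutativity of $H$: the commutator trick based on $\gamma-\bar\gamma\in O_L^\times$ (which uses in an essential way that $L/F$ is unramified), combined with the norm-integrality constraint $(\beta\Pi)^2\in O_F$, is what forces every overorder to sit in the very short chain $\calO\subset\O$ and rules out any ``intermediate'' order.
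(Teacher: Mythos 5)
Your proof is correct, and it takes a genuinely different route from the paper's. The paper first observes (via the base-change behaviour of the Eichler invariant, Section~\ref{sect:def-eichler-invariant}) that $e(\calO)=e(\O_0)=-1$, then invokes Brzezinski's structure theory \cite[Proposition~3.1]{Brzezinski-1983} to get a \emph{unique minimal} overorder $\O$ with $\chi(\O,\calO)=\pi^2O_F$ and $\grJ(\calO)=\pi\O$; a discriminant computation $\grd(\O)=\pi_0\pi^{-2}O_F=O_F$ then shows $\O$ is maximal, hence is the unique proper overorder, and finally the identity $\calO=O_L+\pi\O$ is deduced by comparing ideal indices. You instead construct $\O$ explicitly as $O_L+O_L\xi$ with $\xi=\pi^{-1}\Pi$ (so $\xi^2=w\in O_F^\times$), verify $O_L+\pi\O=\calO$ by direct computation, and \emph{reprove from scratch} the uniqueness that the paper cites from Brzezinski: your commutator trick with $\gamma-\bar\gamma\in O_L^\times$ (valid precisely because $L/F$ is unramified, by the residue-field argument) shows every overorder has the shape $O_L+\pi^{-n}O_L\Pi$, and the integrality constraint $w\pi^{2-2n}\in O_F$ forces $n\le 1$. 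What your approach buys is a self-contained, elementary argument that avoids the Eichler-invariant and Bass-order machinery entirely and in fact yields slightly more (an explicit classification of \emph{all} overorders of $\calO$); what the paper's approach buys is brevity given the cited machinery, and a method that transfers to settings where explicit presentations like $H_0=L_0+L_0\Pi$ are unavailable. Two small polish points: your appeal to \cite[Section~II.2]{vigneras} for maximality of $\O$ is loose as stated, but this is harmless, since maximality follows either from a one-line discriminant computation ($\chi(\O,\calO)=\pi^2O_F$ gives $\grd(\O)=O_F$) or, more cheaply, from your own uniqueness step (as $\calO$ is non-maximal, any maximal order containing it must coincide with $\O$); and you should remark that $O_L+O_L\xi$ is closed under multiplication (immediate from $\xi\alpha=\bar\alpha\xi$ and $\xi^2\in O_F$), so that it is in fact an order.
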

By \cite[Theoreme~3.2]{vigneras}, any two embeddings of $O_L$ into
$\O$ are conjugate by an element of $\O^\times$. Thus the structure of
$O_L+\pi \O$ does not depend on the choice of the embedding. 
\begin{proof}
 By Section~\ref{sect:def-eichler-invariant}, 
  $e(\calO)=e(\O_0)=-1$.  So according to
  \cite[Proposition~3.1]{Brzezinski-1983}, there is a unique \textit{minimal}
  overorder $\O$ containing $\calO$ with
  $\chi(\O, \calO)=\pi^2O_F$ and
  $\grJ(\calO)=\pi\O$. We have 
  \[\grd(\O)=\grd(\O_0)\chi(\O,
  \calO)^{-1}=\pi_0\pi^{-2}O_F=O_F. \]
Therefore,  $\O$ is a maximal order in $H$, and hence the unique order
properly containing $\calO$. 

Let $L_0/Q$ be the unique unramified quadratic extension of $Q$,
identified with a subfield of $H_0$ such that $H_0=L_0+L_0x$ with
$x^2=\pi_0$ and $xy=\bar{y}x$ for all $y\in L_0$.  Then by
\cite[Corollary~II.1.7]{vigneras}, $\O_0=O_{L_0}+O_{L_0}x$.  Since
$F/Q$ is ramified, we have 
$L=L_0\otimes_\Q F$ and $O_L=O_{L_0}\otimes_R O_F$. Hence 
\begin{equation}
  \label{eq:35}
\calO=\O_0\otimes_RO_F=O_L+O_Lx. 
\end{equation}
In particular, $\calO\supseteq O_L+\pi\O$. On the other hand, 
$\chi(\O, O_L+\pi\O)=\pi^2O_F=\chi(\O, \calO)$.  Thus
$\calO=O_L+\pi\O$ (see also \cite[Proposition~1.12]{Brzezinski-crelle-1990}). 
\end{proof}

\section{The General strategy}
\label{sec:general-stra}
In this section, we assume that $F$ is a totally real number field,
and $H$ is a totally definite
quaternion $F$-algebra. 
We keep the notation in the introduction and fill in
the details for the strategy of computing  $h(G)=h(H, G)$ in
(\ref{eq:105}). Let $\calO$ be an arbitrary $O_F$-order in $H$.  
We first study the general structure of the reduced unit
group $\calO^\bs=\calO^\times/O_F^\times$.

\begin{sect}\label{sect:general-classification-grp}
Fix an embedding $\sigma:F\hookrightarrow \R$.  We have the following
successive inclusions of groups
\[\calO^\bs=\calO^\times/O_F^\times\hookrightarrow H^\times/F^\times\xhookrightarrow{\sigma}
\H^\times/\R^\times.  \]
The quotient $\H^\times/\R^\times$ is 
isomorphic to the special orthogonal group $\SO_3(\R)$, whose finite subgroups have been
classified \cite[Section~I.3]{vigneras}.  Thus $\calO^\bs$ is isomorphic to
one of the  following groups: 
\begin{itemize}
\item a cyclic group $C_n$ of order $n\geq 1$;
\item a dihedral group $D_n$ of
order $2n$ with $n\geq 2$;
\item an exceptional group $A_4$, $S_4$, or $A_5$. 
\end{itemize}
\end{sect}



Since $H$ is totally definite, the field $F(x)$ generated by a
\emph{non-central} element $x\in H$ is a CM-extension (i.e.~a totally
imaginary quadratic extension) of $F$.  An $O_F$-order in a
CM-extension of $F$ is called a \emph{CM $O_F$-order}. 
 If $x\in \calO$, then
$B:=F(x)\cap \calO$ is a CM $O_F$-order;  if further $x\in
\calO^\times$, then 
\begin{equation}
  \label{eq:121}
w(B):=[B^\times:O_F^\times]>1 
\end{equation}
since $x\in \calO^\times$ is non-central. 
Let $\scrB$ be the set of all CM $O_F$-orders $B$ (up to isomorphism)
with $w(B)>1$. We recall the following two facts:
\begin{enumerate}[(i)]
\item $\scrB$ is a finite set; 
\item the reduced unit group $B^\bs:=B^\times/O_F^\times$ is cyclic
  for every $B\in \scrB$. 
\end{enumerate}
Indeed, Pizer denotes the set $\scrB$ by ``$C_1$'' in \cite[Remarks,
p.~92]{Pizer1973} shows that it is a subset of a larger finite set
``$C$'' (ibid.).  Classically, the finiteness of $\scrB$ guarantees that the summation in the Eichler's
class number formula (\ref{eq:168}) has only finitely many terms.  When $F$ is a
quadratic real field $\Q(\sqrt{d})$ with square-free $d\geq 6$, the set $\scrB$ is
classified in Section~\ref{sec:quadratic-orders}. We will prove (ii) in Lemma~\ref{lem:cyclic-red-gp-CM-fields} (see also \cite[Section~2.1]{xue-yang-yu:num_inv}).

\begin{sect}
Let $\calO$ be an arbitrary $O_F$-order in $H$.   Given an $O_F$-order $B$ inside a CM-extension $K/F$,
  we write $\Emb(B, \calO)$ for the \textit{finite} set of optimal $O_F$-embeddings of $B$
  into $\calO$. 
In other words, 
\[\Emb(B,\calO):=\{\varphi\in \Hom_F(K, H)\mid \varphi(K)\cap
\calO=\varphi(B)\}.\]
The unit group $\calO^\times$ acts on $\Emb(B, \calO)$ from the
right by $\varphi\mapsto u^{-1}\varphi u$ for all
$\varphi\in \Emb(B, \calO)$ and $u\in \calO^\times$, and the action descends to  $\calO^\bs$. We denote
\begin{equation}
  \label{eq:133}
 m(B,\calO,\calO^\times):= \abs{ \Emb(B,\calO)/\calO^\times}. 
\end{equation}
For
each finite prime $\grp$ of $F$, we set
\begin{equation}
  \label{eq:109}
m_\grp(B):=m(B_\grp,\calO_\grp,\calO_\grp^\times)=\abs{
  \Emb(B_\grp,\calO_\grp)/\calO_\grp^\times}.
\end{equation}


Given a fractional locally principal right ideal $I$ of $\calO$, we
write $[I]$ for its ideal class, and $\calO_l(I)$ for its associated
left order $\{x\in H\mid xI\subseteq I\}$.  Let  $\Cl(\calO)$ be
the finite set of locally principal right ideal classes of $\calO$. 
By \cite[Theorem~5.11, p. 92]{vigneras}, 
\begin{equation}
  \label{eq:41}
  \sum_{[I]\in \Cl(\calO)} m(B,\calO_l(I), \calO_l(I)^\times)=h(B)\prod_{\grp} m_\grp(B),
\end{equation}
where the product on the right runs over all finite primes of $F$, and $m_\grp(B)=1$ for all but finitely many $\grp$. A priori,  \cite[Theorem~5.11]{vigneras} is stated for Eichler
orders, but it applies in much more general settings.  See
\cite[Lemma~3.2]{wei-yu:classno} and
\cite[Lemma~3.2.1]{xue-yang-yu:ECNF}.
When $\calO$ is maximal, we have 
\begin{equation}\label{eq:111}
m_\grp(B):=
\begin{cases}
  1-\left(\frac{B}{\grp}\right )  & \text{if }
  \grp|\grd(H),\\
  1 & \text{otherwise},
\end{cases}   
\end{equation}
where $\left(\frac{B}{\grp}\right )$ is the Eichler symbol
\cite[p.~94 and Section~II.3]{vigneras}. 
\end{sect}

\begin{sect}\label{sect:maximal-cyclic-subgp}
  Suppose that $\calO^\times\neq O_F^\times$. We explain the basic
  idea of counting $m(B, \calO, \calO^\times)$ for $B\in \scrB$.  A
  nontrivial cyclic subgroup of $\calO^\bs$ is said to be
  \emph{maximal} if it is not properly contained in any other cyclic
  subgroup of $\calO^\bs$. For each CM $O_F$-order $B\in \scrB$, an
  optimal $O_F$-embedding $\varphi: B\to \calO$ identifies $B^\bs$
  with a maximal cyclic subgroup of $\calO^\bs$.  Conversely, let $C$
  be a maximal cyclic subgroup of $\calO^\bs$, and $u\in \calO^\times$
  be a representative of an arbitrary nontrivial element of $C$.  The
  CM-field $F(u)\subset H$ depends only on $C$ and not on the choice
  of $u$, and the same holds true for the $O_F$-order
  $B_C:=F(u)\cap \calO$.  There is an optimal $O_F$-embedding
  $\varphi: B\to \calO$ such that $\varphi(B^\bs)=C$ if and only if
  $B\simeq B_C$.

  For each $B\in \scrB$, let $\scrC(B, \calO)$ be the subset of maximal
  cyclic subgroups $C\subseteq \calO^\bs$ such that $B_C\simeq B$. It
  is clear that $\scrC(B, \calO)$ is invariant under the conjugation by
  $\calO^\bs$ from the right.  We have an $\calO^\bs$-equivariant
  surjective $2:1$ map
\[\Emb(B, \calO)\twoheadrightarrow \scrC(B, \calO), \qquad \varphi,
\bar\varphi\mapsto \varphi(B^\bs),\]
where $\bar\varphi$ denotes the complex conjugate of $\varphi$. 

For each $C\in \scrC(B, \calO)$, let $N_{\calO^\bs}(C)$ be the
normalizer of $C$ in $\calO^\bs$. We claim that the induced map 
\begin{equation}
  \label{eq:122}
  \Emb(B, \calO)/\calO^\bs\twoheadrightarrow \scrC(B, \calO)/\calO^\bs
\end{equation}
is a 2:1 cover ramified over the conjugacy classes $[C]$ with
$N_{\calO^\bs}(C)\neq C$. In other words, $\varphi$ and $\bar\varphi$
belong to the same $\calO^\times$-conjugacy class if and only if
$N_{\calO^\bs}(\varphi(B^\bs))\neq \varphi(B^\bs)$.  The necessity is
obvious. For the sufficiency, suppose that
$\tilde{v}\in N_{\calO^\bs}(\varphi(B^\bs))$ and
$\tilde{v}\not\in \varphi(B^\bs)$. Let $v\in \calO^\times$ be a
representative of $\tilde{v}$. Then $v^{-1}\varphi(B) v=\varphi(B)$
since $\varphi(B)$ is uniquely determined by the maximal cyclic subgroup
$\varphi(B^\bs)\subseteq \calO^\bs$. It follows that
$v^{-1}\varphi v\in \{\varphi, \bar\varphi\}$.  But $v^{-1}\varphi
v\neq \varphi$, otherwise $v$ lies in the centralizer of $\varphi(B)$
in $\calO$, which is $\varphi(B)$ itself.  Therefore, $v^{-1}\varphi
v=\bar\varphi$, and our claim is verified. 
\end{sect}

The dihedral group $D_2$ is just the Klein $4$-group. It is
abelian and has $3$ maximal cyclic subgroups of order $2$.  If
$\calO^\bs\simeq D_2$, then (\ref{eq:122}) is a bijection and $m(B,
 \calO, \calO^\times)=\abs{\scrC(B, \calO)/\calO^\bs}=\abs{\scrC(B, \calO)}$. 

\begin{prop}\label{prop:explicit-num-opt-embed}
  Assume that $\calO^\times \neq O_F^\times$ and $\calO^\bs\not\simeq
  D_2$. Then $m(B, \calO,
\calO^\times)\leq 2$ for every CM $O_F$-order  $B\in \scrB$. More explicitly, we have the following cases. 
\newcounter{propm}
\begin{enumerate}
\item If $B\not\simeq B_C$ for any maximal cyclic subgroup $C$ of
  $\calO^\bs$, then $\Emb(B, \calO)=\emptyset$, and hence
$m(B, \calO, \calO^\times)=0$. 
\item If $\calO^\bs$ is cyclic, then there is a unique
  $B'\in \scrB$ such that $\Emb(B', \calO)\neq \emptyset$. Moreover,
  we have 
  \begin{equation}
    \label{eq:124}
 m(B', \calO, \calO^\times)=2.    
  \end{equation}
\setcounter{propm}{\value{enumi}} 
\end{enumerate}
Assume further that $B\simeq B_C$ for
some maximal cyclic subgroup $C$ of $\calO^\bs$.
\begin{enumerate}
\setcounter{enumi}{\value{propm}}
\item Suppose that $\calO^\bs\simeq D_n$ with $n\geq 3$. If $n$ is
  even, then $\calO^\bs$ has two conjugacy classes of maximal cyclic
  subgroups of order $2$, denoted by $[C']$ and
  $[C'']$.
\[m(B, \calO, \calO^\times)=
\begin{cases}
1 \qquad &\text{if }  \abs{C}=n;\\
2 \qquad &\text{if }  \abs{C}=2 \text{ and $n$ is odd};\\
1 \qquad &\text{if }  \abs{C}=2, \text{ $n$ is even and }
B_{C'}\not\simeq B_{C''};\\
2 \qquad &\text{if }  \abs{C}=2, \text{ $n$ is even and }
B_{C'}\simeq B_{C''}.
\end{cases}\]
\item If $\calO^\bs\simeq S_4$ or $A_5$, then $m(B, \calO,
  \calO^\times)=1$; if $\calO^\bs\simeq A_4$, then 
\[m(B, \calO, \calO^\times)=
\begin{cases}
1 \qquad &\text{if }  \abs{C}=2;\\
2 \qquad &\text{if }  \abs{C}=3.
\end{cases}\]


\end{enumerate}
\end{prop}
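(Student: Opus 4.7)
The proof follows almost mechanically from the $2{:}1$ ramified cover $\Emb(B,\calO)/\calO^\bs \twoheadrightarrow \scrC(B,\calO)/\calO^\bs$ established immediately above, which yields the formula
\[
m(B,\calO,\calO^\times) \;=\; 2\,u(B) \;+\; r(B),
\]
where $u(B)$ (resp.\ $r(B)$) counts the $\calO^\bs$-conjugacy classes $[C]$ in $\scrC(B,\calO)$ with $N_{\calO^\bs}(C) = C$ (resp.\ $N_{\calO^\bs}(C) \neq C$). The plan is therefore to carry out group-theoretic bookkeeping inside each of the finite groups $C_n$, $D_n$ ($n \geq 3$), $A_4$, $S_4$, $A_5$: enumerate the conjugacy classes of maximal cyclic subgroups, group them by the isomorphism class of the attached CM $O_F$-order $B_C$, and decide in each case whether the normalizer strictly contains the subgroup. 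The universal bound $m \leq 2$ will drop out of the case analysis, and the hypothesis $\calO^\bs \not\simeq D_2$ serves precisely to exclude the abelian group with three distinct maximal cyclic subgroups, where $r(B)$ alone can reach $3$.

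I first dispose of cases (1) and (2). Case (1) is immediate since $\scrC(B,\calO) = \emptyset$ forces $\Emb(B,\calO) = \emptyset$. For case (2), when $\calO^\bs$ is cyclic of order $\geq 2$, the unique maximal cyclic subgroup is $\calO^\bs$ itself, which is abelian and therefore self-normalizing; applying the formula with $u = 1$, $r = 0$ gives $m = 2$ for the unique $B' \simeq B_{\calO^\bs}$. For case (3), the rotation subgroup is the unique maximal cyclic subgroup of order $n$ in $D_n$ and forms a single conjugacy class with normalizer the whole $D_n$, contributing $1$; this settles $|C| = n$. For the reflections: when $n$ is odd, all $n$ reflections are conjugate and the centralizer of any one coincides with the subgroup it generates (no nontrivial rotation commutes with a reflection when $n$ is odd), so the single class is unramified and contributes $2$; when $n$ is even, the reflections split into two $D_n$-conjugacy classes, and each reflection is centralized by the order-$2$ center of $D_n$, hence both classes are ramified. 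The two sub-subcases in the statement then turn on whether only one or both classes contribute to $\scrC(B,\calO)$, i.e.\ on whether $B_{C'} \simeq B_{C''}$.

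Case (4) is a direct inspection of the subgroup structure. In $A_4$, the four order-$3$ subgroups form a single conjugacy class with $N_{A_4}(C_3) = C_3$ (unramified, contributing $2$), while the three order-$2$ subgroups are mutually conjugate with normalizer the Klein four-subgroup (ramified, contributing $1$). In $S_4$, the maximal cyclic subgroups have orders $2$ (transpositions only; the double-transposition subgroups are properly contained in order-$4$ cyclic subgroups), $3$, and $4$, each forming a single conjugacy class with normalizer $\langle(12),(34)\rangle$, $S_3$, and $D_4$ respectively, all strictly larger than $C$. In $A_5$, the orders $2$, $3$, $5$ occur, each as a single conjugacy class with normalizer $V_4$, $S_3$, $D_5$, again all strictly larger. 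Hence in every exceptional case each class is ramified and contributes $1$; moreover, distinct orders of $C$ produce non-isomorphic $B_C$ since $|C| = |B_C^\bs| = w(B_C)$, so at most one class can match a given $B$. The one genuinely delicate point in the whole argument is the $D_n$ ($n$ even) bookkeeping: one must track which reflection class generates which CM-field $F(u) \subset H$ in order to decide the isomorphism type of $B_{C'}$ versus $B_{C''}$; everything else is elementary finite group theory applied to the list in Section~\ref{sect:general-classification-grp}.
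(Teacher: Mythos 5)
Your proposal is correct and takes essentially the same route as the paper: part (1) from the emptiness of $\scrC(B,\calO)$, and the remaining parts by enumerating the conjugacy classes of maximal cyclic subgroups and their normalizers in each group, fed into the $2{:}1$ ramified cover of Section~\ref{sect:maximal-cyclic-subgp} (the paper writes out only the $D_n$ case and leaves the cyclic, $A_4$, $S_4$, $A_5$ cases to the reader, which you have filled in correctly). One wording slip: your summary clause ``in every exceptional case each class is ramified and contributes $1$'' should be restricted to $S_4$ and $A_5$, since, as you yourself computed two sentences earlier, the order-$3$ class in $A_4$ is unramified and contributes $2$.
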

\begin{proof}
Part (1) follows directly from Section~\ref{sect:maximal-cyclic-subgp}.
The remaining parts all reduce to counting the conjugacy classes of
maximal cyclic subgroups in $\calO^\bs$ and working out the
normalizers. For example, 
if $\calO^\bs\simeq
  D_n$ with $n\geq 3$, then we present it as 
\begin{equation}
  \label{eq:130}
\calO^\bs=\dangle{\tilde\eta, \tilde u\in \calO^\bs \mid
  \ord(\tilde\eta)=n, \ord(\tilde u)=2, \tilde\eta\tilde  u=\tilde u
    \tilde\eta^{-1}}. 
\end{equation}
It has a unique maximal cyclic subgroup of order $n$, namely
$\dangle{\tilde{\eta}}$, which is normal. Let $C$ be a maximal cyclic
subgroup of $\calO^\bs$ distinct from $\dangle{\tilde\eta}$. Then
$\abs{C}=2$ and $C=\dangle{\tilde u\tilde\eta^r}$ for some
$0\leq r < n$. If $n$ is odd, such subgroups form a single conjugacy
class, and $N_{\calO^\bs}(C)=C$; if $n$ is even, then they form two conjugacy
classes according to the parity of $r$ (e.g.~we may set $C'=\dangle{\tilde u}$ and
$C''=\dangle{\tilde u\tilde\eta}$), and $N_{\calO^\bs}(C)\neq C$ since
$\tilde\eta^{n/2}$ lies in the center of $\calO^\bs$.  This
proves part (3).  The proof of the remaining parts are left to the interested reader.
\end{proof}

For the rest of this section, we fix a maximal order $\O\subset H$. 
\begin{sect}
   Denote the $H^\times$-conjugacy
  class of a maximal order
  $\O'\subset H$ by $\dbr{\O'}$.   There is a surjective map of finite
  sets 
\begin{equation}
  \label{eq:36}
\Upsilon:  \Cl(\O)\to \Tp(H), \qquad [I]\mapsto \dbr{\calO_l(I)}. 
\end{equation}
The fibers of $\Upsilon$ is studied in
\cite[Section~1.7]{vigneras:ens}, whose result we briefly recall below.


By \cite[Theorem~22.10]{reiner:mo}, the set of nonzero two-sided
fractional ideals of $\O$ forms a commutative multiplicative group
$\scrI(\O)$, which is a free abelian group generated by the nonzero prime
two-sided 
ideals of $\O$. Let $\scrP(\O)\subseteq \scrI(\O)$ be the subgroup of nonzero
principal two-sided fractional ideals of $\O$, and $\scrP(O_F)$ the
group of nonzero principal fractional $O_F$-ideals, identified with a subgroup
of $\scrP(\O)$ via $xO_F\mapsto x\O, \forall x\in F^\times$.  For any maximal order
$\O'$,  there is a
bijection (see \cite[Section~1.7]{vigneras:ens},
\cite[Lemma~III.5.6]{vigneras})
\begin{equation}
  \label{eq:37}
\Upsilon^{-1}(\dbr{\O'})\longleftrightarrow \scrI(\O')/\scrP(\O'). 
\end{equation}
The quotient group $\scrI(\O')/\scrP(\O')$ sits in a short exact sequence \cite[Theorem~55.22]{curtis-reiner:2}
\begin{equation}
  \label{eq:38}
  1\to \calN(\O')/(F^\times\O'^\times)\to \Pic(\O')\to
  \scrI(\O')/\scrP(\O')\to 1. 
\end{equation}
Here $\Pic(\O')$ denotes the Picard group $\scrI(\O')/\scrP(O_F)$, whose
cardinality can be calculated using the short exact sequence  \cite[Theorem~55.27]{curtis-reiner:2}
\begin{equation}
  \label{eq:39}
  1\to \Cl(O_F)\to \Pic(\O')\to \prod_{\grp\mid \grd(H)} (\zmod{2})\to
  0, 
\end{equation}
where $\Cl(O_F)$ denotes the ideal class group of $O_F$. 
It follows that 
\begin{equation}
  \label{eq:40}
\abs{\Upsilon^{-1}(\dbr{\O'})}=\frac{2^{\omega(H)}
  h(F)}{\abs{\calN(\O')/(F^\times\O'^\times)}}, 
\end{equation}
where $\omega(H)$ is the number of finite primes of $F$ that are
ramified in $H$.
\end{sect}

\begin{rem}\label{rem:free-act-odd-hF}
There is a natural action of $\Cl(O_F)$ on $\Cl(\O)$ as follows: 
\begin{equation}
  \label{eq:169}
  \Cl(O_F)\times \Cl(\O)\to \Cl(\O), \qquad ([\gra], [I])\mapsto [\gra
  I]. 
\end{equation}
Let $\ol{\Cl}(\O):=  \Cl(O_F)\backslash \Cl(\O)$ be the set of orbits
of this action. Clearly, $\Upsilon([I])=\Upsilon([\gra I])$, so
$\Upsilon$ factors through $\ol{\Cl}(\O)$. It is shown in
\cite[Section~3]{xue-yu:type_no} that 
\begin{itemize}
\item if $H$ splits at all finite places of $F$, then the induced map
  $\ol{\Cl}(\O)\to \Tp(H)$ is bijective; 
\item if $h(F)$ is odd, then the action in (\ref{eq:169}) is free. 
\end{itemize}
This is the gist of the proof of Proposition~\ref{1.1}, and we
refer to \cite[Section~3]{xue-yu:type_no} for details. 
\end{rem}

\begin{sect}\label{sec:general-strategy-cyclic}
We explain the strategy for calculating $h(G)=h(H, G)$ when $G=C_n$ is a
cyclic group of order $n>1$. Let $\scrB_n\subseteq \scrB$ be the finite subset of
CM $O_F$-orders $B$  such that
$B^\bs\simeq C_n$. For each fixed
  $B\in \scrB_n$, we define
\begin{equation}
  \label{eq:135}
  h(C_n, B):=\#\{[I]\in \Cl(\O)\mid \calO_l(I)^\bs \simeq C_n,
  \text{ and } \Emb(B, \calO_l(I))\neq \emptyset\}. 
\end{equation}
According to part (2) of
Proposition~\ref{prop:explicit-num-opt-embed}, 
\begin{equation}
  \label{eq:43}
h(C_n)=\sum_{B\in \scrB_n}h(C_n, B).   
\end{equation}


We divide the type set $\Tp(H)$ into two subsets: 
\[ \Tp^\circ(H):=\{\dbr{\O'}\in \Tp(H)\mid \O'^\bs \text{ is
    cyclic}\},\quad \text{and} \quad \Tp^{\natural}(H):=\Tp(H)\smallsetminus\Tp^\circ(H). \]
In the trace formula (\ref{eq:41}), the summation $\sum_{[I]\in
  \Cl(\calO)} m(B,\calO_l(I), \calO_l(I)^\times)$ is decomposed as two
parts accordingly: one sums over all $[I]\in \Cl(\calO)$ with $\dbr{\calO_l(I)}\in
\Tp^\circ(H)$, and the other with $\dbr{\calO_l(I)}\in
\Tp^\natural(H)$. 
Thanks to (\ref{eq:124}), 
\begin{equation}
  \label{eq:125}
\sum_{\substack{[I]\in \Cl(\O), \\ \dbr{\calO_l(I)}\in \Tp^\circ(H)}}
m(B, \calO_l(I), \calO_l(I)^\times)=2h(C_n, B). 
\end{equation}
On the other hand, note that if $\Upsilon([I])=\Upsilon([J])$, then
$\calO_l(I)\simeq \calO_l(J)$, so 
\[  m(B, \calO_l(I), \calO_l(I)^\times)=  m(B, \calO_l(J),
  \calO_l(J)^\times). \]
It follows that 
\begin{equation}
  \label{eq:126}
  \sum_{\substack{[I]\in \Cl(\O), \\ \dbr{\calO_l(I)}\in \Tp^{\natural}(H)}}
m(B, \calO_l(I), \calO_l(I)^\times)=  \sum_{\dbr{\O'}\in
  \Tp^\natural(H)}  \abs{\Upsilon^{-1}(\dbr{\O'})} m(B, \O',
 \O'^\times). 
\end{equation}
Combining (\ref{eq:41}), (\ref{eq:111}), (\ref{eq:40}), 
(\ref{eq:125}) and (\ref{eq:126}), we obtain the following
equation for each fixed $B\in \scrB_n$:
\begin{equation}
  \label{eq:42}
2h(C_n,
B)+2^{\omega(H)}h(F)\sum_{\dbr{\O'}\in \Tp^\natural(H)}\frac{m(B, \O',
  \O'^\times)}{\abs{\calN(\O')/(F^\times\O'^\times)}}=h(B)\prod_{\grp\vert\grd(H)} \left(1-\Lsymb{B}{\grp}\right )
\end{equation}
Therefore, to compute $h(C_n, B)$ (and hence $h(C_n)$), it is
crucial to classify all isomorphism classes of maximal orders with
\emph{noncyclic} reduced unit groups, and ideally, to write them down
as explicitly as possible. Sections~\ref{sec:minimal-g-orders}--\ref{sec:maximal-orders-part2} are devoted
to this task. Once this is done, then for any noncyclic group $G$, 
\begin{equation}
  \label{eq:107}
h(G)=\sum \frac{2^{\omega(H)}h(F)}{\abs{\calN(\O')/(F^\times\O'^\times)}},
\end{equation}
where the summation runs over all  $\dbr{\O'}\in \Tp^\natural(H)$
with $  \O'^\bs\simeq G$. 
\end{sect}

\begin{sect}\label{sect:mass-formula}
  Lastly, we compute $h(C_1)$ by the \emph{mass formula}. Recall that the 
  \emph{mass} of an arbitrary $O_F$-order $\calO\subset H$ is defined as 
  \begin{equation}
    \label{eq:112}
    \Mass(\calO):=\sum_{[I]\in \Cl(\calO)}\frac{1}{\abs{\calO_l(I)^\bs}}. 
  \end{equation}
The mass of a maximal order $\O$ can be calculated by the  mass formula  \cite[Corollaire~V.2.3]{vigneras}
\begin{equation}
  \label{eq:113}
\Mass(\O)= \frac{ h(F)|\zeta_F(-1)| }{2^{([F:\Q]-1)}} \prod_{\grp
  | \grd(H) } 
  (N(\grp)-1).
\end{equation}
See \cite[Lemma~5.1.2]{xue-yang-yu:ECNF} for the mass formula of an
arbitrary $O_F$-order in $H$. 
Once $h(G)$ is known for every nontrivial finite group $G$, then we have 
\begin{equation}
  \label{eq:114}
  h(C_1)=\Mass(\O)-\sum_{G\neq C_1}\frac{h(G)}{\abs{G}}.  
\end{equation}

Alternatively, one could compute $h(C_1)$ by 
\begin{equation}\label{eq:171}
h(C_1)=h(\O)-\sum_{G\neq C_1}h(G).   
\end{equation}
According to  Eichler's class number formula \cite[Corollaire~V.2.5]{vigneras},
\begin{equation}
  \label{eq:168}
h(\calO)=\Mass(\calO)+\frac{1}{2}\sum_{B\in
  \scrB}h(B)(1-w(B)^{-1})\prod_{\grp}m_\grp(B),  
\end{equation}
where $w(B)=[B^\times:O_F^\times]$ as in (\ref{eq:121}), and
$m_\grp(B)$ is the number of conjugacy classes of local optimal
embeddings at $\grp$ as in (\ref{eq:109}). A similar formula
\cite[Theorem~1.5]{xue-yang-yu:ECNF} holds for arbitrary $\Z$-orders
of full rank in $H$. When  $\calO=\O$ is maximal,  we have 
\begin{equation}
  \label{eq:170}
h(\O)=\frac{ h(F)|\zeta_F(-1)| }{2^{([F:\Q]-1)}} \prod_{\grp
  | \grd(H) } 
  (N(\grp)-1)+\frac{1}{2}\sum_{B\in
  \scrB}h(B)(1-w(B)^{-1})\prod_{\grp\vert \grd(H)}\left(1-\Lsymb{B}{\grp}\right).
\end{equation}
Applying (\ref{eq:168}) for  real quadratic fields $F$, Vign\'eras obtains explicit class
number formulas for all Eichler orders of square free level in 
\cite[Theorem~3.1]{vigneras:ens}.  

Comparing (\ref{eq:170}) with (\ref{eq:42}), we see that
(\ref{eq:114}) is more direct than (\ref{eq:171}) since it avoid
calculating the right hand side of (\ref{eq:42}) twice. However, the
two approaches share the same theoretical root (\ref{eq:41}), so the
difference is merely superficial.
\end{sect}

\section{Minimal $G$-orders}
\label{sec:minimal-g-orders}
\numberwithin{thmcounter}{subsection}

Throughout this section,  $F$ is a totally real field, and $H$
is a totally definite quaternion $F$-algebra. We mainly focus on
the case that $F=\Q(\sqrt{d})$ is a
real quadratic field with square-free $d\geq 6$. The cases $d\in \{2, 3,5\}$ are
treated separately in Section~\ref{case-p-2-3-5}.  The fundamental unit of
$F=\Q(\sqrt{d})$ is denoted by $\varepsilon$.  By definition, 
$\varepsilon>1$ for the canonical embedding  $F\hookrightarrow\R$.



\subsection{Vign\'eras unit index of $\calO$ and the fundamental unit
  of $F=\Q(\sqrt{d})$}
For an $O_F$-order $\calO$ in $H$, we set
$\calO^1:=\{u\in \calO^\times \mid \Nr(u)=1\}$. It is known \cite[Section~V.1]{vigneras} that
$\calO^1$ is a \textit{finite} normal subgroup of
$\calO^\times$.  Vign\'eras shows in
\cite[Theorem~6]{vigneras-crelle-1976} that 
$[\calO^\times:O_F^\times \calO^1]\in\{1,2,4\}$ for any arbitrary
totally real field $F$, so we call it  the \emph{Vign\'eras unit index} of $\calO$. 

\begin{lem}
  If $F=\Q(\sqrt{d})$ is a real quadratic field, then
  $[\calO^\times: O_F^\times \calO^1]\in \{1,2\}$. Moreover, if
  $\Nm_{F/\Q}(\varepsilon)=-1$, then $\calO^\times=O_F^\times \calO^1$.
\end{lem}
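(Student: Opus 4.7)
The plan is to reduce the question to a computation of unit indices in $O_F$ via the reduced norm map. Since $\calO^1$ is the kernel of $\Nr\colon \calO^\times\to O_F^\times$, the reduced norm induces an isomorphism
\begin{equation*}
\calO^\times/(O_F^\times\calO^1) \xrightarrow{\ \sim\ } \Nr(\calO^\times)/\Nr(O_F^\times).
\end{equation*}
Since $O_F^\times\subset\calO^\times$ sits in the center, its reduced norm is the squaring map, so $\Nr(O_F^\times)=(O_F^\times)^2$. Thus the first step is to record the identity
\begin{equation*}
[\calO^\times:O_F^\times\calO^1]=[\Nr(\calO^\times):(O_F^\times)^2].
\end{equation*}

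Next I would use that $H$ is totally definite: for every nonzero $x\in H$, the reduced norm $\Nr(x)\in F$ is totally positive (this is essentially the definition, since under any real embedding $H\otimes_{F,\sigma}\R\simeq\bbH$ the norm is a sum of four squares). Consequently $\Nr(\calO^\times)$ is contained in the group $O_F^{\times,+}$ of totally positive units, and therefore
\begin{equation*}
[\calO^\times:O_F^\times\calO^1]\le [O_F^{\times,+}:(O_F^\times)^2].
\end{equation*}

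The final step is to compute the right-hand side for $F=\Q(\sqrt{d})$. Write $O_F^\times=\{\pm 1\}\times\langle\varepsilon\rangle$ with $\varepsilon>1$ under the canonical embedding, and let $\varepsilon'$ denote its conjugate. If $\Nm_{F/\Q}(\varepsilon)=+1$, then $\varepsilon'=1/\varepsilon>0$, so $\varepsilon$ is totally positive but is not a square in $O_F^\times$; this gives $O_F^{\times,+}=\langle\varepsilon\rangle$ and $[O_F^{\times,+}:(O_F^\times)^2]=2$. If $\Nm_{F/\Q}(\varepsilon)=-1$, then $\varepsilon'<0$, so no odd power of $\varepsilon$ and no $\pm\varepsilon^k$ with $-$ sign is totally positive, leaving $O_F^{\times,+}=\langle\varepsilon^2\rangle=(O_F^\times)^2$ and the index equal to $1$. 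Combining the two cases yields both assertions of the lemma.

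There is no substantial obstacle here; the proof is a two-line manipulation of exact sequences followed by an elementary case analysis of the real quadratic unit group. The only point that deserves a careful sentence is the total positivity of $\Nr$ on $H^\times$, which is where total definiteness enters.
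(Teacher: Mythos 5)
Your proof is correct and follows essentially the same route as the paper: both use the reduced norm to embed $\calO^\times/(O_F^\times\calO^1)$ into $O_{F,+}^\times/O_F^{\times 2}$ (total definiteness giving $\Nr(\calO^\times)\subseteq O_{F,+}^\times$) and then compute that quotient via the sign of $\Nm_{F/\Q}(\varepsilon)$. The only cosmetic difference is that you spell out the isomorphism $\calO^\times/(O_F^\times\calO^1)\simeq \Nr(\calO^\times)/(O_F^\times)^2$ before bounding, whereas the paper states the embedding directly.
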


\begin{proof}
  Since $H$ is totally definite, we have
  $O_F^{\times 2}\subseteq \Nr(\calO^\times)\subseteq O_{F, +}^\times$, the group of totally
  positive units in $O_F^\times$. This gives rise to an embedding 
\begin{equation}
  \label{eq:3}
  \calO^\times/(O_F^\times\calO^1)\hookrightarrow O_{F, +}^\times/O_F^{\times
  2}. 
\end{equation}
When $F$ is a real quadratic field, the fundamental unit
$\varepsilon\in O_F^\times$ is totally positive if and only if
$\Nm_{F/\Q}(\varepsilon)=1$. We have
\begin{equation}
  \label{eq:134}
 O_{F,+}^\times=
\begin{cases}
  \dangle{\varepsilon} & \quad \text{if } \Nm_{F/\Q}(\varepsilon)=1,\\
  \langle\varepsilon^2\rangle & \quad \text{if }
  \Nm_{F/\Q}(\varepsilon)=-1, 
\end{cases} \qquad \text{while}\quad   O_F^{\times
  2}=\dangle{\varepsilon^2}. 
\end{equation}
The lemma follows directly from (\ref{eq:3}). 
\end{proof}

We set $\scrS=\{1, \varepsilon\}$ if $\Nm_{F/\Q}(\varepsilon)=1$, and
$\scrS=\{1\}$ otherwise. Thus $\scrS$ is a complete set of
representatives of $O_{F, +}^\times/O_F^{\times 2}$ for $F=\Q(\sqrt{d})$.




\begin{rem}
It is well-known \cite[Section~11.5]{Alaca-Williams-intro-ANT} that for $F=\Q(\sqrt{d})$, 
\begin{itemize}
\item $\Nm_{F/\Q}(\varepsilon)=1$ if $d$ is divisible by a prime $p\equiv
3\pmod{4}$;
\item $\Nm_{F/\Q}(\varepsilon)=-1$ if $d=p$ with a prime $p\equiv 1\pmod{4}$, 
  or $d=2p$ with a prime $p\equiv 5\pmod{8}$, or $d=p_1p_2$ with primes
  $p_1, p_2\equiv 1\pmod{4}$ and $\Lsymb{p_1}{p_2}=\Lsymb{p_2}{p_1}=-1$. 
\end{itemize}
Before the discussion leads us too far astray, we refer to the
classes of $d$ with known signs of $\Nm_{F/\Q}(\varepsilon)$ in
Corollaries~21.10 and 24.5 of \cite{Conner-Hurrelbrink}.
\end{rem}

\subsection{Structure of the reduced unit group of a CM-extension $K/F$}
\label{subsec:structure-CM-ext}
We first return to the general case where $F$ is an arbitrary totally
real number field. Let $K/F$ be a CM-extension, and $\bmu(K)$ be the group of
roots of unity in $K$. By
\cite[Theorem~4.12]{Washington-cyclotomic}, the Hasse unit index
  \begin{equation}
    \label{eq:103}
Q_{K/F}:=[O_K^\times: O_F^\times\bmu(K)]   
  \end{equation}
  is either 1 or 2.
  Following \cite[Section~13]{Conner-Hurrelbrink}, we call $K/F$ a
  \textit{CM-extension of type I} if $Q_{K/F}=1$, and of
  \textit{type II} if $Q_{K/F}=2$. Recall that an $O_F$-order in a
  CM-extension of $F$ is called a \emph{CM
$O_F$-order}.


  \begin{lem}\label{lem:cyclic-red-gp-CM-fields}
The reduced unit group  $O_K^\bs:=O_K^\times/O_F^\times$ is a cyclic
group of order
    $Q_{K/F}\abs{\bmu(K)}/2$. In particular,  $B^\bs:=B^\times/O_F^\times$ is
a cyclic subgroup of $O_K^\bs$ for every CM $O_F$-order $B$. 
  \end{lem}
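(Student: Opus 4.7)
The plan is to realize $O_K^\bs$ as a subgroup of the cyclic group $\bmu(K)$, which will give cyclicity for free; the order is then a simple index computation using the definition of $Q_{K/F}$.

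First I would introduce the map
\[
\phi : O_K^\times \longrightarrow \bmu(K), \qquad u \longmapsto u/\bar u,
\]
where $\bar{\cdot}$ denotes the nontrivial element of $\Gal(K/F)$ (i.e.~complex conjugation under any embedding $K\hookrightarrow \C$). The target is really $\bmu(K)$: since $K$ is CM, for every complex embedding $\sigma$ one has $|\sigma(u/\bar u)|=1$, so by Kronecker's theorem $u/\bar u$ is a root of unity in $K$. The map $\phi$ is a group homomorphism, and it is trivial on $O_F^\times$, so it descends to a homomorphism $\bar\phi : O_K^\bs \to \bmu(K)$. Its kernel is represented by those $u\in O_K^\times$ with $u=\bar u$, i.e.~$u\in F\cap O_K^\times = O_F^\times$, so $\bar\phi$ is injective. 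Since $\bmu(K)$ is a finite cyclic group (as a finite subgroup of $K^\times$), this already forces $O_K^\bs$ to be cyclic.

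Next I would compute the order. By the definition \eqref{eq:103} of $Q_{K/F}$ we have an exact sequence
\[
1 \longrightarrow \frac{O_F^\times\bmu(K)}{O_F^\times} \longrightarrow O_K^\bs \longrightarrow \frac{O_K^\times}{O_F^\times\bmu(K)} \longrightarrow 1,
\]
whose right-hand term has order $Q_{K/F}$. The left-hand term is isomorphic to $\bmu(K)/(\bmu(K)\cap O_F^\times)=\bmu(K)/\{\pm 1\}$, since $F$ is totally real and hence $\bmu(F)=\{\pm 1\}$. Therefore
\[
|O_K^\bs| \;=\; Q_{K/F}\cdot \frac{|\bmu(K)|}{2},
\]
which is the stated order.

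Finally, for an arbitrary CM $O_F$-order $B\subseteq O_K$, note that $O_F\subseteq B\subseteq O_K$ gives $O_F^\times\subseteq B^\times\subseteq O_K^\times$, so $B^\bs=B^\times/O_F^\times$ injects into $O_K^\bs$. A subgroup of a finite cyclic group is cyclic, so $B^\bs$ is cyclic. I do not anticipate any real obstacle here; the only point that deserves a line of justification is the fact that $u/\bar u\in \bmu(K)$ for every unit $u\in O_K^\times$, which is where Kronecker's theorem (and the CM hypothesis) is used.
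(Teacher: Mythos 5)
Your proof is correct, but it reaches cyclicity by a genuinely different route than the paper. The paper starts from the same index computation you end with --- $\bmu(K)/\{\pm 1\}$ is a cyclic subgroup of $O_K^\bs$ of index $Q_{K/F}\in\{1,2\}$, which already gives the order formula --- and then deduces cyclicity by a case analysis: the conclusion is immediate when $Q_{K/F}=1$ or when $\abs{\bmu(K)}/2$ is odd (an abelian group with a cyclic subgroup of odd order and index $2$ is cyclic), and in the one remaining case $K=F(\sqrt{-1})$ with $Q_{K/F}=2$ it checks directly that every order-$2$ element of $O_K^\bs$ is represented by a purely imaginary unit, so $\sqrt{-1}$ represents the \emph{unique} element of order $2$ and the group must be cyclic. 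You instead embed $O_K^\bs$ into $\bmu(K)$ via $u\mapsto u/\bar u$ --- injective because the fixed field of the conjugation is $F$, and landing in $\bmu(K)$ by Kronecker's theorem --- which settles all cases uniformly; this is essentially the mechanism behind the proof in \cite[Theorem~4.12]{Washington-cyclotomic} that $Q_{K/F}\le 2$. The trade-off is that your argument invokes Kronecker's theorem and the compatibility of the $K/F$-conjugation with complex conjugation under every embedding (i.e.\ the CM hypothesis, which you correctly flag as the one point needing justification), whereas the paper's argument is more elementary at the price of a case split. The order computation and the final reduction for $B^\bs$ are the same in both.
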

  \begin{proof}
    Note that $\bmu(K)/\{\pm 1\}\subseteq O_K^\bs$ is  a cyclic subgroup 
    of index $Q_{K/F}\in \{1, 2\}$.  The lemma is
     true if one of the following conditions holds:
    \begin{itemize}
    \item $K/F$ is of type I;
    \item $K\neq F(\sqrt{-1})$ so that $\bmu(K)/\{\pm 1\}$ has \emph{odd}
       order. 
    \end{itemize}
    Now suppose that $K=F(\sqrt{-1})$ and $Q_{K/F}=2$. If
    $u\in O_K^\times$ represents an element of order $2$ in
    $O_K^\bs$, then it is purely imaginary,
    i.e.~$u\sqrt{-1}\in O_F^\times$. Thus $O_K^\bs$ contains a unique
    element of order $2$ represented by $\sqrt{-1}$, so it must be
    cyclic. 
  \end{proof}
  Since
  $[O_K^\bs: \bmu(K)/\{\pm 1\}]\in
  \{1,2\}$, if $\tilde{u}\in O_K^\bs$ has \emph{odd} order, then
  $\tilde{u}\in \bmu(K)/\{\pm 1\}$ and is represented by a root of
  unity.


From now on, we assume that $F=\Q(\sqrt{d})$ is a quadratic real field, where $d\in\bbN$ is a positive square free integer.  Since $[K:\Q]=4$,   the possible orders of
  $\bmu(K)$ are $2,4,6,8,10,12$ by
  \cite[Section~2.3]{xue-yang-yu:num_inv}. Moreover,
\begin{itemize}
\item $\abs{\bmu(K)}=8$ if and only if $F=\Q(\sqrt{2})$ and
  $K=\Q(\sqrt{2}, \sqrt{-1})$;
\item $\abs{\bmu(K)}=10$ if and only if $F=\Q(\sqrt{5})$ and
  $K=\Q(\zeta_{10})$; 
\item $\abs{\bmu(K)}=12$ if and only if $F=\Q(\sqrt{3})$ and
  $K=\Q(\sqrt{3},\sqrt{-1})$.
\end{itemize}
Here $\zeta_n$ denotes the primitive $n$-th root of unity $e^{2\pi i/n}$ for all positive $n\in \mathbb{N}$. 

Assume further that that $d\geq 6$ for the rest of Section~\ref{sec:minimal-g-orders}, so
$\abs{\bmu(K)}\in \{2, 4, 6\}$. Let $K/F$ be a CM-extension with
$[O_K^\times: O_F^\times]>1$. If $\bmu(K)=\{\pm 1\}$, then
$\Nm_{F/\Q}(\varepsilon)=1$ and $K=F(\sqrt{-\varepsilon})$ by 
\cite[Lemma~2.2]{xue-yang-yu:num_inv}. It follows that
$[O_K^\times:O_F^\times]>1$ if and only if
\begin{equation}
  \label{eq:6}
  K=\begin{cases}
     F(\sqrt{-1})\text{ or } F(\sqrt{-3}) &\quad \text{if } \Nm_{F/\Q}(\varepsilon)=-1;\\
     F(\sqrt{-1}), F(\sqrt{-\varepsilon}), \text{ or } F(\sqrt{-3})
     &\quad \text{if } \Nm_{F/\Q}(\varepsilon)=1.
   \end{cases}
\end{equation}
Clearly, $F(\sqrt{-\varepsilon})\neq F(\sqrt{-1})$ for all $F$. On
the other hand,  $F(\sqrt{-\varepsilon})= F(\sqrt{-3})$ if
$3\varepsilon\in F^{\times 2}$. 


By
Lemma~\ref{lem:cyclic-red-gp-CM-fields},
$O_K^\bs:=O_K^\times/O_F^\times$ is a cyclic group of order
$n\in \{2, 3, 4, 6\}$ and 
\[n=4 \text{ (resp.~$6$)} \quad \Leftrightarrow \quad K=F(\sqrt{-1})
\text{ (resp.~$F(\sqrt{-3})$) and } Q_{K/F}=2.\]
According to
\cite[Lemma~2]{MR0441914}, $F(\sqrt{-1})/F$ (resp.~$F(\sqrt{-3})/F$)
is of type II if and only if $2\varepsilon\in F^{\times 2}$ (resp.~$3\varepsilon\in F^{\times 2}$).  For example, we have
$2\varepsilon\in F^{\times 2}$ if $d=cp$ with $c\in \{1, 2\}$ and
$p\equiv 3\pmod{4}$ by \cite[Lemma~3]{MR1344833} or
\cite[Lemma~3.2(1)]{MR3157781}.  Similarly,
$3\varepsilon\in F^{\times 2}$ if $d=3p$ with $p\equiv 3\pmod{4}$ by
\cite[Lemma~3]{MR0441914}.  We introduce the following notation:
\begin{itemize}
\item   if   $2\varepsilon\in F^{\times 2}$, then we fix $\vartheta\in F^\times$
  such that $\varepsilon=2\vartheta^2$; 
\item if
  $3\varepsilon\in F^{\times 2}$, then we fix $\varsigma\in F^\times$
  such that $\varepsilon=3\varsigma^2$.
\end{itemize}
Note that in these two cases, $2$ (resp.~3) is ramified in $F/\Q$, and
$2\vartheta$ (resp.~$3\varsigma$) generates the unique prime of
$F$ over $2$ (resp.~$3$). Note that $2\varepsilon$ and
$3\varepsilon$ are simultaneously perfect squares in
$F^\times$ if and only if $d=6$.  Lastly, if
$\{2\varepsilon, 3\varepsilon\}\cap F^{\times 2}=\emptyset$, then
$\abs{O_K^\bs}\in \{2, 3\}$. For instance, if
$\Nm_{F/\Q}(\varepsilon)=-1$, then
$\{2\varepsilon, 3\varepsilon\}\cap F^{\times 2}=\emptyset$, and every CM-extension $K/F$ is of type I
by \cite[Lemma~13.6]{Conner-Hurrelbrink}. 

We select a special set of representatives of $O_K^\bs$. The
representative of the identity element is always chosen to be $1$. Let
$\tilde{u}\in O_K^\bs$ be a nontrivial element. After multiplying a
representative $u\in O_K^\times$ by a suitable element of
$O_F^\times$, we may assume that $n_u:=\Nm_{K/F}(u)$ belongs to the set
$\scrS\subseteq \{1, \varepsilon\}$. This determines $u$ uniquely up
to sign and we put $\Nm_{K/F}(\tilde{u}):=n_u$.   Let $P_u(x)=x^2-t_ux+n_u\in F[x]$
be the minimal polynomial  of $u$ over $F$. Changing $u$ to $-u$
switches the sign of $t_u$ and leaves $n_u$ invariant, so we put
$P_{\tilde{u}}(x)=x^2\pm t_ux+n_u$. In Table~\ref{tab:rep-elements},
we list all the possible $P_{\tilde{u}}(x)$ for each 
$\tilde{u}$.  The method is the
same as that of \cite[Lemma~2.2]{xue-yang-yu:num_inv}, hence omitted.

\begin{table}[htbp]
\centering
\caption{Minimal polynomials of $\tilde{u}$ when $d\geq 6$}
\label{tab:rep-elements}
\renewcommand{\arraystretch}{1.7}
\begin{tabular}{|>{$}c<{$}|>{$}c<{$}|>{$}c<{$}|>{$}c<{$}|}
\hline
\ord(\tilde{u}) & \text{Conditions} & P_{\tilde{u}}(x)\in F[x] &
                                                                 \text{roots
                                                                 of
                                                                 }P_{\tilde{u}}(x)
  \\
\hline
  \multirow{2}{*}{2}& \Nm_{K/F}(\tilde{u})=1 & x^2+1 & \pm \sqrt{-1}\\ \cline{2-4}
                  & \Nm_{F/\Q}(\varepsilon)=1, 
                    \Nm_{K/F}(\tilde{u})=\varepsilon& x^2+\varepsilon &
                                                                  \pm \sqrt{-\varepsilon}\\ \hline
3 & & x^2\pm x+1  & \pm \zeta_{3}^{\pm 1}\\ \hline
4& 2\varepsilon\in F^{\times 2}
                                    &x^2\pm 2\vartheta x+\varepsilon &
                                                                       \pm \sqrt{\pm \varepsilon\sqrt{-1}}=\pm
                                                                       \vartheta(1\pm
  \sqrt{-1})\\ 
\hline 
6& 3\varepsilon\in F^{\times 2}
                                    &x^2\pm 3\varsigma x+\varepsilon &
                                                                       \pm\sqrt{\varepsilon\zeta_6^{\pm
                                                                       1}}=\pm
                                                                       \varsigma
                                                                       \zeta_6^{\pm 1}\sqrt{-3}\\
                                                                       
\hline
\end{tabular}
\end{table}



\subsection{Finite noncyclic subgroups of $H^\times/O_F^\times$}
We keep the assumption that $F=\Q(\sqrt{d})$ with $d\geq 6$.  For every
$O_F$-order $\calO$ in $H$, the reduced unit group
$\calO^\bs=\calO^\times/O_F^\times$ is a \emph{finite} subgroup of
$H^\times/O_F^\times$, so we study the finite subgroups of
$H^\times/O_F^\times$. 

Let $\tilde{u}\in H^\times/O_F^\times$ be a nontrivial element. For
any $u\in H^\times$ representing $\tilde{u}$, the subfield
$F(u)\subset H$ depends only on $\tilde{u}$ and is denoted by
$F(\tilde{u})$. Since $H$ is totally definite, $F(\tilde{u})/F$ is a
CM-extension.  Note that $\tilde{u}$ has finite order if and only if
$u$ is a unit of $O_{F(\tilde{u})}$.




\begin{defn}\label{defn:minimal-g-orders}
Let $G$ be a finite noncyclic subgroup of $H^\times/O_F^\times$.  The
$O_F$-submodule $\calO$ spanned by a complete set of representatives of $G$
is an $O_F$-order in $H$ and called the \emph{minimal} $O_F$-order
attached to $G$,  or \emph{minimal $G$-order} for short. 
\end{defn}
Clearly, the minimal $G$-order $\calO$ does not depend on the choice of
representatives, and $G$ is naturally a subgroup of $\calO^\bs$.
We will show that in fact $G=\calO^\bs$ in Corollary~\ref{cor:units-of-minimal-orders}. For the
moment, let us note that $\ord(\tilde{u})\in \{2, 3, 4, 6\}$ for every
nontrivial $\tilde{u}\in G$ by 
Section~\ref{subsec:structure-CM-ext}. Applying the classification in
Section~\ref{sect:general-classification-grp}, we write down all possible finite
noncyclic subgroups of $H^\times/O_F^\times$ up to isomorphism in 
Table~\ref{tab:non-cyclic-red-gp}.  
\begin{table}[htbp]
 \centering
 \caption{Finite noncyclic  subgroups of $H^\times/O_F^\times$}\label{tab:non-cyclic-red-gp}
\renewcommand{\arraystretch}{1.3}
\begin{tabular}{|>{$}c<{$}|>{$}c<{$}|>{$}c<{$}|}
\hline
  F=\Q(\sqrt{d}), \ d\geq 6 & \ord(\tilde{u})>1 & \text{possible noncyclic
                                                       }G \\ \hline
  2\varepsilon\in F^{\times  2} & 2, 3, 4 & D_2, D_3,
                                              D_4,   A_4, S_4 \\ \hline
  3\varepsilon\in F^{\times  2} & 2, 3, 6 & D_2, D_3, D_6,
                                                 A_4  \\ \hline
\{2\varepsilon, 3\varepsilon\}\cap F^{\times  2}=\emptyset    & 2, 3 & D_2, D_3,  A_4 \\ 
\hline
\end{tabular}
\end{table}


\begin{defn}\label{def:strict-isom}
  For $s\in\{1,2\}$, let $H_s$ be a totally definite quaternion
  $F$-algebra, and $G_s$ be a finite noncyclic subgroup of
  $H_s^\times/O_F^\times$.  We say $G_1$ and $G_2$ are \emph{strictly
    isomorphic} if there is an $F$-isomorphism of quaternion algebras
\begin{equation}
  \label{eq:142}
\psi: H_1\to H_2
\quad \text{such that}\quad \widetilde{\psi}(G_1)=G_2, 
\end{equation}
where $\widetilde{\psi}: H_1^\times/O_F^\times\to H_2^\times/O_F^\times$
denotes the induced isomorphism. 
\end{defn}

By transport of structure, strictly isomorphic groups have isomorphic
 minimal $O_F$-orders. We characterize strict isomorphisms using the reduced norm map. 
 

For every finite subgroup $G$ in $H^\times/O_F^\times$,
the reduced norm map induces a
homomorphism 
\begin{equation}
  \label{eq:136}
\Nr: G\to
O_{F,+}^\times/O_F^{\times 2}. 
\end{equation}
Recall that $O_{F,+}^\times/O_F^{\times 2}$ is represented by
$\scrS$, which is either  $\{1, \varepsilon\}$ or $\{1\}$ depending on
whether $\Nm_{F/\Q}(\varepsilon)=1$ or not. Given $\tilde{u}\in G$, 
we may always
pick a representative $u\in O_{F(\tilde{u})}^\times$ of $\tilde{u}$
such that $\Nr(u)=\Nm_{F(\tilde{u})/F}(u)\in \scrS\subseteq \{1, \varepsilon\}$. Subsequently,
\emph{representatives} of $\tilde{u}$ refer exclusively to these ones.
We set $\Nr(\tilde{u}):=\Nr(u)$ and call it the \emph{reduced norm} of
$\tilde{u}$.  If $\widetilde{\psi}: G_1\to G_2$ is a strict
isomorphism as in (\ref{eq:142}), then clearly $\Nr\circ
\widetilde{\psi}=\Nr$.



\begin{defn}\label{def-of-kinds}
   Let $G$ be a finite noncyclic subgroup of $H^\times/O_F^\times$
   isomorphic to $D_n$ with $n\in \{2, 3\}$. We say
     $G$ is of the \emph{first kind} and write $G\simeq D_n^\dagger$  if every element of order $2$ in $G$
     has reduced norm $1$; otherwise, we say $G$ is of the
     \emph{second kind} and write $G\simeq D_n^\ddagger$.  
\end{defn}
\begin{rem}\label{rem:elements-of-order-2}
We explain why Definition~\ref{def-of-kinds} applies only to
$D_2$ and $D_3$ but not to any other groups in
Table~\ref{tab:non-cyclic-red-gp}. Clearly,  the reduced norm map in (\ref{eq:136}) is constant on each conjugacy
class of $G$. 
 If $G$ contains an element of
order $4$ or $6$, then $\scrS=\{1,
\varepsilon\}$ and $\Nr$ is surjective by
Table~\ref{tab:rep-elements}. 
We separate the rest of the discussion into cases: 

\begin{enumerate}
\item If $G\simeq A_4$, then $\Nr(G)=\{1\}$ since $A_4$ contains no
  subgroup of index $2$. 

\item Suppose that $G\simeq S_4$. Any two isomorphisms $G\simeq S_4$
  differ by a conjugation because all automorphisms of $S_4$ are
  inner. In particular, it makes sense to talk about \emph{cycle
    types} \cite[Section~2.3.1]{wilson-fin-simple-gp} of elements of $G$. There are two conjugacy classes of
  elements of order $2$ in $S_4$:
  \begin{itemize}
  \item the transpositions, and
    \item the permutations of  type $(2,2)$.
  \end{itemize}
The group $G$ has a unique subgroup $G'$ of index $2$ which
corresponds to $A_4$ for every isomorphism $G\simeq S_4$. We have $\ker(\Nr)=G'$ since $\Nr$ maps
  surjectively onto $O_{F,+}^\times/O_F^{\times 2}\simeq \zmod{2}$ and
  $\ker(\Nr)\supseteq G'$. 
 It follows that every transposition has reduced norm
  $\varepsilon$, and every permutation of type $(2,2)$ has reduced
  norm $1$. 
\item Suppose that $G\simeq D_n$ with $n\in \{2,3,4, 6\}$. We present
  $G$ as in (\ref{eq:130}): 
  \begin{equation}
    \label{eq:137}
G=\dangle{\tilde\eta, \tilde u\in G \mid
  \ord(\tilde\eta)=n, \ord(\tilde u)=2, \tilde\eta\tilde  u=\tilde u
    \tilde\eta^{-1}}.     
  \end{equation}
  First consider the case $n=2m$ with $m\in \{2,3\}$. There are 3
  conjugacy classes of elements of order $2$ in $G$:
\begin{itemize}
\item $\{\tilde{\eta}^m\}$, consisting of the unique nontrivial
  element of the center;
\item $\{\tilde{u}\tilde{\eta}^{2t}\mid 0\leq t\leq m-1\}$; 
\item $\{\tilde{u}\tilde{\eta}^{2t+1}\mid 0\leq t\leq m-1\}$.
\end{itemize}
We have $\Nr(\tilde{u})\neq \Nr(\tilde{u}\tilde{\eta})$ since
$\Nr(\tilde{\eta})=\varepsilon$.   Replacing
$\tilde{u}$ by $\tilde{u}\tilde{\eta}$ if necessary, we may and will assume
that 
\begin{equation}
  \label{eq:138}
\Nr(\tilde{u})=1\quad \text{and}\quad
\Nr(\tilde{u}\tilde{\eta})=\varepsilon. 
\end{equation}

If $G\simeq D_3$, then there is a unique conjugacy class of elements of
$2$, which has a uniform reduced norm (either $1$ or
$\varepsilon$). These two cases are distinguished by the notation
$D_3^\dagger$ and $D_3^\ddagger$.

Lastly, suppose that $G\simeq D_2$,  the Klein 4-group. We
write $G\simeq D_2^\dagger$ if every element of $G$ has reduced
norm $1$. If $G\not\simeq D_2^\dagger$, then there is an element with
reduced norm $\varepsilon$, which we choose as $\tilde{\eta}$. Once
again $\Nr(\tilde{u})\neq \Nr(\tilde{u}\tilde{\eta})$. Replacing
$\tilde{u}$ by $\tilde{u}\tilde{\eta}$ if necessary, we always assume
that (\ref{eq:138}) holds true in this case as well. 
\end{enumerate}
\end{rem}



\begin{prop}\label{prop:uniqueness-finit-subgp}
For $s\in\{1,2\}$, let $H_s$ and $G_s$ be as in
Definition~\ref{def:strict-isom}. Then $G_1$ and $G_2$ are strictly
isomorphic if and only if they are isomorphic as abstract groups and are of the same
kind (if applicable).   
\end{prop}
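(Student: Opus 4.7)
The \emph{only if} direction is the easy half. A strict isomorphism $\widetilde{\psi}: G_1\to G_2$ is in particular an abstract group isomorphism, so preservation of the abstract type is automatic. Moreover, since $\psi:H_1\to H_2$ is an $F$-algebra isomorphism between quaternion algebras, it commutes with the reduced norm, so $\Nr\circ\widetilde{\psi}=\Nr$ on $G_1$; in particular the reduced norms of elements of order $2$ are preserved, and the kind is preserved.

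For the \emph{if} direction, the plan is to exploit the presentation $H=\{K,c\}$ introduced in \eqref{eq:33} and the normalized representatives of $\tilde{u}\in G_s$ described in Section~\ref{subsec:structure-CM-ext} and Table~\ref{tab:rep-elements}. Suppose first that $G_s\simeq D_n$ with $n\in\{2,3,4,6\}$, presented as in~\eqref{eq:137} and normalized as in~\eqref{eq:138} (when applicable), so that the abstract isomorphism $G_1\simeq G_2$ together with the kind uniquely determines $\Nr(\tilde{\eta}_s)$ and $\Nr(\tilde{u}_s)$. Pick representatives $\eta_s\in O_{F(\tilde{\eta}_s)}^\times$ and $u_s\in O_{F(\tilde{u}_s)}^\times$ with prescribed reduced norms. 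Then the minimal polynomial of $\eta_s$ is the one read off from Table~\ref{tab:rep-elements}, so it is the same for $s=1,2$; in particular there is an $F$-isomorphism $\alpha:F(\eta_1)\to F(\eta_2)$ with $\alpha(\eta_1)=\eta_2$. From $\tilde{u}_s\tilde{\eta}_s\tilde{u}_s^{-1}=\tilde{\eta}_s^{-1}$ one obtains $u_s\eta_s u_s^{-1}=\bar{\eta}_s$, where the bar denotes the nontrivial Galois automorphism of $F(\eta_s)/F$. This forces $u_s^2\in F$, and since $\tilde{u}_s$ has order $2$ one gets $u_s^2=-\Nr(\tilde{u}_s)$. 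Hence $H_s=\{F(\eta_s),-\Nr(\tilde{u}_s)\}$, and the assignment $\eta_1\mapsto \eta_2$, $u_1\mapsto u_2$ extends uniquely to an $F$-algebra isomorphism $\psi:H_1\to H_2$ whose induced map sends the generators $\tilde{\eta}_1,\tilde{u}_1$ of $G_1$ to $\tilde{\eta}_2,\tilde{u}_2$, and therefore $\widetilde{\psi}(G_1)=G_2$.

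For $G_s\simeq A_4$, the reduced norm vanishes on $G_s$ (Remark~\ref{rem:elements-of-order-2}(1)). Choose order $2$ elements $\tilde{\iota}_s,\tilde{\jmath}_s$ in the normal Klein four-subgroup $V_4\subset G_s$ with $\tilde{\iota}_s\tilde{\jmath}_s$ again of order $2$, and lift them to $\iota_s,\jmath_s$ with $\iota_s^2=\jmath_s^2=-1$; since $F(\iota_s,\jmath_s)$ cannot be commutative in the $4$-dimensional algebra $H_s$, one has $\iota_s\jmath_s=-\jmath_s\iota_s$, so $H_s\simeq\qalg{-1}{-1}{F}$ and $\iota_1\mapsto\iota_2,\ \jmath_1\mapsto\jmath_2$ extends to an $F$-algebra isomorphism $\psi$. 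To see that $\widetilde{\psi}(G_1)=G_2$, one checks that a generator $\tilde{g}_s\in G_s$ of order $3$ can be chosen so that conjugation by $g_s$ cyclically permutes $\iota_s,\jmath_s,\iota_s\jmath_s$; by Skolem--Noether this pins down $g_s$ inside $\qalg{-1}{-1}{F}$ up to $F^\times$, giving the same explicit expression (essentially $(1+\iota_s+\jmath_s+\iota_s\jmath_s)/2$) on both sides. The case $G_s\simeq S_4$ extends the $A_4$ argument by one further generator: the transposition $\tilde{t}_s$ has $\Nr(\tilde{t}_s)=\varepsilon$ by Remark~\ref{rem:elements-of-order-2}(2), so its lift satisfies $t_s^2=-\varepsilon$, and its conjugation action on $V_4\subset G_s$ together with the abstract isomorphism $S_4\simeq S_4$ again determines $t_s$ up to $F^\times$; the same Skolem--Noether step shows $\widetilde{\psi}(\tilde{t}_1)=\tilde{t}_2$. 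The main technical nuisance is precisely this book-keeping for $A_4$ and $S_4$, ensuring that the $F$-algebra isomorphism built from the Klein four data actually transports the chosen $3$-cycle (resp.\ transposition) to the correct element; once this compatibility is arranged by modifying $\alpha$ by an inner automorphism of $G_2$ and adjusting signs of lifts, the proof concludes.
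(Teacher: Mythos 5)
Your proof is correct and follows essentially the same route as the paper: the dihedral cases are handled via the presentation $H_s=\{F(\eta_s),c\}$ with $c=u_s^2$ determined by the kind, and then $A_4$ and $S_4$ are treated by showing that the extension of the (already classified) Klein four-subgroup, resp.\ of $A_4$, is forced. The only cosmetic differences are that the paper pins down the $3$-cycle by a direct computation showing $\xi^{-1}\xi'\in F^\times\langle i,j,k\rangle$ rather than by citing Skolem--Noether, and for $S_4$ it uses the order-$4$ element $\vartheta(1\pm i)$, read off directly from Table~\ref{tab:rep-elements}, instead of a transposition as the extra generator.
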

\begin{proof}
  We first prove the proposition under the
assumption that $G_s\simeq D_n$ with $n\in \{2,3,4,6\}$. Presented $G_s$ as in
(\ref{eq:137}),  and assume the generators $\{\tilde{\eta}_s,
\tilde{u}_s\}_{s=1,2}$ satisfy (\ref{eq:138}) if $G_s\simeq
D_2^\ddagger$ or $G_s\simeq D_n$ with $n\in \{4, 6\}$. By the assumption, we have $\ord(\tilde{\eta}_1)=\ord(\tilde{\eta}_2)$
and $\Nr(\tilde{\eta}_1)=\Nr(\tilde{\eta}_2)$. Let
$K_s:=F(\tilde{\eta}_s)$ for $s=1,2$. 
Thanks to
Table~\ref{tab:rep-elements}, there is an
$F$-isomorphism of CM-fields 
\begin{equation}
  \label{eq:140}
\psi_0: K_1\to
K_2 \quad \text{such that}\quad
\widetilde{\psi}_0(\tilde{\eta}_1)=\tilde{\eta}_2. 
\end{equation}
Let $u_s\in H_s^\times$ be a representative of $\tilde{u}_s$ and put
$c:=u_1^2=u_2^2\in \{-1, -\varepsilon\}$. We have
$u_sK_s u_s^{-1}=K_s$ since $\tilde{u}_s\tilde\eta_s\tilde{u}_s^{-1}=
    \tilde{\eta}_s^{-1}$. Hence conjugation by $u_s$ induces the unique
\textit{nontrivial} $F$-automorphism $y_s\mapsto \bar{y}_s$ on
$K_s$. It follows that  $H_s=K_s+K_su_s\simeq
\{K_s, c\}$ (notation as in the start of Section~\ref{sec:prelimiary}). 
Thus $\psi_0$
extends to an $F$-isomorphism 
\begin{equation}
  \label{eq:141}
\psi: H_1\to H_2\quad\text{such that}\quad \psi(u_1)=u_2.   
\end{equation}


Next, assume that $G_1$ and $G_2$ are both isomorphic to $A_4$. By
Remark~\ref{rem:elements-of-order-2}, we have
$A_4 \supset D_2^\dagger$. Applying the preceding proof to $D_2^\dagger$, we may identify
both $H_s$ with $H:=\{F(\sqrt{-1}), -1\}=\qalg{-1}{-1}{F}$ such that the
unique Sylow $2$-subgroup of $G_s$ is identified with $V:=\{\tilde{1},
\tilde{i}, \tilde{j}, \tilde{k}\}\subseteq H^\times/O_F^\times$. Put 
\begin{equation}\label{eq:108}
\xi:=(1+i+j+k)/2\in \qalg{-1}{-1}{\Q}\subset \qalg{-1}{-1}{F}.  
\end{equation}
The subgroup $G_0:=\dangle{\tilde{i}, \tilde{j}, \tilde{\xi}}\subset H^\times/O_F^\times$ is isomorphic to $A_4$. Let $G\subset
H^\times/O_F^\times$ be a subgroup such that $G\supset V$ and $G\simeq
A_4$. We show that $G=G_0$.
 The inner automorphisms of $G$
induce an embedding
\[G/V\hookrightarrow \Aut(V)\simeq S_3, \]
which identifies $G/V$ with $A_3\subset S_3$.  In
particular, there exists an element $\tilde{\xi}'\in G$ of order $3$
such that conjugation by $\tilde{\xi}'$ induces the cyclic permutation
$(\tilde{i}, \tilde{j}, \tilde{k})$. On the other hand, conjugation by $\tilde{\xi}$ also acts as $(\tilde{i},
\tilde{j}, \tilde{k})$. It follows that $\xi^{-1}\xi'$ commutes with
$i, j, k$ up to sign, and hence $\xi^{-1}\xi'\in F^\times\dangle{i, j,
  k}$, the subgroup of $H^\times$ generated by $F^\times$ and $\{i,j,k\}$. 
By Table~\ref{tab:rep-elements}, $\Nr(\xi')=1$ since
$\ord(\tilde{\xi}')=3$. We have $\xi^{-1}\xi'\in \{\pm 1, \pm
i, \pm j, \pm k\}$ and hence $\tilde{\xi}\in G$. This concludes the
proof for $A_4$.

Lastly, suppose that $G_1$ and $G_2$ are both isomorphic to $S_4$. By
Table~\ref{tab:non-cyclic-red-gp}, we have
$2\varepsilon\in F^{\times 2}$.  Since $S_4\supset A_4$, we may
identify each $H_s$ with $H=\qalg{-1}{-1}{F}$ in such a way that the unique
index 2 subgroup of $G_s$ is identified with $G_0\simeq A_4$ as above.  
It remains to show that there is a unique subgroup of
$H^\times/O_F^\times$ containing $G_0$ and isomorphic to
$S_4$.  Let $G$ be such a group. Then $\tilde{i}\in V\subset G$ is the
square of an element of order $4$, say $\tilde{\tau}\in G$.  We then
have $F(\tilde{\tau})=F(i)$ and $\tilde{\tau}$ is represented by
either $\vartheta(1+i)$ or $\vartheta(1-i)$ according to
Table~\ref{tab:rep-elements}. These two elements are mutually inverse
to each other modulo $O_F^\times$. Thus $G$ is generated by $G_0$ and
$\vartheta(1+i)O_F^\times$, hence uniquely determined. 
\end{proof}

In the case $G\simeq S_4$ and $G\supset G_0$, the same argument also shows that 
$\vartheta(1+j)O_F^\times\in G$ as well. This will be used when we
write down the minimal $G$-order in Section~\ref{sec:minimal-g-order-table}.

By Table~\ref{tab:non-cyclic-red-gp} and Proposition~\ref{prop:uniqueness-finit-subgp}, the strict
isomorphism classes of all possible 
finite
noncyclic subgroups of $H^\times/O_F^\times$ for all $H$ are
\begin{equation}
  \label{eq:139}
\calG^\natural:=\{D_2^\dagger, D_2^\ddagger, D_3^\dagger,
D_3^\ddagger, D_4, D_6, A_4, S_4\}.  
\end{equation}
Given two distinct groups $G, G'\in \calG^\natural$, we write $G< G'$ if $G$ is realizable as a subgroup of
$G'$. The relationships between these groups are illustrated in
Figure~\ref{fig:rel-gps}, where every pair $G< G'$ is
connected by a line. 
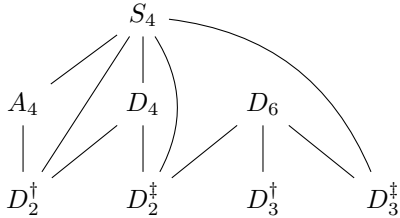
\begin{figure}[htbp]
 \centering
 \caption{Relationships between the groups in
   $\calG^\natural$}\label{fig:rel-gps}
\bigskip
\begin{tikzcd}
  & S_4\ar[ld, dash]\ar[d, dash]\ar[rrdd, dash, bend left] \ar[ldd,
  dash]\ar[dd,dash, bend left] &  & \\
A_4\ar[d, dash] & D_4\ar[d, dash]\ar[ld, dash] & D_6 \ar[rd, dash]\ar[d, dash]\ar[ld, dash] & \\
D_2^\dagger & D_2^\ddagger &D_3^\dagger & D_3^\ddagger 
\end{tikzcd}
\end{figure}

The diagram makes heavy use of Remark~\ref{rem:elements-of-order-2}.
For example, $D_3^\dagger$ is not realizable as a subgroup of
$S_4$. Otherwise $D_3^\dagger< A_4$, but $A_4$
does not contain any subgroup of index $2$.  The group $D_6$ has
a unique conjugacy class of subgroups of order $4$, namely its Sylow
$2$-subgroups. Each Sylow 2-subgroup contains the unique nontrivial
element of the center, which has reduced norm
$\varepsilon$. Hence $D_6$ contains subgroups strictly isomorphic to
$D_2^\ddagger$ but none to $D_2^\dagger$. Lastly, let $\{\tilde{u}, \tilde{\eta}\}$ be
  generators of $D_4$ 
as in (\ref{eq:137}) with $\Nr(\tilde{u})=1$. Since
$\Nr(\tilde{\eta}^2)=1$, we have  $\dangle{\tilde{u},
  \tilde{\eta}^2}\simeq D_2^\dagger$ and  $\dangle{\tilde{u}\tilde{\eta},
  \tilde{\eta}^2}\simeq D_2^\ddagger$. We leave the discussion of the
remaining cases to the interested readers. 

\begin{lem}\label{lem:strict-isom-subgp-conj}
  Let $G$ be a finite noncyclic subgroup of $H^\times/O_F^\times$,
  and $G_1, G_2$ be two noncyclic subgroups of $G$. If $G_1$ and
  $G_2$ are strictly isomorphic, then they are conjugate in $G$. 
\end{lem}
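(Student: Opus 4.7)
The plan is to do a case analysis on the abstract isomorphism type of the ambient group $G$, using the list $\calG^\natural$ in \eqref{eq:139} and the subgroup relations drawn in Figure~\ref{fig:rel-gps}. Since any strict isomorphism must preserve the reduced norm character $\Nr\colon G\to O_{F,+}^\times/O_F^{\times 2}$ (by Definition~\ref{def:strict-isom}), and since this character is a homomorphism into an abelian group and hence constant on conjugacy classes of $G$, the ``kind'' (dagger versus double-dagger) of a noncyclic subgroup of $G$ is automatically a $G$-conjugation invariant. So the statement to verify is that, within each ambient $G$, the noncyclic subgroups are distinguished up to $G$-conjugacy by (abstract isomorphism type) together with (kind).

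First I would dispose of the trivial cases. If $G$ is one of $D_2^\dagger, D_2^\ddagger, D_3^\dagger, D_3^\ddagger$ then the only noncyclic subgroup of $G$ is $G$ itself, so there is nothing to prove. For $G\simeq A_4$, the only proper noncyclic subgroup is the unique Sylow $2$-subgroup (a normal Klein four group of type $D_2^\dagger$ by Remark~\ref{rem:elements-of-order-2}), and again the assertion is immediate.

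Next I would treat $G\simeq D_4$ and $G\simeq D_6$ using the presentation \eqref{eq:137} with the normalization \eqref{eq:138}. For $D_4$, the two Klein four subgroups are $\langle \tilde\eta^2,\tilde u\rangle$ and $\langle \tilde\eta^2,\tilde u\tilde\eta\rangle$; since $\Nr(\tilde\eta^2)=1$ while $\Nr(\tilde u)=1$ and $\Nr(\tilde u\tilde\eta)=\varepsilon$, the first is $D_2^\dagger$ and the second is $D_2^\ddagger$, so strict isomorphism actually forces $G_1=G_2$. For $D_6\simeq D_3\times C_2$, the three Sylow $2$-subgroups are all of type $D_2^\ddagger$ (each contains the central element $\tilde\eta^3$, which has reduced norm $\varepsilon$) and form a single conjugacy class by Sylow's theorem. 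The two index-$2$ dihedral subgroups are $\langle \tilde\eta^2,\tilde u\rangle$ and $\langle \tilde\eta^2,\tilde u\tilde\eta\rangle$; the same reduced-norm computation shows that the first is $D_3^\dagger$ and the second $D_3^\ddagger$, so again strict isomorphism forces equality.

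The main case—and the only one that requires a genuine enumeration—is $G\simeq S_4$. I would use the unique identification of $S_4$ with the symmetric group, so that cycle types are well defined (this is how Remark~\ref{rem:elements-of-order-2} identifies transpositions as having reduced norm $\varepsilon$ and $(2,2)$-cycles as having reduced norm $1$). Then I would enumerate noncyclic subgroups by order: $A_4$ is unique; the three Sylow $2$-subgroups are conjugate copies of $D_4$; the four $S_3$-subgroups are conjugate copies of $D_3^\ddagger$ (they contain three transpositions, each of reduced norm $\varepsilon$); and the Klein four subgroups split as the normal $V=\{1,(12)(34),(13)(24),(14)(23)\}$, uniquely of type $D_2^\dagger$, versus the three conjugate subgroups of shape $\{1,(ab),(cd),(ab)(cd)\}$, each of type $D_2^\ddagger$ (containing a transposition, hence an element of reduced norm $\varepsilon$). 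Each strict isomorphism class of noncyclic subgroup is thus a single $S_4$-conjugacy class, and the lemma follows.

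The only place where anything non-routine happens is the $S_4$ case, and even there the ``obstacle'' is really just bookkeeping: one has to check that none of the distinct $S_4$-conjugacy classes of noncyclic subgroups collapse under strict isomorphism, which is exactly ensured by pairing the abstract subgroup structure of $S_4$ against the reduced-norm dichotomy already codified in Remark~\ref{rem:elements-of-order-2}.
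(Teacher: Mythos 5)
Your proof is correct and follows essentially the same route as the paper's: a case analysis on the ambient group $G$, in which strictly isomorphic subgroups are shown to be equal (via uniqueness or the $\dagger/\ddagger$ dichotomy from Remark~\ref{rem:elements-of-order-2}), conjugate by Sylow's theorem when they are Sylow $2$-subgroups, or conjugate because the relevant subgroups of $S_4$ (non-normal Klein four subgroups, order-$6$ subgroups) form a single conjugacy class. Your write-up merely spells out the trivial ambient cases and the $D_4$/$D_6$ reduced-norm computations a bit more explicitly than the paper does.
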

\begin{proof}
Suppose that $G_1$ and $G_2$ are strictly isomorphic. First, we have $G_1=G_2$ in the following cases: 
\begin{itemize}
\item $G\simeq A_4, D_4$; 
\item $G\simeq S_4$ and $G_s\simeq A_4$ (or $D_2^\dagger$) for   $s=1,2$; 
\item $G\simeq D_6$ and $G_s\simeq D_3^\dagger$ (or $D_3^\ddagger$)
  for  $s=1,2$.
\end{itemize}
Next, if $G\simeq S_4$
(resp.~$D_6$) and $G_s\simeq D_4$ (resp.~$D_2^\ddagger$), then both
$G_s$ are Sylow $2$-subgroups of $G$, and hence conjugate. It remains
to handle the cases that $G\simeq S_4$, and $G_s\simeq D_2^\ddagger$ or
$D_3^\ddagger$. Note that a noncyclic order $4$ subgroup of $S_4$ 
is strictly
isomorphic to $D_2^\ddagger$ if and only if it is not normal, and such
subgroups form a single conjugacy class. There is a single
conjugacy class of subgroups of order $6$ in $S_4$, and each such subgroup is
strictly isomorphic to $D_3^\ddagger$. 
\end{proof}


\subsection{Structure of the minimal $G$-orders for noncyclic finite
  groups $G$}
\label{sec:minimal-g-order-table}

\begin{defn}\label{def:min-g-ord-up-isom}
For each $G\in \calG^\natural$ as in \eqref{eq:139}, an $O_F$-order $\calO\subset H$ is
called a 
\emph{minimal $G$-order} if there exists $G'\subset
H^\times/O_F^\times$ such that $G'\simeq G$ and $\calO$ is a minimal
$G'$-order.  
\end{defn}
By Proposition~\ref{prop:uniqueness-finit-subgp}, a minimal
$G$-order is uniquely determined up
to $O_F$-isomorphism.  The proof of
Proposition~\ref{prop:uniqueness-finit-subgp} also provides a method
for writing down a minimal $G$-order explicitly. For example, if $G$ is
dihedral of order $2n$ and presented as in (\ref{eq:137}), then the
$O_F$-order 
\begin{equation}
  \label{eq:44}
\scrO_n:=O_F+O_Fu+O_F\eta+O_Fu \eta,
\end{equation}
is a minimal $G$-order. Take
$n=6$. Then by (\ref{eq:44}) and Table~\ref{tab:rep-elements}, 
  \begin{equation}\label{eq:63}
\scrO_6=O_F+O_Fi+O_F\varsigma
  (3+j)/2+O_F\varsigma i(3+j)/2\subset \qalg{-1}{-3}{F},    
  \end{equation}
where $3\varsigma^2=\varepsilon$.  Since
$O_{F(j)}=O_F+O_F\varsigma (3+j)/2$ by Lemma~\ref{lem:ord-6-max-ord}, we may  write
\begin{equation}
  \label{eq:143}
  \scrO_6=O_{F(j)}+iO_{F(j)}. 
\end{equation}

As before, we use superscripts $\dagger$ or $\ddagger$ to distinguish the two kinds,
e.g.~$\scrO_2^\dagger$ denotes a minimal $D_2^\dagger$-order.
Let 
\begin{equation}
  \label{eq:149}
H_{2,\infty}=\qalg{-1}{-1}{\Q}, \qquad 
H_{3, \infty}=\qalg{-1}{-3}{\Q},
\end{equation}
 and $\xi\in H_{2, \infty}$ be the element in \eqref{eq:108}. We define
\begin{align}
  \gro_2&:=\Z[i,j]=\Z+\Z i+\Z j+\Z k\subset
          H_{2,\infty},\\
  \bbo_2&:=\Z+\Z i+\Z j+\Z \xi\subset H_{2,\infty},\label{eq:146}\\
  \gro_3&:=\Z[i, (1+j)/2]=\Z+\Z
  i+\Z\frac{1+j}{2}+\Z\frac{i(1+j)}{2}\subset H_{3, \infty}.\label{eq:60}
\end{align}
Then $\scrO_n^\dagger=\gro_n\otimes_\Z O_F$ for $n=2,3$, and
$\calO_{12}:=\bbo_2\otimes_\Z O_F$ is a minimal $A_4$-order. 
  It is well-known that both $\bbo_2$ and $\gro_3$ are maximal orders in their
respective quaternion $\Q$-algebras, and $[\bbo_2:\gro_2]=2$.

By the proof of Proposition~\ref{prop:uniqueness-finit-subgp}, if
$H^\times/O_F^\times$ contains a subgroup isomorphic to $D_4$ or $S_4$, then 
$2\varepsilon\in F^{\times 2}$ and $H$ may be identified with
$\qalg{-1}{-1}{F}$.  Fix $\vartheta\in F^\times$ such that
$\varepsilon=2\vartheta^2$ as in Section~\ref{subsec:structure-CM-ext}. If 
$\alpha\in H$ satisfies $\alpha^2=-1$, then we put
$\sqrt{\varepsilon\alpha}=\vartheta(1+\alpha)$ since $(\pm
\vartheta(1+\alpha))^2=\varepsilon \alpha$. Thus 
\begin{equation}
  \label{eq:145}
  \scrO_4=O_F+O_Fi+O_F\sqrt{\varepsilon
    j}+O_Fi\sqrt{\varepsilon j}. 
\end{equation}
Straightforward calculation shows that 
\begin{equation}
  \label{eq:144}
\O_{24}:=O_F+O_F\sqrt{\varepsilon i}+O_F\sqrt{\varepsilon j}+O_F\xi  
\end{equation}
is closed under multiplication,   and hence an $O_F$-order. Note that
$\sqrt{\varepsilon k}=(2\vartheta-\sqrt{\varepsilon j})\xi\in
\O_{24}$. Clearly,
$\O_{24}$ contains the minimal $A_4$-order $\calO_{12}$.  Thus it is
 a minimal $S_4$-order since $\sqrt{\varepsilon i}\in \O_{24}$. See
Definition-Lemma~\ref{defn:O12-O4I} for a simpler
 presentation of $\O_{24}$. 

The deduction of the explicit forms of minimal $O_F$-orders attached
to $D_2^\ddagger$ or
$D_3^\ddagger$ are left to the interested reader.
 In Table~\ref{tab:minimal-G-orders}, we list for each $G\in \calG^\natural$ a
representative in the unique isomorphism class of minimal $G$-orders
and calculate its discriminant (see
Lemmas~\ref{lem:gen-lem-scro2II} and \ref{lem:gen-lem-scro3II}).
The existence of a minimal $G$-order determines $H$
 uniquely up to isomorphism.


\begin{table}[htbp]
\centering
\caption{Minimal $G$-orders for $d\geq 6$}\label{tab:minimal-G-orders}
\renewcommand{\arraystretch}{1.5}
\begin{tabular}{|>{$}c<{$}|>{$}c<{$}|>{$}c<{$}|>{$}c<{$}|>{$}c<{$}|}
\hline
G & \text{condition on }\varepsilon & H & \text{minimal $G$-order } & \grd(\calO)\\
\hline
D_2^\dagger& & \qalg{-1}{-1}{F} & \scrO_2^\dagger:=O_F[i,j] & 4O_F\\
\hline
D_2^\ddagger&\Nm_{F/\Q}(\varepsilon)=1 & \qalg{-1}{-\varepsilon}{F} &
                                                                     \scrO_2^\ddagger:=O_F[i,j]
                            & 4O_F\\
\hline
D_3^\dagger& & \qalg{-1}{-3}{F} & \scrO_3^\dagger:=O_F[i,(1+j)/2]
                            & 3O_F\\
\hline
D_3^\ddagger&\Nm_{F/\Q}(\varepsilon)=1 & \qalg{-\varepsilon}{-3}{F} & \scrO_3^\ddagger:=O_F[i,(1+j)/2]
                            & 3O_F\\
\hline
D_4 & 2\varepsilon\in F^{\times 2} & \qalg{-1}{-1}{F}
                    &\scrO_4:=O_F+O_Fi+O_F\sqrt{\varepsilon j}+O_Fi\sqrt{\varepsilon j} & 2O_F\\
\hline
D_6 & 3\varepsilon\in F^{\times 2} &\qalg{-1}{-3}{F} & \scrO_6:=O_{F(j)}+iO_{F(j)}& O_F\\
\hline
A_4 &  & \qalg{-1}{-1}{F}&\calO_{12}:=O_F+O_Fi+O_Fj+O_F\xi & 2O_F\\
\hline 
S_4 & 2\varepsilon\in F^{\times 2} &
                                     \qalg{-1}{-1}{F}&\O_{24}:=O_F+O_F\sqrt{\varepsilon
                                                       i}+O_F\sqrt{\varepsilon j}+O_F\xi
                                                   & O_F\\
\hline
\end{tabular}
\end{table}

We make a few observations based on
Table~\ref{tab:minimal-G-orders}. 
\begin{cor}\label{cor:mini-D6-S4-maximal}
The minimal $O_F$-orders  attached to $D_6$ and $S_4$  are 
 maximal. 
\end{cor}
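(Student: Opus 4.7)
The plan is to deduce maximality directly from the discriminant entries in the last column of Table~\ref{tab:minimal-G-orders}, which record $\grd(\scrO_6)=O_F$ and $\grd(\O_{24})=O_F$. Granted these values, the argument is purely formal: recall from Section~\ref{sec:prelimiary} that for any containment $\calO'\subseteq \calO$ of $O_F$-orders in $H$ one has $\grd(\calO')=\chi(\calO,\calO')\grd(\calO)$, with the ideal index $\chi(\calO,\calO')\subseteq O_F$ integral. Thus the unit ideal $O_F$ is the minimal possible value of a discriminant of an order, and any order realizing it admits no proper overorder. Applying this to a maximal $O_F$-order $\O\supseteq \scrO_6$ forces $\grd(\O)\mid O_F$, hence $\grd(\O)=O_F$ and then $\chi(\O,\scrO_6)=O_F$, so $\O=\scrO_6$. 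The identical argument gives $\O=\O_{24}$, and (as a byproduct) shows $\grd(H)=O_F$, i.e.~$H$ must be unramified at every finite prime of $F$.

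The only genuine content lies in verifying the tabulated discriminants. For $\scrO_6=O_{F(j)}+iO_{F(j)}$, I would use the $O_F$-basis $\{1,\alpha,i,i\alpha\}$ with $\alpha=\varsigma(3+j)/2$ (so that $\alpha^2-3\varsigma\alpha+\varepsilon=0$ and $O_{F(j)}=O_F[\alpha]$ by Lemma~\ref{lem:ord-6-max-ord}). The Gram matrix $(\Tr(x_s\bar x_t))$ is block-diagonal with both $2\times 2$ blocks equal to $\left(\begin{smallmatrix}2&3\varsigma\\3\varsigma&2\varepsilon\end{smallmatrix}\right)$, whose determinant is $4\varepsilon-9\varsigma^2=\varepsilon$; taking the square root of the total determinant $\varepsilon^2$ gives $\grd(\scrO_6)=\varepsilon O_F=O_F$. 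For $\O_{24}$, the analogous computation using the basis $\{1,\sqrt{\varepsilon i},\sqrt{\varepsilon j},\xi\}$ from \eqref{eq:144} yields $\grd(\O_{24})=O_F$ as well.

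No serious obstacle is expected: once the Gram-matrix calculations confirm the table, the corollary reduces to the one-line observation that $\grd(\calO)=O_F$ implies $\calO$ is maximal.
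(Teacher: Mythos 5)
Your proposal is correct and follows the paper's own (one-line) proof exactly: read off $\grd(\scrO_6)=\grd(\O_{24})=O_F$ from Table~\ref{tab:minimal-G-orders} and conclude maximality from the fact that the unit ideal is the minimal possible discriminant. Your Gram-matrix verification of the tabulated discriminants (which checks out, e.g.\ $4\varepsilon-9\varsigma^2=\varepsilon$) is a sound way to justify the table entries that the paper leaves implicit, but it does not change the argument.
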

\begin{proof}
Let $\O$ be either $\scrO_6$ or $\O_{24}$. We have $\grd(\O)=O_F$, and
hence $\O$ is maximal. 
\end{proof}
\begin{cor}\label{cor:quat-alg-with-non-cyclic-gp}
   If $H$ is not isomorphic to any of the
  quaternion algebras in Table~\ref{tab:minimal-G-orders}, (e.g.~$\grd(H)\nmid 6O_F$), then
  $\calO^\bs$ is \emph{cyclic} for every  $O_F$-order
  $\calO\subset H$.
\end{cor}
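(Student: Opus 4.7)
The plan is to argue by contrapositive: assume that some $O_F$-order $\calO\subset H$ has noncyclic reduced unit group $G:=\calO^\bs$, and then show that $H$ is forced to be one of the quaternion algebras listed in Table~\ref{tab:minimal-G-orders}.

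First I would place $G$ inside the classification. Viewing $G$ as a finite noncyclic subgroup of $H^\times/O_F^\times$, Section~\ref{sect:general-classification-grp} says it is cyclic, dihedral, or exceptional. Combining the local constraint on orders of elements (each $\tilde u\in G$ lies in the reduced unit group of some CM $O_F$-order, whose possible orders are $2,3,4,6$ by Section~\ref{subsec:structure-CM-ext}) with Table~\ref{tab:non-cyclic-red-gp}, one sees that $G$ must be strictly isomorphic (in the sense of Definition~\ref{def:strict-isom}) to one of the eight groups in $\calG^\natural$.

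Next I would produce a minimal $G$-order inside $\calO$. Choose representatives in $\calO^\times$ for each element of $G$; their $O_F$-span is an $O_F$-submodule $\calO'\subseteq \calO$. The key observation is that $\calO'$ is a full $O_F$-lattice in $H$: because $G$ is noncyclic, it cannot be contained in $K^\times/O_F^\times$ for a single CM-extension $K/F$ (otherwise $G\subseteq O_K^\bs$ would be cyclic by Lemma~\ref{lem:cyclic-red-gp-CM-fields}), so $G$ contains two elements $\tilde u,\tilde v$ with $F(\tilde u)\neq F(\tilde v)$, and then $1,u,v,uv$ span $H$ over $F$. Thus $\calO'$ is an $O_F$-order, and by construction it is a minimal $G$-order in the sense of Definitions~\ref{defn:minimal-g-orders} and \ref{def:min-g-ord-up-isom}.

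By Proposition~\ref{prop:uniqueness-finit-subgp}, minimal $G$-orders are unique up to $O_F$-isomorphism, so $\calO'$ is $O_F$-isomorphic to the representative exhibited in Table~\ref{tab:minimal-G-orders}. Since $\calO'$ generates $H$ over $F$, this $O_F$-isomorphism extends to an $F$-isomorphism of quaternion algebras between $H$ and the quaternion algebra displayed in the corresponding row of the table. For the parenthetical assertion, I would simply note that any maximal $O_F$-order $\O\supseteq \calO'$ satisfies $\grd(H)=\grd(\O)\mid \grd(\calO')$, and the discriminants listed in Table~\ref{tab:minimal-G-orders} all divide $12 O_F$; since $\grd(H)$ is square-free as a product of distinct ramified primes, it must divide the radical $6O_F$. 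Hence $\grd(H)\nmid 6O_F$ precludes all the cases in the table, giving the stated sufficient condition.

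The main technical content is really borrowed from earlier results: the strict uniqueness of minimal $G$-orders (Proposition~\ref{prop:uniqueness-finit-subgp}) does the heavy lifting, and the only small point to verify directly is that the $O_F$-span of representatives of a noncyclic $G$ already equals $H$ over $F$, which is the step I would take most care with.
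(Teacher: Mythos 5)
Your proposal is correct and follows the same route as the paper: the paper's proof is simply that a noncyclic $\calO^\bs$ forces $\calO$ to contain a minimal $G$-order, whose existence pins down $H$ up to isomorphism via Table~\ref{tab:minimal-G-orders}. The extra details you supply (that the $O_F$-span of representatives of a noncyclic $G$ is a full lattice, and the discriminant bound for the parenthetical) are exactly the content the paper delegates to Definition~\ref{defn:minimal-g-orders}, Proposition~\ref{prop:uniqueness-finit-subgp}, and the table, and they check out.
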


\begin{proof}
  If $G:=\calO^\bs$ is noncyclic, then $\calO$ contains a
  minimal $G$-order. Thus $H$ is necessarily isomorphic to
  one of the quaternion algebras listed in
  Table~\ref{tab:minimal-G-orders}.
\end{proof}

\begin{cor}\label{cor:units-of-minimal-orders}
If $\calO$ is the minimal $O_F$-order attached to a finite noncyclic subgroup $G$ of
$H^\times/O_F^\times$, then $\calO^\bs=G$.  

\end{cor}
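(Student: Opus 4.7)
The plan is to assume that $G':=\calO^\bs$ strictly contains $G$ and derive a contradiction by comparing discriminants. Since $G\subseteq\calO^\bs$ and $G$ is noncyclic, $G'$ is a finite noncyclic subgroup of $H^\times/O_F^\times$. Because $d\geq 6$, the classification in Section~\ref{sect:general-classification-grp} together with Table~\ref{tab:non-cyclic-red-gp} forces $G'\in\calG^\natural$ of \eqref{eq:139}.

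Let $\calO'$ be the minimal $G'$-order attached to the specific subgroup $G'\subseteq H^\times/O_F^\times$ in the sense of Definition~\ref{defn:minimal-g-orders}. The key observation is that $\calO=\calO'$ as subsets of $H$. Indeed, a complete set of representatives of $G'\supseteq G$ restricts to one for $G$, giving $\calO\subseteq\calO'$; conversely, every element of $G'=\calO^\bs$ admits a representative lying in $\calO^\times\subseteq\calO$, so $\calO'\subseteq\calO$. The independence of the minimal order from the choice of representatives, noted immediately after Definition~\ref{defn:minimal-g-orders}, is used implicitly here. In particular, $\grd(\calO)=\grd(\calO')$.

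By Proposition~\ref{prop:uniqueness-finit-subgp} and Definition~\ref{def:min-g-ord-up-isom}, $\calO$ and $\calO'$ are respectively $O_F$-isomorphic to the minimal $G$-order and minimal $G'$-order tabulated in Table~\ref{tab:minimal-G-orders}, and hence share the corresponding tabulated discriminants. Enumerating the strict inclusions $G<G'$ from Figure~\ref{fig:rel-gps}---namely $D_2^\dagger<A_4,D_4,S_4$; $D_2^\ddagger<D_4,D_6,S_4$; $D_3^\dagger<D_6$; $D_3^\ddagger<D_6,S_4$; and $A_4,D_4<S_4$---the tabulated discriminants differ in every case ($4O_F$ versus $2O_F$ or $O_F$ in the $D_2$-cases, $3O_F$ versus $O_F$ in the $D_3$-cases, and $2O_F$ versus $O_F$ for $A_4,D_4<S_4$). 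This contradicts $\grd(\calO)=\grd(\calO')$, forcing $G'=G$. The only delicate step is the double inclusion giving $\calO=\calO'$; everything else is a finite case check against Figure~\ref{fig:rel-gps} and Table~\ref{tab:minimal-G-orders}.
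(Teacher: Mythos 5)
Your proof is correct and follows essentially the same route as the paper's: both establish that $\calO$ coincides with the minimal order attached to $G':=\calO^\bs$ (you spell out the double inclusion that the paper leaves terse), and then rule out $G\subsetneq G'$ by checking against Figure~\ref{fig:rel-gps} that the tabulated discriminants in Table~\ref{tab:minimal-G-orders} differ in every case. No gaps.
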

\begin{proof}
Let $G':=\calO^\bs$. 
Clearly, $G\subseteq G'$, and $\calO$ is also 
a minimal $G'$-order since by definition $\calO$ is spanned over $O_F$ by
representatives of $G\subseteq G'$. 
  In particular, minimal $O_F$-orders attached to $G$ and
  $G'$ have
  the same discriminants. Combining Figure~\ref{fig:rel-gps} and
Table~\ref{tab:minimal-G-orders}, we see that this is possible only if
$G=G'$. 
\end{proof}

By Corollary~\ref{cor:units-of-minimal-orders},  if $\calO$ is a minimal $O_F$-order attached to a noncyclic group $G\in \calG^\natural$, then the Vign\'eras unit index 
\begin{equation}
    [\calO^\times: O_F^\times\calO^1]=\begin{cases}
      1 \qquad \text{if } G\in \{D_2^\dagger, D_3^\dagger, A_4\},\\
    2 \qquad \text{if } G\in \{D_2^\ddagger, D_3^\ddagger, D_4, S_4, D_6\}.\\
    \end{cases}
\end{equation}

\subsection{Normalizers of minimal orders}
\label{subsec:norm-minim-orders}
\begin{lem}
Let $\calO$ be one of the minimal $G$-orders in
Table~\ref{tab:minimal-G-orders}.  The kernel of the natural
homomorphism $\varphi:   \calN(\calO)\to \Aut(\calO^\bs)$ is given as follows
\[\ker(\varphi)=
\begin{cases}
  F^\times\calO^\times &\quad \text{if }  \calO^\bs\simeq D_2;\\
  F^\times\<j\>&\quad \text{if }  \calO^\bs\simeq D_n \text{ with } n\in \{3, 4, 6\};\\
  F^\times &\quad \text{if }  \calO^\bs\simeq A_4 \text{ or } S_4. \\
\end{cases}
\]  
\end{lem}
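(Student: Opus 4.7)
The plan is to reinterpret $\ker(\varphi)$ as follows: an element $u\in\calN(\calO)$ lies in the kernel iff $uvu^{-1}\in O_F^\times v$ for every $v\in\calO^\times$, i.e.\ iff the image of $u$ in $H^\times/F^\times$ centralizes $\calO^\bs$ regarded as a subgroup of $H^\times/F^\times$. The apparent discrepancy between $O_F^\times$ and $F^\times$ is harmless: if $uvu^{-1}=\lambda v$ with $\lambda\in F^\times$, then $\lambda=(uvu^{-1})v^{-1}\in\calO^\times\cap F^\times=O_F^\times$, and taking reduced norms gives $\lambda^2=1$, so in fact $\lambda=\pm1$. I would then compute the centralizer case by case using the explicit generators of $\calO^\bs$ coming from Table~\ref{tab:rep-elements}, and verify the candidate normalizers using the explicit orders in Table~\ref{tab:minimal-G-orders}.

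Case $\calO^\bs\simeq D_2$. Here $\calO=O_F[i,j]$ and $\calO^\bs=\langle\tilde i,\tilde j\rangle$. If $u\in\ker(\varphi)$, then $uiu^{-1}=\pm i$ and $uju^{-1}=\pm j$ (the sign constraint coming from $\lambda^2=1$). Running through the four sign combinations inside $H=F\oplus Fi\oplus Fj\oplus Fk$ gives $u\in F^\times$, $F^\times i$, $F^\times j$, or $F^\times k$; all four lie in $\calN(\calO)$ since $i,j,k\in\calO^\times$. Noting that $\calO^\times=O_F^\times\{1,i,j,k\}$ in both the $D_2^\dagger$ and the $D_2^\ddagger$ case (by Corollary~\ref{cor:units-of-minimal-orders}) yields $\ker(\varphi)=F^\times\calO^\times$.

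Case $\calO^\bs\simeq D_n$ with $n\in\{3,4,6\}$. In each of these three orders the generators can be chosen as $i$ (with $\tilde i$ of order $2$) and a CM element $\eta\in F(j)$ with $\tilde\eta$ of order $n$. Table~\ref{tab:rep-elements} shows that the minimal polynomial of $\eta$ over $F$ has \emph{nonzero} trace, namely $\pm1$, $\pm2\vartheta$, and $\pm3\varsigma$ respectively. For $u\in\ker(\varphi)$, matching minimal polynomials of $\eta$ and $u\eta u^{-1}=\lambda\eta$ forces $\lambda=1$, hence $u\in F(\eta)^\times=F(j)^\times$. Writing $u=a+bj$ and combining with $uiu^{-1}=\pm i$ gives $a=0$ (sign $-$) or $b=0$ (sign $+$); thus $u\in F^\times\cup F^\times j=F^\times\langle j\rangle$. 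The reverse inclusion amounts to checking $j\in\calN(\calO)$ in each case, which follows from Lemma~\ref{lem:gen-lem-scro3II} for $\scrO_3^\dagger,\scrO_3^\ddagger$ and from a short direct computation on the $O_F$-generators listed in Table~\ref{tab:minimal-G-orders} for $\scrO_4$ and $\scrO_6$.

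Case $\calO^\bs\simeq A_4$ or $S_4$. In both cases $H=\qalg{-1}{-1}{F}$ and $\calO$ contains $\calO_{12}$, in particular the Klein four subgroup $\langle\tilde i,\tilde j\rangle\subset\calO^\bs$ and the order-$3$ element $\tilde\xi$, where $\xi=(1+i+j+k)/2$. Applying the $D_2$-analysis to $\langle\tilde i,\tilde j\rangle$ gives $u\in F^\times\{1,i,j,k\}$; applying the order-$\geq3$ analysis to $\xi$ (whose minimal polynomial $x^2-x+1$ has trace $1\neq0$) gives $u\in F(\xi)$. A general element of $F(\xi)$ has the form $a+b\xi=(a+b/2)+(b/2)(i+j+k)$, which meets $F^\times\{1,i,j,k\}$ only if $b=0$. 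Therefore $u\in F^\times$, completing the proof. The main step that requires care is the verification, in the three cases of the second bullet, that the element $j$ actually normalizes the corresponding minimal order; everything else is bookkeeping driven by the reduced norm constraint $\lambda=\pm1$.
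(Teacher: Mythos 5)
Your proof is correct and follows essentially the same route as the paper's: both arguments hinge on showing that a kernel element must genuinely commute with any unit whose class in $\calO^\bs$ has order greater than $2$ (the paper rules out the conjugation case via the root-of-unity argument for $\bar{v}/v$, you via the reduced-norm constraint $\lambda=\pm 1$ together with trace matching — the same fact in computational form), after which the centralizer computations for $D_2$, $D_n$ and $A_4/S_4$ proceed case by case just as in the paper. The only noticeable variation is the $A_4/S_4$ case, where the paper intersects two CM fields $F(\tilde\eta_1)^\times\cap F(\tilde\eta_2)^\times=F^\times$ attached to elements of order at least $3$, whereas you intersect the Klein-four constraint $F^\times\{1,i,j,k\}$ with $F(\xi)^\times$; this is the same idea and equally valid.
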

\begin{proof}
  Let $\tilde{v}\in \calO^\bs$ be a nontrivial element 
  represented by $v\in \calO^\times$, and
  $x\in \ker(\varphi)\subseteq \calN(\calO)$. Since
  $x\tilde{v}x^{-1}=\tilde{v}$, we have $xF(v)x^{-1}=F(v)$, so either
  $xvx^{-1}=v$ or $xvx^{-1}=\bar{v}$. The latter case is possible only
  if $\bar{v}/v\in O_F^\times$. On the other hand, $\bar{v}/v$ is a
  root of unity in the CM-field $F(v)$. So necessarily $\bar{v}=-v$,
  i.e.~$\ord(\tilde{v})=2$. Therefore, if $\calO^\bs$ contains an
  element $\tilde{\eta}$ with $\ord(\tilde{\eta})>2$, then $\ker(\varphi)\subseteq
  F(\tilde{\eta})^\times$. In particular, if $\calO^\bs\simeq A_4$ or $S_4$,
  then there exist $\tilde{\eta}_1, \tilde{\eta}_2\in \calO^\bs$ such
  that  $F(\tilde{\eta}_1)\neq F(\tilde{\eta}_2)$ and
  $\min\{\ord(\tilde{\eta}_1), \ord(\tilde{\eta}_2)\}\geq 3$. Hence
  $F^\times\subseteq \ker(\varphi)\subseteq F(\tilde{\eta}_1)^\times\cap
  F(\tilde{\eta}_2)^\times=F^\times$ in this case. 

  Next, suppose that $\calO=O_F+O_Fi+O_F\eta+O_Fi \eta$ is a minimal
  $D_n$-order with $i^2\in \{-1, -\varepsilon\}$ and
  $n=\ord(\tilde{\eta})\geq 3$.  Note that $F(\eta)=F(j)$ for all $\calO$
 in this
  case, so we have $\ker(\varphi)\subseteq F(j)^\times$. Given any $x\in
  \ker(\varphi)$, if $xix^{-1}=i$, then $x\in F(i)^\times\cap
  F(j)^\times=F^\times$, otherwise $xix^{-1}=-i$, and hence $x\in
  F^\times j$.  It follows that $\ker(\varphi)=F^\times\cup F^\times
  j=F^\times\<j\>$ in this case. 

  Lastly, suppose that $\calO^\bs\simeq D_2$. Then $\calO=O_F[i, j]$
  in either $\qalg{-1}{-1}{F}$ or $\qalg{-1}{-\varepsilon}{F}$. For every $x\in \ker(\varphi)$, we have $xix^{-1}=\pm i$ and
  $xjx^{-1}=\pm j$.  There exists a suitable element in
  $\{1, i, j, k\}$ whose product with $x$ commutes with both $i$ and
  $j$, and hence lies in $F^\times$. Therefore,
  $\ker(\varphi)=F^\times\calO^\times$.
\end{proof}

Since we can write down $\Aut(G)$ for every $G\in \calG^\natural$, it is a simple exercise to work
out the normalizer $\calN(\calO)$ for each minimal $G$-order $\calO$.
For $\calO^\bs\simeq D_n$ with $n\in \{4, 6\}$, the non-central
elements of order 2 fall into two conjugacy classes with distinct
reduced norms by Remark~\ref{rem:elements-of-order-2}. Hence the image of $\varphi$ coincides with the inner
automorphism group $\Inn(\calO^\bs)$ in this case.  For a set $S\subset H^\times$, let $F^\times\calO^\times\<S\>$ be
the subgroup of $H^\times$ generated by $F^\times\calO^\times$ and $S$. Then
\begin{align}
\label{eq:56}  \calN(\scrO_2^\dagger)&=F^\times(\scrO_2^\dagger)^\times\< 1+i, 1+j\>, & \calN(\scrO_2^\ddagger)&=F^\times(\scrO_2^\ddagger)^\times\<1+i\>,     \\
\calN(\scrO_3^\dagger)&=F^\times(\scrO_3^\dagger)^\times\< j\>, & \calN(\scrO_3^\ddagger)&=F^\times(\scrO_3^\ddagger)^\times\< j\>,\\
 \label{eq:51}   \calN(\calO_{12})&=F^\times\calO_{12}^\times\<1+i\>,& &\\ 
\calN(\calO)&=F^\times\calO^\times \text{ if }\calO\simeq \scrO_4,  \scrO_6
              \text{ or }\O_{24}. \label{eq:98}
\end{align}

%


Let us denote
$\ol\calN(\calO):=\calN(\calO)/F^\times\calO^\times$ for any
order $\calO\subset H$.  Then $\ol\calN(\scrO_2^\dagger)\simeq S_3$,
and $\ol\calN(\calO')\simeq \zmod{2}$ if $\calO'\in \{\scrO_2^\ddagger, \scrO_3^\dagger, \scrO_3^\ddagger,
\calO_{12}\}$.  The natural action of 
$\calN(\calO)$ on the set $\grS(\calO)$ of maximal orders containing $\calO$ descends to $\ol\calN(\calO)$. The number of orbits is
denoted by 
\begin{equation}
  \label{eq:72}
\beth(\calO):=\abs{\ol\calN(\calO)\backslash
                                    \grS(\calO)}=\abs{\calN(\calO)\backslash\grS(\calO)}. 
\end{equation}

\begin{lem}\label{lem:conj-max-orders}
  Let $\calO\subset H$ be a minimal $G$-order with $G\in \calG^\natural$, and $\O,\O'\in \grS(\calO)$ be two 
  maximal $O_F$-orders containing $\calO$. If $\O$ and $\O'$ are
  $H^\times$-conjugate, then there there
  exists $x\in \calN(\calO)$ such that $\O'=x\O x^{-1}$.
\end{lem}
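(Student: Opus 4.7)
My plan is to reduce the claim to Lemma~\ref{lem:strict-isom-subgp-conj}, which says that strictly isomorphic noncyclic subgroups of a finite group like $\O^\bs$ are conjugate inside that group.

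First I would write $\O' = y\O y^{-1}$ for some $y \in H^\times$. Since $\calO \subseteq \O'$, conjugating by $y^{-1}$ gives $y^{-1}\calO y \subseteq \O$. Thus both $\calO$ and $y^{-1}\calO y$ are $O_F$-suborders of $\O$. Setting $G' := \calO^\bs \subseteq \O^\bs$ (a finite noncyclic subgroup by Corollary~\ref{cor:units-of-minimal-orders}), the image of $(y^{-1}\calO y)^\bs$ in $H^\times/O_F^\times$ is precisely $y^{-1} G' y$, which is also a subgroup of $\O^\bs$.

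Next I would observe that $G'$ and $y^{-1}G'y$ are \emph{strictly} isomorphic in the sense of Definition~\ref{def:strict-isom}: inner conjugation by $y$ on $H$ is an $F$-algebra automorphism, and it preserves the reduced norm, so it respects both the abstract isomorphism type and the kind ($\dagger$ vs.\ $\ddagger$) in the sense of Definition~\ref{def-of-kinds}. Applying Lemma~\ref{lem:strict-isom-subgp-conj} inside the finite group $\O^\bs$, there exists $u \in \O^\times$ with $u^{-1} G' u = y^{-1} G' y$. Setting $x := y u^{-1}$, we then have $x G' x^{-1} = G'$ in $H^\times/O_F^\times$.

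Finally I would verify that $x \in \calN(\calO)$ and that $x\O x^{-1} = \O'$. For the first, note that $\calO$ is, by Definition~\ref{defn:minimal-g-orders}, the $O_F$-span of any set of representatives of $G'$; if $\gamma$ represents $g \in G'$, then $x\gamma x^{-1}$ represents $xgx^{-1} \in G'$, so $x\gamma x^{-1}$ lies in $O_F^\times$ times a chosen representative, hence in $\calO$. Thus $x\calO x^{-1} \subseteq \calO$, and the reverse inclusion is symmetric. For the second, $u \in \O^\times$ normalizes $\O$, so
\[
x\O x^{-1} = y u^{-1}\O u y^{-1} = y\O y^{-1} = \O',
\]
completing the proof. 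The only non-routine step is invoking Lemma~\ref{lem:strict-isom-subgp-conj}, for which verifying that conjugation by $y$ yields a \emph{strict} (as opposed to merely abstract) isomorphism of subgroups is the single point requiring care.
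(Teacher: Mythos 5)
Your proof is correct and follows essentially the same route as the paper: both reduce the claim to Lemma~\ref{lem:strict-isom-subgp-conj} applied to $\calO^\bs$ and its conjugate inside $\O^\bs$, then adjust $y$ by the resulting unit $u\in\O^\times$ to land in $\calN(\calO)$. The extra care you take in checking that conjugation gives a \emph{strict} isomorphism and that normalizing $\calO^\bs$ implies normalizing $\calO$ is exactly what the paper compresses into the phrase ``or equivalently, $u\calO u^{-1}=y\calO y^{-1}$.''
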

\begin{proof}
  By the assumption, there there exists $y\in H^\times$ such that
  $\O=y\O'y^{-1}$. Applying Lemma~\ref{lem:strict-isom-subgp-conj} to
  the groups 
  $\calO^\bs\subseteq \O^\bs$ and
  $y\calO^\bs y^{-1}\subseteq \O^\bs$, we see that there exists
  $u\in \O^\times$ such that
  $u\calO^\bs u^{-1}=y\calO^\bs y^{-1}$, or equivalently, $u\calO
  u^{-1}= y\calO y^{-1}$. Take $x=y^{-1}u$. Then $x\in \calN(\calO)$
  and $x\O x^{-1}=\O'$.  
\end{proof}
Therefore, if $\calO$ is a minimal $G$-order, then
$\beth(\calO)$ is the number of conjugacy classes of
maximal orders containing $\calO$.

\section{Refined type numbers for noncyclic reduced unit groups: part I}
\label{sec:maximal-orders}
\numberwithin{thmcounter}{section}
Let $H$ be a totally definite quaternion algebra over a real quadratic
field $F=\Q(\sqrt{d})$ with square-free $d\geq 6$. The refined type number
$t(G)=t(H, G)$ is defined in (\ref{eq:106}) and counts the number of
conjugacy classes of maximal orders in $H$ with reduced unit group
$G$. We compute $t(G)$ for each finite noncyclic group $G$ in
Figure~\ref{fig:rel-gps}, starting from the groups on the top and
working downward. The current
section treats the cases
$G\in \{S_4, D_6,  A_4, D_4, D_2^\dagger, D_3^\dagger\}$. The remaining cases $G\in \{D_2^\ddagger, D_3^\ddagger\}$ are
postponed to the next section.

By Corollary~\ref{cor:quat-alg-with-non-cyclic-gp}, if $t(G)\neq 0$ for
some $G\in \calG^\natural$ as in \eqref{eq:139}, then we may identify $H$ with  one of the quaternion $F$-algebras in
Table~\ref{tab:minimal-G-orders}. Assume that this is the case
throughout this section.  Let $\O\subset H$ be a maximal
order with $\O^\bs\simeq G$.  After a
suitable $H^\times$-conjugation, $\O$ contains the minimal
$G$-order $\calO$ in Table~\ref{tab:minimal-G-orders}.  To
compute $t(G)$, we classify the set $\grS(\calO)$ of maximal orders
containing $\calO$,  and compute  the number  $\beth(\calO)$ of
$\calN(\calO)$-conjugacy classes. It then follows from
Lemma~\ref{lem:conj-max-orders} that 
\begin{equation}
  \label{eq:147}
t(G)=\beth(\calO)-\sum_{G< G'} t(G'). 
\end{equation}
The orbits in $\calN(\calO)\backslash \grS(\calO)$ consisting of maximal
orders with reduced unit groups $G'>G$ have already been
accounted for in a previous iteration. After
eliminating those, we write down a representative $\O\in \grS(\calO)$
for each remaining orbit in $\calN(\calO)\backslash
\grS(\calO)$. Necessarily, such
an $\O$ is  intermediate to $\calO\subseteq
\calO^\vee$ (the dual lattice of $\calO$). By construction, $\O^\times=\calO^\times$. Since $\calO$ is
spanned by $\calO^\times$ and every $x\in \calN(\O)$ normalizes $\O^\times$, we have 
\begin{equation}
  \label{eq:148}
\calN(\O)\subseteq \calN(\calO).
\end{equation}
 This provides a
way to work out $\calN(\O)$ explicitly by applying the results of
Section~\ref{subsec:norm-minim-orders}. We then 
apply  (\ref{eq:107}) to compute $h(G)$. Recall that $\omega(H)$
denotes the number of finite primes of $F$ that are ramified in $H$.





The computation of $t(S_4)$ and $t(D_6)$  is made easiest since
minimal $O_F$-orders attached to $S_4$ and $D_6$  turn out to be
maximal by Corollary~\ref{cor:mini-D6-S4-maximal}.

\begin{prop}\label{prop:S4-unique-order}
We have 
 \begin{align}
  \label{eq:46}
  t(S_4)&=
  \begin{cases}
    1 &\quad\text{if } H=\qalg{-1}{-1}{F} \text{ and } 2\varepsilon\in F^{\times 2} 
    ,\\
    0 &\quad\text{otherwise;}
  \end{cases}\\
 h(S_4)&=h(F)t(S_4). 
\end{align}
When $2\varepsilon\in F^{\times 2}$, every maximal order in $\qalg{-1}{-1}{F}$ with reduced unit
group $S_4$ is conjugate to $\O_{24}$ in (\ref{eq:144}), and
$\calN(\O_{24})=F^\times\O_{24}^\times$. 
\end{prop}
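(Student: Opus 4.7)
The plan is to reduce the problem to the existence and uniqueness of the minimal $S_4$-order $\O_{24}$, exploiting the key fact that $\O_{24}$ is already maximal.

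First, if $t(S_4) \neq 0$, pick a maximal order $\O \subset H$ with $\O^\bs \simeq S_4$. By the discussion preceding \eqref{eq:147}, after an $H^\times$-conjugation $\O$ must contain a minimal $S_4$-order. By Table~\ref{tab:minimal-G-orders}, such a minimal order exists only when $H \simeq \qalg{-1}{-1}{F}$ and $2\varepsilon \in F^{\times 2}$; in particular $t(S_4) = 0$ outside this case. Assume now that these two conditions hold, so that $\O_{24}$ is a well-defined $O_F$-order in $H$ with $\O_{24}^\bs \simeq S_4$ (by Corollary~\ref{cor:units-of-minimal-orders}).

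The main step is to invoke Corollary~\ref{cor:mini-D6-S4-maximal}: $\grd(\O_{24}) = O_F$, so $\O_{24}$ itself is maximal. Consequently $\grS(\O_{24}) = \{\O_{24}\}$, which forces $\beth(\O_{24}) = 1$. Since no group $G' \in \calG^\natural$ properly contains $S_4$ (see Figure~\ref{fig:rel-gps}), the recursion \eqref{eq:147} gives $t(S_4) = \beth(\O_{24}) = 1$, with $\O_{24}$ as the unique representative up to $H^\times$-conjugation (using Lemma~\ref{lem:conj-max-orders} to pass from $\calN(\O_{24})$-orbits to $H^\times$-conjugacy classes). The normalizer formula $\calN(\O_{24}) = F^\times \O_{24}^\times$ is then a direct reading of \eqref{eq:98}.

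Finally, for $h(S_4)$, I apply \eqref{eq:107} to the single conjugacy class $\dbr{\O_{24}}$. Since $\grd(\O_{24}) = O_F$, the maximality of $\O_{24}$ forces $\grd(H) = O_F$, so $H$ is unramified at every finite prime and $\omega(H) = 0$. Combined with $\calN(\O_{24})/(F^\times \O_{24}^\times) = 1$, this yields $h(S_4) = h(F)\, t(S_4)$. There is no genuine obstacle in this proof: the only thing one might worry about is whether a non-maximal minimal $G$-order could yield several maximal overorders, but Corollary~\ref{cor:mini-D6-S4-maximal} removes this concern entirely for $G = S_4$.
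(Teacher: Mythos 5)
Your proposal is correct and follows essentially the same route as the paper: reduce to the minimal $S_4$-order via Table~\ref{tab:minimal-G-orders}, observe that $\O_{24}$ is already maximal (Corollary~\ref{cor:mini-D6-S4-maximal}) so $\beth(\O_{24})=1$, and read off the normalizer from \eqref{eq:98}. The only cosmetic difference is how $\omega(H)=0$ is obtained — you deduce it from $\grd(\O_{24})=O_F$ forcing $\grd(H)=O_F$, while the paper notes that $H=H_{2,\infty}\otimes_\Q F$ with $2$ ramified in $F$ — and both arguments are valid.
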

\begin{proof}
The normalizer of $\O_{24}$ is calculated in (\ref{eq:98}).   Suppose that
$2\varepsilon\in F^{\times 2}$ and $H=\qalg{-1}{-1}{F}=H_{2,
  \infty}\otimes_\Q F$, where $H_{2,\infty}=\qalg{-1}{-1}{\Q}$ as in
(\ref{eq:149}).  Note that  $2$ is ramified in $F$, and the quaternion
$\Q$-algebra $H_{2, \infty}$ is
ramified only at $2$ and $\infty$. It follows that $\omega(H)=0$, and hence
$h(S_4)=h(F)t(S_4)$.
\end{proof}
\begin{rem}\label{rem:suborders-of-O24}
Suppose that $2\varepsilon\in F^{\times 2}$, and fix $\vartheta\in F^\times$ such that
$\varepsilon=2\vartheta^2$ as in Section~\ref{subsec:structure-CM-ext}. 
From the proof of Lemma~\ref{lem:strict-isom-subgp-conj},  $O_F[i,
j]\subset \qalg{-1}{-1}{F}$ is the unique minimal $D_2^\dagger$-suborder of
  $\O_{24}$. On the other hand, $O_F[i, \vartheta(j+k)]$ is a minimal
  $D_2^\ddagger$-suborder of $\O_{24}$, and any other minimal
  $D_2^\ddagger$-suborder of $\O_{24}$ is $\O_{24}^\times$-conjugate
  to it. We have
  $O_F[i, \vartheta(j+k)]\not\subset O_F+(2\vartheta)\O_{24}$ since
  the latter order does not contain $\vartheta(j+k)$.  
  \end{rem}

\begin{prop}\label{prop:D6-max-ord-unique}
We have 
\begin{align}
  t(D_6)&=
  \begin{cases}
    1 &\qquad \text{if } H=\qalg{-1}{-3}{F} \text{ and } 3\varepsilon\in F^{\times 2},\\
    0 &\qquad \text{otherwise;}
  \end{cases}\\
h(D_6)&=h(F)t(D_6).
\end{align}
When $3\varepsilon\in F^{\times 2}$, every maximal order in $\qalg{-1}{-3}{F}$ with reduced unit
group $D_6$ is conjugate to $\scrO_6$ in (\ref{eq:143}), and
$\calN(\scrO_6)=F^\times\scrO_6^\times$. 
\end{prop}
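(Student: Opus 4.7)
The plan is to exploit the fact, already recorded in Corollary~\ref{cor:mini-D6-S4-maximal}, that the minimal $D_6$-order $\scrO_6$ is itself \emph{maximal}, which collapses the counting problem to a near-triviality, exactly paralleling the proof of Proposition~\ref{prop:S4-unique-order}.

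First I would establish the necessary conditions. If $t(D_6)\neq 0$, any maximal order $\O$ with $\O^\bs\simeq D_6$ realises $D_6$ as a subgroup of $H^\times/O_F^\times$, producing in particular an element of order $6$. Table~\ref{tab:rep-elements} then forces $3\varepsilon\in F^{\times 2}$, and Proposition~\ref{prop:uniqueness-finit-subgp} together with Table~\ref{tab:minimal-G-orders} forces $H\simeq \qalg{-1}{-3}{F}$. This pins down the ``otherwise'' case, where both $t(D_6)$ and $h(D_6)$ vanish trivially.

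Second, under the existence hypotheses $3\varepsilon\in F^{\times 2}$ and $H\simeq\qalg{-1}{-3}{F}$, Definition~\ref{def:min-g-ord-up-isom} with Proposition~\ref{prop:uniqueness-finit-subgp} shows that any such $\O$ contains, after $H^\times$-conjugation, the minimal $D_6$-order $\scrO_6$ displayed in (\ref{eq:143}). Because $\grd(\scrO_6)=O_F$ by Table~\ref{tab:minimal-G-orders}, $\scrO_6$ is already a maximal order, whence $\O=\scrO_6$. This simultaneously yields $\grS(\scrO_6)=\{\scrO_6\}$, $\beth(\scrO_6)=1$, and $t(D_6)=1$, with $\scrO_6$ a representative of the unique conjugacy class. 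The normalizer claim $\calN(\scrO_6)=F^\times\scrO_6^\times$ is read off directly from (\ref{eq:98}).

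Finally, for the refined class number I would invoke (\ref{eq:107}). The discriminant equality $\grd(H)=\grd(\scrO_6)=O_F$ gives $\omega(H)=0$, and the normalizer computation gives $|\calN(\scrO_6)/(F^\times\scrO_6^\times)|=1$, so (\ref{eq:107}) produces $h(D_6)=h(F)$ in the existence case and $0$ otherwise, matching $h(F)t(D_6)$ in both. I anticipate no serious obstacle: the conceptual crux is that maximality of the minimal $D_6$-order, an exceptional phenomenon visible only because $D_6$ sits at the top of Figure~\ref{fig:rel-gps}, trivialises both the orbit count $\beth(\scrO_6)$ and the normalizer quotient, leaving merely the bookkeeping with (\ref{eq:107}).
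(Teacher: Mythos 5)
Your proposal is correct and follows exactly the paper's route: the paper's proof is literally ``the proof of Proposition~\ref{prop:S4-unique-order} applies, mutatis mutandis,'' i.e.\ maximality of $\scrO_6$ from Corollary~\ref{cor:mini-D6-S4-maximal} collapses the count, the normalizer is read off from (\ref{eq:98}), and $\omega(H)=0$ gives $h(D_6)=h(F)t(D_6)$. Your derivation of $\omega(H)=0$ from $\grd(H)=\grd(\scrO_6)=O_F$ is a slightly more direct phrasing of the same fact.
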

\begin{proof}
The proof of Proposition~\ref{prop:S4-unique-order} applies,  mutatis
mutandis, to here as well. 
\end{proof}

  We introduce two maximal orders in $\qalg{-1}{-1}{F}$ when $2$
  is ramified in $F$.   The first order provides a simplified form of $\O_{24}$ when $2\varepsilon\in F^{\times
    2}$ and is also used for the calculation of $t(A_4)$, and the second one will be used for $t(D_4)$ and $t(D_2^\dagger)$. 


\begin{def-lem}\label{defn:O12-O4I}
  Suppose that $d\equiv 2, 3\pmod{4}$ so that $2$ is ramified in
  $F$, and hence $H=\qalg{-1}{-1}{F}$ splits at all finite places of $F$. 
  
  \begin{enumerate}[(i)]
  \item Let $\grp$ be the unique dyadic prime of $F$,  $K:=F(i)\subset
H$, and 
\begin{align}
  \label{eq:53}
B&:=
\begin{cases}
  O_K & \quad \text{if } d\equiv 2\pmod{4},\\
 O_F+\grp O_K &\quad  \text{if } d\equiv 3\pmod{4}, \\
\end{cases}\\
  \label{eq:74}
\O_{12}&:=B+B\xi, \quad \text{where}\quad \xi=(1+i+j+k)/2\in H.
\end{align}
Then $\O_{12}$
 is the unique maximal order containing the minimal $A_4$-order
$\calO_{12}\subseteq H$. If  $2\varepsilon\in
F^{\times 2}$, then $\O_{12}$ coincides with $\O_{24}$ in
(\ref{eq:144}), and 
\begin{equation}
\label{eq:150}
 \O_{12}^\bs\simeq
  \begin{cases}
    S_4\quad &\text{if } 2\varepsilon\in
F^{\times 2},\\
    A_4\quad &\text{otherwise.}
  \end{cases}
\end{equation}

\item Let
  $L$ (resp.~$\O_4^\dagger$) be the following subfield (resp.~$O_F$-order) of $H$: 
  \begin{align}
    \label{eq:49}
L:&=
\begin{cases}
  F((1+\sqrt{d}i+j)/2)\simeq F(\sqrt{-(d+1)})\qquad &\text{if }
  d\equiv 2\pmod{4},\\
F(j)\simeq F(\sqrt{-1}) \qquad &\text{if }
  d\equiv 3\pmod{4},
\end{cases}\\
\O_4^\dagger:&=O_L+iO_L, \qquad \text{or more explicitly}\label{eq:152}\\
\O_4^\dagger:&= 
\begin{cases}
  O_F+O_Fi+O_F\frac{1+\sqrt{d}i+j}{2}+O_F\frac{-\sqrt{d}+i+k}{2}\quad
&\text{if } d\equiv 2\pmod{4}, \\  
O_F+O_Fi+O_F\frac{\sqrt{d}+j}{2}+O_F\frac{\sqrt{d}i+k}{2}\quad &\text{if } d\equiv 3\pmod{4}.
\end{cases}
  \end{align}
Then $\O_4^\dagger$ is a maximal order containing the minimal
$D_2^\dagger$-order $\scrO_2^\dagger=O_F[i, j]$, and it is not
$H^\times$-conjugate to $\O_{12}$ in (\ref{eq:74}).   We have 
\begin{equation}
\label{eq:151}
 (\O_4^\dagger)^\bs\simeq
  \begin{cases}
    D_4\quad &\text{if } 2\varepsilon\in
F^{\times 2},\\
    D_2^\dagger\quad &\text{otherwise.}
  \end{cases}
\end{equation}

\item  When $2\varepsilon\in F^{\times 2}$, the maximal orders
  $\O_{12}$ and $\O_4^\dagger$ are denoted as $\O_{24}$ and $\O_8$
  respectively to emphasize their
   reduced unit groups. Both $\O_{24}$ and $\O_8$ contain the minimal $D_4$-order
 $\scrO_4=O_F[i, \sqrt{\varepsilon j}]$. 
  \end{enumerate}
\end{def-lem}


\begin{proof}[Proof of Definition-Lemma~\ref{defn:O12-O4I}]
Let $H_{2,
  \infty}=\qalg{-1}{-1}{\Q}$, the unique quaternion $\Q$-algebra (up to
isomorphism) ramified
exactly at $2$ and $\infty$. We have $H=H_{2, \infty}\otimes_\Q F$, which 
splits at all finite places of
$F$ since $2$ is ramified in $F$ by
the assumption. 

From Table~\ref{tab:orders-K1},  the order $B$ is the unique $O_F$-order in $K$ containing $O_F[i]$ with $\chi(B, O_F[i])=\grp$.
  Straightforward calculation shows that $\O_{12}=B+B\xi$ is closed under
  multiplication, hence an $O_F$-order. Since
  $\calO_{12}=O_F[i]+O_F[i]\xi$, we have
  $\grd(\O_{12})=\grp^{-2}\grd(\calO_{12})=O_F$ by
  Table~\ref{tab:minimal-G-orders}.  Thus, $\O_{12}$ is a maximal
  order. If $2\varepsilon\in F^{\times 2}$, then
  $B=O_F[\sqrt{\varepsilon i}]$ by Lemma~\ref{lem:OF-eta-K1}, which implies that
  $\O_{12}=\O_{24}$ and hence $\O_{12}^\bs\simeq S_4$; otherwise
  $\O_{12}^\bs\simeq A_4$ since $\O_{12}\supset \calO_{12}$. It
  is clear from the expression of $\O_{24}$ in (\ref{eq:144}) that
  $\O_{24}\supset \scrO_4$. 

  Recall that $\calO_{12}=\bbo_2\otimes_\Z O_F$, where $\bbo_2$ is the
  maximal $\Z$-order in  $H_{2, \infty}$ defined in (\ref{eq:146}). 
Thus $\calO_{12}\otimes\Z_\ell$ is maximal in
  $H\otimes\Q_\ell$ for every prime $\ell\neq 2$ since 
  $H_{2, \infty}$  splits at $\ell$.  As both $H_{2,\infty}$ and $F$  are ramified at
  $2$, there is a unique maximal order of $H\otimes\Q_2$ containing
  $\calO_{12}\otimes \Z_2$ by Lemma~\ref{lem:ram-ext}. Therefore, $\O_{12}$
  is the unique maximal order containing $\calO_{12}$, and there is no other $O_F$-order intermediate to $\calO_{12}\subset \O_{12}$.

For (ii),  one calculates directly that $\O_4^\dagger=O_L+iO_L$ is
closed under multiplication, hence an $O_F$-order. Let
$B':=O_F[\sqrt{d}i+j]\subset O_L$ if $d\equiv 2\pmod{4}$,  and
$B':=O_F[j]\subset O_L$ if
$d\equiv 3\pmod{4}$. Then $\scrO_2^\dagger=B'+iB'$ and $\chi(O_L:
B')=2O_F$. Thus 
$\grd(\O_4^\dagger)=(2O_F)^{-2}\grd(\scrO_2^\dagger)=O_F$ by 
  Table~\ref{tab:minimal-G-orders}.  It follows that $\O_4^\dagger$ is
  a maximal order containing $\scrO_2^\dagger$. Note that $\O_{12}\cap
  F(i)=B\not\simeq O_F[\sqrt{-1}]$, so by symmetry, results of the same form hold true if $i$ is replaced by
  $j$ or $k$. On the other hand,  $\O_4^\dagger\cap F(i)=O_F[i]\simeq O_F[\sqrt{-1}]$. Thus
  $\O_4^\dagger$ is not $H^\times$-conjugate to $\O_{12}$. 

  Suppose that $2\varepsilon\in F^{\times 2}$. Clearly,
  $\scrO_4\subseteq \O_4^\dagger$ if $d\equiv 3\pmod{4}$. By
  Lemma~\ref{lem:OF-eta-K1}, if $d\equiv 2\pmod{4}$, then
  $O_F[\sqrt{\varepsilon j}]=O_{F(j)}$, which has a $\Z$-basis
  $\{1,\sqrt{d}, j, (\sqrt{d}+\sqrt{d}j)/2\}$. Thus,
  $\O_4^\dagger\supset \scrO_4$ in this case as well. We have
  $(\O_4^\dagger)^\bs\not\simeq S_4$ since $\O_4^\dagger$ is not
  conjugate to $\O_{12}$. Now (\ref{eq:151}) follows directly from
  Table~\ref{tab:non-cyclic-red-gp} and Figure~\ref{fig:rel-gps}. 
\end{proof}




For each prime $p\in \bbN$, we write $\Lsymb{F}{p}$ for the Artin
symbol \cite[p.~94]{vigneras}:
\begin{equation}\label{eq:165}
\Lsymb{F}{p}=
  \begin{cases}
    1 \quad&\text{if $p$ splits in } F,\\
    0 \quad&\text{if $p$ ramifies in } F,\\
   -1 \quad&\text{if $p$ is inert in } F.\\
  \end{cases}
\end{equation}

\begin{prop}\label{prop:type-num-A4}
We have
\begin{align}
  t(A_4)&=
  \begin{cases}
    1 &\quad\text{if } H=\qalg{-1}{-1}{F} \text{ and }
    2\varepsilon\not\in F^{\times 2}, \\
    0 &\quad\text{otherwise};
\end{cases}\\
h( A_4)&=\left(\frac{1}{2}+\frac{1}{2}\Lsymb{F}{2}+\Lsymb{F}{2}^2\right)h(F)t(A_4).\label{eq:153}
\end{align}  
Suppose that $H=\qalg{-1}{-1}{F}$ and $2\varepsilon\not\in F^{\times
  2}$. We set $\O_{12}=B+B\xi$ as in (\ref{eq:74}) if $d\not \equiv
1\pmod{4}$,  $\O_{12}=\calO_{12}$ if $d\equiv 1\pmod{8}$, and 
\begin{equation}
  \label{eq:52}
\O_{12}=O_F+O_F\frac{(1+\sqrt{d})-2i+(1-\sqrt{d})j}{4}+O_Fj+O_F\xi\quad
\text{ if } d\equiv 5 \bmod8.
\end{equation}
Then every maximal order $\O\subset H$ with $\O^\bs\simeq A_4$ is
$H^\times$-conjugate to $\O_{12}$, and 
\begin{equation}
  \label{eq:54}
  \calN(\O_{12} )=
  \begin{cases}
F^\times\O_{12}^\times   &\qquad \text{if } d\equiv 5\pmod{8},\\
F^\times\O_{12}^\times\<1+i\> &\qquad \text{otherwise}. 
  \end{cases}
\end{equation}
\end{prop}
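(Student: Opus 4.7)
The plan is to apply the general framework of \eqref{eq:147}: classify the set $\grS(\calO_{12})$ of maximal $O_F$-orders containing the minimal $A_4$-order $\calO_{12}$ of Table~\ref{tab:minimal-G-orders} and count its orbits under $\calN(\calO_{12})$. First I would show $t(A_4)$ vanishes outside the stated case. By Table~\ref{tab:minimal-G-orders} we need $H \simeq \qalg{-1}{-1}{F}$, and if $2\varepsilon \in F^{\times 2}$ then Definition-Lemma~\ref{defn:O12-O4I}(i) gives $\O_{12} = \O_{24}$ as the unique maximal overorder of $\calO_{12}$, with reduced unit group $S_4$. Under the hypothesis $2\varepsilon \notin F^{\times 2}$, Proposition~\ref{prop:S4-unique-order} supplies $t(S_4) = 0$, and Figure~\ref{fig:rel-gps} shows $A_4$ only extends to $S_4$ inside $\calG^\natural$; hence every element of $\grS(\calO_{12})$ has reduced unit group exactly $A_4$, and \eqref{eq:147} reduces $t(A_4)$ to $\beth(\calO_{12})$.

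For the local analysis of $\grS(\calO_{12})$, the equality $\grd(\calO_{12}) = 2 O_F$ from Table~\ref{tab:minimal-G-orders} forces maximality at every non-dyadic prime, so only the dyadic primes require attention. I split into three subcases governed by $\Lsymb{F}{2}$. If $d \equiv 1 \pmod{8}$ then $2 O_F = \grp_1 \grp_2$ and $H$ is division at each $\grp_i$; then $\grd(H) = 2 O_F = \grd(\calO_{12})$, so $\calO_{12}$ is already maximal and I take $\O_{12} = \calO_{12}$. If $d \equiv 2, 3 \pmod{4}$ then Definition-Lemma~\ref{defn:O12-O4I}(i) directly gives the unique maximal overorder $\O_{12} = B + B\xi$. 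If $d \equiv 5 \pmod{8}$ then the unique dyadic prime $\grp = 2 O_F$ satisfies $H_\grp \simeq M_2(F_\grp)$, and Lemma~\ref{lem:unram-ext} identifies $(\calO_{12})_\grp = \bbo_2 \otimes_{\Z_2} O_{F_\grp}$ as an Eichler order of level $\grp$, contained in exactly two maximal orders of $H_\grp$; I would then verify by computing its discriminant that \eqref{eq:52} provides an explicit global maximal overorder. Because $\Nr(1+i) = 2$ is a uniformiser at $\grp$, the element $1+i$ is odd at $\grp$, so by Section~\ref{sect:parity-of-element} it lies outside the local normalizer $F_\grp^\times (\O_{12})_\grp^\times$; consequently $1+i$ interchanges the two maximal overorders of $\calO_{12}$.

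In all three cases $\beth(\calO_{12}) = 1$, giving $t(A_4) = 1$. To finish, I would determine $\calN(\O_{12})$. In the first two cases, uniqueness of $\O_{12}$ inside $\grS(\calO_{12})$ combined with \eqref{eq:148} forces $\calN(\O_{12}) = \calN(\calO_{12}) = F^\times \O_{12}^\times \langle 1+i \rangle$ via \eqref{eq:51}. In the inert case the parity argument above removes $1+i$, leaving $\calN(\O_{12}) = F^\times \O_{12}^\times$. Substituting the triples $(\omega(H), \abs{\calN(\O_{12})/(F^\times \O_{12}^\times)}) = (2, 2)$ if $d \equiv 1 \pmod{8}$, $(0, 1)$ if $d \equiv 5 \pmod{8}$, and $(0, 2)$ if $d \equiv 2, 3 \pmod{4}$ into \eqref{eq:107} yields $h(A_4) = 2h(F)$, $h(F)$, and $h(F)/2$ respectively, which is exactly \eqref{eq:153}. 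The main obstacle is the inert case $d \equiv 5 \pmod{8}$: one must pass from the local Eichler-order picture to the concrete global order \eqref{eq:52}, and crucially argue that the normalizer shrinks relative to $\calN(\calO_{12})$, where the decisive input is the odd parity of $\Nr(1+i) = 2$ at the inert dyadic prime.
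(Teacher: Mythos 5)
Your proposal is correct and follows essentially the same route as the paper's proof: reduce $t(A_4)$ to $\beth(\calO_{12})$ using $t(S_4)=0$ and Figure~\ref{fig:rel-gps}, split on $\Lsymb{F}{2}$, invoke Definition-Lemma~\ref{defn:O12-O4I} in the ramified case and the Eichler-order/parity-of-$\Nr(1+i)$ argument in the inert case, then feed the resulting pairs $(\omega(H),\abs{\calN(\O_{12})/F^\times\O_{12}^\times})$ into \eqref{eq:107}. The only cosmetic difference is that the paper verifies \eqref{eq:52} by exhibiting the dual basis of $\{1,i,j,\xi\}$ and checking the order is intermediate to $\calO_{12}\subset\calO_{12}^\vee$, which is the same discriminant check you defer.
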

\begin{proof}
Suppose that $t(A_4)\neq 0$, and $\O\subset H$ is a maximal order
with $\O^\bs\simeq A_4$. By Table~\ref{tab:minimal-G-orders}, we may
identify $H$ with $\qalg{-1}{-1}{F}$ in such a way that $\O\supseteq
\calO_{12}$. 
 Necessarily, $2\varepsilon \not\in F^{\times
    2}$,  otherwise $\O=\O_{24}$, which has reduced unit group
  $S_4$. 

  Conversely, suppose that $H=\qalg{-1}{-1}{F}$ and
  $2\varepsilon\not\in F^{\times 2}$.   Then $t(S_4)=0$, and $t(A_4)=\beth(\calO_{12})$. In other words, $\O^\bs\simeq A_4$ for every maximal order $\O$
  containing $\calO_{12}=\bbo_2\otimes_\Z O_F$. According to
  (\ref{eq:51}) and (\ref{eq:148}),
  \begin{equation}\label{eq:nO12}
  \calN(\O)\subseteq
  \calN(\calO_{12})=F^\times\calO_{12}^\times\<1+i\>=F^\times\O^\times\<1+i\>.
  \end{equation}
We show that
  $\calN(\calO_{12})$ acts transitively on 
  $\grS(\calO_{12})$ so that $t(A_4)=1$.

  If $F$ splits at $2$, then $H$ is ramified at the two dyadic primes of $F$, and $\calO_{12}$ is already a maximal order. We have 
  $\omega(H)=2$, and $h(A_4)=2h(F)$ in this case.

  In the remaining two cases, there is a unique dyadic prime $\grp$ of
  $F$, and $H$ splits at all finite primes of $F$,
  i.e.~$\omega(H)=0$. If $F$ is ramified at $2$, then $\O_{12}$
  in (\ref{eq:74}) is the unique maximal order containing $\calO_{12}$
  by Definition-Lemma~\ref{defn:O12-O4I}.  Necessarily
  $\calN(\O_{12})=\calN(\calO_{12})$, and $h(A_4)=\frac{1}{2}h(F)$.

If $F$ is inert
  at $2$,  then $e_\grp(\calO_{12})=1$ and
  $\calO_{12}$ is an Eichler order of level $\grp=2O_F$ by
  Lemma~\ref{lem:unram-ext}. Let $\O$ and $\O'$ be the two maximal
  orders containing $\calO_{12}$.  Note that
  $(1+i)\in \calN(\calO_{12})$ , but $(1+i)\not\in\calN(\O)$ since it
  is odd at $\grp$ (see
  Section~\ref{sect:parity-of-element}). Therefore,
  $(1+i)\O (1+i)^{-1}=\O'$, and $\calN(\O)=F^\times\O^\times$ by
  \eqref{eq:nO12}. It follows that $t(A_4)=1$ and $h(A_4)=h(F)$. 
A direct calculation shows that the dual basis of $\{1, i, j,
\xi\}\subset \qalg{-1}{-1}{F}$ is 
\[\left\{\frac{1+k}{2},\quad \frac{-i+k}{2}, \quad \frac{-j+k}{2},
  \quad -k   \right\},\]
and the order in (\ref{eq:52}) is a maximal order intermediate to
$\calO_{12}\subset \calO_{12}^\vee$, so it coincides with either $\O$
or $\O'$.  

Summarizing the above three cases and interpolating, we obtain
(\ref{eq:153}). 
\end{proof}

Similar to $\calO_{12}=\bbo_2\otimes_\Z O_F$, the minimal $D_3^\dagger$-order
$\scrO_3^\dagger$ is also obtained from the maximal
$\Z$-order $\gro_3\subset H_{3, \infty}=\qalg{-1}{-3}{\Q}$ in
(\ref{eq:60}) by extending the scalars from $\Z$ to $O_F$. By
Figure~\ref{fig:rel-gps}, $D_3^\dagger$ is realizable as a subgroup of
$D_6$ but not $S_4$. 

\begin{prop}\label{prop:type-num-D3I}
 We have 
\begin{align}
\label{eq:59}
    t(D_3^\dagger)&=
  \begin{cases}
    1 &\quad\text{if } 
    H\simeq\qalg{-1}{-3}{F} \text{ and }
    3\varepsilon\not\in F^{\times 2}  
    ,\\ 0 &\quad\text{otherwise.}
\end{cases}\\
    h(D_3^\dagger)&=\left(\frac{1}{2}+\frac{1}{2}\Lsymb{F}{3}+\Lsymb{F}{3}^2\right)h(F)t(D_3^\dagger).
\end{align}
Suppose that $H=\qalg{-1}{-3}{F}$ and $3\varepsilon\not\in F^{\times
  2}$. We set $\O_6^\dagger=O_{F(j)}+iO_{F(j)}$ if $d \equiv
0\pmod{3}$,  $\O_6^\dagger=\scrO_3^\dagger$ if $d\equiv 1\pmod{3}$,
and 
\begin{equation}\label{eq:101}
  \O_6^\dagger:=O_F+O_F\frac{i+k}{2}+O_F\frac{1+j}{2}+O_F\frac{\sqrt{d}j+k}{3}\quad\text{if
  }
  d\equiv 2\pmod{3}. 
\end{equation}
Every maximal order $\O\subset H$ with $\O^\bs\simeq D_3^\dagger$ is
$H^\times$-conjugate to $\O_6^\dagger$, and 
\[\calN(\O_6^\dagger)=
\begin{cases}
  F^\times(\O_6^\dagger)^\times\qquad &\text{if } d\equiv 2\pmod{3},\\
  F^\times(\O_6^\dagger)^\times\<j\>\qquad &\text{otherwise.}
\end{cases}
\]
\end{prop}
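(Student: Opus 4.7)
The plan is to mirror Proposition~\ref{prop:type-num-A4}, replacing $\calO_{12}$ by the minimal $D_3^\dagger$-order $\scrO_3^\dagger=O_F[i,(1+j)/2]$ throughout. By Corollary~\ref{cor:quat-alg-with-non-cyclic-gp}, if $t(D_3^\dagger)\neq 0$ then $H\simeq \qalg{-1}{-3}{F}$, and we may identify the two. From Figure~\ref{fig:rel-gps}, the only noncyclic group in $\calG^\natural$ strictly containing $D_3^\dagger$ is $D_6$, so any maximal order $\O\supset\scrO_3^\dagger$ has $\O^\bs\in\{D_3^\dagger,D_6\}$, and (\ref{eq:147}) reduces the task to $t(D_3^\dagger)=\beth(\scrO_3^\dagger)-t(D_6)$.

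If $3\varepsilon\in F^{\times 2}$, a short valuation argument forces $3$ to be ramified in $F$; combining Lemma~\ref{lem:gen-lem-scro3II} (at primes away from $3$) with Lemma~\ref{lem:ram-ext} (at the unique prime above $3$, after identifying $\scrO_{3,\grp}^\dagger$ with $\gro_3\otimes_{\Z_3}O_{F_\grp}$) yields $\aleph(\scrO_3^\dagger)=1$. Since $\scrO_6\supset\scrO_3^\dagger$ is itself maximal by Corollary~\ref{cor:mini-D6-S4-maximal}, it must be that unique overorder; its reduced unit group is $D_6$ by Proposition~\ref{prop:D6-max-ord-unique}, so $t(D_3^\dagger)=0$ in this subcase.

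Assume now $3\varepsilon\not\in F^{\times 2}$, so $t(D_6)=0$ by Proposition~\ref{prop:D6-max-ord-unique} and $t(D_3^\dagger)=\beth(\scrO_3^\dagger)$. I analyze $\grS(\scrO_3^\dagger)$ locally according to the splitting of $3$ in $F$ via Lemma~\ref{lem:gen-lem-scro3II}. When $d\equiv 1\pmod{3}$ ($3$ splits, $\omega(H)=2$), $H_\grp$ is division and unramified at each $\grp\mid 3$, so $\scrO_3^\dagger$ is already maximal and $\O_6^\dagger=\scrO_3^\dagger$. When $d\equiv 2\pmod{3}$ ($3$ inert, $\omega(H)=0$), $e_\grp(\scrO_3^\dagger)=1$ at $\grp=3O_F$ and $\scrO_3^\dagger$ is an Eichler order of level $\grp$ with exactly two maximal overorders $\O,\O'$; since $\nu_\grp(\Nr(j))=1$, $j$ is odd and swaps $\O,\O'$ by Section~\ref{sect:parity-of-element}, so $\beth=1$, with representative (\ref{eq:101}) verified to be maximal and to contain $\scrO_3^\dagger$ by a direct discriminant calculation. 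When $d\equiv 0\pmod{3}$ ($3$ ramified, $\omega(H)=0$), Lemma~\ref{lem:ram-ext} applies locally at the unique prime above $3$ and produces a unique maximal overorder, which a discriminant check identifies with $O_{F(j)}+iO_{F(j)}$. In every case $\beth(\scrO_3^\dagger)=1$, establishing (\ref{eq:59}).

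Finally, $\calN(\O_6^\dagger)\subseteq F^\times(\O_6^\dagger)^\times\<j\>$ by (\ref{eq:148}) and (\ref{eq:56}). In the inert case $j\notin\calN(\O_6^\dagger)$ by the swapping argument, giving index $1$; in the split and ramified cases direct computation shows $j\in\calN(\O_6^\dagger)$, while $j\notin F^\times(\O_6^\dagger)^\times$ since under the standing assumptions $\Nr(j)=3$ is neither a square nor a totally positive unit times a square in $F^\times$ (this is where both $3\varepsilon\not\in F^{\times 2}$ and $d\geq 6$ are used), so the index is $2$. Plugging the triples $(\omega(H),[\calN(\O_6^\dagger):F^\times(\O_6^\dagger)^\times])=(2,2),(0,1),(0,2)$ into (\ref{eq:107}) yields $h(D_3^\dagger)=2h(F),h(F),h(F)/2$ in cases (a), (b), (c), which interpolate precisely to $\bigl(\tfrac12+\tfrac12\Lsymb{F}{3}+\Lsymb{F}{3}^2\bigr)h(F)$. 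The main technical obstacle will be the identification in the ramified case of the local order $\scrO_{3,\grp}^\dagger$ with $\gro_3\otimes_{\Z_3}O_{F_\grp}$, so that Lemma~\ref{lem:ram-ext} controls the local overorder structure.
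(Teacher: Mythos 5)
Your proposal is correct and takes essentially the same route as the paper, which itself only sketches this proof by saying it mirrors Proposition~\ref{prop:type-num-A4}; your case division by the splitting behaviour of $3$, the use of Lemmas~\ref{lem:gen-lem-scro3II} and \ref{lem:ram-ext} (via $\scrO_3^\dagger=\gro_3\otimes_\Z O_F$), the odd-element swapping argument for $j$ in the inert case, and the reduced-norm argument that $j\notin F^\times(\O_6^\dagger)^\times$ all match the intended argument and correctly reproduce the stated values of $t(D_3^\dagger)$, $\calN(\O_6^\dagger)$ and $h(D_3^\dagger)$. (The only cosmetic slip is citing \eqref{eq:56} for $\calN(\scrO_3^\dagger)$, which is actually given in the unlabeled line just below it.)
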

\begin{proof}
By (\ref{eq:61}), the dual lattice of $\scrO_3^\dagger$ is 
\[  (\scrO_3^\dagger)^\vee=O_F\frac{3+j}{6}+O_F\frac{-3i+k}{6}+O_F\frac{j}{3}+O_F\frac{k}{3}.\]
When $d\equiv 2\pmod{3}$, one checks directly that
$\scrO_3^\dagger+O_F\frac{\sqrt{d}j+k}{3}\subseteq
(\scrO_3^\dagger)^\vee$ is a maximal order and coincides with the one
in (\ref{eq:101}).  Similar proof as that of 
Proposition~\ref{prop:type-num-A4} shows that  $\grS(\scrO_3^\dagger)=\{\O_6^\dagger,
j\O_6^\dagger j^{-1}\}$ if 
$d\equiv 2\pmod{3}$, and 
$\grS(\scrO_3^\dagger)=\{\O_6^\dagger\}$ otherwise. 
The rest of the proof is almost the same, hence omitted.
\end{proof}

\begin{prop}\label{prop:max-D4}
  We have  
  \begin{align}
  t(D_4)&=
  \begin{cases}
    1 &\quad\text{if } H=\qalg{-1}{-1}{F} \text{ and }  2\varepsilon\in F^{\times 2}, 
    \\
    0 &\quad\text{otherwise};
\end{cases}\\
      h(D_4)&=h(F)t(D_4). 
  \end{align}
In particular, 
\begin{equation}
  \label{eq:81}
  t(D_4)=t(S_4) \qquad \text{for  all }  H. 
\end{equation}
Suppose that $H=\qalg{-1}{-1}{F}$ and $2\varepsilon\in F^{\times
  2}$.
Every maximal order $\O\subset H$ with $\O^\bs\simeq D_4$ is
$H^\times$-conjugate to $\O_8=\O_4^\dagger$ in (\ref{eq:152}), and 
\begin{equation}
  \label{eq:55}
\calN(\O_8)=F^\times\O_8^\times. 
\end{equation}
\end{prop}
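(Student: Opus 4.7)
The plan is to adapt the strategy used in Propositions~\ref{prop:S4-unique-order}--\ref{prop:type-num-D3I}. First, if $t(D_4) \neq 0$ and $\O^\bs \simeq D_4$, Corollary~\ref{cor:quat-alg-with-non-cyclic-gp} and Table~\ref{tab:minimal-G-orders} force $H \simeq \qalg{-1}{-1}{F}$ and $2\varepsilon \in F^{\times 2}$, and after an $H^\times$-conjugation $\O$ contains the minimal $D_4$-order $\scrO_4 = O_F[i, \sqrt{\varepsilon j}]$. Conversely, assume the two conditions. Since $S_4$ is the only group in $\calG^\natural$ properly containing $D_4$ by Figure~\ref{fig:rel-gps}, formula (\ref{eq:147}) together with Proposition~\ref{prop:S4-unique-order} gives $t(D_4) = \beth(\scrO_4) - t(S_4) = \beth(\scrO_4) - 1$, reducing the claim to $\beth(\scrO_4) = 2$.

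For the lower bound $\beth(\scrO_4) \geq 2$, I will invoke Definition-Lemma~\ref{defn:O12-O4I}(iii): both $\O_{24}$ and $\O_8 = \O_4^\dagger$ contain $\scrO_4$, and they cannot be $\calN(\scrO_4)$-conjugate (indeed not even $H^\times$-conjugate) because $\O_{24}^\bs \simeq S_4 \not\simeq D_4 \simeq \O_8^\bs$ and the isomorphism type of the reduced unit group is conjugation-invariant. For the upper bound, the hypothesis $2\varepsilon \in F^{\times 2}$ forces $2$ to ramify in $F$, so $\scrO_4$ is locally maximal away from the unique dyadic prime $\grp$ and $\aleph(\scrO_4) = \aleph_\grp(\scrO_4)$; also $H_\grp \simeq M_2(F_\grp)$ since the division algebra $H_{2,\infty}/\Q_2$ splits over the ramified quadratic extension $F_\grp/\Q_2$. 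My plan is to argue: the local hereditary closure $\calH(\scrO_4)_\grp$ is a hereditary order in $M_2(F_\grp)$, hence either maximal or Eichler of level $\grp$; every maximal overorder of $\scrO_{4,\grp}$ must contain it; since the distinct maximal orders $(\O_{24})_\grp$ and $(\O_8)_\grp$ both contain $\scrO_{4,\grp}$, the order $\calH(\scrO_4)_\grp$ cannot be maximal, so it is Eichler of level $\grp$ with exactly two maximal overorders. This yields $\aleph_\grp(\scrO_4) = 2$.

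For the remaining items, $\O_8 \supseteq \scrO_4$ and $\O_8^\bs \simeq D_4 \simeq \scrO_4^\bs$ force $\O_8^\times = O_F^\times \scrO_4^\times$; combining (\ref{eq:148}) with (\ref{eq:98}) then yields $\calN(\O_8) \subseteq \calN(\scrO_4) = F^\times \scrO_4^\times = F^\times \O_8^\times$, the reverse inclusion being obvious. Finally, $\omega(H) = 0$ because the sole ramification prime $2$ of $H_{2,\infty}/\Q$ becomes a split prime of $H$ after base change to $F$; substituting into (\ref{eq:107}) gives $h(D_4) = h(F) t(D_4)$, and $t(D_4) = t(S_4)$ is immediate from matching the defining conditions. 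The main obstacle is the hereditary-closure dichotomy in the middle paragraph; this route sidesteps the verification of the Bass property (Lemma~\ref{lem:scro2II-Bass-criterion} being inapplicable here since $\grd(\scrO_4)_\grp = \grp^2$ is not divisible by $\grp^3$), relying instead purely on the classification of hereditary orders in a split local quaternion algebra.
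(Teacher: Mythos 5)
Your architecture is the paper's: reduce via Table~\ref{tab:minimal-G-orders} to $H=\qalg{-1}{-1}{F}$ with $2\varepsilon\in F^{\times 2}$, show $\grS(\scrO_4)=\{\O_{24},\O_8\}$ with the two orders non-conjugate, and deduce the normalizer from $\calN(\O_8)\subseteq\calN(\scrO_4)=F^\times\scrO_4^\times$. The lower bound $\beth(\scrO_4)\geq 2$, the normalizer computation, $\omega(H)=0$, and $t(D_4)=t(S_4)$ are all fine. The gap is in your upper bound. The assertion that every maximal overorder of $(\scrO_4)_\grp$ contains the hereditary closure $\calH(\scrO_4)_\grp$ is not a formal property of arbitrary orders in $M_2(F_\grp)$; it is precisely the consequence of the Bass property (via \cite[Corollary~4.3]{Brzezinski-1983}) that you announce you are sidestepping. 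The paper's own Lemma~\ref{lem:non-Gorenstein-order} refutes the general claim: for the non-Bass order $O_F+\pi M_2(O_F)$ the hereditary (indeed Gorenstein) closure is the maximal order $M_2(O_F)$, yet $\aleph=\abs{\grk}+2$, so most of its maximal overorders do not contain the closure. Without Bass-ness your dichotomy never gets started --- a priori $\grT((\scrO_4)_\grp)$ could be a star or a longer thick line, and the existence of two maximal overorders does not cap $\aleph_\grp(\scrO_4)$ at $2$.

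The irony is that the Bass property is free here. Lemma~\ref{lem:scro2II-Bass-criterion} is indeed inapplicable, but the paper instead quotes \cite[Corollary~1.6]{Brzezinski-1983}: any order with cube-free discriminant is Bass, and $\grd(\scrO_4)=2O_F=\grp^2$ is cube-free. It then checks $e_\grp(\scrO_4)=0$ by the argument of Lemma~\ref{lem:gen-lem-scro2II} (using $i^2\equiv(\sqrt{\varepsilon j})^4\equiv 1\pmod{\grp}$, which rules out $\scrO_4$ being Eichler of level $\grp^2$ with three maximal overorders), and applies Lemma~\ref{lem:bass-order-eichler-inv=0} --- whose proof is exactly your hereditary-closure argument, run under the hypotheses that make it valid. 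Inserting the one-line cube-free observation repairs your proof, at which point it coincides with the paper's.
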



\begin{proof}
  By Table~\ref{tab:minimal-G-orders}, there exists a minimal
  $D_4$-order only if $H=\qalg{-1}{-1}{F}$ and
  $2\varepsilon\in F^{\times 2}$. We show that
  $t(D_4)=1$ in this case. It is shown in
  Definition-Lemma~\ref{defn:O12-O4I} that 
  $\grS(\scrO_4)\supseteq \{\O_{24}, \O_8\}$, and $\O_8$ is not
  $H^\times$-conjugate to $\O_{24}$. 
 Let $\grp=(2\vartheta) O_F$ be the
  unique dyadic prime of $F$, where
  $2\vartheta^2=\varepsilon$.  By
  \cite[Corollary~1.6]{Brzezinski-1983},
  $\scrO_4=O_F[i, \sqrt{\varepsilon j}]$ is a Bass order since
  $\grd(\scrO_4)=2O_F=\grp^2$ is cube-free. Note that
  $i^2\equiv (\sqrt{\varepsilon j})^4\equiv 1 \pmod{\grp}$. The same
  proof as that of Lemma~\ref{lem:gen-lem-scro2II} shows that
  $\scrO_4$ is maximal at every finite place of $F$ coprime
  to $2$, and the Eichler invariant $e_\grp(\scrO_4)=0$. It then
  follows from Lemma~\ref{lem:bass-order-eichler-inv=0} that
  $\grS(\scrO_4)=\{\O_{24}, \O_8\}$. 
  Since $\O_8$ is not $H^\times$-conjugate to $\O_{24}$, we have 
  $\O_8^\bs=\scrO_4^\bs\simeq D_4$. Therefore, 
\[F^\times\O_8^\times\subseteq \calN(\O_8)\subseteq
  \calN(\scrO_4)=F^\times\scrO_4^\times=F^\times\O_8^\times. \]
It follows that (\ref{eq:55}) holds,  and $h(D_4)=h(F)t(D_4)$
since $\omega(H)=0$. 
\end{proof}

\begin{rem}
The proof of Proposition~\ref{prop:max-D4} shows that $\O_{24}$ and
$\O_8$ intersect at an Eichler order $\scrE$ of level
$\grp=(2\vartheta)O_F$, and $\scrE$ is also the unique minimal
overorder of $\scrO_4$.  It is not hard to write it down explicitly: 
\begin{equation}\label{eq:24}
\scrE=O_F[i, \sqrt{\varepsilon j}, \xi-\sqrt{\varepsilon
  i}]=O_F+O_Fi+O_F\sqrt{\varepsilon j}+O_F(\xi-\sqrt{\varepsilon
  i}). 
\end{equation}
\end{rem}

\begin{prop}\label{prop:type-num-D2I}
 We have
 \begin{align}
  t(D_2^\dagger)&=
  \begin{cases}
    1 &\quad\text{if } H=\qalg{-1}{-1}{F}, \left(\frac{F}{2}\right)=0 \text{ and }  2\varepsilon\not\in F^{\times 2},\\
    0 &\quad\text{otherwise};
  \end{cases}\\
   h(D_2^\dagger)&=\frac{1}{2}h(F)t(D_2^\dagger)
 \end{align}
Suppose that $H=\qalg{-1}{-1}{F}, \left(\frac{F}{2}\right)=0$
  and  $2\varepsilon\not\in F^{\times 2}$. 
Every maximal order $\O\subset H$ with 
  $\O^\bs\simeq D_2^\dagger$ is $H^\times$-conjugate to $\O_4^\dagger$
in  (\ref{eq:152}), and  
\[\calN(\O_4^\dagger)=F^\times(\O_4^\dagger)^\times\<1+i\>.\]
\end{prop}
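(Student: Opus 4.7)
The plan is to apply the inductive formula \eqref{eq:147} for $G=D_2^\dagger$. By Table~\ref{tab:minimal-G-orders}, a minimal $D_2^\dagger$-order exists only when $H\simeq\qalg{-1}{-1}{F}$, so $t(D_2^\dagger)=0$ otherwise. Assume from now on that $H=\qalg{-1}{-1}{F}$ and put $\scrO_2^\dagger=O_F[i,j]$. Figure~\ref{fig:rel-gps} identifies $A_4$, $D_4$, $S_4$ as exactly the groups in $\calG^\natural$ strictly containing $D_2^\dagger$, so
\[
t(D_2^\dagger)=\beth(\scrO_2^\dagger)-t(A_4)-t(D_4)-t(S_4).
\]
Propositions~\ref{prop:type-num-A4}, \ref{prop:max-D4} and \ref{prop:S4-unique-order} already provide $t(A_4)$, $t(D_4)$, $t(S_4)$ in every subcase. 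It therefore suffices to compute $\beth(\scrO_2^\dagger)$, the number of $\calN(\scrO_2^\dagger)$-orbits in $\grS(\scrO_2^\dagger)$. Since $\grd(\scrO_2^\dagger)=4O_F$, the order $\scrO_2^\dagger$ is automatically maximal at every odd prime of $F$, and the whole analysis is localized at the dyadic primes.

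I would stratify by the Artin symbol $\Lsymb{F}{2}$. If $\Lsymb{F}{2}=1$, then $H$ is ramified at both dyadic primes and the local uniqueness of the maximal order in the division algebra forces $\aleph(\scrO_2^\dagger)=1$, giving $\beth(\scrO_2^\dagger)=1=t(A_4)$ and $t(D_2^\dagger)=0$. If $\Lsymb{F}{2}=-1$, then $H$ splits at the unique dyadic prime $\grp$, the overorder $\calO_{12,\grp}$ is an Eichler order of level $\grp$ by Lemma~\ref{lem:unram-ext}, and a local argument using Lemma~\ref{lem:gen-lem-scro2II} (Gorenstein with $e_\grp=0$) identifies $\grS(\scrO_2^\dagger)$ with $\grS(\calO_{12})$, giving $\beth(\scrO_2^\dagger)=1=t(A_4)$ and $t(D_2^\dagger)=0$ again. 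If $\Lsymb{F}{2}=0$ and $2\varepsilon\in F^{\times 2}$, Proposition~\ref{prop:max-D4} provides $\grS(\scrO_4)=\{\O_{24},\O_8\}$; the local analysis at $\grp$ shows that every maximal order containing $\scrO_{2,\grp}^\dagger$ must already contain $\scrO_{4,\grp}$, whence $\grS(\scrO_2^\dagger)=\{\O_{24},\O_8\}$, $\beth(\scrO_2^\dagger)=2=t(D_4)+t(S_4)$, and $t(D_2^\dagger)=0$.

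The decisive case is $\Lsymb{F}{2}=0$ and $2\varepsilon\notin F^{\times 2}$; here $t(A_4)=1$ and $t(D_4)=t(S_4)=0$, so the goal is to prove $\beth(\scrO_2^\dagger)=2$. Definition-Lemma~\ref{defn:O12-O4I} already constructs the maximal order $\O_4^\dagger=O_L+iO_L$ (with $L$ as in \eqref{eq:49}) containing $\scrO_2^\dagger$, with reduced unit group $D_2^\dagger$ by \eqref{eq:151} and not $H^\times$-conjugate to $\O_{12}$. The crux is to show that $\O_{12,\grp}$ and $\O_{4,\grp}^\dagger$ are the only maximal orders of $H_\grp$ containing $\scrO_{2,\grp}^\dagger$. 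By Lemma~\ref{lem:gen-lem-scro2II}, $\scrO_{2,\grp}^\dagger$ is Gorenstein with $\grd_\grp=\grp^4$ and Eichler invariant $0$; combining Lemma~\ref{lem:scro2II-Bass-criterion} with Brzezinski's classification of Bass and non-Bass orders with $e=0$ in \cite{Brzezinski-crelle-1990} I would enumerate the local overorders and confirm that exactly two of them are maximal. The normalizer formula then follows from \eqref{eq:148}: $\calN(\O_4^\dagger)\subseteq \calN(\scrO_2^\dagger)=F^\times(\scrO_2^\dagger)^\times\<1+i,1+j\>$ by \eqref{eq:56}, and a direct conjugation check (using $(1+i)\colon i\mapsto i,\ j\mapsto k$ and $(1+j)\colon i\mapsto -k,\ j\mapsto j$) identifies the unique generator that preserves $\O_4^\dagger$ while the other sends it to the conjugacy class of $\O_{12}$. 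Finally $\omega(H)=0$ in this case, so \eqref{eq:107} yields $h(D_2^\dagger)=\tfrac{1}{2}h(F)\,t(D_2^\dagger)$. The main obstacle is precisely this local overorder enumeration at the ramified dyadic prime, since the non-Eichler nature ($e_\grp=0$) of $\scrO_{2,\grp}^\dagger$ blocks a clean Bruhat--Tits argument of the type used in Lemma~\ref{lem:non-Gorenstein-order}, and forces us into Brzezinski's finer Bass-order analysis to rule out any extra overorder besides $\O_{12,\grp}$ and $\O_{4,\grp}^\dagger$.
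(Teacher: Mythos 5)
Your global strategy coincides with the paper's: the inductive formula \eqref{eq:147}, stratification by $\Lsymb{F}{2}$, the explicit orders of Definition-Lemma~\ref{defn:O12-O4I}, the normalizer inclusion $\calN(\O_4^\dagger)\subseteq\calN(\scrO_2^\dagger)$ via \eqref{eq:148} and \eqref{eq:56}, and $h(D_2^\dagger)=\tfrac{1}{2}h(F)\,t(D_2^\dagger)$ from \eqref{eq:107} with $\omega(H)=0$. However, in the decisive case $\Lsymb{F}{2}=0$ your plan rests on a false count. You propose to show that $(\O_{12})_\grp$ and $(\O_4^\dagger)_\grp$ are the \emph{only} maximal orders of $H_\grp$ containing $(\scrO_2^\dagger)_\grp$, i.e.\ $\aleph(\scrO_2^\dagger)=2$. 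In fact $\aleph(\scrO_2^\dagger)=4$ when $2$ is ramified in $F$: since $O_F[\sqrt{-1}]$ is \emph{not} maximal (Table~\ref{tab:orders-K1}), Lemma~\ref{lem:scro2II-Bass-criterion} shows that $(\scrO_2^\dagger)_\grp$ is not Bass, so its unique minimal overorder $\calO$ (of discriminant $\grp^3$) is non-Gorenstein, and Lemma~\ref{lem:non-Gorenstein-order} then says $\grT(\scrO_2^\dagger\otimes\Z_2)$ is a star with one center and $1+\abs{\grk}=3$ external vertices --- four maximal orders in all. The Brzezinski-type enumeration you invoke would therefore produce four maximal overorders, not two, and your identity $t(D_2^\dagger)=\beth(\scrO_2^\dagger)-t(A_4)-t(D_4)-t(S_4)$ cannot be closed without the step that is missing from your proposal: showing that the three external vertices form a \emph{single} $\calN(\scrO_2^\dagger)$-orbit. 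The paper does this by observing that $\xi=(1+i+j+k)/2\in\calN(\scrO_2^\dagger)$ cannot normalize any external vertex $\O'$ (else $\xi\in\O'$ by Lemma~\ref{lem:max-order-normalizer}, so $\calO_{12}\subseteq\O'$, contradicting the uniqueness of $\O_{12}$ as maximal overorder of $\calO_{12}$); hence $\xi$ acts as a $3$-cycle, $\ol\calN(\scrO_2^\dagger)\simeq S_3$ acts as the full symmetric group on the three external vertices, and $\beth(\scrO_2^\dagger)=2$.

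The same miscount infects your subcase $\Lsymb{F}{2}=0$, $2\varepsilon\in F^{\times 2}$: it is not true that every maximal order containing $(\scrO_2^\dagger)_\grp$ contains $(\scrO_4)_\grp$ --- two of the three external vertices contain $\scrO_2^\dagger$ but not $\scrO_4$, so $\grS(\scrO_2^\dagger)\neq\{\O_{24},\O_8\}$, even though your conclusion $\beth(\scrO_2^\dagger)=2$ (hence $t(D_2^\dagger)=0$) happens to be correct there. Finally, your normalizer check is incoherent as stated: conjugation preserves $H^\times$-conjugacy classes, so $1+j$ cannot send $\O_4^\dagger$ ``to the conjugacy class of $\O_{12}$''; it sends $\O_4^\dagger$ to one of the other two external vertices, all of which lie in the class of $\O_4^\dagger$. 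The correct argument, supplied by the $S_3$-action above, is that the image of $1+i$ stabilizes $\O_4^\dagger$ while $\xi$ does not, whence $\calN(\O_4^\dagger)=F^\times(\O_4^\dagger)^\times\<1+i\>$.
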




\begin{proof}
  We focus on the case that $H=\qalg{-1}{-1}{F}$ since
  $t(D_2^\dagger)=0$ otherwise.  Thanks to
  Lemma~\ref{lem:gen-lem-scro2II}, $\scrO_2^\dagger=O_F[i,j]$ is a Gorenstein
  order maximal at every prime $\ell\neq 2$. We study the set of  maximal
  orders in $H$ containing $\scrO_2^\dagger$.

If $\Lsymb{F}{2}=1$,  then $t(D_2^\dagger)=0$. Indeed, $H$ is ramified at
  the two dyadic primes of $F$ in this case, and $\calO_{12}$ is the unique
  maximal order containing $\scrO_2^\dagger$ with
  $\calO_{12}^\bs\simeq A_4$. Suppose that $\Lsymb{F}{2}\neq 1$ so that there is a   unique dyadic
  prime $\grp$ of $F$. Then $e_\grp(\scrO_2^\dagger)=0$ by
  Lemma~\ref{lem:gen-lem-scro2II}. It follows from
  \cite[Proposition~4.1]{Brzezinski-1983} that there is a unique
  minimal overorder $\calO$ of $\scrO_2^\dagger$, and
  $\chi(\calO, \scrO_2^\dagger)=\grp$.  In particular, $\grS(\calO)=\grS(\scrO_2^\dagger)$, and 
  $\calO_{12}\supseteq \calO$. Since $\chi(\calO_{12}, \scrO_2^\dagger)=2O_F$ by Table~\ref{tab:minimal-G-orders},  we see that $\calO_{12}=\calO$ if $\Lsymb{F}{2}=-1$, forcing $\grS(\scrO_2^\dagger)=\grS(\calO_{12})$.   Therefore,
  $t(D_2^\dagger)\neq 0$ only if $\Lsymb{F}{2}=0$, which we
  assume for the rest of the proof. In this case, $\grd(\calO)=\chi(\calO, \scrO_2^\dagger)^{-1}\grd(\scrO_2^\dagger)=\grp^3$ by Table~\ref{tab:minimal-G-orders}. It follows from \cite[Proposition~4.1]{Brzezinski-1983}  again that $e_\grp(\calO)=0$ since $\grd(\calO)\neq \grp$.

Note that $O_F[\sqrt{-1}]$ is not maximal in $F(\sqrt{-1})$ (see Table~\ref{tab:orders-K1}).  According to Lemma~\ref{lem:scro2II-Bass-criterion}, $\scrO_2^\dagger$ is not Bass, and hence $\calO$ is not Gorenstein by \cite[Proposition~1.12]{Brzezinski-1983}.  Now it follows from Lemma~\ref{lem:non-Gorenstein-order} that $\aleph(\scrO_2^\dagger)=\aleph(\calO)=4$. More precisely, the subtree $\grT(\scrO_2^\dagger\otimes \Z_2)$ of the  Bruhat-Tits
  tree of $H\otimes \Q_2$ is a star centered at $\Gor(\calO)\otimes \Z_2$ with $3$
external vertices. Here $\Gor(\calO)$ denotes the Gorenstein saturation of $\calO$, which is a maximal order as shown in Lemma~\ref{lem:non-Gorenstein-order}.
According to Definition-Lemma~\ref{defn:O12-O4I},
 $\grS(\scrO_2^\dagger)\supseteq \{\O_{12}, \O_4^\dagger\}$. By \eqref{eq:56},   $\O_{12}$ is fixed under the action of $\ol\calN(\scrO_2^\dagger)$ on
$\grS(\scrO_2^\dagger)$. We claim that  $\beth(\scrO_2^\dagger)=2$, that is,  $\ol\calN(\scrO_2^\dagger)$ acts transitively on the set $\grS'(\scrO_2^\dagger):=\grS(\scrO_2^\dagger)\smallsetminus
\{\O_{12}\}$. It then follows that $\Gor(\calO)=\O_{12}$.

In fact, 
$\xi=(1+i+j+k)/2\in \calN(\scrO_2^\dagger)$ acts transitively on
$\grS'(\scrO_2^\dagger)$. Otherwise, $\xi$ lies in the
normalizer of one of its members, say $\O'$.  Then $\xi\in \O'$ by
Lemma~\ref{lem:max-order-normalizer}, and hence $\O'$ contains
$\calO_{12}=\scrO_2^\dagger+O_F\xi$ as well. But this contradicts the fact that $\O_{12}$ is the unique maximal overorder of $\calO_{12}$.  Since $\xi$
generates the only nontrivial normal proper subgroup of
$\ol\calN(\scrO_2^\dagger)\simeq S_3$, the action of
$\ol\calN(\scrO_2^\dagger)$ on $\grS'(\scrO_2^\dagger)$ identifies
$\ol\calN(\scrO_2^\dagger)$ with the  full symmetric  group on
$\grS'(\scrO_2^\dagger)$. 




Lastly, if $2\varepsilon\in F^{\times 2}$, then the orbits
$\ol\calN(\scrO_2^\dagger)\backslash \grS(\scrO_2^\dagger)$ are
represented by $\O_{12}=\O_{24}$ and $\O_8=\O_4^\dagger$, with
$\O_{24}^\bs\simeq S_4$ and $\O_8^\bs\simeq D_4$ respectively. Thus
there are no maximal orders $\O$ with $\O^\bs\simeq D_2^\dagger$ in
this case.  If $2\varepsilon\not\in F^{\times 2}$, then
$\O_{12}^\bs\simeq A_4$, and $(\O_4^\dagger)^\bs\simeq  D_2^\dagger$  since
 $\O_4^\dagger $ is not conjugate to $\O_{12}$. 
Hence $\calN(\O_4^\dagger)\subseteq \calN(\scrO_2^\dagger)$. One verifies
directly that $(1+i)\in \calN(\O_4^\dagger)$, but
$\xi\not\in\calN(\O_4^\dagger)$ as demonstrated. It follows that
\[\calN(\O_4^\dagger)=F^\times(\scrO_2^\dagger)^\times\<1+i\>=F^\times(\O_4^\dagger)^\times\<1+i\>.\]
We conclude that $h(D_2^\dagger)=h(F)/2$ since $\omega(H)=0$. 
\end{proof}


%



\section{Refined type numbers for noncyclic reduced unit groups: part II}
\label{sec:maximal-orders-part2}

We keep the notation and assumptions of
Section~\ref{sec:maximal-orders} and study the refined type numbers
and class numbers for $D_2^\ddagger$ or $D_3^\ddagger$. In particular, $F=\Q(\sqrt{d})$ with square-free $d\geq 6$.  We assume further that $\Nm_{F/\Q}(\varepsilon)=1$ throughout this section,  otherwise $t(D_2^\ddagger)=t(D_3^\ddagger)=0$.  

\numberwithin{thmcounter}{subsection}
\subsection{Maximal orders containing $\scrO_2^\ddagger$}
In this subsection, $H$ denotes the quaternion algebra $\qalg{-1}{-\varepsilon}{F}$.
We first write down the finite ramified places of
$H$.  By Lemma~\ref{lem:fund-unit-d=1mod8},
if $d\equiv 1\pmod{8}$, then $\varepsilon$ is of the form
$a+b\sqrt{d}\in \Z[\sqrt{d}]$ with $a$ odd and $b$ divisible by $4$.

\begin{lem}\label{lem:ram-places-D2II}
  The quaternion algebra
  $H=\qalg{-1}{-\varepsilon}{F}$ splits at all finite nondyadic primes
  of $F$. If $d\not\equiv 1\pmod{8}$, then $H$ splits at the unique dyadic
  prime of $F$ as well. When $d\equiv 1\pmod{8}$, $H$ is ramified at
  the two dyadic primes of $F$ if and only if
  $\varepsilon=a+b\sqrt{d}$ with $a\equiv 1\pmod{4}$. 
\end{lem}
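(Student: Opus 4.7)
The plan is to compute the Hilbert symbol $(-1,-\varepsilon)_\grp$ at each finite prime $\grp$ of $F$. Since $\Nm_{F/\Q}(\varepsilon)=1$, both $-1$ and $-\varepsilon$ lie in $O_F^\times$. At any nondyadic $\grp$ they remain units in $O_{F_\grp}^\times$, and because $\grp$ has odd residue characteristic, the Hilbert symbol of two units is trivial: $(-1,-\varepsilon)_\grp=1$. This gives the first assertion.

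For the dyadic primes, I would first deploy a parity argument. Because $\varepsilon>1$ under the canonical embedding and $\Nm_{F/\Q}(\varepsilon)=1$, the other real conjugate equals $\varepsilon^{-1}>0$, so $\varepsilon$ is totally positive and $(-1,-\sigma(\varepsilon))_\R=-1$ at both real embeddings $\sigma$ (this is also consistent with $H$ being totally definite). By Hilbert reciprocity, the number of finite ramified primes of $H$ must then be even. When $d\equiv 2,3\pmod 4$ or $d\equiv 5\pmod 8$, the prime $2$ is non-split in $F$, so $F$ has a unique dyadic prime, and parity forces $H$ to be unramified there. This settles the case $d\not\equiv 1\pmod 8$.

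When $d\equiv 1\pmod 8$, the prime $2$ splits into two dyadic primes $\grp_1,\grp_2$ of $F$, each with completion isomorphic to $\Q_2$. Using Lemma~\ref{lem:fund-unit-d=1mod8}, I would write $\varepsilon=a+b\sqrt d$ with $a$ odd and $4\mid b$. Under either embedding $F\hookrightarrow\Q_2$, the image of $\sqrt d$ lies in $\Z_2^\times$, so $\varepsilon\equiv a\pmod{4\Z_2}$ in both completions. Applying the standard explicit formula for the Hilbert symbol at $2$ on units, $(u,v)_2=(-1)^{\epsilon(u)\epsilon(v)}$ with $\epsilon(w):=(w-1)/2\bmod 2$, I have $\epsilon(-1)=1$ and $\epsilon(-\varepsilon)\equiv(-a-1)/2\pmod 2$, which equals $1$ precisely when $a\equiv 1\pmod 4$. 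Hence $(-1,-\varepsilon)_{\grp_i}=-1$ simultaneously for $i=1,2$ exactly when $a\equiv 1\pmod 4$, yielding the claimed characterization. The main (modest) obstacle is the bookkeeping with the $2$-adic Hilbert symbol; the congruence $4\mid b$ in Lemma~\ref{lem:fund-unit-d=1mod8} is precisely what guarantees that the reduction of $\varepsilon$ modulo $4$ at a dyadic prime is independent of the choice of embedding, so both dyadic primes automatically behave in the same way.
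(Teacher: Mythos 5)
Your proof is correct and follows essentially the same route as the paper: triviality at nondyadic primes, a parity (reciprocity) argument when $F$ has a unique dyadic prime, and an explicit $2$-adic computation using $\varepsilon\equiv a\pmod{4}$ (via Lemma~\ref{lem:fund-unit-d=1mod8}) when $d\equiv 1\pmod 8$. The only cosmetic difference is that you evaluate the dyadic Hilbert symbol with the explicit unit formula $(u,v)_2=(-1)^{\epsilon(u)\epsilon(v)}$, whereas the paper reads off the same condition from the norm group $\Nm_{\Q_2(\sqrt{-1})/\Q_2}(\Z_2[\sqrt{-1}]^\times)=\{u\equiv 1\pmod 4\}$.
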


\begin{proof}

By Lemma~\ref{lem:gen-lem-scro2II},  $H$ splits at all finite
nondyadic primes of $F$. First suppose that $\Lsymb{F}{2}\neq 1$  so that there is a unique dyadic prime $\grp\subset O_F$. Since the number
of ramified places of $H$ is even, $H$ necessarily splits at $\grp$ as
well. Hence $H$ splits at all finite primes of $F$ in this
case. 

Now suppose that $d\equiv 1\pmod{8}$. We have $F_\grp=\Q_2$ for every dyadic prime $\grp$, and by
\cite[Corollary~V.3.3]{Serre_local}, 
\[\Nm_{\Q_2(\sqrt{-1})/\Q_2}(\Z_2[\sqrt{-1}]^\times)=\{u\in
\Z_2^\times\mid  u\equiv 1\pmod{4}\}. \]
It follows that the Hilbert symbol $(-1, -\varepsilon)_\grp=1$ if and
only if $v_\grp(\varepsilon+1)\geq 2$, or equivalently,  $a\equiv 3\pmod{4}$. 
\end{proof}

Let $\scrO_2^\ddagger=O_F[i, j]\subset H$
be the minimal $D_2^\ddagger$-order in Table~\ref{tab:minimal-G-orders}. Recall that
$\calN(\scrO_2^\ddagger)=F^\times(\scrO_2^\ddagger)^\times\<1+i\>$ by
(\ref{eq:56}), so
$\ol\calN(\scrO_2^\ddagger)=\calN(\scrO_2^\ddagger)/F^\times(\scrO_2^\ddagger)^\times$
is a cyclic group of order $2$ generated by $1+i$. By
Lemma~\ref{lem:conj-max-orders}, two distinct maximal orders
containing $\scrO_2^\ddagger$ are $H^\times$-conjugate if and only if
they are conjugate by $1+i$. 
As defined in 
  (\ref{eq:72}),  $\beth(\scrO_2^\ddagger)=\abs{\ol\calN(\scrO_2^\ddagger)\backslash \grS(\scrO_2^\ddagger)}$.
According to
\cite[63:3]{o-meara-quad-forms}, $F(j)/F$ is unramified at every
dyadic prime of $F$ if and only if $-\varepsilon$ is congruent to a
square modulo $4O_F$. If this is the case, then $O_F[j]=O_F+2O_{F(j)}$ by
(\ref{eq:64}), and hence  
\begin{equation}\label{eq:62}
\O_j:=O_{F(j)}+iO_{F(j)}  
\end{equation}
is a \emph{maximal} order because
$\grd(\O_j)=\chi(\O_j:
\scrO_2^\ddagger)^{-1}\grd(\scrO_2^\ddagger)=(2O_F)^{-2}\cdot
4O_F=O_F$.
If there exists $\varsigma \in F^\times$ such that
$\varepsilon=3\varsigma^2$, then $(j/\varsigma)^2=-3$, and we may
identify $H=\qalg{-1}{-\varepsilon}{F}$ with $H'=\qalg{-1}{-3}{F}$. In
such case, $F(j)=F(j')\simeq F(\sqrt{-3})$, which is unramified at every
dyadic prime of $F$, and $\O_j$ is identified with $\scrO_6$
in (\ref{eq:143}).

\begin{prop}\label{prop:scroII-unified-proof}
   Write $\varepsilon=a+b\sqrt{d}$ with
  $a,b\in \bbN$ if $\varepsilon\in \Z[\sqrt{d}]$. We have the following table for
  $\aleph(\scrO_2^\ddagger)$ and $\beth(\scrO_2^\ddagger)$ in 
  $H=\qalg{-1}{-\varepsilon}{F}$
\begin{center}
\renewcommand{\arraystretch}{1.2}
  \begin{tabular}{|>{$}c<{$}|>{$}c<{$}|>{$}c<{$}|>{$}c<{$}|}
\hline
  d  &a &\aleph(\scrO_2^\ddagger) &
                                    \beth(\scrO_2^\ddagger)\\
\hline
d\equiv 1\pmod{8}& a\equiv
  1\pmod{4} & 1 & 1\\
\hline
d\equiv 1\pmod{8} & a\equiv
  3\pmod{4} & 4 & 2\\
\hline
d\equiv 5\pmod{8} & & 2 & 1\\
\hline
d\equiv 3\pmod{4} & a\text{ is even} & 2 & 2\\
\hline 
\multicolumn{2}{|c|}{\text{otherwise}} & 4 &3\\
\hline
  \end{tabular}
\end{center}
\end{prop}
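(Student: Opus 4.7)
Since $\scrO_2^\ddagger=O_F[i,j]$ has discriminant $4O_F$ and is maximal at every nondyadic prime of $F$ by Lemma~\ref{lem:gen-lem-scro2II}, the identity \eqref{eq:34} reduces $\aleph(\scrO_2^\ddagger)$ to a product of local contributions $\aleph_\grp(\scrO_2^\ddagger)$ over the dyadic primes $\grp$. The same lemma shows that at every such $\grp$ the completion is Gorenstein with Eichler invariant $0$, so by \cite[Proposition~4.1]{Brzezinski-1983} it admits a unique minimal overorder $\calO'_\grp$ with $\chi(\calO'_\grp,\scrO_2^\ddagger\otimes_{O_F}O_{F_\grp})=\grp$, and every maximal order containing $\scrO_2^\ddagger$ contains $\calO'_\grp$. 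Thus $\aleph_\grp(\scrO_2^\ddagger)=\aleph_\grp(\calO'_\grp)$, and the whole problem becomes local.

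The plan is to compute $\aleph_\grp$ case by case using Lemma~\ref{lem:ram-places-D2II} to decide whether $H_\grp$ is split or division. When $d\equiv 1\pmod 8$ with $a\equiv 1\pmod 4$, $H$ is division at both dyadic primes, so $\aleph_{\grp_i}=1$; when $d\equiv 1\pmod 8$ with $a\equiv 3\pmod 4$, the local discriminant of $\scrO_2^\ddagger$ at each split $\grp_i$ is $\grp_i^2$, so $\calO'_{\grp_i}$ is an Eichler order of level $\grp_i$ with $\aleph_{\grp_i}=2$. The case $d\equiv 5\pmod 8$ is analogous at the single inert prime, giving $\aleph_\grp=2$. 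In the ramified-dyadic cases ($d\equiv 2,3\pmod 4$) the local discriminant jumps to $\grp^4$, and here I will invoke Lemma~\ref{lem:scro2II-Bass-criterion}: $\scrO_2^\ddagger$ is Bass at $\grp$ exactly when one of $O_F[i]$, $O_F[j]$ is locally maximal. By \cite[63:3]{o-meara-quad-forms} together with a short congruence argument modulo $4O_F$ using $\Nm_{F/\Q}(\varepsilon)=a^2-b^2d=1$, this dichotomy translates into the parity of $a$: Bass when $a$ is even, non-Bass when $a$ is odd. In the Bass case Lemma~\ref{lem:bass-order-eichler-inv=0} gives $\aleph_\grp=2$; in the non-Bass case $\calO'_\grp$ of discriminant $\grp^3$ must be the unique non-Gorenstein order of Lemma~\ref{lem:non-Gorenstein-order}, giving $\aleph_\grp=\abs{\grk}+2=4$. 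Multiplying the local contributions reproduces the $\aleph$ column.

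For $\beth$, recall from \eqref{eq:56} that $\ol\calN(\scrO_2^\ddagger)=\<1+i\>$ has order two, so $\beth$ counts orbits of the involution induced by $1+i$. Since $\Nr(1+i)=2$, the parity of $1+i$ at $\grp$ equals $\nu_\grp(2)$, which is odd exactly when $\grp$ is unramified over $2$. By Section~\ref{sect:parity-of-element}, an odd element lies in no maximal-order normalizer and must therefore swap the two local maximal orders whenever $\aleph_\grp=2$ in an unramified-dyadic case; this handles the $d\equiv 5\pmod 8$ row ($\beth=1$) and the $d\equiv 1\pmod 8$, $a\equiv 3\pmod 4$ row, where simultaneous swapping at both $\grp_i$ yields $\beth=2$. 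In the ramified-dyadic Bass row, $1+i$ is even and, using the identity $(1+i)(1-i)=2$ together with $2O_F=\grp^2$, one verifies directly that $1+i$ normalizes each of the two maximal orders, so $\beth=2$. Finally, in the "otherwise" row, the local subtree $\grT(\scrO_2^\ddagger)$ has the star shape of Lemma~\ref{lem:non-Gorenstein-order} centered at $\Gor(\calO'_\grp)$ with three external vertices; the involution $1+i$ fixes the center (as it normalizes $\calO'_\grp$) and acts on the external vertices as an involution with exactly one further fixed point, producing $\beth=3$.

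The main obstacle is the ramified-dyadic analysis: converting the Bass/non-Bass dichotomy of $\scrO_2^\ddagger$ at $\grp$ into the crisp numerical condition on the parity of $a$ requires combining \cite[63:3]{o-meara-quad-forms} with a careful congruence computation modulo $4O_F$ that exploits $a^2-b^2d=1$, and the $\beth=3$ verification in the "otherwise" subcase is equally delicate: one must pin down the action of $1+i$ on the three external vertices of the local star and rule out the possibility of three fixed points. The evenness of $1+i$ at $\grp$ is what \emph{permits} any fixed external vertex, while the fact that $1+i\notin F^\times(\scrO_2^\ddagger)^\times$ (and more sharply, $1+i$ does not lie in every local maximal-order normalizer) is what forces at least one honest swap; turning this heuristic into an exact orbit count will require an explicit local normal form for $\scrO_2^\ddagger$ at $\grp$.
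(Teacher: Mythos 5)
Your overall strategy is the same as the paper's: localize at the dyadic primes, use Lemma~\ref{lem:gen-lem-scro2II} to get Eichler invariant $0$ there, split into Bass and non-Bass cases via Lemma~\ref{lem:scro2II-Bass-criterion} (and your translation ``Bass iff $a$ is even'' agrees with what Lemmas~\ref{lem:criterion-ram-d=3mod4} and \ref{lem:criterion-ram-d=2mod4} deliver, since $O_F[\sqrt{-1}]$ is never locally maximal at a ramified dyadic prime and $a$ is forced odd when $d\equiv 2\pmod 4$), and then count orbits of the order-two group $\ol\calN(\scrO_2^\ddagger)=\<1+i\>$ using the parity of $1+i$. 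All five rows of the table come out correctly in your outline, and the unramified-dyadic rows are genuinely complete: oddness of $1+i$ at an unramified dyadic prime forces a swap of the two local maximal orders, which is all that is needed there.

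The genuine gap is the last step of the ``otherwise'' row, and you flag it yourself without closing it: you must show that $1+i$ acts \emph{nontrivially} on the three external vertices of the local star, i.e.\ rule out $\beth=4$. Evenness of $1+i$ at the ramified dyadic prime and the fact that $1+i$ normalizes the unique minimal overorder only tell you that the center is fixed and that a fixed external vertex is \emph{possible}; nothing you have written forces an actual transposition. The paper closes this with two separate arguments. For $d\equiv 2\pmod 4$ it observes that $(1+i)/\sqrt{d}$ is integral and generates $O_{F_\grp(i)}$; if $1+i$ fixed every vertex then $(1+i)/\sqrt{d}$ would lie in $\bigcap_{\O\in\grS(\calO)}\O=\calO$ by Lemma~\ref{lem:max-order-normalizer}, and by \cite[Proposition~1.11]{Brzezinski-crelle-1990} an order containing a maximal commutative suborder of this kind is Bass, contradicting that $\calO$ is non-Gorenstein. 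For $d\equiv 3\pmod 4$ with $a$ odd it exhibits the explicit maximal order $\O_j=O_{F(j)}+iO_{F(j)}$ (legitimate because $F(j)/F$ is then unramified at $\grp$) and computes $\O_j\cap F(k)=O_F+kO_F\neq O_{F(k)}$, so $(1+i)\O_j(1+i)^{-1}=\O_k\neq\O_j$. Neither argument is a routine ``normal form'' computation, so this step cannot be waved through. A second, smaller soft spot: in the Bass ramified row, evenness of $1+i$ does not by itself give normalization (in the split local algebra $\calN(\O_\grp)=F_\grp^\times\O_\grp^\times$, so even is necessary but not sufficient); you do need the explicit identity $(1+i)(O_{F(i)}+jO_{F(i)})(1+i)^{-1}=O_{F(i)}+kO_{F(i)}=i\,\O_i=\O_i$, after which the second maximal order is fixed automatically since the two-element set carries an action of a group of order two.
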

\begin{proof}
By Lemma~\ref{lem:gen-lem-scro2II}, the Eichler invariant
$e_\grp(\scrO_2^\ddagger)=0$ for every dyadic prime $\grp$ of
$F$. Moreover, $\grd(\scrO_2^\ddagger)=4O_F$, and $\scrO_2^\ddagger$ is maximal at
all finite nondyadic primes of $F$.   It
follows that 
$\aleph(\scrO_2^\ddagger)=\prod_{\grp\vert
  (2O_F)}\aleph_\grp(\scrO_2^\ddagger)$. 

If $d\equiv 1\pmod{8}$ and $a\equiv 1\pmod{4}$, then $H$ is ramified
at the two dyadic primes of $F$. Hence there is a unique maximal order
containing $\scrO_2^\ddagger$, and it is necessarily normalized by
$1+i$.

If $d\equiv 1\pmod{8}$ and $a\equiv 3\pmod{4}$, then $H$ splits at the
two dyadic primes of $F$. By \cite[Corollary~1.6]{Brzezinski-1983},
$\scrO_2^\ddagger$ is a Bass order because $\grd(\scrO_2^\ddagger)$ is cube-free.  Then
Lemma~\ref{lem:bass-order-eichler-inv=0} shows that
$\aleph_\grp(\scrO_2^\ddagger)=2$ for each dyadic prime $\grp$ of $F$,
and hence $\aleph(\scrO_2^\ddagger)=2\cdot 2=4$. Since $(1+i)$ is odd
at every dyadic $\grp$, it does not belong to $\calN(\O)$ for any $\O\in
\grS(\scrO_2^\ddagger)$. Thus conjugation by $(1+i)$ separates
$\grS(\scrO_2^\ddagger)$ into two pairs of maximal orders.  

When $d\equiv 5\pmod{8}$, $\grp=2O_F$ is the unique dyadic prime
in $F$. The same line of argument as the previous case applies here and
produces the desired result. 
 
Now suppose that $2$ is ramified in $F$, i.e.~$d\equiv 2,
3\pmod{4}$. By Table~\ref{tab:orders-K1},  $O_F[i]$ is a proper suborder of $O_{F(i)}$. It then follows from
Lemma~\ref{lem:scro2II-Bass-criterion} that $\scrO_2^\ddagger$ is a
Bass order if and only if $O_F[\sqrt{-\varepsilon}]$ coincides with
the ring of integers of $F(\sqrt{-\varepsilon})$. The latter condition
holds if and only if $d\equiv 3\pmod{4}$ and $a$ is even by 
Lemmas~\ref{lem:criterion-ram-d=3mod4} and
\ref{lem:criterion-ram-d=2mod4}.   Assume that this is the case so that 
$\scrO_2^\ddagger$ is Bass, and we apply
Lemma~\ref{lem:bass-order-eichler-inv=0} again to obtain
$\aleph(\scrO_2^\ddagger)=2$. By Section~\ref{sec:orders-in-K1-K3}, 
we have $\chi(O_{F(i)}, O_F[i])=2O_F$, so 
\begin{equation}\label{eq:70}
\O_i:=O_{F(i)}+jO_{F(i)} 
\end{equation}
is one of the two maximal orders containing
$\scrO_2^\ddagger$. Furthermore, 
\[(1+i)\O_i(1+i)^{-1}=O_{F(i)}+kO_{F(i)}=i(O_{F(i)}+jO_{F(i)} )=\O_i.\]
Hence $\ol\calN(\scrO_2^\ddagger)$ acts trivially on
$\grS(\scrO_2^\ddagger)$.


In the remaining cases, $\scrO_2^\ddagger$ is not a Bass order. Let $\grp$ be the unique dyadic
prime of $F$.  The $\grp$-adic completion $(\scrO_2^\ddagger)_\grp=\scrO_2^\ddagger\otimes \Z_2$ is a Gorenstein order of Eichler invariant $0$ as shown in
Lemma~\ref{lem:gen-lem-scro2II}. By
\cite[Proposition~1.12]{Brzezinski-1983}, there exists a unique
minimal overorder $\calO$ of $(\scrO_2^\ddagger)_\grp$, and $\calO$ is non-Gorenstein
since $(\scrO_2^\ddagger)_\grp$ is not Bass.  We have $\grd(\calO)=\grp^3O_{F_\grp}$ by
\cite[Proposition~4.1]{Brzezinski-1983}.  Now it follows from 
Lemma~\ref{lem:non-Gorenstein-order} that
$\aleph((\scrO_2^\ddagger)_\grp)=\aleph(\calO)=4$. In fact, $\grT(\calO)$ is a star
centered at the Gorenstein saturation $\Gor(\calO)$ with 3 exterior
vertices. The symmetry forces $(1+i)\in \calN(\Gor(\calO))$. To
obtain $\beth(\scrO_2^\ddagger)=3$, it is enough to show that
conjugation by $(1+i)$ acts non-trivially on $\grS(\calO)$.  

Firstly suppose
that $d\equiv 2\pmod{4}$, so $(1+i)/\sqrt{d}$ is integral over
$O_{F_\grp}$.  If $(1+i)/\sqrt{d}$ acts trivially on $\grS(\calO)$,
then $(1+i)/\sqrt{d}\in \O$ for every $\O\in \grS(\calO)$ by
Lemma~\ref{lem:max-order-normalizer}. Thus
\[\frac{1+i}{\sqrt{d}}\in \bigcap_{\O\in \grS(\calO)}\O=\calO. \]
However, $(1+i)/\sqrt{d}$ generates the ring of integers of
$F_\grp(i)$ over $O_{F_\grp}$. In light of
\cite[Proposition~1.11]{Brzezinski-crelle-1990}, this contradicts the
fact that $\calO$ is non-Gorenstein.  

Lastly, suppose that
$d\equiv 3\pmod{4}$ and $a$ is odd. By
Lemma~\ref{lem:criterion-ram-d=3mod4}, $F(j)/F$ is unramified at the
dyadic prime of $F$, so 
the order $\O_j$ in (\ref{eq:62})
 is a maximal order
containing $\scrO_2^\ddagger$. We claim that 
\begin{equation}
  \O_k:=(1+i)\O_j(1+i)^{-1}=O_{F(k)}+iO_{F(k)}  
\end{equation}
is distinct from $\O_j$. Indeed, 
\[\O_j\cap F(k)=O_F+k((j^{-1}O_{F(j)})\cap
F)=O_F+k(O_{F(j)}\cap F)=O_F+kO_F\neq O_{F(k)}. \]
The proposition is proved. 
\end{proof}

\begin{cor}\label{cor:scO2typeII}
  Let $H=\qalg{-1}{-\varepsilon}{F}$. Then $t(D_2^\ddagger)=0$ if $d=6$. For $d\geq 7$,
\begin{gather}
t(D_2^\ddagger)+t(D_4)+t(S_4)+t(D_6)=\beth(\scrO_2^\ddagger), \label{eq:82}\\
\text{where either} \quad t(D_6)=0 \quad \text{or}\quad t(S_4)=t(D_4)=0.\label{eq:83}
\end{gather}
Particularly, if $\{2\varepsilon, 3\varepsilon\}\cap F^{\times
  2}=\emptyset$, then 
\begin{equation}
  \label{eq:84}
  t(D_2^\ddagger)=\beth(\scrO_2^\ddagger). 
\end{equation}
\end{cor}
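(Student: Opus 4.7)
The plan is to establish the master identity
$$\beth(\scrO_2^\ddagger) = t(D_2^\ddagger) + t(D_4) + t(S_4) + t(D_6),$$
valid whenever $H \simeq \qalg{-1}{-\varepsilon}{F}$, and then read off all three assertions by inspecting which terms on the right-hand side vanish. To prove this identity I would partition $\grS(\scrO_2^\ddagger)$ according to the reduced unit group $\O^\bs$ of its members. By Corollary~\ref{cor:units-of-minimal-orders} and Figure~\ref{fig:rel-gps}, such an $\O^\bs$ must contain a strict copy of $D_2^\ddagger$ and hence lies in $\{D_2^\ddagger, D_4, S_4, D_6\}$; the group $A_4$ is excluded because Remark~\ref{rem:elements-of-order-2} shows the reduced norm is trivial on $A_4$, whereas $D_2^\ddagger$ contains an element of order $2$ of reduced norm $\varepsilon$. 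Conversely, Lemma~\ref{lem:strict-isom-subgp-conj} together with Proposition~\ref{prop:uniqueness-finit-subgp} shows that any maximal order whose reduced unit group is one of these four groups is $H^\times$-conjugate to a member of $\grS(\scrO_2^\ddagger)$, and Lemma~\ref{lem:conj-max-orders} identifies $\beth(\scrO_2^\ddagger)$ with the number of $H^\times$-conjugacy classes inside $\grS(\scrO_2^\ddagger)$. Summing the $t(G)$'s over the four possible $G$'s then yields the identity.

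With the master identity in hand, the dichotomy (\ref{eq:83}) follows directly from Propositions~\ref{prop:S4-unique-order}, \ref{prop:D6-max-ord-unique} and \ref{prop:max-D4}: nonvanishing of $t(D_4)$ or $t(S_4)$ forces $H \simeq \qalg{-1}{-1}{F}$ with $2\varepsilon \in F^{\times 2}$, while nonvanishing of $t(D_6)$ forces $H \simeq \qalg{-1}{-3}{F}$ with $3\varepsilon \in F^{\times 2}$. If both situations hold simultaneously, then $6\varepsilon^2 \in F^{\times 2}$, hence $6 \in F^{\times 2}$, hence $d = 6$. Thus for $d \geq 7$ at most one side of the alternative can survive, which gives (\ref{eq:83}) together with (\ref{eq:82}). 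Equation (\ref{eq:84}) is then immediate, since the hypothesis $\{2\varepsilon, 3\varepsilon\} \cap F^{\times 2} = \emptyset$ annihilates all three terms $t(D_4), t(S_4), t(D_6)$.

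For the remaining claim $t(D_2^\ddagger) = 0$ when $d = 6$, I plan a direct numerical verification. Here $F = \Q(\sqrt{6})$ has fundamental unit $\varepsilon = 5 + 2\sqrt{6}$, and one checks $2\varepsilon = (2+\sqrt{6})^2$ and $3\varepsilon = (3+\sqrt{6})^2$, so that $H \simeq \qalg{-1}{-1}{F} \simeq \qalg{-1}{-3}{F}$. Propositions~\ref{prop:S4-unique-order}, \ref{prop:D6-max-ord-unique} and \ref{prop:max-D4} therefore each contribute $1$, yielding $t(D_4) + t(S_4) + t(D_6) = 3$. Since $d \equiv 2 \pmod{4}$ and $\varepsilon$ has odd trace $a = 5$, the last row of the table in Proposition~\ref{prop:scroII-unified-proof} gives $\beth(\scrO_2^\ddagger) = 3$, and substituting into the master identity forces $t(D_2^\ddagger) = 0$.

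The subtlest step is the exclusion of $A_4$ in the enumeration of possible $\O^\bs$; once one observes that every element of $A_4$ has reduced norm $1$ whereas $D_2^\ddagger$ possesses an element of order $2$ with reduced norm $\varepsilon$, the rest of the argument is a bookkeeping exercise grounded entirely in the results already proved in Sections~\ref{sec:minimal-g-orders} and \ref{sec:maximal-orders}.
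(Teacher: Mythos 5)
Your proof is correct and follows essentially the same route as the paper: the identity \eqref{eq:82} is the specialization of \eqref{eq:147} to $G=D_2^\ddagger$ (your partition of $\grS(\scrO_2^\ddagger)$ by reduced unit group, with $A_4$ excluded via reduced norms, is exactly the content of Figure~\ref{fig:rel-gps} and Lemma~\ref{lem:conj-max-orders}), the dichotomy \eqref{eq:83} comes from the observation that $2\varepsilon$ and $3\varepsilon$ are simultaneously squares only when $d=6$, and the $d=6$ case is settled by $\beth(\scrO_2^\ddagger)=3$ from Proposition~\ref{prop:scroII-unified-proof}. The only cosmetic difference is that for $d=6$ you verify $2\varepsilon,3\varepsilon\in F^{\times 2}$ directly from $\varepsilon=5+2\sqrt{6}$, whereas the paper deduces $t(S_4)=t(D_4)=t(D_6)=1$ from the fact that $H$ splits at all finite places; both give the same conclusion.
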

\begin{proof}
  For every $\O\in \grS(\scrO_2^\ddagger)$, we have
  $\O^\bs\in \{D_2^\ddagger, D_4, S_4, D_6\}$ by Figure~\ref{fig:rel-gps}.  Hence (\ref{eq:82})
  follows from \eqref{eq:147}, and it holds for all $d\geq 6$.  Formula (\ref{eq:83})
  holds because $2\varepsilon$ and $3\varepsilon$ cannot be 
  perfect squares simultaneously  in $F$ when $d\geq 7$. When $\{2\varepsilon, 3\varepsilon\}\cap F^{\times
  2}=\emptyset$,  every term other than $t(D_2^\ddagger)$ on
the left side of (\ref{eq:82}) is zero, so \eqref{eq:82} simplifies to \eqref{eq:84}.

 Lastly, if $d=6$, then $H=\qalg{-1}{-\varepsilon}{F}$ splits at all finite primes of $F$. Hence $H\simeq \qalg{-1}{-1}{F}\simeq \qalg{-1}{-3}{F}$, so $t(S_4)=t(D_4)=t(D_6)=1$. It follows from Proposition~\ref{prop:scroII-unified-proof} and \eqref{eq:82}  that $t(D_2^\ddagger)=0$.
\end{proof}

By Corollary~\ref{cor:scO2typeII}, we  may assume that $d\geq 7$ for the rest of the discussion.

\begin{lem}\label{lem:red-unit-gp-Oj}
 Assume that $F(j)/F$ is unramified at every dyadic prime of
  $F$ so that $\O_j$ in (\ref{eq:62}) is a maximal order containing
  $\scrO_2^\ddagger$. Then $\O_j^\bs$ is isomorphic to neither $D_4$ nor $S_4$. 
\end{lem}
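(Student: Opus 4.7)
My plan is to argue by contradiction, showing that $\O_j^\bs$ contains no element of order $4$; this suffices since both $D_4$ and $S_4$ do. Suppose $\tilde{u} \in \O_j^\bs$ has order $4$. Tables \ref{tab:minimal-G-orders} and \ref{tab:rep-elements} then force $2\varepsilon \in F^{\times 2}$; fix $\vartheta \in F^\times$ with $\varepsilon = 2\vartheta^2$ as in Section~\ref{subsec:structure-CM-ext}. After rescaling by $O_F^\times$, I may take a representative $u \in \O_j^\times$ with $\Nr(u) = \varepsilon$ and minimal polynomial $x^2 \mp 2\vartheta x + \varepsilon$; in particular $u \in F(\sqrt{-1}) = F(i)$ and $\Tr(u) = \pm 2\vartheta$. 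Note that $\vartheta \notin O_F$: the condition $2\varepsilon \in F^{\times 2}$ implies $2$ ramifies in $F/\Q$ and $2\vartheta$ generates the dyadic prime $\grp$ (so $v_\grp(2\vartheta) = 1$), whence $v_\grp(\vartheta) = 1 - v_\grp(2) = -1$.

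The heart of the argument is the computation $F(i) \cap \O_j = O_F[i]$. Since $F(j)$ and $F(i)$ are distinct quadratic extensions of $F$ (they contain the non-associate square roots $\sqrt{-\varepsilon}$ and $\sqrt{-1}$), we have $F(j) \cap F(i) = F$ inside $H$. In the $F$-basis $\{1, i, j, ij\}$ of $H$, a general element $x = \alpha + i\beta$ of $\O_j = O_{F(j)} + i\,O_{F(j)}$ can be written with $\alpha = a_0 + a_1 j$ and $\beta = b_0 + b_1 j$ for some $a_s, b_s \in F$, giving $x = a_0 + b_0 i + a_1 j + b_1 ij$. Thus $x \in F(i)$ iff $a_1 = b_1 = 0$, i.e.~iff $\alpha, \beta \in F \cap O_{F(j)} = O_F$; this proves the claim.

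Combining the two steps, $u = p + qi$ with $p, q \in O_F$, so $\Tr(u) = 2p \in 2 O_F$. Matching $\Tr(u) = \pm 2\vartheta$ then forces $\vartheta = \pm p \in O_F$, contradicting $\vartheta \notin O_F$. I do not expect any substantial obstacle: the argument relies only on the explicit form of order-$4$ elements from Table~\ref{tab:rep-elements} together with the basis-level computation of $F(i) \cap \O_j$. The one delicate point is that $O_{F(j)}$ may strictly contain $O_F[j]$ depending on $\varepsilon \pmod{4 O_F}$, but the argument is insensitive to this, since it uses only the identity $O_{F(j)} \cap F = O_F$.
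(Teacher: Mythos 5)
Your overall strategy (rule out elements of order $4$), your computation $F(i)\cap\O_j=O_F[i]$, and the closing valuation argument ($\Tr(u)=\pm2\vartheta$ while $v_\grp(\vartheta)=1-v_\grp(2)=-1$) are all sound. The genuine gap is the step ``in particular $u\in F(\sqrt{-1})=F(i)$.'' The minimal polynomial $x^2\mp2\vartheta x+\varepsilon$ only tells you that $F(u)$ is $F$-\emph{isomorphic} to $F(\sqrt{-1})$; it does not place $u$ in the particular subfield $F+Fi$ of $H$. Concretely, $u=\vartheta(1+\alpha)$ where $\alpha=u^2\varepsilon^{-1}\in\O_j$ is \emph{some} square root of $-1$, and the square roots of $-1$ in $H$ form an entire conjugacy class; nothing in your argument forces $\alpha=\pm i$. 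Your intersection computation is tied to the specific decomposition $\O_j=O_{F(j)}+iO_{F(j)}$ and says nothing about $F(\alpha)\cap\O_j$ for a conjugate subfield $F(\alpha)$, which could a priori be larger (this is exactly the difference between one embedding of $F(\sqrt{-1})$ into $H$ and all of them, i.e.\ between an intersection and an optimal-embedding count). So as written you have only excluded order-$4$ elements lying in $F(i)$, not order-$4$ elements of $\O_j^\bs$.

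This is precisely where the paper does extra work, using the hypothesis $\O_j^\bs\simeq D_4$ or $S_4$, which you never exploit beyond ``contains an element of order $4$.'' Since $(\scrO_2^\ddagger)^\bs=\{\tilde1,\tilde i,\tilde j,\tilde k\}$ sits inside $\O_j^\bs$, and in either $D_4$ or $S_4$ every embedded copy of $D_2^\ddagger$ contains the square of an order-$4$ element (the center of $D_4$; the type-$(2,2)$ element of a non-normal Klein four subgroup of $S_4$), there exists $\tilde v\in\O_j^\bs$ of order $4$ with $\tilde v^2\in(\scrO_2^\ddagger)^\bs$. That square has reduced norm $\varepsilon^2\equiv1$ in $O_{F,+}^\times/O_F^{\times2}$, and $\tilde i$ is the \emph{unique} order-$2$ element of $(\scrO_2^\ddagger)^\bs$ with reduced norm $1$; hence $\tilde v^2=\tilde i$, which forces $i\in F(v)$, i.e.\ $F(v)=F(i)$. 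Only after this localization step does your computation $\O_j\cap F(i)=O_F[i]$ (equivalently, the paper's appeal to $O_F[i]\not\simeq O_F[\sqrt{\varepsilon i}]$ via Lemma~\ref{lem:OF-eta-K1}) yield the contradiction. If you insert this group-theoretic step, your argument becomes a complete proof essentially identical to the paper's.
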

\begin{proof}
  Clearly, the lemma holds if $2\varepsilon\not\in F^{\times
    2}$, so assume that $2\varepsilon\in F^{\times
    2}$. Suppose that $\O_j^\bs$ is isomorphic to either
  $D_4$ or $S_4$. Then there exists an element $\tilde{v}\in \O_j^\bs$ of
  order $4$ such that $\tilde{v}^2\in
  (\scrO_2^\ddagger)^\bs$. However, $\tilde{i}\in
  (\scrO_2^\ddagger)^\bs$ is the unique element of order $2$ with
  $\Nr(\tilde{i})=1$. So we must have $\tilde{v}^2=\tilde{i}$. This
  leads to a contradiction since $\O_j\cap F(i)=O_F[i]\not\simeq
  O_F[\sqrt{\varepsilon i}]$ by Lemma~\ref{lem:OF-eta-K1}. 
\end{proof}
\begin{sect}\label{sec:d=1mod8scro2II}
  We keep the notation and assumptions of
  Proposition~\ref{prop:scroII-unified-proof}. 
  If $d\equiv 1\pmod{8}$ and $a\equiv 1\pmod{4}$, then 
  \begin{equation}
    \O=O_F+O_Fi+O_Fj+O_F\frac{1+i+j+k}{2}\subset H=\qalg{-1}{-\varepsilon}{F}
  \end{equation}
is the unique maximal order containing $\scrO_2^\ddagger$. Since 
$\omega(H)=2$ by Lemma~\ref{lem:ram-places-D2II}, the reduced unit
group $\O^\bs$ cannot be isomorphic to $S_4$, $D_4$, nor $D_6$, otherwise $H$ 
 splits at all finite primes of $F$. 
Therefore, we have
\begin{alignat}{2}
\O^\bs&\simeq D_2^\ddagger,&\qquad  \calN(\O)&=F^\times\O^\times\<1+i\>, \quad \text{and}\\
t(D_2^\ddagger)&=1,  &\qquad h(D_2^\ddagger)&=2h(F). 
\end{alignat}
Note that $-\varepsilon$ coincides with $3$ in $O_F/4O_F\simeq \zmod{4}\times\zmod{4}$ in this case. So $F(j)/F$
is ramified at both the dyadic primes of $F$.

Suppose that $d\equiv 1\pmod{8}$ and $a\equiv 3\pmod{4}$. Then
$-\varepsilon\equiv 1\pmod{4O_F}$, and $F(j)/F$
is unramified at every dyadic prime of $F$. We
leave it to the reader to check that the following are two distinct maximal
orders containing $\scrO_2^\ddagger$:
\begin{align}
  \O_j&=O_{F(j)}+iO_{F(j)}=O_F+O_Fi+O_F\frac{1+j}{2}+O_F\frac{i+k}{2},\\
  \begin{split}\label{eq:66}
  \O&=O_F+O_Fi+O_F\frac{(-1+\sqrt{d})+(1+\sqrt{d})i+2j}{4}\\&\phantom{=}+O_F\frac{(1+\sqrt{d})+(-1+\sqrt{d})i+2k}{4}. 
  \end{split}
  \end{align}
  We have $(1+i)\O_j(1+i)^{-1}=\O_k$, which coincides with neither
  $\O_j$ nor $\O$. Since $2$ splits in $F$, $\O_j^\bs$ cannot be
  isomorphic to $S_4$ or $D_4$, and the same for $\O^\bs$. It
  can happen that $3\varepsilon\in F^{\times 2}$ (e.g.~when $d=3p>9$
  with $p\equiv 3\pmod{8}$), in which case
  $\qalg{-1}{-\varepsilon}{F}=\qalg{-1}{-3}{F}$, and $\O_j$ coincides
  with $\scrO_6\subseteq \qalg{-1}{-3}{F}$ in (\ref{eq:63}). On the other hand,
  $\O^\bs\not\simeq D_6$ since $\O$ is not $H^\times$-conjugate to
  $\O_j$. Therefore,
\begin{align}
\label{eq:68}  \O_j^\bs&=
  \begin{cases}
    D_2^\ddagger\qquad\text{if } 3\varepsilon\not\in F^{\times 2},\\
    D_6\qquad\text{if } 3\varepsilon\in F^{\times 2},\\
  \end{cases} &  \calN(\O_j)&=F^\times\O_j^\times;\\
\O^\bs&=D_2^\ddagger,  &  \calN(\O)&=F^\times\O^\times. 
\end{align}
By Lemma~\ref{lem:ram-places-D2II}, $\omega(H)=0$ in this case, and hence
\begin{equation}
  \label{eq:154}
  t(D_2^\ddagger)=  \begin{cases}
    2\qquad&\text{if } 3\varepsilon\not\in F^{\times 2},\\
    1\qquad&\text{if } 3\varepsilon\in F^{\times 2}, \\
  \end{cases}\quad\text{and}\quad h(D_2^\ddagger)=h(F)t(D_2^\ddagger). 
\end{equation}
\end{sect}

\begin{sect}
Now suppose that $d\equiv 5\pmod{8}$ so that $\grp=2O_F$ is the unique
dyadic prime of $F$.  If $F(j)$ is unramified at $\grp$, then
$\grS(\scrO_2^\ddagger)=\{\O_j, \O_k\}$. Once again, it can happen
that $3\varepsilon\in F^{\times 2}$ (e.g.~when $d=3p$ with
$p\equiv 7\pmod{8}$), in which case $\O_j$ coincides with
$\scrO_6\subset \qalg{-1}{-3}{F}$ as before. Moreover, 
(\ref{eq:68}) still holds for $\O_j$. 

Suppose that $F(j)/F$
is ramified at $\grp$. Then $O_F[j]=O_{F(j)}$ by
Lemma~\ref{lem:d=1mod4ram-max}. 
According to Lemma~\ref{lem:CM-ext-fund-unit-d=5mod8}, there are three subcases to
consider.
\begin{enumerate}[(i)]
\item If
$\varepsilon=a+b\sqrt{d}\in \Z[\sqrt{d}]$, then $a\equiv 1\pmod{4}$
and $4\mid b$. Hence $\O$ in (\ref{eq:66}) is a maximal order
containing $\scrO_2^\ddagger$.
\item  If $\varepsilon=(a+b\sqrt{d})/2$ with
$a$ odd and $b\equiv 1\pmod{4}$, then 
 \begin{equation}
   \label{eq:65}
   \O=O_F+O_Fi+O_F\frac{(-1+\sqrt{d})+2i+2j}{4}+O_F\frac{2+(-1+\sqrt{d})i+2k}{4}
 \end{equation}
is a maximal order containing $\scrO_2^\ddagger$.
\item  If $\varepsilon=(a+b\sqrt{d})/2$ with $a$ odd and
$b\equiv 3\pmod{4}$, then 
\begin{equation}
  \label{eq:67}
   \O=O_F+O_Fi+O_F\frac{(1+\sqrt{d})+2i+2j}{4}+O_F\frac{2+(1+\sqrt{d})i+2k}{4}
\end{equation}
is a maximal order containing $\scrO_2^\ddagger$. 
\end{enumerate}
In all three cases, we have 
\begin{equation}
  \label{eq:69}
  \O^\bs\simeq D_2^\ddagger, \quad \text{and}\quad \calN(\O)=F^\times\O^\times.
\end{equation}
Summarizing, if $d\equiv 5\pmod{8}$, then $\omega(H)=0$, and 
\begin{equation}
  \label{eq:155}
  t(D_2^\ddagger)=
  \begin{cases}
    1\qquad&\text{if } 3\varepsilon\not\in F^{\times 2},\\
    0 \qquad&\text{if } 3\varepsilon\in F^{\times 2},\\
  \end{cases}\quad \text{and}\quad
  h(D_2^\ddagger)=h(F)t(D_2^\ddagger). 
\end{equation}
\end{sect}

\begin{sect}
Next, suppose that $d\equiv 3\pmod{4}$ and $\varepsilon=a+b\sqrt{d}$
with $a$ even. Then $F(j)/F$ is ramified at the unique dyadic prime
$\grp$ of
$F$, and $O_{F(j)}=O_F[j]$ by
Lemma~\ref{lem:criterion-ram-d=3mod4}. We have
$\grS(\scrO_2^\ddagger)=\{\O_i, \O\}$, where $\O_i$ is defined in
(\ref{eq:70}),  and $ \O=O_{F(j)}+O_{F(j)}\frac{1+i+(1+\sqrt{d})j}{2}$. More explicitly, 
\begin{equation}
  \label{eq:71}
  \O=O_F+O_F\frac{1+i+(1+\sqrt{d})j}{2}+O_Fj+O_F\frac{(1+\sqrt{d})+j+k}{2}. 
\end{equation}
Clearly, $F(j)\not\simeq F(\sqrt{-3})$ since the latter is unramified
at $\grp$. Hence $3\varepsilon\not\in F^{\times 2}$. On the other
hand, it is possible that $2\varepsilon\in F^{\times 2}$ (e.g.~when
$d=p$ with $p\equiv 3\pmod{4}$).
The reduced unit groups and normalizers are given by the following table
\begin{center}
\renewcommand{\arraystretch}{1.3}
  \begin{tabular}{*{5}{|>{$}c<{$}}|}
\hline
& \O_i^\bs & \calN(\O_i)&\O^\bs & \calN(\O)\\
\hline
2\varepsilon\not\in F^{\times 2} & D_2^\ddagger &
                                                  F^\times\O_i^\times\<1+i\>&
    D_2^\ddagger& F^\times\O^\times\<1+i\>\\
\hline
2\varepsilon\in F^{\times 2} & D_4 & F^\times\O_i^\times
                                                  & S_4 & F^\times\O^\times\\
\hline
\end{tabular}
\end{center}
In this case we have
\begin{equation}
  \label{eq:157}
  t(D_2^\ddagger)=
  \begin{cases}
    2\qquad &\text{if } 2\varepsilon\not\in F^{\times 2},\\
    0 \qquad &\text{if } 2\varepsilon\in F^{\times 2},\\
  \end{cases}\quad\text{and}\quad h(D_2^\ddagger)=h(F)t(D_2^\ddagger)/2. 
\end{equation}
\end{sect}

\begin{sect}\label{sect:D2-typeII-lastcase}
  We treat the remaining cases where one of the following is
  true:
  \begin{itemize}
  \item $d\equiv 2\pmod{4}$ and $d>6$;
  \item $d\equiv 3\pmod{4}$ and $\varepsilon=a+b\sqrt{d}$ with $a$ odd.
  \end{itemize}
Let $\grp$ be the unique dyadic prime of $F$. Then
$\grT(\scrO_2^\ddagger\otimes\Z_2)$ is a star on which
conjugation by $(1+i)$ acts as a reflection, and
$\aleph(\scrO_2^\ddagger)=4$. For simplicity, let 
$\varrho=(1+i+j+k)/2\in \qalg{-1}{-\varepsilon}{F}$, and $B$ be the
order of $F(i)\subset \qalg{-1}{-\varepsilon}{F}$ defined in (\ref{eq:53}). 
We claim that 
\begin{equation}
  \label{eq:73}
  \O_0=B+B\varrho
\end{equation}
is the maximal order in $\grS(\scrO_2^\ddagger)$ corresponding to the center of the star
$\grT(\scrO_2^\ddagger\otimes\Z_2)$. Clearly,  $\varrho
  i=-i\varrho+(-1+i)$. For any $x+yi\in B$ with $x, y\in F$, we
  have $y(-1+i)\in B$, and hence $\varrho(x+yi)=(x-yi)\varrho+y(-1+i)\in
  \O_0$. Therefore, $\O_0$ is an order containing $\scrO_2^\ddagger$,
  and it is maximal because $\chi(\O_0, \scrO_2^\ddagger)=4O_F$. Let $\pi=\sqrt{d}$ if
  $d\equiv 2\pmod{4}$, and 
  $\pi=1+\sqrt{d}$ if $d\equiv 3\pmod{4}$. Then $\grp=(2, \pi)$, and
  the norm of $(1+i)\pi/2\in B$ over $F$ is a $\grp$-adic
  unit. It follows that $\scrO_2^\ddagger\subseteq O_F+\grp
  \O_0$. Thus $\grT(\scrO_2^\ddagger\otimes\Z_2)$ is
  centered at $(\O_0)_\grp$. Although the expression looks similar, $\O_0$ is
  \emph{not}  isomorphic to $\O_{12}$ in (\ref{eq:74}).  Indeed, if $2\varepsilon\not\in F^{\times 2}$, then $\O_{12}^\bs\simeq A_4$, which does not contain any subgroup strictly isomorphic to $D_2^\ddagger$; if $2\varepsilon\in F^{\times 2}$, then $\O_{12}=\O_{24}$, and $\O_0\not\simeq \O_{24}$
  by
  Remark~\ref{rem:suborders-of-O24}. 

 First suppose that $d\equiv 3\pmod{4}$ and $\varepsilon=a+b\sqrt{d}$
 with $a$ odd. Then $\grS(\scrO_2^\ddagger)=\{\O_i, \O_j, \O_k,
 \O_0\}$, and $(1+i)\O_j(1+i)^{-1}=\O_k$. Note that
 $2\varepsilon\not\in F^{\times 2}$ in this case, otherwise
 $F(j)\simeq \Q(\sqrt{d}, \sqrt{-2})$ is totally ramified over
 $\Q$ at $2$, contradicting to Lemma~\ref{lem:criterion-ram-d=3mod4}. On the other hand, it is
 possible that $3\varepsilon\in F^{\times 2}$ (e.g.~$d=3p$ with
 $p\equiv 1\pmod{4}$). The reduced unit groups and normalizers are given by the following table
\begin{center}
\renewcommand{\arraystretch}{1.3}
  \begin{tabular}{*{7}{|>{$}c<{$}}|}
\hline
&\O_j^\bs&\calN(\O_j)&\O_0^\bs & \calN(\O_0)& \O_i^\bs & \calN(\O_i)  \\
\hline
3\varepsilon\not\in F^{\times 2} & D_2^\ddagger&
                                                 \multirow{2}{*}{$F^\times\O_j^\times$} & \multirow{2}{*}{$D_2^\ddagger$} &
                                                  \multirow{2}{*}{$F^\times\O_0^\times\<1+i\>$}
 &\multirow{2}{*}{$D_2^\ddagger$} & \multirow{2}{*}{$F^\times\O_i^\times\<1+i\>$}\\

\cline{1-2}
3\varepsilon\in F^{\times 2} &D_6  & & &  &  & \\
\hline
\end{tabular}
\end{center}
In this case, we have 
\begin{equation}
  \label{eq:158}
  t(D_2^\ddagger)=
  \begin{cases}
    3\qquad &\text{if } 3\varepsilon\not\in F^{\times 2},\\
    2 \qquad &\text{if } 3\varepsilon\in F^{\times 2},\\
  \end{cases}\quad\text{and}\quad h(D_2^\ddagger)=  \begin{cases}
    2h(F) \qquad &\text{if } 3\varepsilon\not\in F^{\times 2}.\\
    h(F) \qquad &\text{if } 3\varepsilon\in F^{\times 2},\\
  \end{cases}
\end{equation}
For the remaining subsection,  assume that $d\equiv 2\pmod{4}$ and $d>6$.  We then have 
$\varepsilon=a+b\sqrt{d}$ with $a$ odd and $b$ even. Hence 
\begin{equation}
  \label{eq:75}
  \O_i':=O_F+O_F\frac{1+i+\sqrt{d}j}{2}+O_Fj+O_F\frac{\sqrt{d}+j+k}{2}
\end{equation}
is an order containing $\scrO_2^\ddagger$. Moreover,
$(1+i)\O_i'(1+i)^{-1}=\O_i'$. Note that $\O_i'\neq \O_0$ since
$(1+i+j+k)/2\not\in \O_i'$.  

First suppose further that $F(j)/F$ is
unramified at $\grp$ (see Lemma~\ref{lem:criterion-ram-d=2mod4}). Then
$\grS(\scrO_2^\ddagger)=\{\O_i', \O_0, \O_j, \O_k\}$. 
It can happen that $2\varepsilon\in F^{\times 2}$ (e.g.~when $d=2p$ with
$p\equiv 3\pmod{4}$) or $3\varepsilon\in F^{\times 2}$ (e.g.~when
$d=78$ or $222$) in this case.  By
Lemma~\ref{lem:red-unit-gp-Oj}, $\O_j^\bs$ is  isomorphic to neither
$D_4$ nor $S_4$.  When $2\varepsilon\in F^{\times 2}$, we must have
$\{\O_0^\bs, \O_i'^\bs\}=\{D_4, S_4\}$. It has already been remarked
that $\O_0\not\simeq \O_{24}$. Thus $\O_0^\bs\simeq D_4$ and
$\O_i'^\bs\simeq S_4$ when $2\varepsilon\in F^{\times 2}$. The reduced
unit groups and normalizers are given by the following table
\begin{center}
\renewcommand{\arraystretch}{1.3}
  \begin{tabular}{*{7}{|>{$}c<{$}}|}
\hline
&\O_j^\bs&\calN(\O_j)&\O_0^\bs & \calN(\O_0)& \O_i'^\bs & \calN(\O_i')  \\
\hline
\{2\varepsilon, 3\varepsilon\}\cap F^{\times 2}=\emptyset & D_2^\ddagger&
                                                 \multirow{3}{*}{$F^\times\O_j^\times$} & \multirow{2}{*}{$D_2^\ddagger$} &
                                                  \multirow{2}{*}{$F^\times\O_0^\times\<1+i\>$}
 &\multirow{2}{*}{$D_2^\ddagger$} & \multirow{2}{*}{$F^\times\O_i'^\times\<1+i\>$}\\

\cline{1-2}
3\varepsilon\in F^{\times 2} &D_6  & & &  &  & \\
\cline{1-2}\cline{4-7}
2\varepsilon\in F^{\times 2} & D_2^\ddagger  & & D_4 & F^\times\O_0^\times & S_4 & F^\times\O_i'^\times\\
\hline
\end{tabular}
\end{center}
In this case, we have 
\begin{equation}
  \label{eq:159}
 t(D_2^\ddagger)=
  \begin{cases}
    3 \quad &\text{if } \{2\varepsilon, 3\varepsilon\}\cap F^{\times 2}=\emptyset,\\
    2\quad &\text{if } 3\varepsilon\in F^{\times 2},\\
    1\quad &\text{if } 2\varepsilon\in F^{\times 2},\\
  \end{cases}\quad h(D_2^\ddagger)=  \begin{cases}
    2h(F) \quad &\text{if } \{2\varepsilon, 3\varepsilon\}\cap F^{\times 2}=\emptyset,\\
    h(F)\quad &\text{otherwise.}
\end{cases}
\end{equation}

Lastly, suppose that $d\equiv 2\pmod{4}$ and $F(j)/F$ is ramified at
$\grp$.  By Lemma~\ref{lem:criterion-ram-d=2mod4}, if $4\mid b$, then
$a\equiv 1\pmod{4}$, and hence 
\begin{equation}
  \label{eq:76}
  \O_j'=O_F+O_Fi+O_F\frac{\sqrt{d}+i+j}{2}+O_F\frac{1+\sqrt{d}i+k}{2}
\end{equation}
is a maximal order containing $\scrO_2^\ddagger$; if $b\equiv
2\pmod{4}$, then $a\equiv 3\pmod{4}$, and hence 
\begin{equation}
  \label{eq:77}
  \O_j'=O_F+O_Fi+O_F\frac{1+\sqrt{d}+\sqrt{d}i+j}{2}+O_F\frac{\sqrt{d}+(1+\sqrt{d})i+k}{2}
\end{equation}
is a maximal order containing $\scrO_2^\ddagger$. Let
$\O_k'=(1+i)\O_j'(1+i)^{-1}$. It is straightforward to check that
$\O_j'\neq \O_k'$ in both cases. So
$\grS(\scrO_2^\ddagger)=\{\O_0, \O_i', \O_j', \O_k'\}$. Since
$F(\sqrt{-3})/F$ is unramified at $\grp$, we have
$3\varepsilon\not\in F^{\times 2}$ in this case. On the other hand, it
is possible that $2\varepsilon\in F^{\times 2}$ (e.g.~when $d=2p$ for
some prime $p\equiv 1\pmod{8}$ and $p$ \emph{not} of the form
$x^2+32y^2$ for any $x,y\in \bbZ$. See \cite[Corollary~24.5]{Conner-Hurrelbrink}).
The reduced
unit groups and normalizers are given by the following table
\begin{center}
\renewcommand{\arraystretch}{1.3}
  \begin{tabular}{*{7}{|>{$}c<{$}}|}
\hline
&\O_j'^\bs&\calN(\O_j')&\O_0^\bs & \calN(\O_0)& \O_i'^\bs & \calN(\O_i')  \\
\hline
2\varepsilon\not\in F^{\times 2} & \multirow{2}{*}{$D_2^\ddagger$}&
                                                 \multirow{2}{*}{$F^\times\O_j'^\times$}
                     & D_2^\ddagger&
                                                 F^\times\O_0^\times\<1+i\>& D_2^\ddagger&
                                                 F^\times\O_i'^\times\<1+i\>\\

\cline{1-1}\cline{4-7}
2\varepsilon\in F^{\times 2} &  & & D_4 & F^\times\O_0^\times & S_4 & F^\times\O_i'^\times\\
\hline
\end{tabular}
\end{center}
In this case, we have 
\begin{equation}
  \label{eq:160}
    t(D_2^\ddagger)=
  \begin{cases}
    3\qquad &\text{if } 2\varepsilon\not\in F^{\times 2},\\
    1 \qquad &\text{if } 2\varepsilon\in F^{\times 2},\\
  \end{cases}\quad\text{and}\quad h(D_2^\ddagger)=  \begin{cases}
    2h(F) \qquad &\text{if } 2\varepsilon\not\in F^{\times 2},\\
    h(F) \qquad &\text{if } 2\varepsilon\in F^{\times 2}.\\
  \end{cases}
\end{equation}
Combining \eqref{eq:158}, \eqref{eq:159}, \eqref{eq:160}, we see that under the assumption of Section~\ref{sect:D2-typeII-lastcase} on $d$ and $\varepsilon$, \begin{equation}
    h(D_2^\ddagger)=\begin{cases}
      2h(F) \quad&\text{if } \{2\varepsilon, 3\varepsilon\} \cap F^{\times 2}=\emptyset,\\
      h(F) \quad&\text{otherwise.}
    \end{cases}
\end{equation} 
\end{sect}

\subsection{Maximal orders containing $\scrO_3^\ddagger$}
Throughout this subsection, $H$ denotes the quaternion algebra $\qalg{-\varepsilon}{-3}{F}$.  We study  the maximal orders in $H$ containing the minimal $D_3^\ddagger$-order 
$\scrO_3^\ddagger=O_F[i, \frac{1+j}{2}]$. By
\cite[Corollary~1.6]{Brzezinski-1983}, $\scrO_3^\ddagger$ is always a
Bass order since $\grd(\scrO_3^\ddagger)=3O_F$ is cube-free.  We
first determine the finite ramified primes of
$H=\qalg{-\varepsilon}{-3}{F}$. Write
$\varepsilon=\frac{a+b\sqrt{d}}{2}$, where $a,b$ are positive integers
such that $a\equiv b\pmod{2}$. If $d\equiv 1\pmod{3}$ and
$\Nm_{F/\Q}(\varepsilon)=1$, then $3\mid b$, which is immediately seen
by taking both sides of $a^2-b^2d=4$ modulo $3$. Therefore,
$\varepsilon\equiv \pm 1\pmod{3O_F}$ in this case.

\begin{lem}\label{lem:alg-ramified-places-D3-II}
  Let $H=\qalg{-\varepsilon}{-3}{F}$. Then $H$ splits at all finite places
  of $F$ coprime to $3$. If $d\not\equiv 1\pmod{3}$, then
  $H$ splits at the unique prime of $F$ above $3$ as well. When $d\equiv
  1\pmod{3}$, $H$
   splits at the two primes of $F$ above $3$ if and only if 
   $\varepsilon \equiv -1 \pmod{3O_F}$. 
\end{lem}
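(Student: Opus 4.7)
The plan is to combine the local structure of $\scrO_3^\ddagger$ with a Hilbert-symbol calculation at the primes above $3$. First I would apply Lemma~\ref{lem:gen-lem-scro3II} with $a=-\varepsilon\in O_F^\times$ and $b=-3$: that lemma immediately gives that $\scrO_3^\ddagger=O_F[i,(1+j)/2]$ is maximal at every nonzero prime of $O_F$ coprime to $3$. Since a maximal order over a local field has discriminant equal to that of the ambient quaternion algebra, the local discriminant of $H$ is trivial at each such prime, so $H$ splits there. This disposes of the first assertion.

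For the case $d\not\equiv 1\pmod{3}$, the prime $3$ is either ramified or inert in $F$, giving a unique prime $\grp\mid 3$. The total number of places of $F$ at which $H$ is ramified must be even, and $H$ is already ramified at the two real places of $F$ because $H$ is totally definite. Combined with the splitting at all finite primes coprime to $3$ established above, the number of ramified primes above $3$ must be even; since there is only one such prime, $H$ splits at $\grp$.

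The substance of the lemma lies in the case $d\equiv 1\pmod{3}$, where $3$ splits as $\grp_1\grp_2$ with each completion isomorphic to $\Q_3$. From the computation recalled immediately before the lemma statement, $3\mid b$ and therefore $\varepsilon\equiv\pm 1\pmod{3O_F}$. I would compute the Hilbert symbol $(-\varepsilon,-3)_{\grp_i}$ in $\Q_3$ using the fact that $\Q_3(\sqrt{-3})=\Q_3(\zeta_3)$ is the unique ramified quadratic extension of $\Q_3$, with norm form $x^2+3y^2$. Reducing modulo $3$, that form takes only the values $0,1\pmod{3}$; combined with $\Z_3^\times/\Z_3^{\times 2}\cong\{\pm 1\}$, one concludes that a unit $u\in\Z_3^\times$ is a norm from $\Q_3(\sqrt{-3})$ iff $u\equiv 1\pmod{3}$, equivalently $(u,-3)_{\Q_3}=1\iff u\equiv 1\pmod{3}$. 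Applying this with $u=-\varepsilon$ yields: $H$ splits at $\grp_i$ iff $-\varepsilon\equiv 1\pmod{3}$, i.e.\ iff $\varepsilon\equiv -1\pmod{3O_F}$. Because this condition depends only on the common image of $\varepsilon$ in $O_F/3O_F\cong\F_3\times\F_3$ (and both $\grp_i$ induce the same reduction on $\varepsilon\in\Z$-modulo-$3$), the condition at $\grp_1$ and the condition at $\grp_2$ coincide, so $H$ either splits at both primes above $3$ or ramifies at both, in accordance with the parity constraint.

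The only real obstacle is the local Hilbert-symbol computation at $3$; everything else is a routine parity argument together with Lemma~\ref{lem:gen-lem-scro3II}. Since $\varepsilon\equiv\pm1\pmod{3O_F}$ has already been noted, the analysis is exhaustive and yields exactly the dichotomy stated.
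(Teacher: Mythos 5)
Your proposal is correct and follows essentially the same route as the paper: splitting away from $3$ via the discriminant $\grd(\scrO_3^\ddagger)=3O_F$ from Lemma~\ref{lem:gen-lem-scro3II}, the parity-of-ramification argument when $3$ does not split in $F$, and a local Hilbert-symbol computation at $F_\grq=\Q_3$ when $d\equiv 1\pmod 3$ (the paper phrases it via Hensel's lemma for $-\varepsilon x^2-3y^2$ representing $1$, which is the same as your norm-group criterion for $\Q_3(\sqrt{-3})/\Q_3$). The observation that $\varepsilon\equiv\pm1\pmod{3O_F}$ makes the condition uniform over the two primes above $3$ is used identically in both arguments.
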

\begin{proof}
   Since $\grd(H)$ divides $\grd(\scrO_3^\ddagger)=3O_F$, $H$ splits at
  all finite places of $F$ coprime to $3$.  If $F$ has a unique prime
   $\grq$ above $3$, i.e.~$d\not\equiv 1\pmod{3}$, then $H$ splits
  at $\grq$ as well because it splits  at an even number of
  places of $F$. 
  
  Lastly, suppose that $d\equiv 1\pmod{3}$. Let $\grq$
  be a prime of
  $F$ above $3$. Then $F_\grq=\Q_3$. By Hensel's lemma, the quadratic form 
  $-\varepsilon x^2-3y^2$ represents $1$ with $x,y\in \Q_3$ if and
  only if $\varepsilon\equiv -1\pmod{3}$. Therefore, the Hilbert symbol $(-\varepsilon,
  -3)_\grq=1$ (i.e.~$H$ splits at $\grq$) if and only if
  $\varepsilon\equiv -1\pmod{3}$. 
\end{proof}

\begin{prop}\label{prop:scrO3-typeII}
  Let 
  $H=\qalg{-\varepsilon}{-3}{F}$,  and $\grq$ be a prime of $F$
  above $3$.   The values of 
  $\aleph(\scrO_3^\ddagger)$ and $\beth(\scrO_3^\ddagger)$ are listed in the
  following table
\begin{center}
\renewcommand{\arraystretch}{1.2}
  \begin{tabular}{|>{$}c<{$}|>{$}c<{$}|>{$}c<{$}|>{$}c<{$}|}
\hline
  d\geq 6  &\varepsilon &\aleph(\scrO_3^\ddagger) &\beth(\scrO_3^\ddagger)\\                                 
\hline
\multirow{2}{*}{$d\equiv 0\pmod{3}$} & \varepsilon\equiv 1 \pmod{\grq} & 1 & 1\\
\cline{2-4}
 & \varepsilon\equiv
  -1\pmod{\grq} & 3 & 2\\
\hline
\multirow{2}{*}{$d\equiv 1\pmod{3}$} & \varepsilon\equiv 1 \pmod{3O_F} & 1 & 1\\
\cline{2-4}
 & \varepsilon\equiv -1 \pmod{3O_F} & 4 & 2\\
\hline 
d\equiv 2\pmod{3}& & 2 &1\\
\hline
  \end{tabular}
\end{center}
\end{prop}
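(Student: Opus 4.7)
The strategy is to reduce $\aleph(\scrO_3^\ddagger)$ to a product of local factors via Lemma~\ref{lem:gen-lem-scro3II} (which gives maximality outside $3$), determine each local factor by a case analysis on $d \bmod 3$ using Lemma~\ref{lem:alg-ramified-places-D3-II}, and then compute $\beth$ by analysing the orbits of $\ol\calN(\scrO_3^\ddagger) = F^\times(\scrO_3^\ddagger)^\times\langle j\rangle / F^\times(\scrO_3^\ddagger)^\times \simeq \Z/2$ on $\grS(\scrO_3^\ddagger)$ through the parity $\nu_\grq(\Nr(j)) = \nu_\grq(3)$ at each $\grq \mid 3$.

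For the $\aleph$-column: by Lemma~\ref{lem:gen-lem-scro3II}, $\aleph(\scrO_3^\ddagger) = \prod_{\grq\mid 3}\aleph_\grq(\scrO_3^\ddagger)$. Writing $\varepsilon = (a+b\sqrt{d})/2$, the relation $a^2 - db^2 = 4$ reduced modulo $3$ forces $3 \mid b$ whenever $d \equiv 1 \pmod 3$, so that $\varepsilon \equiv \pm 1 \pmod{3 O_F}$ in that case. For $d \equiv 1 \pmod 3$ and $\varepsilon \equiv 1$, Lemma~\ref{lem:alg-ramified-places-D3-II} makes $H$ division at each of $\grq_1, \grq_2 \mid 3$, and since $\grq_s/3$ is unramified, Lemma~\ref{lem:gen-lem-scro3II} gives $\aleph_{\grq_s} = 1$; for $\varepsilon \equiv -1$, $H$ splits at each $\grq_s$ with $-\varepsilon \equiv 1 \in (\F_3^\times)^2$, so $e_{\grq_s} = 1$ and $(\scrO_3^\ddagger)_{\grq_s}$ is an Eichler order of level $\grq_s$, giving $\aleph_{\grq_s} = 2$. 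For $d \equiv 0 \pmod 3$ (unique $\grq$ with $3 O_F = \grq^2$, $\grk = \F_3$), $H_\grq$ splits; if $\varepsilon \equiv 1 \pmod\grq$, then $-\varepsilon$ is a non-square and $e_\grq = -1$, and after identifying $\sqrt\varepsilon \in F_\grq$ (by Hensel, since residue characteristic is $3$) one recognises $(\scrO_3^\ddagger)_\grq \simeq \gro_3 \otimes_{\Z_3} O_{F_\grq}$, so Lemma~\ref{lem:ram-ext} gives $\aleph_\grq = 1$; if $\varepsilon \equiv -1 \pmod\grq$, then $-\varepsilon$ is a square, $e_\grq = 1$, and $(\scrO_3^\ddagger)_\grq$ is an Eichler order of level $\grq^2$ (discriminant $\grq^2$), so $\aleph_\grq = 3$. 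For $d \equiv 2 \pmod 3$ ($\grq = 3 O_F$ inert, $\grk \simeq \F_9$), $H_\grq$ splits and the norm-one condition combined with Frobenius $x \mapsto x^3$ yields $\varepsilon^4 = 1$ in $\F_9^\times$, placing $-\varepsilon$ in the index-$2$ subgroup $(\F_9^\times)^2$ of fourth roots of unity; hence $e_\grq = 1$ and $\aleph_\grq = 2$.

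For the $\beth$-column: by Lemma~\ref{lem:conj-max-orders}, $\beth(\scrO_3^\ddagger)$ counts $\langle j\rangle$-orbits on $\grS(\scrO_3^\ddagger)$. Where $\grq/3$ is unramified, $\nu_\grq(\Nr(j)) = 1$ is odd, so by Section~\ref{sect:parity-of-element} $j \notin F_\grq^\times \O_\grq^\times$ for any maximal $\O \supset (\scrO_3^\ddagger)_\grq$ and hence $j$ exchanges the two maximal orders in each local length-one Eichler chain; this produces $\beth = 2$ from the diagonal action on the four maximal orders in the case $d \equiv 1$, $\varepsilon \equiv -1$, and $\beth = 1$ from a single swap in the case $d \equiv 2$. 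Where $\grq/3$ is ramified with $\aleph_\grq = 3$ (case $d \equiv 0$, $\varepsilon \equiv -1$), $j$ is even and parity alone is uninformative; a direct local computation is required, using $\sqrt{-\varepsilon} \in F_\grq$ to diagonalise $i$ and realise $(\scrO_3^\ddagger)_\grq$ as the Eichler order of level $\grq^2$ that stabilises the lattice chain $O_{F_\grq} e_1 \oplus \pi^k O_{F_\grq} e_2$ for $k = 0, 1, 2$; then conjugation by $j = \bigl(\begin{smallmatrix} 0 & 1 \\ 3 & 0 \end{smallmatrix}\bigr)$ in this basis sends $\O_k \mapsto \O_{2-k}$, yielding orbits $\{\O_0, \O_2\}$ and $\{\O_1\}$ and hence $\beth = 2$. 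The remaining cases with $\aleph = 1$ give $\beth = 1$ automatically. The main obstacle is precisely this ramified subcase: the standard odd/even parity argument fails to detect the action of $j$ on the length-$2$ local path, so confirming that $j$ realises the Atkin--Lehner involution of the level-$\grq^2$ Eichler order requires the explicit matrix calculation above rather than a general principle.
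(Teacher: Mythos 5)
Your proof is correct, and its skeleton is the same as the paper's: reduce $\aleph(\scrO_3^\ddagger)$ to local factors at the primes above $3$ via Lemma~\ref{lem:gen-lem-scro3II}, identify the local orders through their Eichler invariants and discriminants, and count $\langle j\rangle$-orbits on $\grS(\scrO_3^\ddagger)$ using the parity of $\nu_\grq(\Nr(j))=\nu_\grq(3)$ in the unramified cases. Two subcases are handled by genuinely different (but valid) arguments. For $d\equiv 0\pmod 3$ with $\varepsilon\equiv 1\pmod{\grq}$ you identify $(\scrO_3^\ddagger)_\grq$ with $\gro_3\otimes_{\Z_3}O_{F_\grq}$ and invoke Lemma~\ref{lem:ram-ext}, whereas the paper gets $\aleph_\grq=1$ directly from $e_\grq(\scrO_3^\ddagger)=-1$ by citing Brzezinski; both are fine. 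More substantially, in the ramified case $d\equiv 0\pmod 3$, $\varepsilon\equiv -1\pmod{\grq}$ — where, as you correctly note, parity is uninformative — the paper normalizes the optimal embedding of $O_F[(1+j)/2]$ into an outer maximal order $\O$ via \cite[Theorem~II.3.2]{vigneras} and checks that the resulting matrix for $j$ does not lie in $F_\grq^\times\GL_2(O_{F_\grq})$, concluding $j\O j^{-1}=\O'$; you instead diagonalize $i$ (legitimate, since $-\varepsilon$ is a square in $F_\grq$ by Hensel) and exhibit $j$ as the Atkin--Lehner involution of the level-$\grq^2$ Eichler order acting on the lattice chain. Your route is more self-contained but requires verifying the explicit coordinates of $(\scrO_3^\ddagger)_\grq$ in that basis, which you only sketch; also your matrix for $j$ should square to $-3$, so one off-diagonal entry needs a sign, though this does not affect the conjugation action on lattices. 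Finally, your computation of $e_\grq$ for $d\equiv 2\pmod 3$ via $\varepsilon^4=1$ in $\F_9^\times$ is correct but can be bypassed: the local discriminant $\grq O_{F_\grq}$ is square-free, so the order is automatically an Eichler order of level $\grq$, which is how the paper (implicitly) treats this case.
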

\begin{proof}
  First suppose that $3\mid d$ so that $\grq=(3, \sqrt{d})$. Since $F(j)\simeq F(\sqrt{-3})$, we have 
  $O_F[(1+j)/2]=O_F+\grq O_{F(j)}$ by Table~\ref{tab:orders-K3}.
  Hence $\O_j=O_{F(j)}+iO_{F(j)}$ is a maximal order containing $\scrO_3^\ddagger$
  with $j\in \calN(\O_j)$.  By Lemma~\ref{lem:gen-lem-scro3II},
  $\scrO_3^\ddagger$ is maximal at all finite places of $F$ coprime to
  $\grq$, and
\[e_\grq(\scrO_3^\ddagger)=
 \begin{cases}
  -1           &\text{if} \quad \varepsilon \equiv 1\pmod{\grq};\\
  1 &\text{if} \quad \varepsilon \equiv 2\pmod{\grq}.
 \end{cases}\]
If $e_\grq(\scrO_3^\ddagger)=-1$, then
$\aleph_\grq(\scrO_3^\ddagger)=1$ by
\cite[Corollary~3.2]{Brzezinski-1983}.  Thus $\O_j$ is the unique
maximal order containing $\scrO_3^\ddagger$.  Suppose that
$e_\grq(\scrO_3^\ddagger)=1$ next. Then
$\scrO_3^\ddagger$ is an Eichler order of level $3O_F=\grq^2$ by
\cite[Corollary~2.2]{Brzezinski-1983}. Hence
$\aleph(\scrO_3^\ddagger)=3$. Let $\O$ and $\O'$ be the remaining two maximal orders distinct
from $\O_j$ that contain
$\scrO_3^\ddagger$. Then $\O\cap
F(j)=O_F[(1+j)/2]$.
Otherwise, we have $O_{F(j)}\subseteq \O$, and hence
$\O\subseteq \O_j$, which contradicts our assumption. For
simplicity, write $R=O_{F_\grq}$. By \cite[Theorem~II.3.2]{vigneras},
there exists an isomorphism $\O\otimes_{O_F}R\simeq M_2(R)$ such
that $(1+j)/2$ is identified with $
\begin{pmatrix}
  0& 1 \\ 1& 1
\end{pmatrix}$. Then $j$ is identified with $
\begin{pmatrix}
  -1 & 2 \\ 2 &1
\end{pmatrix}$, which does not normalize $M_2(R)$. It follows that
$j\not\in \calN(\O)$. Therefore, $j\O j^{-1}=\O'$, and hence
$\beth(\scrO_3^\ddagger)=2$. 

Next, suppose that $d\equiv 1\pmod{3}$. By
Lemma~\ref{lem:alg-ramified-places-D3-II}, if
$\varepsilon\equiv 1\pmod{3O_F}$, then $H$ is ramified at the two
places of $F$ above $3$ and splits at all other finite places. Hence
$\grd(H)=3O_F=\grd(\scrO_3^\ddagger)$, which implies that
$\scrO_3^\ddagger$ is maximal.  Suppose next that
$\varepsilon\equiv -1\pmod{3O_F}$. Then $H$ splits at all finite places
of $F$.  For any prime $\grq$ of $F$ above $3$, we have
$\grd((\scrO_3^\ddagger)_\grq)=3 O_{F_\grq}=\grq O_{F_\grq}$.  It
follows that $(\scrO_3^\ddagger)_\grq$ is an Eichler order of level
$\grq O_{F_\grq}$, and hence $\aleph_\grq(\scrO_3^\ddagger)=2$.
Therefore,
$\aleph(\scrO_3^\ddagger)=\prod_{\grq\mid
  3O_F}\aleph_\grq(\scrO_3^\ddagger)=2\cdot 2=4$.
By Section~\ref{sect:parity-of-element}, $j\not\in \calN(\O)$ for
any maximal order $\O$ containing $\scrO_3^\ddagger$ since it is odd
at $\grq$. Hence conjugation by $j$ acts on $\grS(\scrO_3^\ddagger)$ as the product of two
disjoint transpositions, and
$\beth(\scrO_3^\ddagger)=2$. 

Lastly,  the calculation of
$\aleph(\scrO_3^\ddagger)$ and $\beth(\scrO_3^\ddagger)$ for $d\equiv 2\pmod{3}$ is similar to
the case $d\equiv 1\pmod{3}$ and $\varepsilon\equiv -1\pmod{3O_F}$ above,
hence omitted.
\end{proof}

\begin{cor}\label{cor:D3typeII}
Let 
  $H=\qalg{-\varepsilon}{-3}{F}$. Then $t(D_3^\ddagger)=0$ if $d=6$. For $d\geq 7$, 
  \begin{gather}
t(D_3^\ddagger)+t(S_4)+t(D_6)=\beth(\scrO_3^\ddagger),\label{eq:200}\\    
\text{where either} \quad t(S_4)=0 \quad \text{or}\quad t(D_6)=0. 
  \end{gather}
Particularly, if $\{2\varepsilon, 3\varepsilon\}\cap F^{\times
  2}=\emptyset$, then 
\begin{equation}
  \label{eq:85}
t(D_3^\ddagger)=\beth(\scrO_3^\ddagger). 
\end{equation}
\end{cor}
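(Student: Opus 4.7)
My plan is to imitate the proof of Corollary~\ref{cor:scO2typeII}. From Figure~\ref{fig:rel-gps}, the groups $G' \in \calG^\natural$ that properly contain $D_3^\ddagger$ are precisely $S_4$ and $D_6$. For every maximal order $\O \in \grS(\scrO_3^\ddagger)$, the reduced unit group $\O^\bs$ contains $(\scrO_3^\ddagger)^\bs \simeq D_3^\ddagger$ by Corollary~\ref{cor:units-of-minimal-orders}, so $\O^\bs \in \{D_3^\ddagger, S_4, D_6\}$. Combining Lemma~\ref{lem:conj-max-orders} with the counting identity \eqref{eq:147} then yields
\[
t(D_3^\ddagger) + t(S_4) + t(D_6) = \beth(\scrO_3^\ddagger),
\]
valid for every $d \geq 6$.

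To prove the dichotomy that at most one of $t(S_4)$, $t(D_6)$ is nonzero when $d \geq 7$, I would invoke Theorem~\ref{1.2}. By \eqref{eq:S_4}, $t(S_4) \neq 0$ forces $2\varepsilon \in F^{\times 2}$, while by \eqref{eq:D_6}, $t(D_6) \neq 0$ forces $3\varepsilon \in F^{\times 2}$. Were both to hold simultaneously, then $6 = (2\varepsilon)(3\varepsilon)\varepsilon^{-2}$ would be a square in $F^\times$, forcing $F \supseteq \Q(\sqrt{6})$ and hence $d = 6$. The particularization \eqref{eq:85} is then immediate: when $\{2\varepsilon, 3\varepsilon\} \cap F^{\times 2} = \emptyset$, both $t(S_4)$ and $t(D_6)$ vanish and the displayed identity reduces to $t(D_3^\ddagger) = \beth(\scrO_3^\ddagger)$.

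It remains to dispose of the exceptional case $d = 6$. For $F = \Q(\sqrt{6})$ the fundamental unit is $\varepsilon = 5 + 2\sqrt{6}$, with $\Nm_{F/\Q}(\varepsilon) = 1$, and direct verification gives $2\varepsilon = (2+\sqrt{6})^2$ and $3\varepsilon = (3+\sqrt{6})^2$. Since $\varepsilon \equiv -1$ modulo the unique prime $\grq$ of $F$ above $3$, Lemma~\ref{lem:alg-ramified-places-D3-II} shows that $H = \qalg{-\varepsilon}{-3}{F}$ splits at every finite place of $F$; hence $H$ is isomorphic both to $\qalg{-1}{-1}{F}$ and to $\qalg{-1}{-3}{F}$, and Theorem~\ref{1.2} yields $t(S_4) = t(D_6) = 1$. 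Proposition~\ref{prop:scrO3-typeII} (the row $d \equiv 0 \pmod{3}$, $\varepsilon \equiv -1 \pmod{\grq}$) gives $\beth(\scrO_3^\ddagger) = 2$, so the main identity forces $t(D_3^\ddagger) = 0$. I anticipate no genuine obstacle beyond bookkeeping; the only delicate point is the congruence check $\varepsilon \equiv -1 \pmod{\grq}$ for $d = 6$, which is a one-line computation.
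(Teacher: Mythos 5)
Your argument is correct and follows essentially the same route as the paper, which simply notes that the proof is parallel to that of Corollary~\ref{cor:scO2typeII}: the identity \eqref{eq:200} comes from Figure~\ref{fig:rel-gps} together with \eqref{eq:147}, the dichotomy from the fact that $2\varepsilon$ and $3\varepsilon$ are simultaneously squares only when $d=6$, and the case $d=6$ from $t(S_4)=t(D_6)=1$ combined with $\beth(\scrO_3^\ddagger)=2$ from Proposition~\ref{prop:scrO3-typeII}. One tiny citation nit: for $d=6$ the splitting of $H$ at the prime above $3$ already follows from the clause of Lemma~\ref{lem:alg-ramified-places-D3-II} for $d\not\equiv 1\pmod{3}$ (it does not require the congruence $\varepsilon\equiv -1\pmod{\grq}$, which you do however correctly need for reading off $\beth(\scrO_3^\ddagger)=2$ from the table).
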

The proof is similar to that of Corollary~\ref{cor:scO2typeII}, 
hence omitted.

\begin{sect}
 Assume that $3\mid d$ with $d>6$. By the proof of   Proposition~\ref{prop:scrO3-typeII},  $\grS(\scrO_3^\ddagger)\supseteq \{\O_j\}$, and $j\in \calN(\O_j)$. 
If $3\varepsilon\in F^{\times 2}$, then $\O_j^\bs \simeq D_6$ since $O_{F(j)}^\bs\simeq \zmod{6}$ in this case.
For example, if $d=3p$ with
$p>3$ and $p\equiv 3\pmod{4}$, then $3\varepsilon\in F^{\times 2}$ by
\cite[Lemma~3(a)]{MR0441914}. Both cases $\varepsilon\equiv \pm 1 \pmod{\grq}$ may occur when $3\varepsilon\in F^{\times 2}$, as shown by the examples $d=21$ and $d=33$.
On the other hand, $\O_j^\bs$ is never isomorphic to  $S_4$. Indeed, $F(\xi)\cap \O_{24}=O_F[\xi]\neq O_{F(\xi)}$ in $\qalg{-1}{-1}{F}$ by \eqref{eq:144}, so $\O_j\not\simeq \O_{24}$.  Therefore, $\O_j^\bs\simeq D_3^\ddagger$ if $3\varepsilon\not\in F^{\times 2}$, in which case $\calN(\O_j)=F^\times\O_j^\times\dangle{j}$. 

Let us write
\begin{equation}
  \label{eq:79}
 \varepsilon=a+b\sqrt{d}\quad \text{with}\quad a, b\in \frac{1}{2}\Z\quad\text{and}\quad
a\equiv b\pmod{\Z}.  
\end{equation}
First assume that $a\equiv 1\pmod{\frac{3}{2}\Z}$ so that $\varepsilon\equiv 1\pmod{\grq}$, then $\grS(\scrO_3^\ddagger)=\{\O_j\}$. It follows that
\begin{equation}
  \label{eq:162}
    t(D_3^\ddagger)=
  \begin{cases}
    1\qquad &\text{if }  3\varepsilon\not\in  F^{\times 2},\\
    0 \qquad &\text{if } 3\varepsilon\in  F^{\times 2},
  \end{cases}\quad\text{and}\quad
  h(D_3^\ddagger)=\frac{1}{2}h(F)t(D_3^\ddagger). 
\end{equation}
Next, assume that $a\equiv -1\pmod{\frac{3}{2}\Z}$. We
define
\begin{gather}
  \label{eq:78}
\O=\scrO_3^\ddagger+O_F\delta=O_F+O_Fi+O_F\frac{1+j}{2}+O_F\delta, \qquad\text{where}\\
\delta=\begin{cases}
\frac{-3i+2j+k}{6}\qquad &\text{if } b\equiv
0\pmod{\frac{3}{2}\Z};\\
\frac{-3i+2(1+\sqrt{d})j+k}{6}\qquad &\text{if } b\equiv
1\pmod{\frac{3}{2}\Z};\\
\frac{-3i+2(-1+\sqrt{d})j+k}{6}\qquad &\text{if } b\equiv
2\pmod{\frac{3}{2}\Z}.
\end{cases}
\end{gather}
Then $\O$ is a maximal order containing $\scrO_3^\ddagger$ and
distinct from $\O_j$. It follows from the proof of 
Proposition~\ref{prop:scrO3-typeII} that
$\grS(\scrO_3^\ddagger)=\{\O, \O_j, j\O j^{-1}\}$. 

In this case, it is possible that $2\varepsilon\in F^{\times 2}$,  as demonstrated by the example
$d=66$ and $\varepsilon=65+8\sqrt{66}$. Since $d>6$ by our assumption, $2\varepsilon$ and $3\varepsilon$ are not simultaneously perfect squares in $F$.  The reduced
unit groups and normalizers are given by the following table
\begin{center}
\renewcommand{\arraystretch}{1.3}
  \begin{tabular}{*{5}{|>{$}c<{$}}|}
\hline
&\O_j^\bs&\calN(\O_j)&\O^\bs & \calN(\O)\\
\hline
\{2\varepsilon, 3\varepsilon\}\cap F^{\times 2}=\emptyset & \multirow{2}{*}{$D_3^\ddagger$}&
                                                 \multirow{2}{*}{$F^\times\O_j^\times\<j\>$} & D_3^\ddagger &
                                                  \multirow{3}{*}{$F^\times\O^\times$}\\
\cline{1-1}\cline{4-4}
2\varepsilon\in F^{\times 2} &  & & S_4 &  \\
\cline{1-4}
3\varepsilon\in F^{\times 2} & D_6  &F^\times\O_j^\times & D_3^\ddagger &  \\
\hline
\end{tabular}
\end{center}
It follows that when $3\mid d$ with $d>6$ and $\varepsilon\equiv -1\pmod{\grq}$
\begin{equation}
  \label{eq:161}
    t(D_3^\ddagger)=
  \begin{cases}
    2\qquad &\text{if } \{2\varepsilon, 3\varepsilon\}\cap  F^{\times 2}=\emptyset,\\
    1 \qquad &\text{otherwise,}
  \end{cases}\quad h(D_3^\ddagger)=  \begin{cases}
    3h(F)/2 \qquad &\text{if } \{2\varepsilon, 3\varepsilon\}\cap F^{\times 2}=\emptyset,\\
    h(F)/2 \qquad &\text{if } 2\varepsilon\in F^{\times 2},\\
    h(F) \qquad &\text{if } 3\varepsilon\in F^{\times 2}.\\
  \end{cases}
\end{equation}
\end{sect}

\begin{sect}\label{sect:maximal-D3II-d=1mod3}
Assume that $d\equiv 1\pmod{3}$. If $\varepsilon\equiv
1\pmod{3O_F}$, then $H$ is ramified at the two primes of $F$
above $3$, and $\scrO_3^\ddagger$ is maximal. We have 
\begin{equation}
  \label{eq:163}
      t(D_3^\ddagger)=1, \quad \text{and}\quad
      h(D_3^\ddagger)=2h(F). 
\end{equation}

Next, assume that  $\varepsilon\equiv -1\pmod{3O_F}$ so that $\omega(H)=0$. We
  define
  \begin{align}
        \O&=O_F+O_Fi+O_F\frac{1+j}{2}+O_F\frac{-3i+2j+k}{6};\label{eq:100}\\
    \O'&=O_F+O_Fi+O_F\frac{1+j}{2}+O_F\frac{-3i+2\sqrt{d}j+k}{6}.\label{eq:110}
  \end{align}
Then $\O\neq \O'$ and $j\O j^{-1}\neq \O'$. Therefore,
$\grS(\scrO_3^\ddagger)=\{\O,\O', j\O j^{-1}, j\O' j^{-1}\}$. Note that $3\varepsilon\not\in F^{\times 2}$ since
$F/\Q$ is unramified at $3$. On
the other hand, it is possible that $2\varepsilon\in F^{\times 2}$
(e.g.~$d=p$ with $p\equiv 7\pmod{12}$ or $d=2p$ with $p\equiv 11
\pmod{12}$). Suppose that this is the case. Then $\{\O^\bs,
\O'^\bs\}=\{S_4,D_3^\ddagger\}$. Indeed, we \emph{cannot} have
$\O^\bs\simeq \O'^\bs\simeq S_4$ since $\O$ and $\O'$ are not
$H^\times$-conjugate.  Write
$\varepsilon=2\vartheta^2$ and $2\vartheta=x+y\sqrt{d}\in O_F$ with
$x,y\in \frac{1}{2}\Z$ and $x\equiv y\pmod{\Z}$. Note that either $x$
or $y$ lies in $\frac{3}{2}\Z$, and 
\[\left(\frac{j}{3}\pm \frac{2\vartheta
    k}{3\varepsilon}\right)^2=-1,\qquad i\left(\frac{j}{3}\pm \frac{2\vartheta
    k}{3\varepsilon}\right)=-\left(\frac{j}{3}\pm \frac{2\vartheta
    k}{3\varepsilon}\right)i.\]
If $y\in \frac{3}{2}\Z$, then $\frac{j}{3}\pm \frac{2\vartheta
  k}{3\varepsilon}\in \O$ for a suitable choice of sign depending on
$(x\bmod{\frac{3}{2}\Z})$. Hence $\O^\bs\simeq S_4$ in this
case. Similarly, if $x\in \frac{3}{2}\Z$, then  $\frac{j}{3}\pm \frac{2\vartheta
  k}{3\varepsilon}\in \O'$ for a suitable choice of sign depending on
$(y\bmod{\frac{3}{2}\Z})$. Hence $\O'^\bs\simeq S_4$ in this
case.
The reduced
unit groups and normalizers are summarized in the following table
\begin{center}
\renewcommand{\arraystretch}{1.3}
  \begin{tabular}{*{6}{|>{$}c<{$}}|}
\hline
&&\O^\bs&\calN(\O)&\O'^\bs & \calN(\O')\\
\hline
2\varepsilon\not\in F^{\times 2}& & \multirow{2}{*}{$D_3^\ddagger$}&
                                                 \multirow{3}{*}{$F^\times\O^\times$} & D_3^\ddagger &
                                                  \multirow{3}{*}{$F^\times\O'^\times$}\\
\cline{1-2}\cline{5-5}
2\varepsilon\in F^{\times 2} &x\in \frac{3}{2}\Z &  & & S_4 &  \\
\cline{1-3}\cline{5-5}
2\varepsilon\in F^{\times 2}&y\in \frac{3}{2}\Z & S_4  & &D_3^\ddagger&  \\
\hline
\end{tabular}
\end{center}
It follows that when $d\equiv 1\pmod{3}$ and $\varepsilon\equiv -1\pmod{3O_F}$
\begin{equation}
  \label{eq:164}
   t(D_3^\ddagger)=
   \begin{cases}
2\quad&\text{if } 2\varepsilon\not\in F^{\times 2}, \\
1 \quad&\text{if } 2\varepsilon\in F^{\times 2} 
   \end{cases}\quad\text{and}\quad    h(D_3^\ddagger)=h(F)t(D_3^\ddagger). 
\end{equation}
\end{sect}

\begin{cor}\label{cor:type-D3II-d-prime-1mod3}
  Suppose that $d=p$ is a prime congruent to $7$ modulo $12$ so that
  $p\equiv 1\pmod{3}$ and $2\varepsilon\in F^{\times 2}$. Let $\O$ and
  $\O'$  be the two maximal order defined in (\ref{eq:100}) and (\ref{eq:110}). If
  $p\equiv 7\pmod{24}$, then $\O^\bs=D_3^\ddagger$, otherwise $\O'^\bs=D_3^\ddagger$. 
\end{cor}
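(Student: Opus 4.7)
The plan is to translate the corollary into a divisibility statement about the integers $x, y$ defined by $2\vartheta = x + y\sqrt{p}$, where $\varepsilon = 2\vartheta^2$. Because $p \equiv 3 \pmod 4$ we have $O_F = \Z[\sqrt{p}]$, so $x, y \in \Z$, and the table in Section~\ref{sect:maximal-D3II-d=1mod3} says that $\O^\bs \simeq D_3^\ddagger$ iff $3 \mid x$ (i.e.\ $x \in \tfrac{3}{2}\Z$), while $\O'^\bs \simeq D_3^\ddagger$ iff $3 \mid y$. So it suffices to show that $3 \mid x$ when $p \equiv 7 \pmod{24}$ and $3 \mid y$ when $p \equiv 19 \pmod{24}$.

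Write $\varepsilon = a + b\sqrt{p}$ with $a^2 - p b^2 = 1$. Since $p \equiv 1 \pmod 3$, the prime $3$ splits in $F$, and the standing hypothesis $\varepsilon \equiv -1 \pmod{3 O_F}$ of Section~\ref{sect:maximal-D3II-d=1mod3}, applied at each of the two primes of $F$ over $3$, yields $a \equiv 2 \pmod 3$ and $3 \mid b$. Expanding $(x + y\sqrt{p})^2 = 2\varepsilon$ and taking $F/\Q$-norms gives
\[
x^2 + p y^2 = 2 a, \qquad xy = b, \qquad (x^2 - p y^2)^2 = \Nm_{F/\Q}(2\varepsilon) = 4,
\]
so $x^2 - p y^2 = \pm 2$.

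The crux is to fix the sign of $x^2 - p y^2$ by a mod-$8$ analysis. Both $x, y$ even would give $\Nm_{F/\Q}(\varepsilon) = 4(x'^2 - p y'^2)^2$ with $x' = x/2,\, y' = y/2 \in \Z$, contradicting $\Nm_{F/\Q}(\varepsilon) = 1$; opposite parities contradict $x^2 + p y^2 = 2 a$ being even. So $x, y$ are both odd, whence $x^2 \equiv y^2 \equiv 1 \pmod 8$ and $x^2 - p y^2 \equiv 1 - p \pmod 8$. This pins down $x^2 - p y^2 = +2$ when $p \equiv 7 \pmod 8$ and $x^2 - p y^2 = -2$ when $p \equiv 3 \pmod 8$; combined with $x^2 + p y^2 = 2 a$, this gives $p y^2 = a - 1$ in the former case and $p y^2 = a + 1$ in the latter.

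Reducing modulo $3$ using $a \equiv 2 \pmod 3$ and $p \equiv 1 \pmod 3$ then completes the argument. If $p \equiv 7 \pmod{24}$, then $p \equiv 7 \pmod 8$, so $p y^2 \equiv a - 1 \equiv 1 \pmod 3$, forcing $3 \nmid y$; combined with $3 \mid b = xy$, this gives $3 \mid x$. If $p \equiv 19 \pmod{24}$, then $p \equiv 3 \pmod 8$, so $p y^2 \equiv a + 1 \equiv 0 \pmod 3$, forcing $3 \mid y$. Invoking the table of Section~\ref{sect:maximal-D3II-d=1mod3} finishes the proof. The main hurdle is the mod-$8$ step that fixes the sign of $x^2 - p y^2$; the surrounding congruences and the translation to reduced unit groups are entirely routine.
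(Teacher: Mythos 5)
Your proof is correct. The overall reduction is the same as the paper's: everything comes down to deciding which of $x,y$ (where $2\vartheta=x+y\sqrt{p}$) is divisible by $3$, and then invoking the table of Section~\ref{sect:maximal-D3II-d=1mod3}. Where you diverge is in the one genuinely delicate step, fixing the sign in $x^2-py^2=\pm 2$. The paper's argument is shorter: $x^2-py^2\equiv x^2\pmod{p}$ is a quadratic residue modulo $p$, and since $p\equiv 3\pmod 4$ exactly one of $\pm 2$ is a residue, namely $2\Lsymb{2}{p}$; hence $\Nm_{F/\Q}(2\vartheta)=2\Lsymb{2}{p}$, and reducing $x^2-y^2p=2\Lsymb{2}{p}$ modulo $3$ immediately tells you which of $x,y$ is divisible by $3$. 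Your route instead first shows $x$ and $y$ are both odd (ruling out the other parities via $\Nm_{F/\Q}(\varepsilon)=1$ and $x^2+py^2=2a$) and reads off the sign from $x^2-py^2\equiv 1-p\pmod 8$; you then need the extra inputs $a\equiv 2\pmod 3$, $3\mid b=xy$ coming from $\varepsilon\equiv-1\pmod{3O_F}$, which is indeed part of the standing hypotheses under which $\O$ and $\O'$ are defined. Both arguments are elementary and complete; the quadratic-residue observation buys a one-line determination of the sign and avoids the parity analysis, while your version makes the dependence on $p\bmod 8$ (equivalently $\Lsymb{2}{p}$) explicit without ever mentioning residues. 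One small stylistic remark: $\varepsilon\equiv -1\pmod{3O_F}$ gives $a\equiv 2\pmod 3$ and $3\mid b$ directly from the $\Z$-basis $\{1,\sqrt{p}\}$ of $O_F$; there is no need to localize at the two primes above $3$.
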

\begin{proof}
  Write $\varepsilon=2\vartheta^2$ and $2\vartheta=x+y\sqrt{p}$ as in
  Section~\ref{sect:maximal-D3II-d=1mod3}. Since $p\equiv 3\pmod{4}$
  and $2\vartheta\in O_F$, we have $x, y\in \Z$. Clearly, 
  $\Nm_{F/\Q}(2\vartheta)^2= \Nm_{F/\Q}(2\varepsilon)=4$. On the other
  hand, $\Nm_{F/\Q}(2\vartheta)=x^2-y^2p$
  is a quadratic residue modulo $p$. It follows that
  $\Nm_{F/\Q}(2\vartheta)=2\Lsymb{2}{p}$. Reducing modulo $3$ on both
  sides of $x^2-y^2p=2\Lsymb{2}{p}$, we see that $x\equiv 0\pmod{3}$
  if and only if $\Lsymb{2}{p}=1$, i.e. $p\equiv 7\pmod{8}$. The
  corollary follows from the classification in Section~\ref{sect:maximal-D3II-d=1mod3}. 
\end{proof}
\begin{sect}
Let $d\equiv 2\pmod{3}$,  and $\varepsilon=a+b\sqrt{d}$ be as in
(\ref{eq:79}). Then either $a$ or $b$ lies in $\frac{3}{2}\Z$. We define 
\begin{gather}
\O=\scrO_3^\ddagger+O_F\delta=O_F+O_Fi+O_F\frac{1+j}{2}+O_F\delta, \qquad\text{where}\\
\delta=
\begin{cases}
\frac{-3i+2\sqrt{d}j+k}{6}\qquad &\text{if } a\equiv 1\pmod{\frac{3}{2}\Z}\text{
  and }b\equiv
0\pmod{\frac{3}{2}\Z};\\
\frac{-3i+2j+k}{6}\qquad &\text{if } a\equiv 2\pmod{\frac{3}{2}\Z}\text{
  and }b\equiv
0\pmod{\frac{3}{2}\Z};\\
\frac{-3i+2(1+\sqrt{d})j+k}{6}\qquad &\text{if } a\equiv 0\pmod{\frac{3}{2}\Z}\text{
  and }b\equiv
1\pmod{\frac{3}{2}\Z};\\
\frac{-3i+2(-1+\sqrt{d})j+k}{6}\qquad &\text{if } a\equiv 0\pmod{\frac{3}{2}\Z}\text{
  and }b\equiv
2\pmod{\frac{3}{2}\Z}.
\end{cases}
  \end{gather}
Then  $\O$ is the unique maximal order
containing $\scrO_3^\ddagger$ up to conjugation by $j$. It is possible
that $2\varepsilon\in F^{\times 2}$ (e.g.~when $d=2p$ with $p\equiv
7\pmod{12}$). On the other hand, $3\varepsilon\not\in F^{\times 2}$
since $F/\Q$ is unramified at $3$. We have 
\begin{align}
  \label{eq:80}
\O^\bs&\simeq
\begin{cases}
  D_3^\ddagger \qquad&\text{if } 2\varepsilon\not\in F^{\times 2};\\
  S_4\qquad&\text{if } 2\varepsilon\in F^{\times 2},
\end{cases}\qquad\text{and}\qquad \calN(\O)=F^\times\O^\times;\\
 t(D_3^\ddagger)&=
  \begin{cases}
 1 \qquad&\text{if } 2\varepsilon\not\in F^{\times 2};\\
 0 \qquad&\text{if } 2\varepsilon\in F^{\times 2},
  \end{cases}\qquad\text{and}\qquad
           h(D_3^\ddagger)=h(F)t(D_3^\ddagger). 
\end{align}
\end{sect}

\section{Quadratic $O_F$-orders in CM-fields}
\label{sec:quadratic-orders}
Let $F=\Q(\sqrt{d})$ be a real quadratic field with square-free
$d\geq 6$, and $K/F$ be a CM-extension of $F$. Given an $O_F$-order $B$
in $K$, we denote $[B^\times: O_F^\times]$ by  $w(B)$, and the conductor $\chi(O_K, B)$ by $\grf_B$. The class number $h(B)$ can be calculated
using the following formula  in \cite[p.~75]{vigneras:ens}:
\begin{equation}
  \label{eq:156}
h(B)=\frac{h(K)\Nm_{F/\Q}(\grf_B)}{[O_K^\times:B^\times]}\prod_{\grp\mid
  \grf_B}\left(1-\frac{1}{\Nm_{F/\Q}(\grp)}\Lsymb{K}{\grp}\right), 
\end{equation}
where $\Lsymb{K}{\grp}$ is the Artin symbol (cf.~(\ref{eq:165}) or see
\cite[p.~94]{vigneras}). For simplicity, we set $h(m)=h(\Q(\sqrt{m}))$
for any non-square $m\in \Z$.

The CM-extensions $K/F$ with $w(O_K)>1$ have been listed in
(\ref{eq:6}).  As explained in Section~\ref{sec:general-stra}, to
compute $h(H, C_n)$ it is necessarily to classify all $B$
in $K$ with $w(B)>1$. When $K=F(\sqrt{-1})$ or $F(\sqrt{-3})$, this is
carried out in \cite[Chapter~3]{vigneras:ens}, whose results are
recalled in Section~\ref{sec:orders-in-K1-K3}. When
$\Nm_{F/\Q}(\varepsilon)=1$,  the orders $B$ in $F(\sqrt{-\varepsilon})$ with $B\supseteq O_F[\sqrt{-\varepsilon}]$ are studied in 
Section~\ref{sec:orders-in-Ke}.




\subsection{Orders in $F(\sqrt{-1})$ and $F(\sqrt{-3})$}\label{sec:orders-in-K1-K3}

First, assume that $K=F(\sqrt{-1})$. Then by
\cite[Proposition~3.3]{vigneras:ens} (see also \cite{MR1544516}), we have 
\begin{equation}
  \label{eq:102}
  h(K)=\frac{1}{2}Q_{K/F}h(d)h(-d),
\end{equation}
where the Hasse unit index $Q_{K/F}=2$ if $2\varepsilon\in F^{\times 2}$,
and $Q_{K/F}=1$ otherwise. 

Write
$\grp$ for the unique dyadic prime of $F$ if $2$ is ramified in $F$ (i.e.~$d\not\equiv 1\pmod{4}$). If further $d\equiv
3\pmod{4}$, then we give the order $B$ in (\ref{eq:53}) a more
specialized notation:
\begin{equation}
  \label{eq:97}
B_{1,2}:=O_F+\grp O_K=\Z[\sqrt{-1},
\alpha_d],\qquad \text{where}\quad \alpha_d=(1+\sqrt{-1})(1+\sqrt{d})/2.
\end{equation}
The $O_F$-orders $B\subseteq O_K$ with $w(B)>1$
are summarized in Table~\ref{tab:orders-K1}.


\begin{lem}\label{lem:OF-eta-K1}
  Suppose that there exists $\vartheta\in F^\times$ such that
  $\varepsilon=2\vartheta^2$. Set
$\eta:=\vartheta(1+\sqrt{-1})\in K$ so that $\eta^2=\varepsilon\sqrt{-1}$. If
$d\equiv 2\pmod{4}$, then $O_F[\eta]=O_K$; if  
$d\equiv 3\pmod{4}$, then $O_F[\eta]=B_{1,2}$. 
\end{lem}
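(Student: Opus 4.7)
The plan is to prove both halves of the lemma by comparing the discriminant of $O_F[\eta]$ with $\grd(K/F)$, computed via the conductor-discriminant formula for the biquadratic field $K=\Q(\sqrt{-1},\sqrt{d})$, and to handle the harder case $d\equiv 3\pmod 4$ by producing an explicit $O_F$-generator of $O_K$.

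First I would verify that $\eta$ is integral and compute its discriminant. Because $(2\vartheta)^2=4\vartheta^2=2\varepsilon\in O_F$, the element $2\vartheta$ lies in $O_F$, and the minimal polynomial of $\eta$ over $F$ is $p(x)=x^2-2\vartheta\,x+\varepsilon\in O_F[x]$, showing $\eta\in O_K$ and $O_F[\eta]=O_F+O_F\eta$. The discriminant of $p$ equals $(2\vartheta)^2-4\varepsilon=-2\varepsilon$, whose $O_F$-ideal is $(2)=\grp^2$ (where $\grp$ is the unique dyadic prime). So $\grd(O_F[\eta]/O_F)=\grp^2$. For $\grd(K/F)$ I would apply the conductor-discriminant formula over $\Q$: $|\grd(K/\Q)|$ is the product of the discriminants of the three quadratic subfields $\Q(\sqrt{-1}),\ F,\ \Q(\sqrt{-d})$, giving $4\cdot 4d\cdot 4d=64d^2$ when $d\equiv 2\pmod 4$ and $4\cdot 4d\cdot d=16d^2$ when $d\equiv 3\pmod 4$. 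Combined with $|\grd(F/\Q)|=4d$ and the transitivity relation $\grd(K/\Q)=\Nm_{F/\Q}(\grd(K/F))\cdot\grd(F/\Q)^2$, this yields $\Nm_{F/\Q}(\grd(K/F))=4$ in the first case and $1$ in the second; since ramification in $K/F$ is confined to $\grp$ (the minimal polynomial of $\sqrt{-1}$ over $F$ has discriminant $-4$, a unit at nondyadic primes), we conclude $\grd(K/F)=\grp^2$ when $d\equiv 2\pmod 4$ and $\grd(K/F)=O_F$ when $d\equiv 3\pmod 4$. The first case is now immediate: $\grd(O_F[\eta]/O_F)=\grd(K/F)$ forces the inclusion $O_F[\eta]\subseteq O_K$ to be an equality.

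For $d\equiv 3\pmod 4$, the squared index $[O_K:O_F[\eta]]^2=\grp^2$ gives $[O_K:O_F[\eta]]=\grp$, and a direct computation with the $O_F$-free rank $2$ module $O_K$ shows $[O_K:O_F+\grp O_K]=\grp$ as well. So it suffices to prove $\eta\in B_{1,2}=O_F+\grp O_K$. I would identify an explicit $O_F$-generator of $O_K$: set $\mu:=(\sqrt{-1}+\sqrt{d})/2$; since $4\mid d+1$, its minimal polynomial over $F$ is $x^2-\sqrt{d}\,x+(d+1)/4\in O_F[x]$, with discriminant $-1\in O_F^\times$, confirming $O_K=O_F[\mu]$. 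Using $\sqrt{-1}=2\mu-\sqrt{d}$, rewrite
\[
\eta \;=\; \vartheta(1+\sqrt{-1}) \;=\; \vartheta(1-\sqrt{d})+2\vartheta\,\mu.
\]
Because $d$ is odd, $\sqrt{d}\equiv 1\pmod\grp$, so $\nu_\grp(1-\sqrt{d})\geq 1$; paired with $\nu_\grp(\vartheta)=-1$ (from $\vartheta^2=\varepsilon/2$) and the fact that $\vartheta$ is a unit at every nondyadic prime, this shows $\vartheta(1-\sqrt{d})\in O_F$. Moreover $2\vartheta$ locally generates $\grp$, so $2\vartheta\,\mu\in\grp O_K$. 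Hence $\eta\in O_F+\grp O_K=B_{1,2}$, and the index comparison gives $O_F[\eta]=B_{1,2}$.

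The main obstacle lies in this last step: discriminant bookkeeping alone pins down only the index of $O_F[\eta]$ in $O_K$, not the submodule itself, so one must recognize the correct generator $\mu=(\sqrt{-1}+\sqrt{d})/2$ of $O_K$ over $O_F$ (motivated by the familiar fact that $(1+\sqrt{-d})/2$ generates $O_{\Q(\sqrt{-d})}$ over $\Z$ when $d\equiv 3\pmod 4$) and then verify the valuation-theoretic identity that the coefficient $\vartheta(1-\sqrt{d})$ is actually an algebraic integer.
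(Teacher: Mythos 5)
Your proof is correct, but it follows a genuinely different route from the paper's. The paper handles $d\equiv 2\pmod 4$ by a congruence argument: it observes that $2\vartheta\equiv\sqrt{d}\pmod{2O_F}$ (both generate the dyadic prime, hence both represent the unique nontrivial nilpotent of $O_F/2O_F$), deduces $\eta\equiv(\sqrt{d}+\sqrt{-d})/2\pmod{O_F[\sqrt{-1}]}$, and then invokes a known integral basis for the biquadratic field $K$ (Marcus, Exercise~II.42(b)); for $d\equiv 3\pmod 4$ it simply defers to the proof of an external proposition in an earlier paper of the authors. You instead make everything quantitative: you pin down $\grd(O_F[\eta]/O_F)=\grp^2$ from the minimal polynomial $x^2-2\vartheta x+\varepsilon$, compute $\grd(K/F)$ via conductor--discriminant and transitivity (getting $\grp^2$ resp.\ $O_F$ in the two cases), so that the case $d\equiv 2\pmod 4$ falls out immediately from equality of discriminants, and the case $d\equiv 3\pmod 4$ reduces to the containment $\eta\in O_F+\grp O_K$, which you verify by writing $\eta=\vartheta(1-\sqrt{d})+2\vartheta\mu$ with $\mu=(\sqrt{-1}+\sqrt{d})/2$ a generator of $O_K$ over $O_F$ and checking $\grp$-adic valuations (note $(2\vartheta)=\grp$ as ideals and $\nu_\grp(1-\sqrt{d})\geq 1$). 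All of these steps check out, including the index count $[O_K:O_F+\grp O_K]=\grp$. What your approach buys is self-containedness: it avoids both the citation of Marcus's integral-basis exercise and the external reference for the $d\equiv 3\pmod 4$ case, at the cost of being somewhat longer; the paper's version is terser precisely because it leans on those references. One cosmetic remark: you call $O_K$ an ``$O_F$-free rank $2$ module'' before you have exhibited the generator $\mu$; freeness is not automatic over a non-principal $O_F$, but the index computation is local at $\grp$ (where $O_K$ is free) and in any case your identity $O_K=O_F[\mu]$ establishes freeness a posteriori, so nothing is lost.
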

\begin{proof}
Suppose first that $d\equiv 2\pmod{4}$.   Then
  $2\vartheta\equiv \sqrt{d}\pmod{2O_F}$ since both sides represent the
  unique nontrivial nilpotent element in $O_F/2O_F$. Hence
  $\vartheta(1+\sqrt{-1})\equiv (\sqrt{d}+\sqrt{-d})/2
  \pmod{O_F[\sqrt{-1}]}$.  By \cite[Exercise~II.42(b),
p.~51]{MR0457396}), we have $O_F[\eta]=O_K$. When $d\equiv 3\pmod{4}$, the same proof as that of
\cite[Proposition~3.3]{xue-yang-yu:num_inv} shows that
$O_F[\eta]=B_{1,2}$. 
\end{proof}


  \begin{table}[htbp]
\renewcommand{\arraystretch}{1.5}
\centering
\caption{$O_F$-orders $B$ with $w(B)>1$ in
  $K=F(\sqrt{-1})$.} \label{tab:orders-K1}
  \begin{tabular}{|c|>{$}c<{$}|>{$}c<{$}|>{$}c<{$}|>{$}c<{$}|}
\hline
   $d$ & B  &\grf_B & w(B)&h(B)/h(d)\\
\hline
$d\equiv 1\pmod{4}$  &O_K  & O_F & 2 & \frac{1}{2}h(-d)\\
\hline
 \multirow{2}{*}{$d\equiv 2\pmod{4}$} & O_K  &O_F
                        &2Q_{K/F} &  \frac{1}{2}Q_{K/F}h(-d)\\
\cline{2-5}
& O_F[\sqrt{-1}]  & \grp & 2 &     h(-d)\\
\hline

 \multirow{3}{*}{$d\equiv 3\pmod{4}$}& O_K & O_F & 2Q_{K/F} &  \frac{1}{2}Q_{K/F}h(-d)\\
\cline{2-5}
 & B_{1,2} & \grp & 2Q_{K/F} & \frac{1}{2}Q_{K/F}h(-d)\left(2-\Lsymb{\Q(\sqrt{-d})}{2}\right)\\
\cline{2-5}

& O_F[\sqrt{-1}] & 2O_F &2 & h(-d)\left(2-\Lsymb{\Q(\sqrt{-d})}{2}\right)\\
 \hline
  \end{tabular}
  \end{table}

Next, assume that $K=F(\sqrt{-3})$. Then by
\cite[Proposition~3.4]{vigneras:ens}, we have 
  \begin{equation}
    \label{eq:104}
    h(K)=\frac{1}{2}Q_{K/F}h(d)h(-3d),
  \end{equation}
  where the Hasse index $Q_{K/F}=2$ if $3\varepsilon\in F^{\times 2}$,
  and $Q_{K/F}=1$ otherwise. In particular, if $3\nmid d$, then
  $3\varepsilon\not\in F^{\times 2}$ and $Q_{K/F}=1$, in which case $O_K$ is the only
  $O_F$-order in $K$ with nontrivial reduced unit group. Suppose next
  that $3\mid d$. Clearly,   $w(B)\mid (3Q_{K/F})$ for every 
  $O_F$-order  $B$ in $K$. The $O_F$-orders $B$ with
  $3\mid w(B)$ are summarized in Table~\ref{tab:orders-K3}. If
  $3\varepsilon\not \in F^{\times 2}$, this exhausts all orders $B$ in
  $K$ with $w(B)>1$. If $3\varepsilon\in F^{\times 2}$ and $B\subset
  K$ is an $O_F$-order with $w(B)>1$ and $3\nmid w(B)$, then
  $K=F(\sqrt{-\varepsilon})$, $w(B)=2$ and $B\supseteq
  O_F[\sqrt{-\varepsilon}]$. Such orders are classified in
  Section~\ref{sec:orders-in-Ke}.

  \begin{table}[htbp]
\renewcommand{\arraystretch}{1.5}
\centering
\caption{$O_F$-orders $B$  in
  $K=F(\sqrt{-3})$ with $3\mid w(B)$.} \label{tab:orders-K3}
  \begin{tabular}{|c|>{$}c<{$}|>{$}c<{$}|>{$}c<{$}|>{$}c<{$}|}
\hline
   $d$ & B &\grf_B & w(B)&h(B)/h(d)\\
\hline
$3\nmid d$  &O_K  &O_F& 3 & \frac{1}{2}h(-3d)\\
\hline
 \multirow{2}{*}{$3\mid d$} & O_K & O_F  &3Q_{K/F} & \frac{1}{2}Q_{K/F}h(-d/3)\\
\cline{2-5}
& O_F[\zeta_6]  & \grq=(3,\sqrt{d}) & 3 &                                                                   \frac{1}{2}\left(3-\Lsymb{\Q(\sqrt{-d/3}}{3}\right)h(-d/3)\\
\hline
  \end{tabular}
  \end{table}

\subsection{Orders in $F(\sqrt{-\varepsilon})$.} \label{sec:orders-in-Ke}
Throughout this subsection, we assume that
$\Nm_{F/\Q}(\varepsilon)=1$ and denote $F(\sqrt{-\varepsilon})$ by
$L$. By \cite[Lemma~3]{MR0441914}, there exists a pair of square-free
positive integers $\{r, s\}$ such that $rs\in \{d, 4d\}$ and
$\{r\varepsilon, s\varepsilon\}\subset  F^{\times 2}$. Hence 
\begin{equation}
  \label{eq:167}
  L=\Q(\sqrt{d}, \sqrt{-r},
\sqrt{-s})=\Q(\sqrt{d}, \sqrt{-r})=\Q(\sqrt{d},
\sqrt{-s}). 
\end{equation}
In particular, $L/F$ is either ramified at every dyadic prime of $F$
or none. According to \cite[Proposition~3.1]{vigneras:ens}, $\{r,s\}$ can be obtained from  $\{\Tr_{F/\Q}(\varepsilon)\pm 2\}$ by stripping away all the perfect square factors, and
\begin{equation}
  \label{eq:166}
  h(L)=h(d)h(-r)h(-s). 
\end{equation}

If $3\varepsilon \in F^{\times
  2}$, then $3\mid d$, and $L=F(\sqrt{-3})$. In this case,
$O_L^\times/O_F^\times=\dangle{\tilde{\eta}}\simeq \zmod{6}$, where $\eta=
\varsigma(3+\sqrt{-3})/2\in O_L^\times$ with
$3\varsigma^2=\varepsilon$ (see Table~\ref{tab:rep-elements}). We will show that $O_F[\eta]=O_L$ in
Lemma~\ref{lem:ord-6-max-ord}. Note that $F(\sqrt{-3})/F$ is unramified
at every dyadic prime of $F$. The $O_F$-orders $B$ with $3\mid w(B)$
have been covered in Table~\ref{tab:orders-K3}, so we focus on those
with $2\mid w(B)$, or equivalently $B\supseteq
O_F[\sqrt{-\varepsilon}]$. By Lemma~\ref{lem:ord-6-max-ord} below, these two
classes of orders overlap only at $O_L$.

If  $3\varepsilon \not\in F^{\times
  2}$, then $\bmu(L)=\{\pm 1\}$, and 
$O_L^\times/O_F^\times$ is a cyclic group of order $2$ generated by
the image of $\sqrt{-\varepsilon}$, so any $O_F$-order
$B\subset O_L$ with $w(B)>1$ contains
$O_F[\sqrt{-\varepsilon}]$.






Let $\grf$ be the conductor of $O_F[\sqrt{-\varepsilon}]\subseteq
O_L$. 
Then by \cite[Proposition~III.5]{Serre_local}, 
\begin{align}
\grf^2\grd_{O_L/O_F}&=\grd_{O_F[\sqrt{-\varepsilon}]/O_F}=4O_F, \label{eq:32}
\\
\grd_{O_F[\sqrt{-\varepsilon}]/\Z}&=(\grd_{O_F/\Z})^2\Nm_{F/\Q}(4O_F)=
                                    \begin{cases}
                                      2^4d^2 \qquad &\text{if }
                                      d\equiv 1\pmod{4},\\
                                      2^8d^2 \qquad &\text{otherwise.}
                                    \end{cases}
 \label{eq:31}
\end{align}
In particular, $L/F$ is unramified at all finite \textit{nondyadic}
primes of $F$. By \cite[63:3]{o-meara-quad-forms}, $L/F$ is unramified
at a dyadic prime $\grp$
if and only if $-\varepsilon$ is a square in $(O_F/4O_F)_\grp$. 
If this is the case for one (or equivalently, all) $\grp$,  then $\grd_{O_L/O_F}=O_F$,
and hence
\begin{gather}
\grf=2O_F,\qquad
O_F[\sqrt{-\varepsilon}]=O_F+2O_L,  \quad \text{and} \label{eq:64}\\
h(O_F[\sqrt{-\varepsilon}])=
\begin{cases}
4h(L)\prod_{\grp
   \mid (2O_F)}\left(1-\frac{1}{\Nm(\grp)}\Lsymb{L}{\grp}\right)\quad &\text{if }
 3\varepsilon\not\in F^{\times 2},\\[7pt]
\left(2+\Lsymb{F}{2}\right)h(d)h(-d/3)  \quad &\text{if }
 3\varepsilon\in F^{\times 2}. \\
\end{cases}
\end{gather}
When $L/F$ is ramified,  it will be shown   (in Lemmas~\ref{lem:d=1mod4ram-max},\ \ref{lem:criterion-ram-d=3mod4} and \ref{lem:criterion-ram-d=2mod4}) that
\begin{equation}\label{eq:conductor}
    \grf=\begin{cases}
      (2,\sqrt{d}) \qquad &\text{if } d\equiv 2\pmod{4},\\
      O_F \qquad &\text{otherwise.}
    \end{cases}
\end{equation}

If $O_F[\sqrt{-\varepsilon}]$ is non-maximal, then
$O_F[\sqrt{-\varepsilon}]\subseteq O_F+\grp O_L$ for every dyadic prime
of $F$. When $\Lsymb{F}{2}\neq -1$, we have $O_F/\grp\simeq \F_2$, and
hence
\begin{equation}
  \label{eq:90}
h(O_F+\grp O_L)=
\begin{cases}
\left(2-\Lsymb{L}{\grp}\right)h(L)\quad &\text{if }
3\varepsilon\not\in F^{\times 2},\\[5pt]
h(d)h(-d/3)  \quad &\text{if }
3\varepsilon\in F^{\times 2}.
\end{cases}
\end{equation}
There is a unique $O_F$-order of the form $O_F+\grp O_L$ if $2$ is ramified
in $F$, and two such orders if $2$ splits in $F$. 


\begin{lem}\label{lem:ord-6-max-ord}
  Suppose that there exists $\varsigma\in F^\times$ such that
  $\varepsilon=3\varsigma^2$. Then $O_F[\eta]=O_L$, where  $\eta=\varsigma
  (3+\sqrt{-3})/2\in O_L^\times$. 
\end{lem}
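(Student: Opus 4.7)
The plan is to show that $\grd_{O_F[\eta]/O_F} = O_F$ and conclude via the tower formula for discriminants. The key computation is that $\eta$ satisfies the monic polynomial $P(x) = x^2 - 3\varsigma x + \varepsilon$ (as in Table~\ref{tab:rep-elements}, or directly from $\eta^2 = \varepsilon\zeta_6$). I will therefore need (a) to check that $3\varsigma \in O_F$, so that $\eta$ is integral over $O_F$, and (b) to compute the relative discriminant of $O_F[\eta]$ and show it is the unit ideal.

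For (a), I would argue that the relation $\varepsilon = 3\varsigma^2$ with $\varepsilon \in O_F^\times$ forces $v_\grp(3) = -2v_\grp(\varsigma)$ at every finite prime $\grp$ of $F$, so $v_\grp(3)$ must be even for all $\grp$. This rules out $3$ being split or inert in $F = \Q(\sqrt{d})$, leaving only the ramified case; hence $3 \mid d$. Denoting by $\grq$ the unique prime of $F$ above $3$, I then have $v_\grq(3) = 2$, $v_\grq(\varsigma) = -1$, and $\varsigma$ is a local unit at every other finite prime; thus $v_\grq(3\varsigma) = 1 \geq 0$ and $3\varsigma \in O_F$. Consequently $\eta \in O_L$, with $\{1, \eta\}$ a free $O_F$-basis of $O_F[\eta]$.

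For (b), the relative discriminant of the monic quadratic $P(x)$ is $(3\varsigma)^2 - 4\varepsilon = 9\varsigma^2 - 12\varsigma^2 = -\varepsilon$, so $\grd_{O_F[\eta]/O_F} = (-\varepsilon)O_F = O_F$ since $\varepsilon \in O_F^\times$. Applying the transitivity of discriminants (cf.~\cite[Section~III.5]{Serre_local}) to the inclusion $O_F[\eta] \subseteq O_L$, I obtain
\[
O_F \;=\; \grd_{O_F[\eta]/O_F} \;=\; \chi(O_L, O_F[\eta])^2 \cdot \grd_{O_L/O_F}.
\]
Both $\chi(O_L, O_F[\eta])$ and $\grd_{O_L/O_F}$ are integral $O_F$-ideals whose product is $O_F$, so each equals $O_F$; in particular $\chi(O_L, O_F[\eta]) = O_F$, i.e., $O_F[\eta] = O_L$. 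The only delicate step will be (a), translating the hypothesis $3\varepsilon \in F^{\times 2}$ into the ramification of $3$ in $F$; once $P(x)$ is seen to have $O_F$-coefficients, its discriminant being a unit collapses the argument in a single line.
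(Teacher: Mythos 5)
Your proof is correct, but it takes a different route from the paper's. The paper does not compute any discriminant for $O_F[\eta]$ directly: it observes that $O_F[\eta]$ contains both $O_F[\sqrt{-\varepsilon}]$ and $O_F[\zeta_6]$, whose conductors ($2O_F$ and the prime $\grq$ above $3$, respectively, by the earlier analysis and Table~\ref{tab:orders-K3}) are coprime, so the conductor of $O_F[\eta]$ divides their gcd and must be $O_F$. You instead compute the minimal polynomial $x^2-3\varsigma x+\varepsilon$ of $\eta$ and find that its discriminant $9\varsigma^2-4\varepsilon=-\varepsilon$ is a unit, which forces $\grf_{O_F[\eta]}^2\,\grd_{O_L/O_F}=O_F$ and hence $O_F[\eta]=O_L$; your preliminary verification that $3\mid d$, that $3$ is ramified, and that $3\varsigma\in O_F$ (so the polynomial has integral coefficients) is sound and matches what the paper records in Section~\ref{subsec:structure-CM-ext}, namely that $3\varsigma$ generates $\grq$. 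Your argument is more self-contained and arguably shorter at the point of use, since it needs only the trace and norm of $\eta$ from Table~\ref{tab:rep-elements}; the paper's argument is essentially free given that the conductors of the two suborders have already been determined for other purposes, and it avoids any discriminant computation. Both are complete proofs.
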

\begin{proof}
  Necessarily $3\mid d$ since $F$ is ramified at $3$. By
  Table~\ref{tab:orders-K3}, the conductor of $O_F[\zeta_6]$ coincides
  with the unique prime ideal $\grq$ of $O_F$ above $3$.  As $O_F[\eta]$ contains both
  $O_F[\sqrt{-\varepsilon}]$ and $O_F[\zeta_6]$, the conductor of
  $O_F[\eta]$ divides both $\grf=2O_F$ and $\grq$, so it
  must be $O_F$.   
 \end{proof}


 The rest of this subsection is devoted to a case-by-case proof of \eqref{eq:conductor} and working out the explicit criteria on $\varepsilon$ for $L/F$ to be ramified.



\begin{lem}\label{lem:fund-unit-d=1mod8}
Suppose that $d\equiv 1\pmod{8}$.
  Then $\varepsilon$ is of the form $a+b\sqrt{d}\in \Z[\sqrt{d}]$ with $a$ 
  odd and $b$ divisible by $4$. Moreover, if $a\equiv
 3\pmod{4}$, then 
$L/F$ is unramified at every dyadic prime of
 $F$; otherwise, $L/F$ is ramified at
  \emph{both} the dyadic primes of $F$. 
\end{lem}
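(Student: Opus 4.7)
The plan has two parts: first, pin down the shape of $\varepsilon$, and second, translate the general ramification criterion already cited in the paper into the explicit congruence condition on $a$.

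For the first part, I would work entirely in $\Z[\sqrt{d}]$ and exploit the norm equation $\Nm_{F/\Q}(\varepsilon)=1$. Writing a candidate $\varepsilon = (a+b\sqrt{d})/2$ with $a\equiv b\pmod 2$, the relation $a^2-db^2=4$ reduced modulo $8$ (using $d\equiv 1\pmod 8$) rules out $a,b$ both odd: $a^2-db^2\equiv 0\pmod 8$ while $4\not\equiv 0\pmod 8$. Hence $\varepsilon\in\Z[\sqrt d]$, so $a^2-db^2=1$. Reducing this modulo $8$ forces $b$ to be even, hence $a$ to be odd, and then reducing $a^2=1+4(b/2)^2 d$ modulo $16$ and using $d\equiv 1\pmod 8$ together with the fact that squares in $\Z/16\Z$ lie in $\{0,1,4,9\}$ rules out $b/2$ being odd; so $4\mid b$.

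For the second part I would apply the criterion already recalled in the paragraph preceding~\eqref{eq:64} (i.e.\ \cite[63:3]{o-meara-quad-forms}): $L/F$ is unramified at a dyadic prime $\grp$ if and only if $-\varepsilon$ is a square modulo $4O_{F_\grp}$. Since $d\equiv 1\pmod 8$, the rational prime $2$ splits in $F$, and both completions $F_\grp$ are isomorphic to $\Q_2$. Because $4\mid b$, under either embedding $F\hookrightarrow\Q_2$ we have $\varepsilon\equiv a\pmod{4\Z_2}$. The criterion therefore becomes: $L/F$ is unramified at $\grp$ iff $-a$ is a square in $\Z_2/4\Z_2$, equivalently $-a\equiv 1\pmod 4$, i.e.\ $a\equiv 3\pmod 4$. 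Since this condition is independent of $\grp$, the two dyadic primes of $F$ behave in the same way, giving the stated dichotomy.

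The only mildly delicate step is the modulo $16$ computation establishing $4\mid b$; the rest is a direct application of \cite[63:3]{o-meara-quad-forms} combined with the fact that $(\Z/4\Z)^\times/(\Z/4\Z)^{\times 2}$ is represented by $\{1,3\}$. Once these pieces are assembled, no further input is needed.
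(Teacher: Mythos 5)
Your proof is correct and follows essentially the same route as the paper's: reduce the norm equation $a^2-b^2d=1$ modulo a power of $2$ to get the congruences on $a$ and $b$, then apply \cite[63:3]{o-meara-quad-forms} at the two dyadic primes, where $\varepsilon\equiv a\pmod{4O_F}$ because $4\mid b$. The only differences are cosmetic: you derive $\varepsilon\in\Z[\sqrt{d}]$ directly from the norm equation instead of citing \cite[Lemma~4.1]{xue-yang-yu:num_inv}, and your modulo-$16$ step can be shortened, since $a$ odd already forces $b^2d\equiv 0\pmod 8$ and hence $4\mid b$ by reduction modulo $8$ alone.
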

\begin{proof}
  When $d\equiv 1\pmod{8}$, $O_F^\times=\Z[\sqrt{d}]^\times$ by
  \cite[Lemma~4.1]{xue-yang-yu:num_inv}. In particular,
  $\varepsilon\in \Z[\sqrt{d}]$, so we may write
  $\varepsilon=a+b\sqrt{d}$ with $a,b\in \bbN$ and $a^2-b^2d=1$. The
  first part of the lemma is obtained by taking modulo 8 on both sides
  of $a^2-b^2d=1$. The second part follows directly from
  \cite[63:3]{o-meara-quad-forms} by noting that $O_F/4O_F\simeq
  \zmod{4}\times \zmod{4}$. 
\end{proof}

\begin{lem}\label{lem:d=1mod4ram-max}
  Suppose that $d\equiv 1\pmod{4}$,  and $L/F$ is ramified at the dyadic
  primes of $F$. Then $O_L=O_F[\sqrt{-\varepsilon}]$. 
\end{lem}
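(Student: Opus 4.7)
The plan is purely local: by \eqref{eq:32}, showing $\grf = O_F$ is equivalent to $\grd_{O_L/O_F} = 4 O_F$, and since $-4\varepsilon \in O_F^\times$ away from $2$, the order $O_F[\sqrt{-\varepsilon}]$ is automatically maximal at all nondyadic primes. Because $d \equiv 1 \pmod{4}$, the prime $2$ is unramified in $F$, so at each dyadic prime $\grp$ the completion $F_\grp/\Q_2$ is unramified with uniformizer $2$ and $\grp^2 = 4 O_{F_\grp}$. It thus suffices to prove $O_{L_\grp} = O_{F_\grp}[\sqrt{-\varepsilon}]$ at each dyadic $\grp$.

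For this I would invoke the following standard local fact for $F_\grp/\Q_2$ unramified and a unit $u \in O_{F_\grp}^\times$: $F_\grp(\sqrt{u})/F_\grp$ is ramified if and only if $u$ is not a unit square in $O_{F_\grp}/4 O_{F_\grp}$ (which is \cite[63:3]{o-meara-quad-forms}), and in the ramified case $O_{F_\grp}[\sqrt{u}]$ is already the maximal order. For the second claim, I would take any $(a + b\sqrt{u})/2 \in O_{F_\grp(\sqrt{u})}$ with $a, b \in O_{F_\grp}$; integrality of its minimal polynomial $t^2 - at + (a^2 - b^2 u)/4$ is equivalent to $a^2 \equiv b^2 u \pmod{4 O_{F_\grp}}$. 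A case analysis modulo $\grp$ then shows: if both $a, b \in \grp$ the element already lies in $O_{F_\grp}[\sqrt{u}]$; if exactly one of $a, b$ is a unit the congruence fails (one side is a unit, the other vanishes modulo $4$); and if $a, b \in O_{F_\grp}^\times$, then $u \equiv (a b^{-1})^2 \pmod{4 O_{F_\grp}}$ is a unit square, contradicting the ramification hypothesis.

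Applying this local fact with $u = -\varepsilon$ at each dyadic prime $\grp$, using the global hypothesis that $L/F$ is ramified at every such $\grp$, yields $O_{L_\grp} = O_{F_\grp}[\sqrt{-\varepsilon}]$ and $\grd_{L_\grp/F_\grp} = \grp^2$. Assembling these across all dyadic primes gives $\grd_{O_L/O_F} = 4 O_F$, hence $\grf = O_F$ and $O_L = O_F[\sqrt{-\varepsilon}]$. The only step requiring genuine care is the parity case analysis in the local claim; everything else is direct bookkeeping via \eqref{eq:32}.
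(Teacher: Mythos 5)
Your proof is correct, but it takes a genuinely different route from the paper's. The paper argues globally: it invokes the presentation $L=\Q(\sqrt{d},\sqrt{-r},\sqrt{-s})$ with $rs=d$ from \eqref{eq:167} (using \cite[Lemma~3]{MR0441914} to get $rs=d$ rather than $4d$ when $d\equiv 1\pmod 4$), notes that ramification at the dyadic primes forces $r\equiv s\equiv 1\pmod 4$, computes $\grd_{O_L/\Z}=d\cdot(-4r)\cdot(-4s)=2^4d^2$ via the biquadratic discriminant formula, and matches this against $\grd_{O_F[\sqrt{-\varepsilon}]/\Z}=2^4d^2$ from \eqref{eq:31}. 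You instead localize at the dyadic primes, exploit that $2$ is a uniformizer there (since $d\equiv 1\pmod 4$), and prove by hand the local fact that for $F_\grp/\Q_2$ unramified and $u\in O_{F_\grp}^\times$ with $F_\grp(\sqrt{u})/F_\grp$ ramified, the order $O_{F_\grp}[\sqrt{u}]$ is already maximal; your parity case analysis on $a,b$ modulo $\grp$ is sound, and the third case correctly feeds the ramification hypothesis back through \cite[63:3]{o-meara-quad-forms}. (The one step you pass over silently --- that every element of the maximal order has the form $(a+b\sqrt{u})/2$ with $a,b\in O_{F_\grp}$ --- follows from $\grf^2\grd_{O_L/O_F}=4O_F$ forcing $\grf\mid 2O_F$, so it is harmless. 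In fact that same relation gives an even shorter finish: $\grf_\grp^2\grd_\grp=\grp^2$ admits only the factorizations $(\grf_\grp,\grd_\grp)=(O,\grp^2)$ or $(\grp,O)$, and the latter contradicts ramification.) What each approach buys: the paper's is shorter because the $\{r,s\}$ machinery is already set up and reused throughout Section~7.2, while yours is more self-contained --- it needs neither the biquadratic discriminant formula nor \cite{MR0441914} --- and the local lemma you prove is exactly the kind of statement that also underlies Lemmas~\ref{lem:criterion-ram-d=3mod4} and \ref{lem:criterion-ram-d=2mod4}, where, however, $2$ ramifies in $F$ and the conclusion genuinely changes ($\grf=(2,\sqrt d)$ when $d\equiv 2\pmod 4$), so your argument correctly isolates where the hypothesis $d\equiv 1\pmod 4$ enters.
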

\begin{proof}
Let $\{r, s\}$ be the pair of square-free positive integers in
(\ref{eq:167}).
By
\cite[Lemma~3]{MR0441914}, $rs=d$ since $d\equiv 1\pmod{4}$. Hence
  $L/F$ is ramified at the dyadic primes of $F$ if and only if $r\equiv
  1\pmod{4}$, in which case
\[\grd(O_L/\Z)=d\cdot
(-4r)\cdot(-4s)=2^4d^2=\grd(O_F[\sqrt{-\varepsilon}]/\Z)\]
by  \cite[Exercise~II.42(f), p.52]{MR0457396} and
(\ref{eq:31}). 
Therefore, $O_L=O_F[\sqrt{-\varepsilon}]$ if $L/F$ is ramified at
every dyadic prime of $F$. 
\end{proof}



Next, suppose that $d\equiv 5\pmod{8}$. It is a
classical problem of Eisenstein to characterize those $d$ such that
$\varepsilon\in \Z[\sqrt{d}]$.  If $\varepsilon\in \Z[\sqrt{d}]$, then
we write $\varepsilon=a+b\sqrt{d}$ as before, otherwise  write 
$\varepsilon=(a+b\sqrt{d})/2$ with both $a$ and $b$ odd. It has been
shown \cite{Stevenhagen, Alperin} that the number of $d$ is infinite in each
case. 

\begin{lem}\label{lem:CM-ext-fund-unit-d=5mod8}
  Suppose that $d\equiv 5\pmod{8}$.  Then $L=F(\sqrt{-\varepsilon})$
  is ramified at $\grp=2O_F$ if and only if one of the following
  conditions holds:
  \begin{enumerate}[(i)]
  \item $a\equiv
    1\pmod{4}$ and $4\mid b$ if $\varepsilon=a+b\sqrt{d}\in \Z[\sqrt{d}]$;
\item $a\equiv 3\pmod{4}$ if $\varepsilon=(a+b\sqrt{d})/2$ with
  both $a$ and $b$ odd. In this case, $d, a, b$ fall into one of the
  subcases listed below:
  \end{enumerate}
\begin{center}
\renewcommand{\arraystretch}{1.2}
  \begin{tabular}{|>{$}c<{$}|>{$}c<{$}|>{$}c<{$}|}
\hline
 d\pmod{16} & a\pmod{8} & b\pmod{8}\\
\hline
\multirow{2}{*}{$5$} & 3 & \pm 1\\
\cline{2-3}
& 7 & \pm 3\\
\hline
\multirow{2}{*}{$13$} & 7 & \pm 1\\
\cline{2-3}
&3 &\pm 3\\
\hline
  \end{tabular}
\end{center}

\end{lem}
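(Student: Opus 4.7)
My plan is to invoke the criterion \cite[63:3]{o-meara-quad-forms} already cited in the text: $L/F$ is unramified at $\grp=2O_F$ if and only if $-\varepsilon$ is a square modulo $4O_F$. Since $d\equiv 5\pmod{8}$, the prime $2$ is inert in $F$, so $O_F/4O_F$ is a ring of order $16$. Setting $\omega=(1+\sqrt{d})/2$ so that $O_F=\Z[\omega]$ and $\omega^2=\omega+(d-1)/4$, let $c\in\{1,3\}$ denote the residue $(d-1)/4\pmod{4}$, according as $d\equiv 5$ or $13\pmod{16}$. A direct computation gives
\[
(x+y\omega)^2\equiv (x^2+cy^2)+(y^2+2xy)\omega\pmod{4O_F}.
\]
Scanning $(x,y)\in(\Z/4\Z)^2$, the image of squaring is exactly the four residues $\{0,\ 1,\ c+\omega,\ (1+c)+3\omega\}$; equivalently, $u+v\omega\pmod{4O_F}$ is a square iff $(u,v)\pmod{4}$ is one of these four pairs.

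For case~(i), I would write $\varepsilon=a+b\sqrt{d}=(a-b)+2b\omega$, so $-\varepsilon\equiv(b-a)-2b\omega\pmod{4O_F}$. The norm equation $a^2-b^2d=1$ taken modulo $4$ forces $a$ odd and $b$ even, and reducing further modulo $8$ using $a^2\equiv 1\pmod 8$ rules out $b\equiv 2\pmod{4}$ (which would give $b^2d\equiv 4d\not\equiv 0\pmod 8$), yielding $4\mid b$. Consequently the $\omega$-coefficient of $-\varepsilon$ vanishes modulo $4$, and the list of squares above shows $-\varepsilon$ is a square iff $b-a\in\{0,1\}\pmod 4$, which (given $4\mid b$) simplifies to $a\equiv 3\pmod{4}$. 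Hence $L/F$ is ramified precisely when $a\equiv 1\pmod{4}$, establishing condition~(i).

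For case~(ii), I would write $\varepsilon=(a-b)/2+b\omega$ with $a,b$ both odd and $a^2-b^2d=4$, so $-\varepsilon\equiv(b-a)/2+(-b)\omega\pmod{4O_F}$ has odd $\omega$-coefficient. The squareness criterion now reads: $-\varepsilon$ is a square iff either $b\equiv 3\pmod{4}$ and $(b-a)/2\equiv c\pmod{4}$, or $b\equiv 1\pmod{4}$ and $(b-a)/2\equiv 1+c\pmod{4}$. Because $a,b$ are odd, $a^2,b^2\in\{1,9\}\pmod{16}$, and the constraint $a^2-b^2d\equiv 4\pmod{16}$ restricts the admissible pair $(a^2,b^2)\pmod{16}$ to exactly two combinations per value of $d\pmod{16}$. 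A direct case-by-case verification shows that within each combination the squareness criterion on $(b-a)/2\pmod{4}$ holds precisely when $a\equiv 1\pmod{4}$; thus $L/F$ is ramified iff $a\equiv 3\pmod{4}$, and extracting those $(a\pmod 8,\,b\pmod 8)$ with $a\equiv 3\pmod{4}$ from the admissible combinations yields exactly the four rows of the stated table. The main obstacle is purely organizational bookkeeping: enumerating the sixteen candidate pairs in case~(ii) and verifying the squareness criterion for each, with parallel tracking of residues modulo $8$ and modulo $16$.
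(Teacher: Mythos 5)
Your proof is correct, and for part (ii) it takes a genuinely different route from the paper's. For part (i) the two arguments essentially coincide: both reduce $a^2-b^2d=1$ modulo $8$ to get $4\mid b$ and both invoke the criterion of O'Meara 63:3; the paper just observes that $-1$ is not a square in $(O_F/4O_F)^\times\simeq \Z/3\Z\times(\Z/2\Z)^2$ rather than listing the squares, which is the same computation in disguise. For part (ii), however, you work entirely locally: you compute the full image of the squaring map on $O_F/4O_F=(\Z/4\Z)[\omega]$ and test $-\varepsilon\equiv (b-a)/2-b\omega$ against the list $\{0,1,c+\omega,(1+c)+3\omega\}$ (I checked your enumeration and the resulting case analysis; both the criterion ``ramified iff $a\equiv 3\pmod 4$'' and the four table rows come out right). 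The paper instead avoids any arithmetic in $O_F/4O_F$ by using the decomposition $L=\Q(\sqrt{d},\sqrt{-r},\sqrt{-s})$ with $rs=d$ and $r\varepsilon, s\varepsilon\in F^{\times 2}$ set up in Section~\ref{sec:orders-in-Ke}: writing $r\varepsilon=\bigl(\frac{x+y\sqrt{d}}{2}\bigr)^2$ with $x,y$ odd gives $ra=\frac{x^2+y^2d}{2}\equiv 3\pmod 4$, so the ramified case $r\equiv 1\pmod 4$ occurs exactly when $a\equiv 3\pmod 4$. The paper's route is shorter given the machinery already in place (Kubota's lemma on the pair $\{r,s\}$ and the ramification criterion from Lemma~\ref{lem:d=1mod4ram-max}); yours is more elementary and self-contained, at the price of the sixteen-case bookkeeping you acknowledge. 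Both arguments then extract the table in the same way, from $a^2-b^2d\equiv 4\pmod{16}$ with $a,b$ odd.
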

\begin{proof}

First suppose that $\varepsilon=a+b\sqrt{d}\in \Z[\sqrt{d}]$. Reducing
both sides of $a^2-b^2d=1$ modulo $8$, we find that $4\mid b$ and
$a$ is odd. If $a\equiv 3\pmod{4}$, then $-\varepsilon\equiv
1\pmod{4O_F}$, and hence $L/F$ is
unramified at $\grp=2O_F$. On the other hand, $(O_F/4O_F)^\times\simeq
\zmod{3}\times(\zmod{2})^2$. So the nontrivial element
$-1\in (O_F/4O_F)^\times$ of order $2$ cannot be a perfect square. Thus if $a\equiv
1\pmod{4}$, then $L/F$ is ramified at $\grp$.


Next, suppose that $\varepsilon=(a+b\sqrt{d})/2$ with both $a$ and $b$
odd. Let $\{r, s\}$ be as in (\ref{eq:167}) so that $\{r\varepsilon, s\varepsilon\}\subset
F^{\times 2}$. Recall that $rs=d$ as mentioned in 
Lemma~\ref{lem:d=1mod4ram-max}. 
Write $r\varepsilon=(\frac{x+y\sqrt{d}}{2})^2$ with both $x,y$
odd.  Then $ra=\frac{x^2+y^2d}{2}\equiv 3\pmod{4}$. Therefore $r\equiv
1\pmod{4}$ if and only if $a\equiv 3\pmod{4}$. Now suppose
further that $d\equiv 5\pmod{16}$ and $b\equiv \pm 1\pmod{8}$. Taking
modulo 16 on both sides of $a^2-b^2d=4$, we get $a\equiv
\pm 3\pmod{8}$. It follows that $L/F$ is ramified at $\grp$ if and
only if $a\equiv 3\pmod{8}$ in this case. The remaining
cases are treated similarly and hence omitted. 
\end{proof}



We now study the cases where $\Lsymb{F}{2}=0$, that is,
$d\not\equiv 1\pmod{4}$. Let $\grp$ be the unique dyadic prime of $F$.

\begin{lem}\label{lem:criterion-ram-d=3mod4}
  Suppose that $d\equiv 3\pmod{4}$. Write $\varepsilon=a+b\sqrt{d}$
  with $a,b\in \bbN$. If $a$ is even, then
  $L=F(\sqrt{-\varepsilon})$ is ramified at $\grp$,  and
  $O_L=O_F[\sqrt{-\varepsilon}]$; otherwise, 
  $L/F$ is unramified at all finite
  places of $F$.
\end{lem}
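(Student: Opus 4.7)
The plan is to verify ramification at each finite prime of $F$ separately, using the square-criterion at the dyadic prime $\grp$ and then computing the local conductor in the ramified case. Since $\varepsilon\in O_F^\times$, the element $-\varepsilon$ is a unit at every finite prime, so $L/F$ is automatically unramified away from $\grp$. Moreover, with $d\equiv 3\pmod 4$ one has $2O_F=\grp^2$, $\grp=(2,1+\sqrt{d})$, and uniformizer $\pi=1+\sqrt{d}$.

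Next I would unpack the norm equation $a^2-db^2=1$: reducing modulo $4$ and then modulo $8$ shows that exactly one of $a,b$ is even, with the sharper statement that $a$ odd forces $4\mid b$, while $a$ even forces $a\equiv 2\pmod 4$ and $b$ odd. By the criterion recalled just after \eqref{eq:64}, $L/F$ is unramified at $\grp$ iff $-\varepsilon$ is a square in $O_F/4O_F$. Every square $(p+q\sqrt{d})^2\equiv (p^2+dq^2)+2pq\sqrt{d}\pmod{4O_F}$ has even $\sqrt{d}$-coefficient, so when $a$ is even the odd $\sqrt{d}$-coefficient $-b$ of $-\varepsilon$ obstructs being a square, giving ramification at $\grp$. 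When $a$ is odd, $-\varepsilon\equiv -a\pmod{4O_F}$, and one checks directly that $-a\equiv 1^2\pmod 4$ for $a\equiv 3\pmod 4$ and $-a\equiv d\equiv(\sqrt{d})^2\pmod{4O_F}$ for $a\equiv 1\pmod 4$; hence $L/F$ is unramified at $\grp$, and therefore at all finite places.

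Finally, in the ramified case ($a$ even) I would show the conductor $\grf$ of $O_F[\sqrt{-\varepsilon}]$ in $O_L$ is trivial. From \eqref{eq:32}, $\grf^2\grd_{O_L/O_F}=4O_F=\grp^4$. Wild ramification at the dyadic prime $\grp$ forces $v_\grp(\grd_{O_L/O_F})\ge 2$, so $v_\grp(\grf)\le 1$; it suffices to rule out $\grf_\grp=\grp$. If this held, there would exist $\alpha=(c+d\sqrt{-\varepsilon})/\pi\in O_L$ with $c,d\in R:=O_{F_\grp}$ and, modulo $R[\sqrt{-\varepsilon}]$, at least one of $c,d$ a unit. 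The trace condition is automatic, while norm integrality demands $c^2+d^2\varepsilon\in 2R$. If $d\in R^\times$ and $c\in\grp$, then $c^2\in 2R$ would force $d^2\varepsilon\in 2R$, impossible. So $c\in R^\times$, and the condition becomes $(d/c)^2\equiv-\varepsilon^{-1}\pmod{2R}$. In $R/2R=\{0,1,\sqrt{d},1+\sqrt{d}\}$ the squares are $\{0,1\}$ (using $d$ odd and $(1+\sqrt{d})^2\in 2R$), so unit squares reduce to $\{1\}$; hence the equation has a solution iff $\varepsilon\equiv 1\pmod{2R}$. But for $a$ even and $b$ odd, $\varepsilon\equiv\sqrt{d}\not\equiv 1\pmod{2R}$, a contradiction. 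Thus $\grf_\grp=O_F$ locally and then globally (since $\grf\mid 4O_F$ is trivial away from $\grp$), giving $O_L=O_F[\sqrt{-\varepsilon}]$. The main obstacle is precisely this conductor computation, where one must identify $R/2R$ and its unit squares correctly; the rest is a routine unpacking of $a^2-db^2=1$.
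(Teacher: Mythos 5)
Your proof is correct, but it takes a genuinely different route from the paper's. The paper leans on the biquadratic structure of $L$: by \eqref{eq:167} and \cite[Lemma~3]{MR0441914}, $L=\Q(\sqrt{-r},\sqrt{-s})$ with $rs=4d$ exactly when $a$ is even; in that case $L/\Q$ is totally ramified above $2$ and the equality $O_L=O_F[\sqrt{-\varepsilon}]$ drops out of comparing the global discriminant $\grd_{O_L/\Z}=2^8d^2$ (from the standard description of the ring of integers of a biquadratic field) with \eqref{eq:31}; when $a$ is odd one has $rs=d$, and choosing $r\equiv 3\pmod{4}$ makes $L=F(\sqrt{-r})$ visibly unramified at $\grp$. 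You instead argue entirely locally: the parity of the $\sqrt{d}$-coefficient of squares in $O_F/4O_F$ replaces the $rs=d$ versus $rs=4d$ dichotomy, and a hands-on conductor computation at $\grp$ replaces the global discriminant comparison. That conductor argument is sound: the bound $v_\grp(\grf)\le 1$ from \eqref{eq:32} plus wild ramification, the reduction to $(e/c)^2\equiv-\varepsilon^{-1}\pmod{2R}$, the identification of the unit squares of $R/2R\simeq\F_2[x]/(x+1)^2$ as $\{1\}$, and the observation $\varepsilon\equiv\sqrt{d}\not\equiv 1\pmod{2R}$ are all correct. Your version is more self-contained, avoiding both the cited lemma on the fundamental unit and the biquadratic discriminant formula, at the cost of a longer computation. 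Two blemishes that do not affect the argument: the assertion that $a$ even forces $a\equiv 2\pmod{4}$ is false (e.g.\ $d=7$, $\varepsilon=8+3\sqrt{7}$), but only the parity of $b$ is ever used; and the letter $d$ is overloaded in the element $(c+d\sqrt{-\varepsilon})/\pi$.
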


\begin{proof}

Let $\{r, s\}$ be the pair of square-free positive integers as in
(\ref{eq:167}).  Clearly, $a$ and $b$ have opposite parity. 


  First, suppose that $a$ is even. By \cite[Lemma~3(b)]{MR0441914},
  $rs=4d$, and hence $L/F$ is ramified above $\grp$
  (In fact, $L/\Q$ is totally ramified above $2$). It follows from
  \cite[Exercise~II.42(f), p.~52]{MR0457396} that
  $\grd_{O_L/\Z}=(4d)\cdot 4(-2r)\cdot 4(-2s)=2^8d^2$. Comparing with
  \eqref{eq:31}, we get $O_L=O_F[\sqrt{-\varepsilon}]$.

  Next, suppose that $a$ is odd. Then $rs=d$ by
  \cite[Lemma~3(a)]{MR0441914}. Without lose of generality, we may
  assume that $r\equiv 3\pmod{4}$ so that $\Q(\sqrt{-r})$ is
  unramified at $2$. Therefore, $L=F(\sqrt{-r})$ is unramified at
  $\grp$, and thus unramified at all finite primes.  
\end{proof}

\begin{rem}
  By \cite[Theorem~1.1]{MR3157781}, we have $2\mid a$ when $d=p$ is a
  prime congruent to $3$ modulo $4$. Thus
  $O_F[\sqrt{-\varepsilon}]=O_L$ in this case (see
  \cite[Proposition~2.6]{xue-yang-yu:num_inv}).  Suppose that $d=pp'$
  is a product of primes with $p\equiv 3\pmod{4}$ and
  $p'\equiv 1\pmod{4}$. By \cite[Corollary~18.6]{Conner-Hurrelbrink},
  the 2-primary subgroup of $\Cl(O_F)$ is a nontrivial cyclic group in
  this case, and it is a cyclic group of order $2$ if either
  $\Lsymb{p}{p'}=-1$ or $\Lsymb{2}{p'}=-1$ (see \cite[Table~1]{Kim-Ryu-2012}). 
  If $\Lsymb{p}{p'}=-1$, then
  $2\mid a$. If $\Lsymb{p}{p'}=1$ and $\Lsymb{2}{p'}=-1$, then
  $2\nmid a$.  For the remaining case where $d=pp'$ with
  $\Lsymb{p}{p'}=\Lsymb{2}{p'}=1$, we merely provide a few examples to
  show its complexity: 
  \begin{center}
\renewcommand{\arraystretch}{1.3}
  \begin{tabular}{|>{$}c<{$}|>{$}c<{$}|>{$}c<{$}|>{$}c<{$}|>{$}c<{$}||>{$}c<{$}|>{$}c<{$}|>{$}c<{$}|>{$}c<{$}|>{$}c<{$}|}
\hline
    d&p&p'&\varepsilon&h(F)&d&p&p'&\varepsilon&h(F)\\ \hline
 323 & 19& 17 & 18+\sqrt{323}&4&  799 & 47 & 17& 424+15\sqrt{799}&8\\ \hline
    2419 & 59 & 41 & 2951+60\sqrt{2419}&12&  943 & 23 & 41 & 737+24\sqrt{943}  &4
    \\ 
\hline
  \end{tabular}
  \end{center}
\end{rem}
\begin{lem}\label{lem:criterion-ram-d=2mod4}
  Suppose that $d\equiv 2\pmod{4}$. Then $\varepsilon=a+b\sqrt{d}$ with
  $a$ odd and $b$ even, and $L/F$ is ramified at $\grp$ if and only
  if the following is true:
\[(a \bmod{4})\equiv 
\begin{cases}
  1 \qquad &\text{if } b\equiv 0\pmod{4},\\
  3\qquad &\text{if } b\equiv 2\pmod{4}.\\
\end{cases}
\]
If
  $L/F$ is ramified at $\grp$, then
  $O_F[\sqrt{-\varepsilon}]=O_F+\grp O_L$.
\end{lem}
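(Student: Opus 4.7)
The plan has two parts, matching the structure of the claim. First I would apply the local criterion (O'Meara~\cite[63:3]{o-meara-quad-forms}) that $L/F$ is unramified at the dyadic prime $\grp$ if and only if $-\varepsilon$ is a square in $O_F/4O_F$. Since $d\equiv 2\pmod{4}$ forces $O_F=\Z[\sqrt{d}]$, a direct enumeration of $(u+v\sqrt{d})^2\pmod 4$ over the sixteen pairs $(u,v)\in(\Z/4\Z)^2$ shows that the set of squares in $O_F/4O_F$ is $\{0,\,1,\,2,\,3+2\sqrt{d}\}$. The norm identity $a^2-b^2d=1$ together with $d\equiv 2\pmod 4$ forces $a$ odd and $b$ even, so $-\varepsilon\equiv -a-b\sqrt{d}\pmod{4}$; matching this against the set of squares case by case on $(a,b)\bmod 4$ yields the displayed criterion.

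For the ramified case, I would produce an explicit integral generator of $O_L$ at $\grp$ by setting
\[
\alpha:=\frac{\sqrt{d}\,(1+\sqrt{-\varepsilon})}{2}\in L,
\]
with $\Tr(\alpha)=\sqrt{d}$ and $\Nm(\alpha)=d(1+\varepsilon)/4$. Integrality of $\alpha$ reduces to checking $v_\grp(1+\varepsilon)\geq 2$. Writing $1+\varepsilon=(1+a)+b\sqrt{d}$, this valuation equals $2$ in the subcase $(a,b)\equiv(1,0)\pmod 4$ (controlled by $v_2(1+a)=1$, so $v_\grp(1+a)=2$, while $v_\grp(b\sqrt{d})\geq 5$) and equals $3$ in the subcase $(a,b)\equiv(3,2)\pmod 4$ (controlled by $v_\grp(b\sqrt{d})=3$, while $v_\grp(1+a)\geq 4$). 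The decisive calculation is the discriminant identity
\[
\grd_{O_F[\alpha]/O_F}=(\Tr(\alpha)^2-4\Nm(\alpha))\,O_F=(d-d(1+\varepsilon))\,O_F=-d\varepsilon\,O_F=d\,O_F,
\]
whose $\grp$-adic part is $\grp^2 O_{F_\grp}$.

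To finish, I would localize at $\grp$. The relation $\grd_{O_F[\alpha]/O_F}=\chi(O_L,O_F[\alpha])^2\,\grd_{O_L/O_F}$, with both factors on the right being powers of $\grp$, combined with the ramification of $L/F$ at $\grp$ (which excludes $\grd_{O_L/O_F}$ from being a unit at $\grp$), leaves only the possibility $O_{F_\grp}[\alpha]=(O_L)_\grp$ and $\grd_{O_L/O_F}=\grp^2$ at $\grp$. Combined with \eqref{eq:32}, this forces the local conductor $\grf_\grp=\grp$; since $L/F$ is unramified at every non-dyadic prime (as the relative discriminant $4O_F$ is supported only at $\grp$), one has $\grf=\grp$ globally, giving $O_F[\sqrt{-\varepsilon}]=O_F+\grp O_L$. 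The main obstacle is the tight valuation estimate in the first ramified subcase, where $v_\grp(1+\varepsilon)=2$ is only just enough to make $\alpha$ integral; the rest of the argument is a formal consequence of the discriminant formula.
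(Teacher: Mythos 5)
Your argument is correct, but it follows a genuinely different route from the paper's. For the ramification criterion, the paper works globally: it invokes the decomposition $rs=d$ with $\{r\varepsilon,s\varepsilon\}\subset F^{\times 2}$ from Kubota's lemma, observes that $L=F(\sqrt{-r})$ is ramified at $\grp$ exactly when $r\equiv 1\pmod 4$, and then transfers this condition to $(a,b)\bmod 4$ by expanding $r\varepsilon=(x+y\sqrt{d})^2$. You instead apply O'Meara's criterion directly and enumerate the squares of $O_F/4O_F=\Z[\sqrt{d}]/4$, which is an entirely local computation; both give the same case table. For the second assertion, the paper shows $O_F[\sqrt{-\varepsilon}]\subseteq\Z[\sqrt{-r},\sqrt{-s}]$, matches discriminants over $\Z$, and quotes Fr\"ohlich--Taylor for the index $[O_L:\Z[\sqrt{-r},\sqrt{-s}]]=2$; you instead exhibit the explicit local generator $\alpha=\sqrt{d}(1+\sqrt{-\varepsilon})/2$, verify its integrality via the valuation of $1+\varepsilon$, and read off $\grf=\grp$ from $\grd_{O_F[\alpha]/O_F}=dO_F$ together with \eqref{eq:32}. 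Your version is more self-contained (it avoids the genus-theoretic input on $\{r,s\}$ and the biquadratic field arithmetic), whereas the paper's version produces the identification $L=F(\sqrt{-r})$ as a byproduct, which it needs elsewhere (e.g.\ for the class number formula \eqref{eq:166}). Two small points you should make explicit: the passage from ``$-\varepsilon$ is a square of a \emph{local} unit mod $4O_{F_\grp}$'' to your enumeration over global elements of $\Z[\sqrt{d}]$ (harmless, since $x\equiv x'\pmod{4}$ implies $x^2\equiv x'^2\pmod 4$ here and $O_F/4O_F\simeq O_{F_\grp}/4O_{F_\grp}$), and the final step that an order of conductor $\chi(O_L,B)=\grp$ containing $O_F$ must equal $O_F+\grp O_L$ (true because $O_F+\grp O_L$ is the unique $O_F$-submodule of $O_L$ of index $\grp$ containing $O_F$, but it deserves a sentence).
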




\begin{proof}
  The parity of $a$ and $b$ follows directly from
  $\Nm_{F/\Q}(\varepsilon)=a^2-b^2d=1$.  Let $\{r, s\}$ be as in
  (\ref{eq:167}).  By \cite[Lemma~3(a)]{MR0441914}, $rs=d$.  Since
  $d\equiv 2 \pmod{4}$, we assume that $r$ is odd and $s$ is
  even. Then $L/F$ is
  ramified at $\grp$ if and only if $r\equiv 1\pmod{4}$.

Write
  $r\varepsilon=(x+y\sqrt{d})^2$ with $x, y\in \bbN$. Then
\[ra=x^2+dy^2\quad \text{and}\quad rb=2xy.\]
Necessarily, $x$ is odd.  If
$b\equiv 2 \pmod{4}$, then $y$ is odd, and hence $ra\equiv 3\pmod{4}$.
Thus $r\equiv 1\pmod{4}$ if and only if  $a\equiv 3\pmod{4}$. If
 $b\equiv 0\pmod{4}$, then $y$ is even, and hence
 $ra\equiv 1\pmod{4}$. Thus $r\equiv 1\pmod{4}$ if and only if  $a\equiv 1\pmod{4}$. 

If $L/F$ is unramified at $\grp$, then $O_F[\sqrt{-\varepsilon}]=O_F+2O_L$ by (\ref{eq:64}).
  Suppose that $r\equiv 1\pmod{4}$ so that  $L/F$ is ramified at
  $\grp$. Then  
  \[\sqrt{-\varepsilon}=\frac{1}{r}\sqrt{-r}\sqrt{r\varepsilon}\in
  (\Q\sqrt{-r}+\Q\sqrt{-s})\cap O_L=\Z\sqrt{-r}+\Z\sqrt{-s}.\]
  Hence $O_F[\sqrt{-\varepsilon}]\subseteq \Z[\sqrt{-r}, \sqrt{-s}]$.
  One calculates that
  $\grd_{\Z[\sqrt{-r},
    \sqrt{-s}]/\Z}=2^8d^2=\grd_{O_F[\sqrt{-\varepsilon}]/\Z}$
  by \eqref{eq:31}.  Therefore, 
  $O_F[\sqrt{-\varepsilon}]=\Z[\sqrt{-r}, \sqrt{-s}]$, which has index
  $2$ in $O_L$ by \cite[Exercise~II.42(b),
  p.~51]{MR0457396}. We conclude that  $O_F[\sqrt{-\varepsilon}]=O_F+\grp
  O_L$. 
\end{proof}
\begin{rem}
  Suppose that $d=2p$ with $p$ prime. If $p\equiv 3\pmod{4}$, then
  $\Nm_{F/\Q}(\varepsilon)=1$ by \cite[(V.1.7)]{ANT-Frohlich-Taylor}.
  Necessarily, $r=p$ in this case, so $L/F$ is unramified above $\grp$
  by Lemma~\ref{lem:criterion-ram-d=2mod4}.  If $p\equiv 5 \pmod{8}$,
  then $\Nm_{F/\Q}(\varepsilon)=-1$ by
  \cite[Proposition~19.9]{Conner-Hurrelbrink}.  The sign of
  $\Nm_{F/\Q}(\varepsilon)$ for $p\equiv 1\pmod{8}$ is more
  complicated and is discussed in \cite[Section~24]{Conner-Hurrelbrink}. The
  following table lists the first few examples of square-free $d\in 2\bbN$
  with more than 2 distinct odd prime factors 
\begin{center}
\renewcommand{\arraystretch}{1.3}
\begin{tabular}{|>{$}c<{$}|>{$}c<{$}|>{$}c<{$}||>{$}c<{$}|>{$}c<{$}|>{$}c<{$}|}
\hline
  d & \varepsilon & r\equiv 1\bmod{4}&   d & \varepsilon & r\equiv
                                                           3\bmod{4}
  \\ \hline
  30&11+2\sqrt{30}&5&42 & 13+2\sqrt{42} & 7\\ \hline
66 & 65+8\sqrt{66} & 33 & 78 & 53+6\sqrt{78} & 3\\ \hline
70 & 251+30\sqrt{70} & 5 & 102 & 101+10\sqrt{102} & 51\\
\hline
\end{tabular}
\end{center}
\end{rem}

\section{Calculations for $F=\Q(\sqrt{p})$ and $H=H_{\infty_1, \infty_2}$}
\label{sec:d=p}

\numberwithin{thmcounter}{section}
Let $p$ be a prime number, $F=\Q(\sqrt{p})$, and
$H=H_{\infty_1, \infty_2}$ be the totally definite quaternion
$F$-algebra that splits at all finite places of $F$. We calculate
$h(G)=h(H,G)$ for every finite group $G$. By
\cite[Corollary~18.4]{Conner-Hurrelbrink}, $h(\Q(\sqrt{p}))$ is 
odd for every prime $p$. According to Proposition~\ref{1.1} (see also
Remark~\ref{rem:free-act-odd-hF} and \cite[Section~3]{xue-yu:type_no}), 
\begin{equation}
  \label{eq:91}
  h(G)=h(F)t(G) \quad  \text{for all } G. 
\end{equation}
Hence it is simpler to list all $t(G)$ instead. 
 The case that $p\equiv 1\pmod{4}$ with $p>5$ has
already been treated by Hashimoto
\cite{Hashimoto-twisted-tr-formula} using another method.  We focus on the cases that
$p\in \{2, 3, 5\}$ or $p\equiv 3\pmod{4}$.

\subsection{Case $p\in \{2, 3, 5\}$}
\label{case-p-2-3-5}
Note that $N_{F/\Q}(\varepsilon)=-1$ if $p=2$ or $5$. If
$p=3$, then $N_{F/\Q}(\varepsilon)=1$, $2\varepsilon\in F^{\times 2}$,
and $F(\sqrt{-1})=F(\sqrt{-3})$. The CM-extensions $K/F$ with $[O_K^\times:
O_F^\times]>1$ are classified in
\cite[Section~2.8]{xue-yang-yu:num_inv}. For a
nontrivial element $\tilde{u}\in
O_K^\times/O_F^\times$,  we have 
\begin{itemize}
\item $\ord(\tilde{u})\in \{2, 3, 4\}$ if $p=2$;
\item $\ord(\tilde{u})\in \{2, 3, 4, 6, 12\}$ if $p=3$;
\item $\ord(\tilde{u})\in \{2, 3, 5\}$ if $p=5$. 
\end{itemize}
The first four rows of Table~\ref{tab:rep-elements} hold for $p=2,3,5$ as
well. Thus our knowledge on minimal $D_2$ or $D_3$-orders
applies here. If $\ord(\tilde{u})=5$, then $p=5$ and
$F(\tilde{u})\simeq\Q(\zeta_5)$.

Let $H'$ be an arbitrary
totally definite quaternion $F$-algebra. We claim that
$H'=H_{\infty_1, \infty_2}$ if it contains an
$O_F$-order $\calO$ with noncyclic reduced unit group $\calO^\bs$. 
Indeed, we have 
\[\calO^\bs\in
\begin{cases}
  \{D_2, D_3, D_4, A_4, S_4\}\quad &\text{if } p=2;\\
  \{D_2, D_3, D_4, A_4, S_4, D_6, D_{12}\}\quad &\text{if } p=3;\\
 \{D_2, D_3, A_4, D_5, A_5\}\quad &\text{if } p=5.\\
\end{cases}
\]
First, suppose that $\calO^\bs\supseteq D_2$. Recall that a
minimal $D_2$-order has discriminant
$4O_F$.  Thus $H'$ splits at all nondyadic primes of
$F$. Since $2$ is
either inert or ramified in $F$ and the number of ramified places of $H'$ is even, $H'$ necessarily splits at the dyadic
prime of $F$ as well.  Similarly, 
$H'=H_{\infty_1, \infty_2}$ if $\calO^\bs\supseteq D_3$. This verifies
our claim in the cases $p=2,3$.  Lastly, if $\calO^\bs\simeq D_5$ or
$A_5$, then $\calO$ contains a minimal $D_5$-order, which implies (by
the proof of Proposition~\ref{prop:uniqueness-finit-subgp}) that
$H'=\{\Q(\zeta_5), -1\}$. A direct calculation shows that
$\{\Q(\zeta_5), -1\}\simeq H_{\infty_1,\infty_2}$.

Thanks to \cite{vigneras:ens} (see also
\cite[Theorem~1.3]{xue-yang-yu:ECNF}), we have 
\begin{equation}
  \label{eq:92}
  t(H)=h(H)=
  \begin{cases}
    1 \qquad &\text{if } p=2, 5;\\
    2 \qquad &\text{if } p=3. 
  \end{cases}
\end{equation}
Using the Magma Computational Algebra System \cite{magma}, one easily
checks that
\begin{center}
\begin{tabular}{|c|c|c|c|}
\hline
  $p$ &    $2$    &      $3$       &    $5$   \\ \hline
  $t(G)$ & $t(S_4)=1$   & $t(S_4)=t(D_{12})=1$ &   $t(A_5)=1$ \\
\hline
\end{tabular}
\end{center}
This can also be obtained by hand using the \emph{mass formula} (\ref{eq:113}), which we
leave to the interested readers.





\subsection{Case $p\equiv 3\pmod{4}$ and $p>3$.}

First, we write down $t(G)$ for $G$ noncyclic. 
Since $3$ is unramified in $F$,
we have $3\varepsilon \not\in F^{\times 2}$. In particular,
$t(D_6)=0$. On the other hand,
$2\varepsilon\in F^{\times 2}$ by \cite[Lemma~3, p.~91]{MR1344833} or
\cite[Lemma~3.2(1)]{MR3157781}.  Note that $H_{\infty_1,
  \infty_2}\simeq \qalg{-1}{-1}{F}$ since $2$ is ramified in $F$. 
It follows from the results of
Section~\ref{sec:maximal-orders} that 
\begin{equation}
  \label{eq:93}
t(S_4)=t(D_4)=1, \qquad t(A_4)=t(D_2^\dagger)=0. 
\end{equation}
The unique conjugacy class of maximal orders with reduced unit group $S_4$  (resp.~$D_4$)
is represented by $\O_{24}$ in (\ref{eq:74}) (resp. $\O_8=\O_4^\dagger$ in
(\ref{eq:152})). 
Write
$\varepsilon=a+b\sqrt{p}$ with $a,b \in\bbN$. Then $a$ is even and $b$ is odd by
\cite[Theorem~1.1(1)]{MR3157781}, and hence $\beth(\scrO_2^\ddagger)=2$ by
Proposition~\ref{prop:scroII-unified-proof}. Therefore,
$t(D_2^\ddagger)=0$ by Corollary~\ref{cor:scO2typeII}. 
For $p>3$,  $H_{\infty_1, \infty_2} \simeq \qalg{-1}{-3}{F}$ if and
only if $p\equiv 2\pmod{3}$. Thus 
\begin{equation}
  \label{eq:94}
t(D_3^\dagger)=\frac{1}{2}\left(1-\Lsymb{p}{3}\right). 
\end{equation}
If $p\equiv 1\pmod{3}$, then the fact that $2\varepsilon\in F^{\times 2}$
implies that $\varepsilon\equiv -1\pmod{3O_F}$. Therefore, $\qalg{-\varepsilon}{-3}{F}\simeq H_{\infty_1, \infty_2}$ for all
$p>3$ by Lemma~\ref{lem:alg-ramified-places-D3-II}.   It follows from Corollary~\ref{cor:D3typeII} that 
\begin{equation}
  \label{eq:95}
t(D_3^\ddagger)=\beth(\scrO_3^\ddagger)-t(S_4)-t(D_6)=\frac{1}{2}\left(1+\Lsymb{p}{3}\right). 
\end{equation}
Combining (\ref{eq:94}) and (\ref{eq:95}), we obtain
\begin{equation}
  \label{eq:96}
t(D_3)=t(D_3^\dagger)+t(D_3^\ddagger)=1. 
\end{equation}
We pick $\O_6$ to be one of the maximal orders in the following table
according to the conditions on $p$ so that $\dbr{\O_6}$ is
the unique member of $\Tp(H)$ with $\O_6^\bs\simeq D_3$:
\begin{center}
\renewcommand{\arraystretch}{1.3}
\begin{tabular}{|l@{}c|lc|c|}
\hline
\multicolumn{2}{|c|}{$p$} & \multicolumn{2}{c|}{$\O_6$} & $\O_6^\bs$ \\
\hline
$ p\equiv 11$& $\pmod{12}$&     $\O_6^\dagger$ &in (\ref{eq:101})
                                                & $D_3^\dagger$\\

\hline 
$p\equiv 7$&$ \pmod{24}$& $\O$  &in (\ref{eq:100}) & $D_3^\ddagger$\\
\hline
  $p\equiv 19$&$\pmod{24}$&     $\O'$    &in (\ref{eq:110}) &
                                                            $D_3^\ddagger$\\
\hline
\end{tabular}
\end{center}
In summary, we
 have 
\begin{equation}
  \label{eq:129}
\Tp^\natural(H)=\{ \dbr{\O_{24}}, \dbr{\O_8}, \dbr{\O_6}\}.
\end{equation}




Next, we calculate $t(C_n)$ for $n\in \{2,3,4\}$. Recall that $\scrB_n$ denotes  the finite set of
CM $O_F$-orders $B$ such that
$B^\times/O_F^\times\simeq C_n$,  and $\scrB:=\cup_{n>1}\scrB_n$. By the classification in
Section~\ref{sec:quadratic-orders}, the orders in $\scrB_n$ for $n\in
\{2,3,4\}$ is listed in the following table: 
\begin{center}
\renewcommand{\arraystretch}{1.3}
  \begin{tabular}{*{4}{|>{$}c<{$}}|}
\hline 
    n & 2 & 3 & 4\\
\hline
\scrB_n&O_F[\sqrt{-1}],
\O_F[\sqrt{-\varepsilon}] &  O_{F(\sqrt{-3})} & O_{F(\sqrt{-1})},
  B_{1,2}\\
\hline
  \end{tabular}
\end{center}
Here $B_{1,2}\subset F(\sqrt{-1})$ is the order defined in
(\ref{eq:97}), and $O_F[\sqrt{-\varepsilon}]$
coincides with the ring of integers of
$F(\sqrt{-\varepsilon})=F(\sqrt{-2})$ by
Lemma~\ref{lem:criterion-ram-d=3mod4}. For each $B\in \scrB_n$, we
set  \[t(C_n, B):=\#\{\dbr{\O'}\in \Tp(H)\mid \O'^\bs \simeq C_n,
  \text{ and } \Emb(B, \O')\neq \emptyset\}. \]
It is shown in \cite[Corollary~3.5]{xue-yu:type_no} (see also Remark~\ref{rem:free-act-odd-hF}) that 
\begin{equation}\label{eq:99}
 h(C_n, B)=h(F)t(C_n, B) \qquad \forall B\in \scrB_n. 
\end{equation}

The main tool to compute $h(C_n, B)$ (and in turn $t(C_n, B)$) is
equation (\ref{eq:42}). In the current setting, we have $\grd(H)=O_F$
and hence $\omega(H)=0$. Moreover, $\calN(\O')=F^\times\O'^\times$ for
every member $\dbr{\O'}\in \Tp^\natural(H)$. This can be checked
individually for each $\dbr{\O'}\in \Tp^\natural(H)$ (see
(\ref{eq:98}), (\ref{eq:55}), Proposition~\ref{prop:type-num-D3I} and
Section~\ref{sect:maximal-D3II-d=1mod3}), or for all of
$\Tp^\natural(H)$ at once by 
 applying
 \cite[Proposition~2.8]{xue-yu:type_no}. The equation
(\ref{eq:42}) simplifies as 
\begin{equation}
  \label{eq:117}
\sum_{\dbr{\O'}\in \Tp^\natural(H)}m(B, \O',
  \O'^\times)+ 2t(C_n,
B)=\frac{h(B)}{h(F)}. 
\end{equation}

It remains to compute $m(B, \O', \O'^\times)$ for every $B\in
\scrB$
and $\dbr{\O'}\in \Tp^\natural(H)$.  We apply Proposition~\ref{prop:explicit-num-opt-embed}
to obtain the following table

\begin{table}[htbp]
\renewcommand{\arraystretch}{1.3}
\caption{Values of $m(B, \O', \O'^\times)$ for $B\in
\scrB$
and $\dbr{\O'}\in \Tp^\natural(H)$}\label{tab:num-opt-embed}
  \begin{tabular}{*{6}{|>{$}c<{$}}|}
\hline 
 & O_F[-1] & O_F[\sqrt{-\varepsilon}] & O_{F(\sqrt{-3})} & B_{1,2} & O_{F(\sqrt{-1})}\\
\hline
\O_{24}& 0 & 1 & 1 & 1 & 0 \\
\hline 
\O_8 & 1 & 1 & 0 & 0 & 1\\
\hline 
\O_6& 1-\Lsymb{p}{3} & 1+\Lsymb{p}{3} & 1 & 0 & 0 \\
\hline
\end{tabular}
\end{table}
Thanks to Proposition~\ref{prop:explicit-num-opt-embed}, it is enough
to work out $B_C$ for each conjugacy class of maximal cyclic subgroups $C$ of
$\O'^\bs$. If $\abs{C}=4$, then $B_C\simeq B_{1,2}$ for
$\O'=\O_{24}$, and $B_C\simeq O_{F(\sqrt{-1})}$ for
$\O'=\O_8$. If $\abs{C}=2$ and $\O'=\O_{24}$, then $B_C\simeq
O_F[\sqrt{-\varepsilon}]$. We present $\O_8^\bs\simeq D_4$ as in
(\ref{eq:137}) 
 with
generators $\tilde u, \tilde\eta\in \O_8^\bs$ satisfying the
conditions of (\ref{eq:138}). Then $B_{\dangle{\tilde u}}\simeq
O_F[\sqrt{-1}]$ and $B_{\dangle{\tilde u\tilde\eta}}\simeq
O_F[\sqrt{-\varepsilon}]$. Lastly,  if $\abs{C}=2$ and
$\O'=\O_6$, then $B_C\simeq O_F[\sqrt{-1}]$ if $p\equiv 2\pmod{3}$,
and $B_C\simeq O_F[\sqrt{-\varepsilon}]$ if $p\equiv 1\pmod{3}$.

Now we are ready to write down $t(C_n)$ for each $n\in \{2,3,4\}$.   For simplicity, let $h(m):=h(\Q(\sqrt{m}))$ for any non-square
  $m\in \Z$.  
Combining (\ref{eq:117}) with Table~\ref{tab:num-opt-embed}, we obtain
\begin{align}
  t(C_4,O_{F(\sqrt{-1})})&=\frac{h(F(\sqrt{-1}))}{2h(F)}-\frac{1}{2}=\frac{1}{2}(h(-p)-1);\\
  t(C_4,
  B_{1,2})&=\frac{h(B_{1,2})}{2h(F)}-\frac{1}{2}=\frac{1}{2}\left(\left(2-\Lsymb{2}{p}\right)h(-p)-1\right). 
\end{align}
Here, we use  (\ref{eq:102}) to rewrite
$h(F(\sqrt{-1}))/h(F)$ (see also
\cite[(2.16)]{xue-yang-yu:num_inv}).  Then
\begin{equation}
  t(C_4)=t(C_4,O_{F(\sqrt{-1})})+t(C_4,B_{1,2})=\left(3-\Lsymb{2}{p}\right)\frac{h(-p)}{2}-1. 
\end{equation}

Similarly, we have 
\begin{align}
  t(C_3)=&t(C_3, O_{F(\sqrt{-3})})=\frac{h(-3p)}{4}-1;\\
  &t(C_2, O_F[\sqrt{-1}])=\left(2-\Lsymb{2}{p}\right)\frac{h(-p)}{2}+\frac{1}{2}\Lsymb{p}{3}-1;\\
  &t(C_2, O_F[\sqrt{-\varepsilon}])=\frac{h(-2p)}{2}-\frac{1}{2}\Lsymb{p}{3}-\frac{3}{2};
                   \\
 t(C_2)=&t(C_2, O_F[\sqrt{-1}])+t(C_2, O_F[\sqrt{-\varepsilon}])\\=&\left(2-\Lsymb{2}{p}\right)\frac{h(-p)}{2}+\frac{h(-2p)}{2}-\frac{5}{2}.   \notag
\end{align}

By the mass formula
(\ref{eq:113}), we have for every maximal order $\O$ in $H$, 
\begin{equation}
\Mass(\O)=\frac{1}{2}\zeta_F(-1)h(F). 
\end{equation}
Note that $\zeta_F(-1)>0$ by the Siegel's formula \cite[Table~2, p.~70]{Zagier-1976-zeta}. 
Combining (\ref{eq:114}) and  (\ref{eq:91}), we obtain 
\begin{equation}
 \begin{split}
t(C_1)&=\frac{\Mass(\O)}{h(F)}-\frac{t(S_4)}{24}-\frac{t(D_4)}{8}-\frac{t(D_3)}{6}-\frac{t(C_4)}{4}-\frac{t(C_3)}{3}-\frac{t(C_2)}{2}\\
     &=\frac{\zeta_F(-1)}{2}+\left(-7+3\Lsymb{2}{p}\right)\frac{h(-p)}{8}-\frac{h(-2p)}{4}-\frac{h(-3p)}{12} +\frac{3}{2}.      
  \end{split}
\end{equation}
As $t(G)=0$ for all $G\not\in \{S_4, D_4, D_3, C_4, C_3, C_2, C_1\}$, this concludes the
computation of $t(G)$ for all $G$.

\section{Superspecial abelian surfaces}
\label{sec:Sp}

We keep the notation of the previous section. In particular, $F=
\Q(\sqrt{p})$, where $p$ is a prime number, and $H=H_{\infty_1,\infty_2}$. In this section, we
give two applications of Theorems~\ref{thm:p=1mod4typenum} and 
\ref{thm:p=3mod4typenum} to superspecial abelian surfaces. 
The first one gives for each finite group $G$ an explicit formula for
the number of certain superspecial abelian surfaces with reduced
automorphism group $G$; this one is straightforward. 
For the second application we  
construct superspecial abelian surfaces $X$ over some field $K$
of characteristic $p$ with endomorphism algebra\footnote{For an abelian
  variety $X$ over a field $k$, we write $\End_k(X)$, or simply
  $\End(X)$ if the ground field $k$ is clear, 
  for the endomorphism ring of $X$ over $k$. For any field
  extension $K/k$, we write $\End_K(X\otimes_k K)$ or simply
  $\End(X\otimes_k K)$ for the endomorphism ring of $X\otimes_k K$
  (over $K$). Some authors denote the latter by
  $\End_K(X)$. We caution the reader that the notation $\End(X)$ is
  also used for the
  endomorphism ring of $X\otimes_k \bar k$ over $\bar k$ in the
  literature, where
   $\bar k$ is an algebraic closure of $k$.}
$\End^0(X):=\End_K(X)\otimes \Q \simeq F$, 
provided that $p\not \equiv 1 \pmod {24}$. 
Recall that an
abelian variety over a field $k$ of characteristic $p$ is said to be
\emph{superspecial} if it is isomorphic to a product of supersingular
elliptic curves over an algebraic closure $\bar k$ of $k$. 

\def\Isog{{\rm Isog}}
\def\Fp{{\F_p}}

\subsection{The first application}
\label{sec:Sp.1}
Fix a Weil $p$-number $\pi=\sqrt{p}$ and a maximal order $\O$ in $H$. Let $\Isog^{O_F}(\pi)$ denote
the set of $\Fp$-isomorphism classes of simple abelian varieties $X$ over
$\Fp$ with Frobenius endomorphism $\pi_X$ satisfying $\pi_X^2=p$ and
with endomorphism ring $\End(X)\supset O_F$. Any member $X$ in 
$\Isog^{O_F}(\pi)$ is necessarily a superspecial abelian surface.
Let 
\begin{equation}
  \label{eq:Sp.1}
  \Tp(\pi):=\{ \End(X) | X\in \Isog^{O_F}(\pi) \}/\simeq.
\end{equation}
By~\cite[Theorem 6.1.2]{xue-yang-yu:ECNF}, 
we have a natural bijection $\Cl(\bbO)\simeq \Isog^{O_F}(\pi)$. 
If the superspecial class $[X]$ corresponds to the ideal class $[I]$,
then $\End(X)\simeq \calO_l(I)$. Thus, one obtains a natural bijection
$\Tp(\pi)\simeq \Tp(H)$. In particular,
\begin{equation}
  \label{eq:Sp.2}
  h(\pi):=\# \Isog^{O_F}(\pi)=h(H), \quad t(\pi):=\#\Tp^{O_F}(\pi)=t(H). 
\end{equation}
For any finite group $G$, define
\begin{equation}
  \label{eq:Sp.3}
  h(\pi,G):=\# \{ X\in \Isog^{O_F}(\pi) | {\rm RAut}(X)\simeq G \}, 
\end{equation}
where ${\rm RAut}(X):=\Aut(X)/O_F^\times$ is the reduced automorphism
group of $X$. As the above correspondence preserves the automorphism
groups, one has $h(\pi,G)=h(G)$, which is also equal to $h(F)t(G)$ by
Proposition~\ref{1.1}. 
By Theorems~\ref{thm:p=1mod4typenum} and \ref{thm:p=3mod4typenum},  
we obtain explicit formulas for $h(\pi,G)$.

\begin{prop}\label{Sp.1}
  For any finite group $G$, we have $h(\pi,G)=h(F)t(G)$, where an explicit
  formula for each $t(G)$ is given by 
  Theorems~\ref{thm:p=1mod4typenum} and \ref{thm:p=3mod4typenum}. 
\end{prop}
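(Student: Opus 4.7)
The plan is to reduce everything to the refined type number formulas already established in Theorems~\ref{thm:p=1mod4typenum} and~\ref{thm:p=3mod4typenum} via the Deuring-type correspondence between superspecial abelian surfaces and right ideal classes of a fixed maximal order. Concretely, I would invoke the bijection $\Cl(\bbO) \xrightarrow{\sim} \Isog^{O_F}(\pi)$ of \cite[Theorem~6.1.2]{xue-yang-yu:ECNF} recalled in Section~\ref{sec:Sp.1}, under which a right ideal class $[I]$ corresponds to an isomorphism class $[X]$ with $\End(X)\simeq \calO_l(I)$. The first (and really only substantive) step is to verify that this isomorphism of rings induces an isomorphism of \emph{reduced} automorphism groups, i.e.\ that $\mathrm{RAut}(X) = \Aut(X)/O_F^\times \simeq \calO_l(I)^\times/O_F^\times = \calO_l(I)^\bs$; this is immediate from $\End(X)\simeq \calO_l(I)$ as $O_F$-algebras, since both quotients are taken by the image of $O_F^\times$.

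Given this, the set parametrizing $\Isog^{O_F}(\pi)$-classes $[X]$ with $\mathrm{RAut}(X)\simeq G$ is in bijection with $\Cl(\bbO,G)$ as defined in \eqref{eq:105}, so by definition
\begin{equation*}
h(\pi,G) = \abs{\Cl(\bbO,G)} = h(H,G) = h(G).
\end{equation*}
Now $H = H_{\infty_1,\infty_2}$ splits at every finite place of $F$, and $h(F) = h(\Q(\sqrt{p}))$ is odd for every prime $p$ by \cite[Corollary~18.4]{Conner-Hurrelbrink}. Both hypotheses of Proposition~\ref{1.1} are therefore satisfied, yielding $h(G) = h(F)\,t(G)$.

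Combining the two displays gives $h(\pi,G) = h(F)\,t(G)$. The explicit formulas for $t(G)$ are then read off from Theorem~\ref{thm:p=1mod4typenum} when $p\equiv 1\pmod 4$ and $p>5$, from Theorem~\ref{thm:p=3mod4typenum} when $p\equiv 3\pmod 4$ and $p>5$, and from the tabulated values in Theorem~\ref{thm:p=3mod4typenum} for $p\in\{2,3,5\}$ (all other groups $G$ contributing $t(G)=0$). There is no real obstacle here: the entire argument is a bookkeeping translation from abelian surfaces to quaternion ideal classes, made possible by the fact that the correspondence is functorial enough to match automorphisms on both sides. The serious content has already been done in proving Proposition~\ref{1.1} and in the case-by-case determination culminating in Theorems~\ref{thm:p=1mod4typenum} and~\ref{thm:p=3mod4typenum}.
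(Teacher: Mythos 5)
Your argument is correct and coincides with the paper's own: both rest on the bijection $\Cl(\bbO)\simeq \Isog^{O_F}(\pi)$ from \cite[Theorem~6.1.2]{xue-yang-yu:ECNF}, the fact that it matches $\End(X)$ with $\calO_l(I)$ (hence reduced automorphism groups with reduced unit groups), and Proposition~\ref{1.1} applied to $H_{\infty_1,\infty_2}$ over $\Q(\sqrt{p})$, whose class number is odd. No gaps.
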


\subsection{Pop's result on embedding problems}
\label{sec:Sp.2}
We state a main result of F.~Pop on embedding problems for large
fields in \cite{Pop-1996}. Let $k$ be any field. Let
$\Gamma_k:=\Gal(k_s/k)$ denote the absolute Galois group of $k$, where $k_s$ is
a separable closure of $k$. An \emph{embedding problem} (EP) for $k$
is a diagram of surjective morphisms of profinite groups
$(\gamma:\Gamma_k\twoheadrightarrow A,\ \alpha:B\twoheadrightarrow A)$. An EP is said to be
\emph{finite} if the profinite group $B$ is finite 
(hence $A$ is also finite); it is
called \emph{split} if the homomorphism $\alpha$ has a section. 
We write $k_{\rm EP}$
for the fixed subfield of $\ker \gamma$. A solution of an EP is a
homomorphism of profinite groups $\beta:\Gamma_k\to B$ such that
$\alpha \beta=\gamma$; it is called a \emph{proper} solution if
$\beta$ is surjective. 

Let $K=k(t)$, where $t$ is a variable. We fix a separable closure
$K_s$ of $K$ which contains $k_s$, and let $\pi:\Gamma_K\to \Gamma_k$
be the canonical projection. To each EP=$(\gamma,\alpha)$ for $k$ one
associates an ${\rm EP}_K:=(\gamma \pi,\alpha)$ for $K$. If $\beta$ is
a solution of ${\rm EP}_K$, define
\[ K_{\beta}:=K_s^{\ker \beta}, \quad k_\beta:=K_\beta\cap k_s.\]
A (proper) \emph{regular} solution of an EP is a (proper) solution
$\beta$ of ${\rm EP}_K$ such that $k_\beta=k_{\rm EP}$. 

The regular inverse Galois problem for $k$ asks whether for a given
finite group $G$, there exists a regular finite Galois extension
$L/k(t)$ (regularity means that $L\cap k_s=k$ in a separable closure 
$k(t)_s$ of $k(t)$) with Galois
group $\Gal(L/k(t))\simeq G$. 
If $A=\{1\}$ and $B=G$ is finite, 
then a proper solution for an EP is precisely a solution for an
inverse Galois problem, and 
a proper regular solution for an EP is precisely a
solution for a regular inverse Galois problem. 

\begin{defn}\label{Sp.2}
  A field $k$ is said to be \emph{large} if for any smooth curve $C$
  over $k$, we have implication
\[ C(k)\neq \emptyset \implies \#C(k)=\infty. \]
\end{defn}

\begin{thm}[{\cite[Main Theorem A]{Pop-1996}}]\label{Sp.3}
  Assume that $k$ is large. Then every finite split EP for $k$ has
  proper regular solutions. In particular, every finite group $G$ is
  regularly realizable as a Galois group over $k(t)$.
\end{thm}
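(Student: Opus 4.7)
The plan is to reduce to the geometric case, then exploit largeness. Since the problem $(\gamma,\alpha)$ splits, we may write $B=N\rtimes A$ with $N=\ker(\alpha)$. Pulling back along $\pi:\Gamma_K\to\Gamma_k$ and using a splitting, it suffices to construct a regular Galois extension $L/k_{\rm EP}(t)$ with Galois group $N$, carrying a compatible $A$-action that recovers the given $A$-action (so that $L\cdot k_s/K$ has group $B$). By induction on $|N|$, we may reduce further to the case where $N$ has no proper nontrivial $A$-stable normal subgroup (in particular to the case $N$ is a minimal normal subgroup).

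Next I would construct the desired $N$-cover of $\mathbb{P}^1_{k_{\rm EP}}$ geometrically. Over an algebraic closure, Riemann's existence theorem provides an enormous supply of $N$-covers of $\mathbb{P}^1$ with prescribed branch locus and monodromy; the question is whether one can choose such a cover defined over $k_{\rm EP}$ and equivariant for the given $A$-action. These covers are parametrized by an appropriate Hurwitz space (or a quotient thereof) $\mathcal{H}$ which is a $k_{\rm EP}$-variety, smooth and geometrically irreducible on a suitable open part. A $k_{\rm EP}$-rational point on this open part produces exactly the cover we need. This is the step where largeness is essential: by Definition~\ref{Sp.2}, to produce such a point it suffices to exhibit a smooth curve inside $\mathcal{H}$ with at least one $k_{\rm EP}$-rational point, and then infinitely many appear automatically; this flexibility lets one simultaneously arrange the descent data and avoid degeneration.

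The concrete device I would use to feed a curve into $\mathcal{H}$ is a patching or formal-algebraic construction: build the cover locally around each branch point (where one has great freedom since the local data can be realized after a small extension), and then glue. Largeness lets one choose the gluing parameters to lie in $k_{\rm EP}$ so that the glued cover descends. A similar idea goes back to Harbater's patching, and in Pop's setting one replaces formal/rigid patching by an algebraic version adapted to large fields — this is what allows the statement to apply to every large $k$, not just to complete ones.

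The main obstacle will be ensuring that the glued object is genuinely an $N$-cover with the correct $A$-action, i.e.\ that the Galois group of the resulting $L/k_{\rm EP}(t)$ is not a proper subgroup of $N\rtimes A$ and that $L\cap k_s=k_{\rm EP}$ (regularity). Controlling the full Galois group typically requires choosing the local branch data so that the inertia generators together generate $N$, and the regularity is where the minimality reduction on $N$ together with a careful choice of rational specialization (again using largeness to avoid a thin set of bad parameters) becomes indispensable. Once these technicalities are dispatched, the "in particular" statement about the regular inverse Galois problem follows by taking $A=\{1\}$ and $B=G$.
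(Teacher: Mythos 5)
This statement is not proved in the paper at all: it is Pop's Main Theorem A, imported verbatim from \cite{Pop-1996} and used as a black box in Section~\ref{sec:Sp}. So there is no in-paper argument to measure your sketch against; the only question is whether your outline would actually constitute a proof of Pop's theorem, and it would not.

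The genuine gap is at the point you flag as ``this is the step where largeness is essential.'' The definition of largeness used here (Definition~\ref{Sp.2}) only says that a smooth curve with \emph{one} $k$-rational point has infinitely many; it produces no rational point from nothing. Your plan is to find a $k_{\rm EP}$-point of a Hurwitz-type parameter space $\mathcal{H}$ by ``exhibiting a smooth curve inside $\mathcal{H}$ with at least one $k_{\rm EP}$-rational point,'' but producing that first point is precisely the hard content of the theorem, and your sketch never does it. In Pop's argument the first point comes from elsewhere: one uses the equivalent characterization that a large field $k$ is existentially closed in $k((t))$ (more generally in the function field of any smooth $k$-curve with a $k$-point), carries out the construction of the $B$-cover over the \emph{complete} field $k((t))$ by formal/rigid (or Pop's algebraic) patching --- where completeness, not largeness, is what makes the gluing work --- and only then transfers the resulting $k((t))$-point of the parameter space back down to a $k$-point via existential closedness. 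Your sketch instead asks largeness to do the gluing itself (``largeness lets one choose the gluing parameters to lie in $k_{\rm EP}$''), which it cannot; and the remaining issues you defer (surjectivity onto $N\rtimes A$, regularity, compatibility of the $A$-action with the descent datum) are exactly the technical core of \cite{Pop-1996}, not dispatchable afterthoughts. The final reduction of the ``in particular'' clause to the case $A=\{1\}$, $B=G$ is correct and matches how the paper uses the theorem.
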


The proof in \cite{Pop-1996} also shows that 
that there are infinitely many solutions in Theorem~\ref{Sp.3}. 
 
\subsection{The second application}
\label{sec:Sp.3}

Let $X_0$ be an abelian variety over any field $K$. It is well-known
that the Galois cohomology $H^1(\Gamma_K, \Aut(X_0\otimes_K {K_s}))$
classifies all $K$-forms of $X_0/K$ up to $K$-isomorphism. Any class
in $H^1(\Gamma_K, \Aut(X_0\otimes_K {K_s}))$ is represented by a 1-cocycle
$\xi=(\xi_\sigma)\in Z^1(\Gal(L/K), \Aut(X_0\otimes_K L))$ for some
finite Galois extension $L/K$.

\begin{lem}\label{Sp.4}
  Suppose $X_\xi/K$ is the abelian variety corresponding to a 1-cocycle
  $\xi=(\xi_\sigma)\in Z^1(\Gal(L/K), \Aut(X_0\otimes_K L))$. Then 
  \begin{equation}
    \label{eq:Sp.4}
    \End(X_\xi)\simeq \{y\in \End(X_0\otimes_K L)\,  |\, \xi_\sigma
  \sigma(y) \xi_\sigma^{-1}=y, \ \forall\, \sigma\in \Gal(L/K) \}. 
  \end{equation}
\end{lem}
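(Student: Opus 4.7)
The plan is to deduce Lemma~\ref{Sp.4} by Galois descent applied to the sheaf of endomorphisms. First I would recall the general principle: for an abelian variety $X/K$ and a finite Galois extension $L/K$, one has
\begin{equation*}
  \End_K(X) = \End_L(X\otimes_K L)^{\Gal(L/K)},
\end{equation*}
where $\Gal(L/K)$ acts on $\End_L(X\otimes_K L)$ through its natural action on $X\otimes_K L$ (a morphism $f$ is sent to $(\id\otimes\sigma)\circ f\circ (\id\otimes\sigma)^{-1}$). This is just descent: $\End_L(X\otimes_K L)$ is $\End_K(X)\otimes_K L$ viewed as a constant sheaf, and $K$-rational points are the Galois-fixed points.

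Next I would unpack the twist. By definition, the form $X_\xi/K$ associated with the $1$-cocycle $(\xi_\sigma)\in Z^1(\Gal(L/K),\Aut(X_0\otimes_K L))$ comes equipped with a fixed $L$-isomorphism
\begin{equation*}
\phi\colon X_\xi\otimes_K L\xrightarrow{\ \sim\ } X_0\otimes_K L
\end{equation*}
whose failure to descend is prescribed by $\xi$: for every $\sigma\in\Gal(L/K)$,
\begin{equation*}
  \phi\circ \sigma_{X_\xi} = \xi_\sigma^{-1}\circ \sigma_{X_0}\circ \phi,
\end{equation*}
where $\sigma_{X_\xi}$ and $\sigma_{X_0}$ denote the canonical Galois actions on $X_\xi\otimes_K L$ and $X_0\otimes_K L$ respectively. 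Transporting the canonical Galois action on $\End_L(X_\xi\otimes_K L)$ along $\phi$, an endomorphism $y\in\End_L(X_0\otimes_K L)$ is sent by $\sigma$ to $\xi_\sigma^{-1}\cdot\sigma(y)\cdot\xi_\sigma$ (up to the harmless convention of inverting $\xi$, which only substitutes the cocycle by its inverse and does not change the form). Hence
\begin{equation*}
  \End_K(X_\xi)=\{y\in \End_L(X_0\otimes_K L)\mid \xi_\sigma\,\sigma(y)\,\xi_\sigma^{-1}=y,\ \forall\sigma\in\Gal(L/K)\}.
\end{equation*}

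The key technical step to justify carefully is that the cocycle condition on $\xi$ ensures the formula $y\mapsto \xi_\sigma\sigma(y)\xi_\sigma^{-1}$ actually defines a group action of $\Gal(L/K)$ on $\End_L(X_0\otimes_K L)$; this is a direct check using $\xi_{\sigma\tau}=\xi_\sigma\,\sigma(\xi_\tau)$. The only subtle point is the compatibility of $\phi$ with the Galois structures, and showing that the descended endomorphism of $X_0\otimes_K L$ lifts to a genuine $K$-morphism of $X_\xi$; I expect this to be the main, though entirely formal, obstacle, and it is handled by the standard equivalence of categories between objects over $K$ and objects over $L$ equipped with descent data. Since $\End(X_0\otimes_K L)$ is unchanged if we replace $L$ by a larger Galois extension (any endomorphism is already defined over some finite separable extension), the formula in \eqref{eq:Sp.4} is independent of the choice of $L$ containing the field of definition of $\xi$.
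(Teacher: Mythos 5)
Your descent argument is correct, and it is the standard justification for this statement; the paper itself states Lemma~\ref{Sp.4} without proof, quoting it as the well-known description of the endomorphism ring of a twisted form, so there is no competing argument in the paper to compare against. The one point to watch is the inversion convention relating $\phi$ to $\xi$: to land exactly on the set $\{y \mid \xi_\sigma\sigma(y)\xi_\sigma^{-1}=y\}$ you want $\phi\circ\sigma_{X_\xi}=\xi_\sigma\circ\sigma_{X_0}\circ\phi$ (equivalently $\xi_\sigma=\phi\circ\sigma(\phi)^{-1}$), whereas your convention produces the fixed points of $y\mapsto\xi_\sigma^{-1}\sigma(y)\xi_\sigma$; as you note this is harmless, and in the paper's application (Proposition~\ref{Sp.7}) the cocycle is a homomorphism onto a subgroup $G\subset\Aut(X_0)$ on which Galois acts trivially, so either convention yields the centralizer of $G$ in $\End(X_0\otimes_K L)$.
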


\begin{lem}\label{Sp.5}
  For any fixed power $q$ of $p$ and any positive integer $\ell$,
  the field 
  \begin{equation}
    \label{eq:Sp.5}
    k:=\F_{q^{\ell^\infty}}:=\bigcup_{m\ge 1} \F_{q^{\ell^m}}
  \end{equation}
  is large.
\end{lem}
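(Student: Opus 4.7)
The plan is to exploit the fact that $k = \bigcup_{m \ge 1} k_m$ with $k_m := \F_{q^{\ell^m}}$ a finite field, and to invoke the Hasse--Weil bound on the number of points of smooth curves over finite fields. Let $C$ be a smooth curve over $k$ with a $k$-rational point $P$. First I would replace $C$ by the connected component containing $P$, which remains a smooth $k$-curve. Since both $C$ and $P$ are of finite type over $k$, and $k$ is the filtered union of the subfields $k_m$, a standard spreading-out argument produces an index $m_0$, a smooth connected curve $C_0$ over $k_{m_0}$, and a point $P_0 \in C_0(k_{m_0})$ such that $(C, P) \simeq (C_0, P_0) \otimes_{k_{m_0}} k$.

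The one step that genuinely requires attention is verifying that $C_0$ is geometrically integral over $k_{m_0}$. The argument is as follows: the geometric irreducible components of $C_0$ are permuted transitively by $\Gal(\ol{k}/k_{m_0})$ (since $C_0$ is connected), while the component through $P_0$ is necessarily Galois-stable; hence there can be only one such component, i.e.\ $C_0 \otimes \ol{k}$ is irreducible.

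With geometric integrality in hand, the rest is routine. Letting $\ol{C}_0$ denote a smooth projective completion of $C_0$ of genus $g$, the Hasse--Weil inequality yields
\[ \#\ol{C}_0(k_m) \;\ge\; q^{\ell^m} + 1 - 2 g \sqrt{q^{\ell^m}} \qquad \text{for all } m \ge m_0,\]
and subtracting the bounded contribution of the finite complement $\ol{C}_0 \setminus C_0$ shows $\#C_0(k_m) \to \infty$ as $m \to \infty$. Each new $k_m$-point gives a distinct $k$-point of $C$ via the inclusion $k_m \hookrightarrow k$, so $\#C(k) = \infty$, as required. I do not expect any serious obstacle; the only mild subtlety is the geometric-integrality check after descent, which is forced by the existence of the rational point $P_0$.
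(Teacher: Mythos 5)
Your proof is correct and takes the same route as the paper, which simply cites the Hasse--Weil bound for $\#C(\F_{q^{\ell^m}})$; you have filled in the spreading-out and geometric-integrality details that the paper leaves implicit.
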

\begin{proof}
  This follows immediately from the Hasse-Weil bound for the size $\#
  C(\F_{q^{\ell^m}})$ of $\F_{q^{\ell^m}}$-rational points of a curve
  $C$. 
\end{proof}

\begin{prop}\label{Sp.6}
  There exists a maximal $O_F$-order $\O$ in
  $H=H_{\infty_1,\infty_2}$ for which the unit group  $\O^\times$
  contains a finite non-abelian group if and only if $p\not\equiv 1
  \pmod {24}$. 
\end{prop}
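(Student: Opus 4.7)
The plan is to reduce the proposition to a question about the reduced unit group $\O^\bs$, and then read off the answer from the refined type number formulas of Theorems~\ref{thm:p=1mod4typenum}--\ref{thm:p=3mod4typenum} together with the analysis of $p\in\{2,3,5\}$ in Section~\ref{case-p-2-3-5}.

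The key reduction is the following chain of equivalences: $\O^\times$ contains a finite non-abelian subgroup if and only if the finite group $\O^1 := \{u \in \O^\times : \Nr(u) = 1\}$ (finite by \cite[Section~V.1]{vigneras}) is non-abelian, if and only if
\[
\O^1/\{\pm 1\} \;=\; \ker\bigl(\Nr\colon \O^\bs \to O_{F,+}^\times/O_F^{\times 2}\bigr)
\]
is non-cyclic. For the first equivalence, any torsion $u \in \O^\times$ lies in a CM subfield $F(u) \subset H$ as a root of unity, so $\bar u = u^{-1}$ and $\Nr(u) = u\bar u = 1$; hence every finite subgroup of $\O^\times$ is contained in $\O^1$. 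For the second equivalence, embedding $\O^1$ via any real place of $F$ into $\bbH^1 \simeq \SU(2)$, the classification of finite subgroups of $\SU(2)$ containing the central element $-1$ shows that they are either cyclic $C_{2n}$ or binary polyhedral, and the abelian ones are precisely the cyclic ones, characterized by having cyclic quotient modulo $\{\pm 1\}$. A case-by-case check using Remark~\ref{rem:elements-of-order-2} then shows that this kernel is non-cyclic exactly when $\O^\bs$ lies in $\{D_2^\dagger, D_3^\dagger, D_4, D_6, A_4, S_4, A_5\}$ (with the $A_5$ entry relevant only for the small-$d$ anomalies of Section~\ref{case-p-2-3-5}).

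The proposition thus becomes a question of whether, for some maximal $\O$, $\O^\bs$ belongs to the above list. For $p \in \{2,3,5\}$, Section~\ref{case-p-2-3-5} exhibits maximal orders with $\O^\bs \in \{S_4, D_{12}, A_5\}$, all with non-abelian $\O^1$; and none of $2, 3, 5$ is $\equiv 1 \pmod{24}$. For $p \equiv 3 \pmod 4$ with $p > 5$, Theorem~\ref{thm:p=3mod4typenum} gives $t(S_4) = 1$, and again $p \not\equiv 1 \pmod{24}$. For $p \equiv 1 \pmod 4$ with $p > 5$, the identity $\Nm_{F/\Q}(\varepsilon) = -1$ forces $\{2\varepsilon, 3\varepsilon\} \cap F^{\times 2} = \emptyset$, which by Theorems~\ref{1.2}--\ref{1.3} rules out $D_2^\ddagger, D_3^\ddagger, D_4, D_6, S_4$; the group $A_5$ is absent from $\calG$ by \eqref{eq:calG}; and $\Lsymb{F}{2} \neq 0$ (since $2$ is unramified in $F$ for such $p$) rules out $D_2^\dagger$. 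Only $D_3^\dagger$ and $A_4$ remain, and Theorem~\ref{thm:p=1mod4typenum} yields
\[
t(D_3) + t(A_4) \;=\; \tfrac{1}{2}\bigl(1 - \Lsymb{p}{3}\bigr) + \tfrac{1}{2}\bigl(1 - \Lsymb{2}{p}\bigr),
\]
which vanishes exactly when $\Lsymb{p}{3} = \Lsymb{2}{p} = 1$, i.e.\ $p \equiv 1 \pmod 3$ and $p \equiv 1 \pmod 8$, equivalent to $p \equiv 1 \pmod{24}$. The only non-routine step is the reduction; the rest is bookkeeping against the classification.
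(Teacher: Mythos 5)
Your proposal is correct and follows essentially the same route as the paper's own proof: reduce to the finiteness and structure of $\O^1$ via the kernel of $\Nr\colon \O^\bs\to O_{F,+}^\times/O_F^{\times 2}$, then read off the answer from Theorems~\ref{thm:p=1mod4typenum} and \ref{thm:p=3mod4typenum} (using $\Nm_{F/\Q}(\varepsilon)=-1$ for $p\equiv 1\pmod 4$, $p>5$). You merely spell out more explicitly the two preliminary equivalences (every finite subgroup of $\O^\times$ lies in $\O^1$, and $\O^1$ is abelian iff $\O^1/\{\pm 1\}$ is cyclic), which the paper leaves implicit.
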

\begin{proof}
  The reduced norm map $\Nr: \O^\times \to O_F^\times$ induces a map
  $\Nr: \O^\bs\to O_{F,+}^\times/(O_F^\times)^2$. The kernel of
  this map is $\O^1/\{\pm 1\}$, where $\O^1\subset \O^\times$ is
  the reduced norm one subgroup. Thus the index $[\O^\bs:\O^1/\{\pm
  1\}]\in \{1,2\}$, and $[\O^\bs:\O^1/\{\pm 1\}]=1$ if $N(\varepsilon)=-1$. 
  If $p\le 5$ or $p\equiv 3\pmod 4$. 
  then there is a maximal order
  $\O$ such that $\O^\bs\simeq S_4$ or $A_5$ by 
  Theorem~\ref{thm:p=3mod4typenum}. 
  Then the finite group $\O^1/\{\pm 1\}$
  must be non-abelian and hence  $\O^1$ is a non-abelian finite
  subgroup of $\O^\times$. 
  Note that  $p\le 5$ or $p\equiv 3 \pmod 4$ implies that 
  $p\not\equiv 1\pmod {24}$.
  Now assume that $p\equiv 1\pmod 4$ and $p\ge 7$. In this case 
  $N(\varepsilon)=-1$ by \cite[Corollary~18.4bis]{Conner-Hurrelbrink} and $\O^\bs=\O^1/\{\pm 1\}$. One has 
  $\O^1/\{\pm 1\}\simeq C_n$ for $n=1,2,3$ if and only if
  $\O^1=\mu_{2n}\simeq C_{2n}$. It then follows from
  Theorem~\ref{thm:p=1mod4typenum} that there exists a maximal order $\O$ with 
  $\O^1$ non-abelian if and only if
  $\Lsymb{2}{p}=-1$ or $\Lsymb{p}{3}=-1$. The latter is equivalent to $p\not\equiv 1\pmod {24}$ under
  the condition $p\equiv 1\pmod 4$. This proves the proposition.
\end{proof}
 
\begin{prop}\label{Sp.7}
  Suppose $X_0/\Fp\in \Isog^{O_F}(\pi)$ is a member such that
  $\Aut(X_0)$ contains a finite non-abelian group $G$. There exist
  a positivie integer $\ell$ and infinitely many abelian varieties $X$ over $K:=\F_{p^{\ell^\infty}}(t)$, 
  which are $K$-forms of $X_0\otimes_{\F_p} K$, such that the
  endomorphism algebra $\End^0(X)$ of $X$ is isomorphic to
  $F=\Q(\sqrt{p})$.  
\end{prop}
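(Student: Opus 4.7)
The plan is to apply Pop's theorem over a suitable large field $K = k(t)$ with $k = \F_{p^{\ell^\infty}}$ to produce many regular Galois $G$-extensions $L/K$, use these to twist $X_0 \otimes_{\F_p} K$ by cocycles valued in $G \subseteq \Aut_{\F_p}(X_0)$, and then verify via the non-abelianness of $G$ together with a double-commutant argument that each resulting twist has endomorphism algebra exactly $F$.

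To begin, I would fix any integer $\ell \geq 2$, so that $k := \F_{p^{\ell^\infty}}$ is infinite and hence large by Lemma~\ref{Sp.5}, and set $K := k(t)$. Applying Theorem~\ref{Sp.3} to the split EP $(\Gamma_k \twoheadrightarrow \{1\},\ G \twoheadrightarrow \{1\})$ produces a surjection $\phi \colon \Gamma_K \twoheadrightarrow G$ whose fixed field $L := K_s^{\ker \phi}$ is regular over $k$, i.e.\ $L \cap \bar{\F}_p = k$; Pop's proof furnishes infinitely many such $L/K$. Since $G \subseteq \Aut_{\F_p}(X_0)$, the Galois action of $\Gamma_K$ on $G$ is trivial, so $\xi_\sigma := \phi(\sigma)$ defines a 1-cocycle in $Z^1(\Gamma_K, \Aut(X_0 \otimes_{\F_p} K_s))$; let $X_\xi/K$ be the corresponding twist, which by construction becomes isomorphic to $X_0 \otimes_{\F_p} L$ over $L$.

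To compute $\End^0(X_\xi)$ I would invoke Lemma~\ref{Sp.4} together with the rigidity of abelian varieties along regular field extensions: since $L/k$ is regular, $\End_L(X_0 \otimes_{\F_p} L) = \End_k(X_0 \otimes_{\F_p} k)$ with $\Gal(L/K)$ acting trivially, so
\[
\End^0(X_\xi) \;=\; Z_{\End^0_k(X_0 \otimes_{\F_p} k)}(G).
\]
Because $G$ is non-abelian, the $F$-subalgebra $F[G] \subseteq H_{\infty_1,\infty_2}$ is neither $F$ nor a CM-extension of $F$, and so by dimension count $F[G] = H_{\infty_1,\infty_2}$; hence the centralizer of $G$ agrees with that of $H_{\infty_1,\infty_2}$. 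Inside $\End^0_{\bar{\F}_p}(X_0 \otimes \bar{\F}_p) \simeq M_2(B_{p,\infty})$, the algebra $H_{\infty_1,\infty_2}$ is the centralizer of $\pi = \sqrt{p}$, so the double-commutant theorem identifies its centralizer with the $\Q$-algebra generated by $\pi$, namely $F$; this computation descends to $\End^0_k(X_0 \otimes_{\F_p} k)$, yielding $\End^0(X_\xi) \simeq F$.

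Finally, to produce infinitely many non-isomorphic $X_\xi$, I would argue that two surjections $\phi, \phi' \colon \Gamma_K \twoheadrightarrow G$ coming from Pop's construction can give rise to isomorphic twists only when the associated cocycle classes coincide, and this forces $\phi, \phi'$ to be conjugate modulo the finite group $\Aut(G)$ (via the normalizer of $G$ inside $\Aut(X_0 \otimes_{\F_p} K_s)$); since Pop produces infinitely many surjections with distinct kernels, infinitely many $K$-isomorphism classes of twists result. The main hurdle will be the endomorphism-algebra calculation: tracking $\End^0$ carefully through the successive base changes $\F_p \to k \to L$ (so that rigidity may be applied cleanly at the regular step $k \to L$) and then deploying the double-commutant principle inside the large algebra $M_2(B_{p,\infty})$ to pin down the centralizer of the non-abelian $G$ as exactly $F$.
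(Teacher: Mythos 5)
Your overall architecture (Pop's theorem, twisting $X_0\otimes_{\F_p}K$ by a cocycle valued in $G$, and Lemma~\ref{Sp.4}) matches the paper's, but there is a genuine gap in the endomorphism computation, and it originates in your very first step: you cannot take $\ell$ to be an \emph{arbitrary} integer $\ge 2$. If $\ell$ is even, then $k=\F_{p^{\ell^\infty}}$ contains $\F_{p^2}$, and since $\pi^2=p$ is already rational, $X_0$ acquires all of its geometric endomorphisms over $\F_{p^2}$; hence $\End^0(X_0\otimes k)\simeq M_2(B_{p,\infty})$ (with $B_{p,\infty}$ the quaternion $\Q$-algebra ramified at $p$ and $\infty$), a $16$-dimensional $\Q$-algebra in which $F$ is \emph{not} central. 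In that ambient algebra your step ``the centralizer of $G$ agrees with that of $H_{\infty_1,\infty_2}$'' fails: an element commuting with every element of $G$ commutes with the $\Q$-subalgebra $\Q[G]$, but need not commute with the $F$-subalgebra $F[G]$, because commuting with $G$ does not force commuting with $F$. And $\Q[G]$ is typically much smaller than $H$: for $p\equiv 3\pmod 4$ with $p>3$, the group supplied by Proposition~\ref{Sp.6} is $\O^1$, the binary tetrahedral group, whose nontrivial elements have order dividing $4$ or $6$, so $\Q[G]\simeq\qalg{-1}{-1}{\Q}$ is only $4$-dimensional and (being totally definite) does not contain $F$. The double centralizer theorem then shows that $Z_{M_2(B_{p,\infty})}(G)=Z_{M_2(B_{p,\infty})}(\Q[G])$ is a quaternion algebra over $\Q$ ramified at $2$ and $p$, which strictly contains $F$; your twist would then have endomorphism algebra of $\Q$-dimension $4$, not $2$.

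The repair is exactly the extra care the paper takes: pick $n$ with $\End(X_0\otimes\overline{\F}_p)=\End(X_0\otimes\F_{p^n})$ and choose $\ell$ coprime to $n$ (concretely, $\ell$ odd suffices here), so that $k\cap\F_{p^n}=\F_p$ and $\End(X_0\otimes k)=\End(X_0)$, a maximal order in $H$. Then the ambient algebra after Chow's theorem is $H$ itself, $F$ is its center, so every element of $Z_H(G)$ automatically commutes with $F$ and $Z_H(G)=Z_H(F[G])=Z_H(H)=F$; the rest of your argument, including the appeal to Lemma~\ref{Sp.4} and the triviality of the Galois action on $\End(X_0\otimes L)$, goes through with no need to invoke $M_2(B_{p,\infty})$ at all. (Your closing paragraph on distinguishing the infinitely many twists is more than the paper attempts --- it simply cites that Pop's construction yields infinitely many regular $G$-extensions --- but that is a secondary issue compared with the centralizer computation.)
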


\def\Fpbar{\ol \Fp}
\newcommand{\isoto}{\stackrel{\sim}{\longrightarrow}}

\begin{proof}
  Let $\F_{p^n}/\Fp$ be a finite extension such that
  $\End(X_0\otimes \overline{\F}_p)=\End(X_0\otimes \F_{p^n})$, where
  we omit the subscript $\F_p$ from the scalar
  extensions of $X_0$.  Let $\ell$ be any positive integer with
  $(\ell,n)=1$ and put $k=\F_{p^{\ell^\infty}}$ and $K=k(t)$. Then
  $k\cap \F_{p^n}=\Fp$ and one has $\End(X_0\otimes k)=\End(X_0)$,
  which is a maximal order in $H$. Since $K/k$ is primary,
  $\End(X_0\otimes K)=\End(X_0\otimes k)=\End(X_0)$ by Chow's Theorem
  ~\cite{Chow-1955}; see also \cite[Theorem 3.19]{Conrad-2006}. Since
  $k$ is large (Lemma~\ref{Sp.5}), there exist infinitely many regular
  finite Galois extensions $L/K$ with Galois group $\Gal(L/K)\simeq G$
  by Theorem~\ref{Sp.3}. Consider the homomorphism
  $\xi: \Gal(L/K)\to \Aut(X_0\otimes L)$ defined by the composition
\[ \Gal(L/K)\isoto G\subset \Aut(X_0)\subset \Aut(X_0\otimes L). \]      
Since $L/K$ is regular, we have $\End(X_0\otimes L)=\End(X_0\otimes
K)=\End(X_0)$ by Chow's Theorem again. 
Thus, $\Gal(L/K)$ acts trivially on
$\Aut(X_0\otimes L)$ and hence $\xi$ is a 1-cocyle. 
Let $X/K$ be the abelian variety
corresponding to $\xi$. Then by Lemma~\ref{Sp.4}, 
$\End^0(X)$ is isomorphic to the centralizer of $G$ in $\End^0(X_0)=H$
which is the center $F=\Q(\sqrt{p})$.  
\end{proof}

\begin{cor}\label{Sp.8}
  Assume that $p\not\equiv 1\pmod {24}$. Then there is a superspecial
  abelian surface $X$ over some field $K$ of characteristic $p$ 
  such that $\End^0_K(X)\simeq \Q(\sqrt{p})$. 
\end{cor}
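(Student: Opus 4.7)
The plan is to assemble the corollary from the two preparatory statements already in place: Proposition~\ref{Sp.6}, which converts the hypothesis $p\not\equiv 1\pmod{24}$ into the existence of a maximal $O_F$-order $\O\subset H_{\infty_1,\infty_2}$ whose unit group $\O^\times$ contains a finite non-abelian subgroup, and Proposition~\ref{Sp.7}, which turns such a unit group into a regular family of twists with endomorphism algebra $F$. So no new arithmetic input is needed; the work is to thread these results together and to verify that superspeciality is preserved under the twisting.

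First, I would invoke Proposition~\ref{Sp.6} to produce a maximal order $\O\subset H$ with $\O^\times\supseteq G$ for some finite non-abelian group $G$. Using the bijection $\Cl(\O)\simeq \Isog^{O_F}(\pi)$ recalled in Section~\ref{sec:Sp.1} (from \cite[Theorem~6.1.2]{xue-yang-yu:ECNF}) together with the surjection $\Upsilon:\Cl(\O)\to \Tp(H)$ of \eqref{eq:36}, I would choose an abelian surface $X_0\in \Isog^{O_F}(\pi)$ whose endomorphism ring $\End(X_0)$ is isomorphic to $\O$ (such an $X_0$ is automatically superspecial, being in $\Isog^{O_F}(\pi)$). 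Then $\Aut(X_0)=\O^\times\supseteq G$ is non-abelian, which is exactly the hypothesis of Proposition~\ref{Sp.7}.

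Next, I would apply Proposition~\ref{Sp.7} to this $X_0$ to obtain $\ell\in\N$, the large field $k=\F_{p^{\ell^\infty}}$, the rational function field $K=k(t)$, and a $K$-form $X$ of $X_0\otimes_{\F_p}K$ constructed from a proper regular solution (via Pop's Theorem~\ref{Sp.3}) to a split finite embedding problem with group $G$; the conclusion delivered by Proposition~\ref{Sp.7} is that $\End^0_K(X)\simeq F=\Q(\sqrt{p})$.

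The only remaining point is to confirm that $X$ is still superspecial. Since $X$ is a twist of $X_0\otimes_{\F_p}K$, there is an isomorphism $X\otimes_K \bar K\simeq X_0\otimes_{\F_p}\bar K$ over an algebraic closure $\bar K$ of $K$. As $X_0$ is superspecial, so is $X_0\otimes_{\F_p}\bar K$, and therefore so is $X\otimes_K\bar K$; by definition this means $X$ itself is superspecial. Combining this with the endomorphism computation from Proposition~\ref{Sp.7} finishes the corollary. The main subtlety to be careful about is that the twisting argument genuinely lives over $K=k(t)$ (not merely over $\bar K$), so that the identity $\End^0_K(X)\simeq F$ reflects a $K$-rational structure; this is already ensured by the regularity of the Galois extension $L/K$ invoked in the proof of Proposition~\ref{Sp.7} via Chow's theorem, and no further argument is required.
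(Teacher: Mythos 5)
Your proposal is correct and follows the same route as the paper, which simply derives the corollary from Propositions~\ref{Sp.6} and \ref{Sp.7}; your additional remarks (choosing $X_0$ with $\End(X_0)\simeq\O$ via the bijection $\Cl(\O)\simeq\Isog^{O_F}(\pi)$, and noting that superspeciality is a geometric property preserved under twisting) are exactly the implicit steps the paper leaves to the reader.
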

\begin{proof}
  This follows from Propositions~\ref{Sp.6} and \ref{Sp.7}. 
\end{proof}

\section*{Acknowledgements}

J. Xue is partially supported by the Natural Science Foundation of
China grant \#11601395. He thanks the Morningside Center of
Mathematics (CAS), Academia Sinica and NCTS for their hospitality and
great working conditions during his visits in 2017, hosted by Xu Shen
and Chia-Fu Yu, respectively. C.-F. Yu is partially supported by the
grants MoST 104-2115-M-001-001-MY3 and 107-2115-M-001-001-MY2. The
authors thank the referees for careful readings and helpful
suggestions that have improved the exposition of the paper
significantly.

\bibliographystyle{hplain}
\bibliography{TeXBiB}
\end{document}